\def\@@and{\MakeLowercase{and}}
\theoremstyle{definition}
\newtheorem{defn}{Definition}[section]
\newtheorem{exam}[defn]{Example}
\newtheorem{rem}[defn]{Remark}
\theoremstyle{plain}
\newtheorem{thm}[defn]{Theorem}
\newtheorem{lem}[defn]{Lemma}
\newtheorem{prop}[defn]{Proposition}
\newtheorem{coro}[defn]{Corollary}
\newcommand{\eps}{\varepsilon}
\newcommand{\bbz}{\mathbb{Z}}
\newcommand{\bbn}{\mathbb{N}}
\newcommand{\bbk}{\mathbb{K}}
\newcommand{\bbr}{\mathbb{R}}
\newcommand{\bbc}{\mathbb{C}}
\newcommand{\bbs}{\mathbb{S}}
\newcommand{\vecz}{\mathbf{0}}
\DeclareMathOperator{\sspan}{span}
\DeclareMathOperator{\diam}{diam}
\DeclareMathOperator{\card}{\mathrm{card}}
\DeclareMathOperator{\udens}{\overline{\mathrm{dens}}}
\DeclareMathOperator{\dens}{\mathrm{dens}}
\DeclareMathOperator{\ldens}{\underline{\mathrm{dens}}}
\DeclareMathOperator{\eend}{End}
\DeclareMathOperator{\asym}{Asym}
\DeclareMathOperator{\prox}{Prox}
\DeclareMathOperator{\masym}{MAsym}
\DeclareMathOperator{\mprox}{MProx}
\DeclareMathOperator{\dasym}{DAsym}
\DeclareMathOperator{\dprox}{DProx}
\title[C\MakeLowercase{haos for endomorphisms and linear operators}] 
{C\MakeLowercase{haos for endomorphisms of completely metrizable groups and linear operators on} F\MakeLowercase{r\'echet spaces}}
\author[Z. J\MakeLowercase{iang} ]{Z\MakeLowercase{hen}  J\MakeLowercase{iang}}
\address[Z. Jiang]{Department of Mathematics,
	Shantou University, Shantou, 515821, Guangdong, China}
\email{jiangzhen17@outlook.com}
\urladdr{https://orcid.org/0000-0001-8013-9317}
\author[J. L\MakeLowercase{i}]{J\MakeLowercase{ian} L\MakeLowercase{i}}
\address[J. Li]{Department of Mathematics,
	Shantou University, Shantou, 515821, Guangdong, China}
\email{lijian09@mail.ustc.edu.cn}
\urladdr{https://orcid.org/0000-0002-8724-3050}
\subjclass[2020]{Primary: 47A16; Secondary: 37B05}
 \keywords{Li-Yorke chaos, mean Li-Yorke chaos, distributional chaos, equicontinuity, sensitivity, endomorphism, completely metrizable group, linear
operator, Fr\'echet space}
\date{\today}
\begin{document}

\begin{abstract}
    Using some techniques from topological dynamics, we give a uniform treatment of Li-Yorke chaos, mean Li-Yorke chaos and distributional chaos for continuous endomorphisms of completely metrizable groups, and characterize three kinds of chaos (resp.\@ extreme chaos) in terms of the existence of the so-called semi-irregular points (resp.\@ irregular points).
    We exhibit some examples of inner automorphisms of Polish groups to illustrate the results. We also apply our results to the chaos theory of continuous linear operators on Fr\'echet spaces, which improves some results in the literature.
\end{abstract}

\maketitle

\tableofcontents

\section{Introduction} \label{sec:Intro}

By a dynamical system, we mean a pair $(X,T)$, where $(X,d)$ is a metric space and $T\colon X\to X$ is a continuous map.
In the seminal paper~\cite{LY1975} Li and Yorke  initiated the study of chaos theory in dynamical systems.
Following~\cite{LY1975}, we say that a dynamical system $(X,T)$ is Li-Yorke chaotic if there exists an uncountable subset $K$ of $X$ such that for any two distinct points $x,y\in K$, one has
\[
    \liminf_{n\to\infty}d(T^nx,T^ny)=0\text{ and }\limsup_{n\to\infty}d(T^nx,T^ny)>0.
\]
Li and Yorke proved in~\cite{LY1975} that if a continuous interval map $T\colon [0,1]\to [0,1]$ has a periodic point of period $3$, then it is  Li-Yorke chaotic.
Since then, various alternatives, but closely related definitions of chaos,  have been proposed to capture the complexity of a dynamical system from different perspectives.

In order to characterize interval maps with positive topological entropy, Schweizer and Smital \cite{SS1994} introduced the notion of distributional chaos as a natural strengthening of Li-Yorke chaos.
Later in~\cite{SS04} and \cite{BSS05}  the authors further introduced three types of distributional chaos. In this paper, we will only focus on distributional chaos of types $1$ and $2$.
We say that a dynamical system  $(X,T)$ is distributionally chaotic of type $1$ (or type $2$) if there exists an uncountable subset $K$ of $X$ such that for any two distinct points $x,y\in X$, one has that for every $\eps>0$,
the upper density of the set $\{n\in\bbn\colon d(T^nx,T^ny)< \eps\}$ is one and there exists some $\delta>0$ such that the upper density of the set $\{n\in\bbn\colon d(T^nx,T^ny)>\delta\}$ is one (or positive).

It is observed by Downarowicz in~\cite{D2014} that for dynamical systems on a bounded metric space, distributional chaos of type $2$ is equivalent to another generalization of Li-Yorke chaos, namely mean Li-Yorke chaos, which was coined in~\cite{HLY2014}.
We say that  a dynamical system $(X,T)$ is mean Li-Yorke chaotic if there exists an uncountable subset $K$ of $X$ such that for any two distinct points $x,y\in K$, one has \[
    \liminf_{n\to\infty}\frac{1}{n}\sum_{i=1}^n d(T^ix,T^iy)=0\text{ and }\limsup_{n\to\infty}\frac{1}{n}\sum_{i=1}^n d(T^ix,T^iy)>0.
\]

For the study of chaos theory in topological dynamics, there are plenty of results about the existence of the different types of chaos and their relationships with each other.
We refer the reader to the survey \cite{LY2016} and references therein. 
Recently, a lot of attention has been paid to the study of chaotic phenomena in linear dynamics, that is, when $X$ is a topological vector space (usually a Banach or Fr\'echet space) and $T\colon X\to X$ is a continuous linear operator.  
We refer the reader to the books~\cite{BM2009, GP2011} for a thorough account of linear dynamics.
 
In a series of papers~\cite{BBMP2011, BBMP2013, BBMP2015, BBP2020, BBPW2018}, various chaotic properties related to Li-Yorke chaos were carefully considered, and many very interesting results were found in linear dynamics. For instance, in \cite{BBMP2011}, Berm\'udez et al.\ connected Li-Yorke chaotic operators on a Banach space with the existence of irregular points which is a topic discussed in operator theory. Later in~\cite{BBMP2015}, Bernardes et al.\ extended this result to Fr\'echet spaces. They utilized the linear structure and the local convexity to construct the uncountable scrambled set, and it is natural to ask if these ingredients are necessary to produce the Li-Yorke chaotic phenomena.

Every topological vector space is also a topological group. In~\cite{C2001}, Chan generalized the hypercyclicity criterion in linear dynamics to continuous homomorphisms on a separable completely metrizable topological semigroup.
In~\cite{M2009}, Moothathu studied the weak mixing and mixing properties of a continuous endomorphism of a Hausdorff topological group.
In \cite{CW2019}, Chan and Walmsley generalized 
the well-known universality criterion from linear dynamics to the setting of continuous homomorphisms acting on a separable, completely metrizable topological semigroup. 
In \cite{BP2022}, Burke and Pinheiro obtained some necessary and sufficient conditions for a continuous endomorphism on a Polish group to be weakly mixing.
In fact, the first chapter of the textbook \cite{B1974} on functional analysis and operator theory is about topological groups. 
It is then natural to investigate the dynamics of continuous endomorphisms of general topological groups and, since both topological dynamics and linear dynamics have plentiful results in chaos theory, it seems very interesting to connect these two important research areas. 

Based on the above reasons, 
influenced by the seminal work of Huang and Ye in~\cite{HY2002}, we take advantage of the techniques from topological dynamics to study the chaos theory in linear dynamics.
After distinguishing what is exactly needed in the linear case, we will show that many results also hold in the more general setting of continuous endomorphisms of completely metrizable groups.
We will give a uniform treatment of Li-Yorke chaos, mean Li-Yorke chaos and distributional chaos for continuous endomorphisms of completely metrizable groups and, as a consequence, these results will hold for continuous linear operators on F-spaces (completely metrizable topological vector spaces).
We will exhibit some examples of inner automorphisms of the Polish group of all permutations of $\bbz$ and the Polish group of all increasing homeomorphisms of $\bbr$ to illustrate the results.
We will also apply the general results on continuous endomorphisms of completely metrizable groups  to continuous linear operators on Fr\'echet and Banach spaces,
improving some results on the characterizations of chaos via irregular vectors in an abstract and concise way.
The paper is organized as follows.

In Section~\ref{sec:Pre} we recall some notation and preliminaries on topological spaces, topological groups, Fr\'echet spaces and dynamical systems.

In Section~\ref{sec:LYC}, we focus our attention on Li-Yorke chaos. For linear operators on Banach spaces, Berm\'{u}dez et al.~\cite{BBMP2011} gave a characterization of Li-Yorke chaos in terms of irregular vectors, introduced by Beauzamy in~\cite{B1988}. In~\cite{BBMP2015} the authors extended this interesting result from Banach to Fr\'echet spaces and systematically studied (dense) Li-Yorke chaos. In this section, we continue to extend the results to the more general situation of continuous endomorphisms acting on completely metrizable groups.
To do this, we first study equicontinuity, sensitivity, and Li-Yorke chaos in topological dynamics, and prove the dichotomy of equicontinuity and sensitivity for continuous endomorphisms of completely metrizable groups. Using the group structure and properties of sensitivity, we characterize Li-Yorke chaos (Li-Yorke extreme chaos) in terms of the existence of semi-irregular points (irregular points) for continuous endomorphisms of completely metrizable groups.
As an application, we characterize Li-Yorke chaos and Li-Yorke extreme chaos for continuous linear operators on Fr\'echet spaces. 

In Section~\ref{sec:MLYC}, we bring our attention to mean Li-Yorke chaos. In~\cite{BBP2020}, Bernardes et al.\ systematically studied (dense) mean Li-Yorke chaos for linear operators on Banach spaces.
In Subsection~\ref{subsec:Meq-Msen-MLYC}, we study mean equicontinuity, mean sensitivity and mean Li-Yorke chaos in topological dynamics. 
In Subsections~\ref{subsec:MLYC-group} and~\ref{subsec:MLYeC-group}, we prove the dichotomies of mean equicontinuity and mean sensitivity, and give a characterization of (dense) mean Li-Yorke (extreme) chaos for continuous endomorphisms of completely metrizable groups. 
In Subsection~\ref{subsec:MLYC-Frechet}, by using the linear structure and the results in Subsections ~\ref{subsec:MLYC-group} and~\ref{subsec:MLYeC-group}, we extended the main results in ~\cite{BBP2020} concerning (dense) mean Li-Yorke chaos to the Fr\'echet space setting. 

In Section~\ref{sec:DC}, we consider the case of distributional chaos. 
In~\cite{BBMP2013}, Bernardes et al.\ systematically studied (dense) distributional Li-Yorke chaos of type 1 for linear operators on Fr\'echet spaces. In~\cite{BBPW2018}, Bernardes et al.\ further studied (dense) distributional Li-Yorke chaos of type 1 and 2 for linear operators on Banach spaces. 
In Subsection~\ref{subsec:MLS-DC}, we study mean-L-stability,  mean-L-unstability and distributional Li-Yorke (extreme) chaos of type 1 and 2 in the context of topological dynamics. 
In Subsections~\ref{subsec:DC-group} and ~\ref{subsec:DeC-group}, we prove the dichotomies of mean-L-stability and mean-L-unstability, and give a characterization of distributional Li-Yorke (extreme) chaos of type 1 and 2 for continuous endomorphisms of completely metrizable groups, extending some results in the literature.
Finally, in Subsection 5.4, we use the linear structure and the results from Subsections 5.2 and 5.3 to give a more complete characterization, for dense distributional chaos of type 1 and type 2, than those given in \cite[Theorem 15]{BBMP2013} and \cite[Theorem 33]{BBPW2018} by Bernardes et al.\ in the context of linear operators acting on Fréchet spaces. Moreover, in Subsection 5.5 we explore the particular case in which the underlying space is Banach, closing the paper.

\section{Preliminaries}\label{sec:Pre}

Let $\bbn$ denote the set of all positive integers.
For a finite subset $A$ of $\bbn$, $\card(A)$ denotes the number of elements in $A$.
For a subset $A$ of $\bbn$, the \emph{upper density} of $A$ is defined by
\[
    \udens(A)=\limsup_{n\to\infty} \frac{\card(A\cap[1,n])}{n},
\]
and the \emph{lower density} of $A$ is defined by
\[
    \ldens(A)=\liminf_{n\to\infty} \frac{\card(A\cap[1,n])}{n}.
\]
We say that $A$ has \emph{density} $\dens(A)$  if $\udens(A)=\ldens(A)$, in which case $\dens(A)$ is equal to this common value.

The following result is a standard exercise in analysis.
\begin{lem}\label{lem:density-limit}
    Let $(a_n)_n$ be a sequence in $\bbr$. Then
    \begin{enumerate}
        \item there exists a subset $A$ of $\bbn$ with $\udens(A)=1$ such that $\lim\limits_{A\ni n\to\infty}|a_n|=0$ if and only if  for every $\eps>0$, $ \udens(\{ n\in\bbn\colon |a_n|< \eps\})=1$;
        \item there exists a subset $A$ of $\bbn$ with $\dens(A)=1$ such that $\lim\limits_{A\ni n\to\infty}|a_n|=0$ if and only if  for every $\eps>0$, $\dens(\{ n\in\bbn\colon |a_n|< \eps\})=1$;
        \item there exists a subset $A$ of $\bbn$ with $\udens(A)=1$ such that $\lim\limits_{A\ni n\to\infty}|a_n|=\infty$ if and only if  for every $M>0$, $ \udens(\{ n\in\bbn\colon |a_n|>M\})=1$.
    \end{enumerate}

\end{lem}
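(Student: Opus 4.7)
The plan is to prove each of the three equivalences by the same recipe: the forward direction is essentially immediate, and the backward direction is the content, requiring a diagonal construction of the witness set $A$.

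For the ``only if'' direction of (1): suppose $A$ with $\udens(A)=1$ satisfies $|a_n|\to 0$ along $A$. Given $\eps>0$, choose $N$ so large that $|a_n|<\eps$ for every $n\in A$ with $n>N$. Then $A\setminus[1,N]\subseteq\{n\in\bbn\colon |a_n|<\eps\}$, and since removing finitely many integers does not affect upper density, the set on the right still has upper density $1$. Parts (2) and (3) have identical forward directions, replacing $\udens$ by $\dens$ and $\{|a_n|<\eps\}$ by $\{|a_n|>M\}$ as appropriate.

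For the converse of (1), I would set $B_k=\{n\in\bbn\colon |a_n|<1/k\}$ and exploit that $\udens(B_k)=1$ to pick integers $0=n_0<n_1<n_2<\cdots$ with $\card(B_k\cap[1,n_k])/n_k>1-1/k$ and $n_k\geq k\,n_{k-1}$. Then I define $A=\bigcup_{k\geq 1}(B_k\cap(n_{k-1},n_k])$. For any $n\in A$ lying in $(n_{k-1},n_k]$ one has $|a_n|<1/k$, so $|a_n|\to 0$ along $A$. Upper density is controlled by the specific scales $n_k$: the inclusion $A\cap[1,n_k]\supseteq B_k\cap(n_{k-1},n_k]$ gives
\[
\frac{\card(A\cap[1,n_k])}{n_k}\geq\bigl(1-\tfrac{1}{k}\bigr)-\frac{n_{k-1}}{n_k}\geq 1-\tfrac{2}{k},
\]
whence $\udens(A)=1$. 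The analogous converse for (3) uses $B_M=\{|a_n|>M\}$ in place of $B_{1/k}$ and is otherwise identical.

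The real technical point is the converse of (2), where the witness $A$ must have \emph{density} (not just upper density) equal to $1$; for this the naive union construction above is insufficient because it gives no control on $\card(A\cap[1,n])$ when $n$ sits just above some threshold $n_{k-1}$. My plan is to work with the complements $E_k=\{n\in\bbn\colon |a_n|\geq 1/k\}$, each of density $0$, pick thresholds $N_1<N_2<\cdots$ with $\card(E_k\cap[1,n])/n<1/k$ for all $n\geq N_k$, and also rapidly growing so that $\sum_{j<k}\card(E_j\cap[1,N_{j+1}])<N_k/k$. Defining $E=\bigcup_k\bigl(E_k\cap(N_k,N_{k+1}]\bigr)$ and $A=\bbn\setminus E$, a direct bookkeeping argument shows $\card(E\cap[1,n])/n\to 0$, so $\dens(A)=1$; meanwhile any $n\in A$ with $n>N_k$ lies in some $(N_j,N_{j+1}]$ with $j\geq k$ and automatically satisfies $|a_n|<1/j\leq 1/k$. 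Balancing the growth of $N_k$ against the accumulated contributions of the $E_j$ is the only delicate step; everything else is bookkeeping.
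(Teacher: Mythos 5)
The paper gives no proof of this lemma at all --- it is stated with the remark that it is ``a standard exercise in analysis'' --- so there is no argument of the authors' to compare yours against. Your proof is correct and complete. The forward directions are handled properly (a cofinite subset of $A$ lands in $\{n\colon |a_n|<\eps\}$, and deleting finitely many integers changes neither $\udens$ nor $\dens$). For the converses of (1) and (3), the two-scale construction $A=\bigcup_k\bigl(B_k\cap(n_{k-1},n_k]\bigr)$ with $n_k\geq k\,n_{k-1}$ works: since the blocks $(n_{k-1},n_k]$ are disjoint, any $n\in A\cap(n_{k-1},n_k]$ lies in $B_k$, and the estimate $\card(A\cap[1,n_k])/n_k\geq (1-\tfrac1k)-n_{k-1}/n_k\geq 1-\tfrac2k$ gives $\udens(A)=1$ along the subsequence $(n_k)_k$, which is all that upper density requires. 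You correctly identified that (2) is the only part where this is insufficient, and your complement construction handles it: for $n\in(N_k,N_{k+1}]$ one gets $\card(E\cap[1,n])\leq \sum_{j<k}\card(E_j\cap[1,N_{j+1}])+\card(E_k\cap[1,n])<N_k/k+n/k\leq 2n/k$, so $\dens(E)=0$ for \emph{every} $n$, not just along a subsequence; the inductive choice of $N_k$ is feasible because the term $\card(E_{k-1}\cap[1,N_k])$ is $o(N_k)$ and the remaining summands are already fixed. Nothing is missing.
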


\subsection{Topological spaces}
Let $X$ be a topological space.
A subset of $X$ is called a \emph{$G_\delta$ set} if it can be expressed as the countable intersection of open subsets of $X$,
a \emph{Cantor set} if it is homeomorphic to the Cantor ternary set in the unit interval,
and a \emph{$\sigma$-Cantor set} if it can be expressed as the countable union of Cantor subsets of $X$.

We say that a topological space $X$ is \emph{completely metrizable} if there exists a metric $d$ on $X$ such that $(X, d)$ is a complete metric space and $d$ induces the topology of $X$.
The well-known Baire category theorem states that for every completely metrizable space, the intersection of a sequence of open dense sets is still dense.
In a completely metrizable space $X$, a subset of $X$ is called \emph{residual} if it contains a dense $G_\delta$ subset of $X$.

The following classical result is due to Mycielski \cite{M1964}, which is an important topological tool for our purposes.
\begin{thm}[Mycielski]\label{Mycielski}
    Let $X$ be a completely metrizable space without isolated points and $R$ be a dense $G_{\delta}$ subset of $X\times X$.
    Then for every sequence $(O_j)_j$ of nonempty open subsets of $X$, there exists a sequence $(K_j)_j$ of Cantor sets with $K_j\subset O_j$ such that
    \[
        \bigcup_{j=1}^\infty   K_j\times  \bigcup_{j=1}^\infty K_j\subset R\cup \Delta_X,
    \]
    where $\Delta_X=\{(x_1,x_2)\in X\times X\colon  x_1=x_2\}$.
    In addition, if $X$ is separable then we can require that the $\sigma$-Cantor set $\bigcup_{j=1}^\infty K_j$ is dense in $X$.
\end{thm}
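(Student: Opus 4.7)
The plan is the standard Cantor scheme construction. Since $R$ is a $G_\delta$ in the metric space $X\times X$, write $R=\bigcap_{n\ge 1}R_n$ with each $R_n$ open and dense in $X\times X$; fix a complete metric $d$ inducing the topology of $X$. The idea is to build, for each $j$, a binary Cantor scheme of nonempty open subsets of $O_j$ whose nested intersection is a Cantor set $K_j$, while arranging at the level-$n$ stage that every pair of distinct branches, \emph{across possibly different trees}, lies in $R_n$.

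I would carry out an induction on $n\ge 1$ specifying, for each $j\le n$ and each binary word $s$ of the appropriate length, a nonempty open set $U_s^j\subseteq O_j$ with $\diam(U_s^j)<1/n$, with $\overline{U_{s0}^j}\cup\overline{U_{s1}^j}\subseteq U_s^j$ and $\overline{U_{s0}^j}\cap\overline{U_{s1}^j}=\emptyset$, and, crucially, with
\[
\overline{U_s^i}\times\overline{U_t^j}\subseteq R_n
\]
whenever $i,j\le n$ and $(i,s)\ne (j,t)$ at the common level. The inductive step leans on two facts: since $X$ has no isolated points, every nonempty open set contains two disjoint nonempty open subsets; since $R_{n+1}$ is open and dense in $X\times X$, any nonempty open box contains a nonempty open box whose closure lies in $R_{n+1}$. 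Choosing tentative children and then shrinking them one pair at a time, each of the finitely many product-in-$R_{n+1}$ conditions at level $n+1$ is realized in turn; no realized condition is disturbed by later shrinkings, since $\overline{U}\times\overline{V}\subseteq R_{n+1}$ persists under further shrinking of either factor.

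Once the scheme is built, set $K_j=\bigcap_{n}\bigcup_{|s|=n}\overline{U_s^j}$. Each $K_j$ is a nonempty, compact, perfect, totally disconnected subset of $O_j$, hence homeomorphic to the Cantor ternary set. For $(x,y)\in K_i\times K_j$ with $(i,x)\ne (j,y)$, the diameter control forces the unique addresses $s_n$ of $x$ and $t_n$ of $y$ in their respective trees to satisfy $(i,s_n)\ne (j,t_n)$ for all large $n$; then $(x,y)\in\overline{U_{s_n}^i}\times\overline{U_{t_n}^j}\subseteq R_n$ for all such $n$, so $(x,y)\in R$.

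For the separable part, I would fix a countable base $(W_m)_m$ of $X$, interleave it with the given sequence $(O_j)_j$, and run the same construction on the merged sequence. The resulting Cantor sets include one inside each $W_m$, so their union meets every nonempty open set and is dense in $X$; passing to the subsequence indexed by the original $(O_j)$ together with the rest yields the claimed $\sigma$-Cantor set. The main obstacle is the bookkeeping at the inductive step: the number of product conditions to realize at level $n$ grows polynomially in $n$ times exponentially in $n$, yet this remains a \emph{finite} list, and each condition is a two-variable open-set shrinking allowed by the openness and density of $R_n$. Finiteness at each stage is exactly what makes the whole construction go through.
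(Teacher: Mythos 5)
The paper does not prove this statement; it is quoted as a known result with a citation to Mycielski's 1964 paper, so there is no in-paper argument to compare against. Your construction is the standard Cantor-scheme proof of (this form of) Mycielski's theorem, and it is essentially correct: the gradual introduction of one binary tree per $O_j$, the diameter control, the disjointness of sibling closures, and the finitely-many-shrinkings argument using openness and density of $R_{n+1}$ all work as you describe, and the identification of each $K_j$ as a Cantor set follows from Brouwer's characterization (nonempty, compact by completeness plus total boundedness, perfect, totally disconnected). The one genuine gap is at the very end of the main argument: you conclude that $(x,y)\in\overline{U_{s_n}^i}\times\overline{U_{t_n}^j}\subseteq R_n$ only for all \emph{large} $n$, which does not yield $(x,y)\in\bigcap_n R_n$ unless the $R_n$ are nested. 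This is repaired by the standard normalization of replacing $R_n$ with $\bigcap_{m\le n}R_m$ (still open and dense) at the outset, but as written the final inference does not follow. A second, cosmetic remark: in the separable case your interleaving with a countable base produces a dense $\sigma$-Cantor set whose members are indexed by the merged sequence rather than by the original $(O_j)_j$ alone; this matches how the paper actually uses the density clause (feeding in a base), so it is the right reading, but you should state explicitly that the dense union is taken over the enlarged family.
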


\subsection{Topological groups}
Let $(G,\cdot)$ be a group and denote by $e$  the identity of the group $G$. We say that $G$ is a \emph{topological group}
if it is a group with a topology such that the group operations are continuous, a \emph{(completely) metrizable group} if it is a topological group with a (completely) metrizable topology, and a \emph{Polish group} if it is a separable completely metrizable group.

By the classical Birkhoff-Kakutani theorem (see e.g.~\cite[Theorem 6.3]{B1974}), every metrizable topological group $G$
admits a left-invariant compatible metric, that is, a metric $d$ inducing the group's topology and such that
\[
    d(zx,zy)=d(x,y)
\]
for all $x,y,z\in G$. 
It is worth mentioning that there exist natural examples of completely metrizable groups such that every left-invariant compatible metric is not complete, see e.g. \cite{M2011}. In this paper, whenever we consider a metrizable topological group $G$,
we fix a left-invariant compatible metric $d$ on $G$.
Denote by $\eend(G)$ the set of all continuous endomorphisms of $G$.

\subsection{Topological vector spaces}
Let $X$ be a topological vector space over a field $\bbk$ (standing for $\bbr$ or $\bbc$). Denote by $\vecz$ the zero vector in $X$.
We say that $X$ is an \emph{F-space} if it is a  completely metrizable topological vector space, and a \emph{Fr\'echet space} if  it is a locally convex F-space.

 In this paper, given a Fréchet space $X$ we will always consider a separating increasing sequence $(p_j)_j$ of seminorms endowing the topology of X. In this case we have that $X$ is complete in the Fr\'echet-space metric given by
\[
    d(x,y):=\sum_{j=1}^\infty \frac{1}{2^j} \min(1,p_j(x-y)),\quad x,y\in X.
\]
As $d$ is a bounded metric on $X$, the boundness with respect to $d$ makes no sense. We need the following boundness with respect to the linear structure of $X$.
We say that a subset $Y$ of $X$ is \emph{bounded}
if every neighborhood $V$ of $\vecz$ in $X$ corresponds a number $s>0$ such that  $Y\subset tV$ for every $t>s$.
It is easy to see that $Y$ is bounded if and only if  for any $j\geq 1$, $\sup_{x\in Y}p_j(x)<\infty$,
and $Y$ is unbounded if and only if there exists $m\in\bbn$
such that $\sup_{x\in Y} p_m(x)=\infty$.
Denote by $L(X)$ the set of all continuous linear operators on $X$.

Let $T\in L(X)$. It is clear that, with the additive operation of the vector space, $X$ is an abelian completely metrizable group, the metric $d$ is translation-invariant and $T$ is a continuous endomorphism of $X$.
So we can apply the results of continuous endomorphisms of completely metrizable groups to continuous linear operators on Fr\'echet spaces.

\subsection{Dynamical systems}

Recall that a \emph{dynamical system} is a pair $(X, T)$, where $(X,d)$ is a metric space and $T\colon X\to X$ is a continuous map. For a point $x\in X$, the \emph{orbit} of $x$ is the set $\{T^nx\colon n\geq 0\}$.
A subset $Y$ of $X$ is called \emph{$T$-invariant} if $TY\subset Y$.
Clearly, the orbit of $x$ is $T$-invariant, and the closure of a $T$-invariant set is also $T$-invariant.

When $X$ is a compact metric space, we say that $(X, T)$ is a \emph{compact dynamical system}. Usually, the study of chaos theory in topological dynamics deals with compact dynamical systems, see the survey~\cite{LY2016} and references therein.
On the other hand, in many situations, the results and techniques also hold for dynamical systems on completely metrizable spaces.
In Subsections \ref{subsec:Eq-sen-LYC},~\ref{subsec:Meq-Msen-MLYC} and~\ref{subsec:MLS-DC}, we study various versions of equicontinuity, sensitivity and chaos for dynamical systems on completely metrizable spaces.
Even though those results must be known for specialists in topological dynamics, we will provide some proofs for the sake of completeness.

\section{Li-Yorke chaos}\label{sec:LYC}

\subsection{Equicontinuity, sensitivity and Li-Yorke chaos in topological dynamics}\label{subsec:Eq-sen-LYC}
Let $(X,T)$ be a dynamical system with a metric $d$ on $X$.
A pair $(x,y)\in X\times X$ is called \emph{asymptotic} if
\[
    \lim_{n\to \infty}d(T^nx,T^ny)=0,
\]
and \emph{proximal} if
\[
    \liminf_{n\to \infty}d(T^nx,T^ny)=0.
\]
The \emph{asymptotic relation} and the \emph{proximal relation} of $(X,T)$, denoted by $\asym(T)$ and $\prox(T)$, are the set of all asymptotic pairs and proximal pairs respectively.
For any $x\in X$, the \emph{asymptotic cell} and the  \emph{proximal cell} of $x$ are defined by
\[
    \asym(T,x):=\{y\in X\colon (x,y)\in \asym(T)\}
\]
and
\[
    \prox(T,x):=\{y\in X\colon (x,y)\in \prox(T)\},
\]
respectively. 

To apply Theorem~\ref{Mycielski}, 
the $G_\delta$ property of the desired set will be critical. 
The following three lemmas are well-known, see e.g.~\cite{HY2002,LY2016}.
Here we provide the proofs for the sake of completeness.
 
\begin{lem}\label{lem:dyn-prox-delta}
    Let $(X,T)$ be a dynamical system and $\delta>0$.
    Then
    \begin{enumerate}
        \item for every $x\in X$, the proximal cell of $x$ is a $G_\delta$ subset of $X$;
        \item the proximal relation of $(X,T)$ is a $G_\delta$ subset of $X\times X$;
        \item for every $x\in X$, the set
              \[
                  \Bigl\{y\in X\colon \limsup_{n\to\infty}d(T^nx,T^ny)\geq \delta\Bigr\}
              \]  is a $G_\delta$ subset of $ X$;
        \item the set
              \[
                  \Bigl\{(x,y)\in X\times X\colon \limsup_{n\to\infty}d(T^nx,T^ny)\geq \delta\Bigr\}
              \]
              is a  $G_\delta$ subset of $X\times X$.
    \end{enumerate}
\end{lem}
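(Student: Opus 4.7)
The plan is to express each of the four sets as a countable intersection of countable unions of manifestly open sets, exploiting two facts: first, $T^n\colon X\to X$ is continuous for every $n\in\bbn$; second, the metric $d$ is continuous on $X\times X$. Together these imply that for any real $c>0$, the sets $\{y\in X\colon d(T^nx,T^ny)<c\}$ and $\{(x,y)\in X\times X\colon d(T^nx,T^ny)<c\}$, as well as their strict-greater counterparts, are open, since they are preimages of open intervals under continuous real-valued functions.

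For (1), the key elementary fact is that for a sequence $(a_n)$ of nonnegative reals, $\liminf_{n\to\infty}a_n=0$ iff for every $k\in\bbn$ and every $N\in\bbn$ there exists $n\geq N$ with $a_n<1/k$. Applying this with $a_n=d(T^nx,T^ny)$ rewrites
\[
    \prox(T,x)=\bigcap_{k=1}^\infty\bigcap_{N=1}^\infty\bigcup_{n\geq N}\bigl\{y\in X\colon d(T^nx,T^ny)<1/k\bigr\},
\]
a countable intersection of open sets, hence a $G_\delta$ subset of $X$. Part (2) is the same computation one level up: the function $(x,y)\mapsto d(T^nx,T^ny)$ is continuous on $X\times X$, so the analogous formula expresses $\prox(T)$ as a $G_\delta$ subset of $X\times X$.

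For (3) and (4), the analogous elementary rewriting is that $\limsup_{n\to\infty}a_n\geq\delta$ iff for every $k\in\bbn$ and every $N\in\bbn$ there exists $n\geq N$ with $a_n>\delta-1/k$. This leads to
\[
    \bigl\{y\in X\colon \limsup_{n\to\infty}d(T^nx,T^ny)\geq\delta\bigr\}=\bigcap_{k=1}^\infty\bigcap_{N=1}^\infty\bigcup_{n\geq N}\bigl\{y\in X\colon d(T^nx,T^ny)>\delta-1/k\bigr\},
\]
again a $G_\delta$ set since each inner set is open; the version on $X\times X$ for (4) is identical with $(x,y)$ ranging over $X\times X$.

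There is essentially no obstacle: the whole lemma is a routine $G_\delta$ bookkeeping exercise. The only step requiring any care is the quantifier rewriting of the $\liminf=0$ and $\limsup\geq\delta$ conditions into the form ``$\forall k\,\forall N\,\exists n\geq N$'', so that the outer intersections are countable; after that the continuity of $T^n$ and $d$ does the rest.
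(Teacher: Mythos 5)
Your proposal is correct and follows essentially the same route as the paper: both express each set as a countable intersection of open sets obtained from the continuity of $T^n$ and of $d$, after rewriting the $\liminf=0$ and $\limsup\geq\delta$ conditions with quantifiers. The only cosmetic difference is that the paper diagonalizes the two indices into a single intersection $\bigcap_{n}\{y\colon\exists k>n \text{ s.t. } d(T^kx,T^ky)<\tfrac{1}{n}\}$, whereas you keep the double intersection over $k$ and $N$; these are equivalent.
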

\begin{proof}
    (1). Since
    \[
        \prox(T,x)=\bigcap_{n=1}^{\infty}\Bigl\{y\in X \colon \exists k>n \text{ s.t. } d(T^kx,T^ky)<\tfrac{1}{n}\Bigr\},
    \]
    by the continuity of $T$, the proximal cell $\prox(T,x)$ is a $G_{\delta}$ subset of $X$.

    (3). Note that
    \begin{align*}
        \Bigl\{y\in X\colon \limsup_{n\to\infty}d(T^nx,T^ny)\geq \delta\Bigr\}
        = \bigcap_{n=1}^{\infty}\Bigl\{y\in X\colon\exists k>n \text{ s.t. } d(T^k x,T^k y)>\delta-\tfrac{1}{n}\Bigr\}.
    \end{align*}
    By the continuity of $T$, the set is a $G_\delta$ subset of $X$.

    The proofs of (2) and (4) are similar to those of (1) and (3).
\end{proof}

We say that a dynamical system $(X,T)$ is \emph{equicontinuous} if for any $\eps>0$, there exists some $\delta>0$ such that $d(T^nx,T^ny)<\eps$ for all $x,y\in X$ with $d(x,y)<\delta$ and $n\in\bbn$.

\begin{lem}\label{lem:eq-prox-asym}
    Let $(X, T)$ be a dynamical system.
    If $(X, T)$ is equicontinuous, then every proximal pair is asymptotic.
\end{lem}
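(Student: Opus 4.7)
The plan is to unwind both definitions and use the equicontinuity at an opportune moment along the orbit. Fix a proximal pair $(x,y)$, so $\liminf_{n\to\infty} d(T^n x, T^n y) = 0$, and let an arbitrary $\eps > 0$ be given; the goal is to show $d(T^n x, T^n y) < \eps$ for all sufficiently large $n$.

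First I would invoke the equicontinuity of $(X,T)$ to produce $\delta > 0$ such that whenever $d(a,b) < \delta$, we have $d(T^m a, T^m b) < \eps$ for every $m \in \bbn$. Next, using the proximality hypothesis, I would pick some $k \in \bbn$ with $d(T^k x, T^k y) < \delta$, which exists because the liminf is $0$.

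Then I would apply the equicontinuity estimate to the pair $(a,b) := (T^k x, T^k y)$: for every $m \geq 0$,
\[
    d(T^{m+k} x, T^{m+k} y) = d(T^m(T^k x), T^m(T^k y)) < \eps.
\]
Hence for all $n \geq k$ one has $d(T^n x, T^n y) < \eps$, which gives $\limsup_{n\to\infty} d(T^n x, T^n y) \leq \eps$. Since $\eps > 0$ was arbitrary, $\lim_{n\to\infty} d(T^n x, T^n y) = 0$, i.e., $(x,y) \in \asym(T)$.

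There is no real obstacle here: the argument is essentially a direct chaining of the two definitions, exploiting that equicontinuity is a uniform statement over all iterates so it can be "restarted" from any point along the orbit once the pair has been brought within distance $\delta$. The only mild subtlety is remembering that equicontinuity as stated gives a single $\delta$ working for all $n$, which is precisely what lets a single close encounter at time $k$ propagate forever.
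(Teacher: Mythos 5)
Your proof is correct and follows exactly the same route as the paper's: use equicontinuity to get a uniform $\delta$ for the given $\eps$, use proximality to find a time $k$ at which the orbits are $\delta$-close, and then propagate the $\eps$-closeness to all later times. No issues.
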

\begin{proof}
    Let $(x_0,y_0)\in X\times X$ be a proximal pair.
    Since $(X,T)$ is equicontinuous, for any $\eps>0$ there exists
    $\delta>0$ such that for any $x,y\in X$ with $d(x,y)<\delta$,
    $d(T^nx,T^nx)<\eps$ for all $n\in\bbn$.
    Since $(x_0,y_0)$ is proximal, there exists $k\in \bbn$ such that
    $d(T^kx_0,T^ky_0)<\delta$. Then $d(T^{k+n}x_0,T^{k+n}y_0)<\eps$ for all $n\in\bbn$ and
    thus $\limsup_{n\to\infty}d(T^n x_0,T^ny_0)\leq \eps$.
    By the arbitrariness of $\eps>0$, $(x_0,y_0)$ is asymptotic.
\end{proof}

We say that a dynamical system $(X,T)$ is \emph{sensitive dependent on initial conditions} (\emph{sensitive} for short) if there exists $\delta>0$ such that for every $x\in X$ and $\eps>0$, there exists $y\in X$ with $d(x,y)<\eps$  and some $n\in\bbn$ such that $d(T^nx, T^ny)>\delta$. The number $\delta$ is called a sensitivity constant for $(X,T)$.

\begin{rem}\label{rem:sensitve-uncountable}
    Note that if $(X,T)$ is sensitive, then $X$ has no isolated points.
    In addition, if $X$ is completely metrizable, then $X$ is uncountable.
\end{rem}

We have the following equivalent conditions of sensitivity.
\begin{lem}\label{lem:sen-equivalent}
    Let $(X, T)$ be a dynamical system with $X$ being completely metrizable.
    Then the following assertions are equivalent:
    \begin{enumerate}
        \item $(X,T)$ is sensitive;
        \item there exists $\delta>0$ such that for every $x\in X$,
              the set
              \[
                  \Bigl\{y\in X\colon \limsup_{n\to\infty}d(T^nx,T^ny)\geq \delta\Bigr\}
              \]
              is a dense $G_\delta$ subset of $X$;
        \item there exists $\delta>0$ such that the set
              \[
                  \Bigl\{(x,y)\in X\times X\colon  \limsup_{n\to\infty}d(T^nx,T^ny)\geq \delta\Bigr\}
              \]
              is a dense $G_\delta$ subset of $X\times X$.
    \end{enumerate}
\end{lem}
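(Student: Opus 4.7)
The plan is to close the cyclic chain of implications (1) $\Rightarrow$ (2) $\Rightarrow$ (3) $\Rightarrow$ (1); the $G_\delta$ assertions needed for (2) and (3) are already supplied by parts (3) and (4) of Lemma~\ref{lem:dyn-prox-delta}, so in every implication only a density statement (or a direct verification of sensitivity) remains.

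For (2) $\Rightarrow$ (3) I would use Lemma~\ref{lem:dyn-prox-delta}(4) to get that
$S_\delta := \{(x,y)\in X\times X : \limsup_n d(T^nx,T^ny)\geq \delta\}$
is $G_\delta$ in $X\times X$. Density follows at once from the slice hypothesis: a basic open rectangle $U\times V$ is met by $S_\delta$ because, fixing any $x\in U$, the slice $S_\delta\cap(\{x\}\times X)$ is already dense in $X$ and so meets $V$. For (3) $\Rightarrow$ (1), given $x_0\in X$ and $\eps>0$, density of $S_\delta$ in $X\times X$ produces $(x,y)$ with $d(x_0,x)<\eps$, $d(x_0,y)<\eps$ and some $n$ satisfying $d(T^nx,T^ny)>\delta/2$; the triangle inequality at $T^nx_0$ then forces one of $d(T^nx_0,T^nx)$, $d(T^nx_0,T^ny)$ to exceed $\delta/4$, delivering sensitivity with constant $\delta/4$.

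The substantive direction is (1) $\Rightarrow$ (2). Let $\delta$ be a sensitivity constant and fix $x\in X$. I plan to write the set in (2), with $\delta$ replaced by $\delta/2$, as the countable intersection
\[
    \bigl\{y\in X : \limsup_n d(T^nx,T^ny)\geq \tfrac{\delta}{2}\bigr\} = \bigcap_{k=1}^\infty U_k^x, \qquad U_k^x := \bigcup_{n\geq k}\{y\in X : d(T^nx,T^ny)>\tfrac{\delta}{2}\},
\]
each $U_k^x$ being open by continuity of the iterates. By the Baire theorem it suffices to prove every $U_k^x$ is dense. Given a nonempty open $V\subset X$ and $k\in\bbn$, pick $y_0\in V$. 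If $\limsup_n d(T^nx,T^ny_0)\geq \delta/2$ then $y_0\in U_k^x$ trivially. Otherwise there is $N\in\bbn$ with $d(T^nx,T^ny_0)<\delta/2$ for all $n\geq N$; using continuity of $T,T^2,\ldots,T^{\max(N,k)}$ at $y_0$ I shrink to a neighborhood $W\subset V$ of $y_0$ on which $d(T^iz,T^iy_0)<\delta$ for every $z\in W$ and every $0\leq i\leq \max(N,k)$. Sensitivity applied at $y_0$ inside $W$ then yields $y\in W$ and $m$ with $d(T^my_0,T^my)>\delta$, and the choice of $W$ forces $m>\max(N,k)$. The triangle inequality
\[
    d(T^mx,T^my) \geq d(T^my_0,T^my) - d(T^mx,T^my_0) > \delta - \tfrac{\delta}{2} = \tfrac{\delta}{2}
\]
then places $y$ in $U_k^x\cap V$, establishing density.

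The main obstacle is to force the separation time $m$ coming from sensitivity to be as large as we wish. Raw sensitivity merely promises a separation at some $m$, which could be arbitrarily small, whereas $U_k^x$ requires $m\geq k$ and the triangle trick above requires $m\geq N$. The remedy is the shrinking-neighborhood device: by continuity of each of the finitely many iterates $T,T^2,\ldots,T^{\max(N,k)}$ at $y_0$, points in a small enough neighborhood stay within $\delta$ of $y_0$ under all these iterates, so any $\delta$-separation supplied by sensitivity can only occur past time $\max(N,k)$. Once this is in place, the rest is routine bookkeeping combining the eventual closeness of the orbits of $x$ and $y_0$ with the triangle inequality.
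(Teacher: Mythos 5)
Your proof of the substantive direction (1)$\Rightarrow$(2) is essentially the paper's argument --- Baire category applied to a countable family of open sets of late-separating points, with the shrinking-neighborhood device forcing the separation time supplied by sensitivity past $\max(N,k)$ and the triangle inequality transferring the separation from the perturbed pair to $x$ --- and your explicit (2)$\Rightarrow$(3)$\Rightarrow$(1) merely fills in what the paper declares obvious. One small imprecision: $\bigl\{y\colon \limsup_n d(T^nx,T^ny)\geq \tfrac{\delta}{2}\bigr\}$ is only a \emph{superset} of $\bigcap_k U_k^x$ (and $\limsup\geq\tfrac{\delta}{2}$ does not place $y_0$ in $U_k^x$ when the value $\tfrac{\delta}{2}$ is approached or attained non-strictly), but this is harmless: density of the $G_\delta$ subset $\bigcap_k U_k^x$ still yields density of the target set (whose $G_\delta$ property you correctly take from Lemma~\ref{lem:dyn-prox-delta}), and in the boundary case your perturbation argument runs verbatim with ``$\leq\tfrac{\delta}{2}$ for all $n\geq k$'' in place of ``$<\tfrac{\delta}{2}$ eventually''; the paper sidesteps the issue by defining its open sets with the relaxed threshold $\delta-\tfrac{1}{n}$.
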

\begin{proof}
    (1)$\Rightarrow$(2).
    Let $\delta_0$ be a sensitivity constant for $(X,T)$ and
    $ \delta=\frac{\delta_0}{2}$.
    For any $n\in\bbn$, let
    \[
        X_n=\{y\in X\colon\exists k>n \text{ s.t. } d(T^kx,T^ky)>\delta-\tfrac{1}{n}\}.
    \]
    It is clear that $X_n$ is open.
    Let us show that $X_n$ is dense in $X$.
    Let $U$ be a nonempty open subset of $X$.
    By the continuity of $T$, we can shrink $U$ to a nonempty open subset $V$ such that $\diam(T^i V)<\delta $ for $i=0,1,\dotsc,n$.
    Since $(X,T)$ is sensitive, there exist  $y_1,y_2\in V$ and $k\in \bbn$
    such that $d(T^ky_1,T^ky_2)>\delta_0=2\delta$.
    By the choice of $V$, we have $k>n$.
    By the triangle inequality, one has either $d(T^kx,T^k y_1)>\delta$ or $d(T^k x,T^ky_2)>\delta $. Then either $y_1\in X_n$ or $y_2\in X_n$.
    This shows that $X_n\cap U \neq \emptyset$. Then $X_n$ is dense in $X$, as desired.
    It is easy to check that
    \[
        \Bigl\{y\in X\colon \limsup_{n\to\infty}d(T^nx,T^ny)\geq \delta\Bigr\} = \bigcap_{n=1}^\infty X_n,
    \]
    which is dense in $X$ by the Baire category theorem.

    The implications (2)$\Rightarrow$(3)$\Rightarrow$(1) are obvious.
\end{proof}

Let $(X, T)$ be a dynamical system. 
Following the literature, we say that
a pair $(x,y)\in X\times X$ is \emph{Li-Yorke chaotic}
if
\[
    \liminf_{n\to\infty} d(T^nx,T^ny)=0\qquad \text{and}\qquad \limsup_{n\to\infty} d(T^nx,T^ny)>0,
\]
that is, $(x,y)$ is proximal but not asymptotic.
A subset $K$ of $X$ is called \emph{Li-Yorke scrambled} if any two distinct points $x,y\in K$ form a Li-Yorke chaotic pair.
We say that a dynamical system $(X,T)$ is \emph{Li-Yorke chaotic}
if there exists an uncountable Li-Yorke scrambled subset of $X$.

Usually, the set of all Li-Yorke chaotic pairs is not a $G_\delta$ subset of $X\times X$.
Following Lemma~\ref{lem:dyn-prox-delta}, it is meaningful to consider the set of all Li-Yorke chaotic pairs that are separated by a fixed constant.

For a given positive number $\delta$, a pair $(x,y)\in X\times X$ is called \emph{Li-Yorke $\delta$-chaotic}
if
\[
    \liminf_{n\to\infty} d(T^nx,T^ny)=0\qquad \text{and}\qquad \limsup_{n\to\infty} d(T^nx,T^ny)\geq \delta.
\]
Similarly, we also can define \emph{Li-Yorke $\delta$-scrambled set} and \emph{Li-Yorke $\delta$-chaos}.

Following \cite{AK2003}, we say that a dynamical system $(X,T)$ is \emph{Li-Yorke sensitive} if there exists some $\delta>0$ such that
for any $x\in X$ and $\eps>0$ there exists $y\in X$ with $d(x,y)<\eps$ such that $(x,y)$ is Li-Yorke $\delta$-chaotic.
It is clear that every Li-Yorke sensitive system is sensitive.

Applying the Mycielski theorem (Theorem~\ref{Mycielski}), we have the following criterion for the existence of Li-Yorke $\delta$-scrambled sets in the countable union of open subsets. In addition, if $X$ is separable, then the Li-Yorke $\delta$-scrambled set can be chosen to be dense in $X$.

\begin{prop}\label{prop:dense-delta-scrambled}
    Let $(X, T)$ be a dynamical system  with $X$ being completely metrizable.
    Then the following assertions are equivalent:
    \begin{enumerate}
        \item there exists $\delta>0$ such that for every sequence $(O_j)_j$ of nonempty open subsets of $X$, there exists a sequence $(K_j)_j$ of Cantor sets with $K_j\subset O_j$ such that $\bigcup_{j=1}^\infty K_j$ is Li-Yorke $\delta$-scrambled;
        \item the proximal relation of $(X,T)$ is dense in $X\times X$ and $(X,T)$ is sensitive.
    \end{enumerate}
\end{prop}
\begin{proof}
    (1)$\Rightarrow$(2).
    For any two nonempty open subsets $O_1$ and $O_2$ of $X$, there exist two Cantor sets $K_1\subset O_1$ and $K_2\subset O_2$ such that $K_1\cup K_2$ is Li-Yorke $\delta$-scrambled.
    Then $(K_1\cup K_2)\times (K_1\cup K_2)\subset \prox(T)$ and
    \[
        (K_1\cup K_2)\times (K_1\cup K_2)  \subset  \Bigl\{(x,y)\in X\times X\colon  \limsup_{n\to\infty}d(T^nx,T^ny)\geq \delta\Bigr\}\cup\Delta_X.
    \]
    Since $O_1$ and $O_2$ are arbitrary, $\prox(T)$ is dense in $X\times X$, and by Lemma~\ref{lem:sen-equivalent} $(X,T)$ is sensitive.

    (2)$\Rightarrow$(1).
    Since $(X,T)$ is sensitive, by Lemma~\ref{lem:sen-equivalent} there exists some $\delta>0$ such that
    \[
        R_1= \Bigl\{(x,y)\in X\times X\colon  \limsup_{n\to\infty}d(T^nx,T^ny)
        \geq \delta\Bigr\}
        \]
    is a dense $G_\delta$ subset of $X\times X$.
    Then $R_1\cap \prox(T)$ is also a  dense $G_\delta$ subset of $X\times X$.
    By Remark~\ref{rem:sensitve-uncountable}, $X$ has no isolated points.
    By applying the Mycielski Theorem (Theorem~\ref{Mycielski}) to $R_1\cap \prox(T)$, for every sequence $(O_j)_j$ of nonempty open subsets of $X$, there exists a sequence $(K_j)_j$ of Cantor sets with $K_j\subset O_j$ such that
    \[
        \bigcup_{j=1}^\infty  K_j\times \bigcup_{j=1}^\infty K_j \subset (R_1\cap \prox(T))\cup \Delta_X.
    \]
    It is clear that $\bigcup_{j=1}^\infty K_j$ is Li-Yorke $\delta$-scrambled.
\end{proof}

Motivated by the study of Li-Yorke chaos and distributional chaos for linear operators on Fr\'echet spaces in \cite{BBMP2013, BBMP2015}, we introduce the concept of extreme chaos with respect to a continuous pseudometric.
Indeed, recall that every Fr\'echet space has a separating increasing sequence of seminorms, and that each of these seminorms can be seen as a pseudometric (see Subsections~\ref{subsec:LYeC-group} and~\ref{subsec:LYC-Frechet} for further details regarding this comparison).
From now on in this subsection, we fix a continuous pseudometric $\rho$ on $X$. 
We say that a dynamical system $(X,T)$ is \emph{$\rho$-extremely sensitive}  if for every $x\in X$ and $\eps>0$, there exists $y\in X$ with $d(x,y)<\eps$  and some $n\in\bbn$ such that $\rho(T^nx, T^ny)>\frac{1}{\eps}$.

The proof of the following result is similar to that of Lemma~\ref{lem:sen-equivalent} and we leave it to the reader.

\begin{lem}\label{lem:ext-sen-equivalent}
    Let $(X, T)$ be a dynamical system with $X$ being completely metrizable.
    Then the following assertions are equivalent:
    \begin{enumerate}
        \item $(X,T)$ is $\rho$-extremely sensitive;
        \item for every $x\in X$,
              the set
              \[
                  \Bigl\{y\in X\colon \limsup_{n\to\infty}\rho(T^nx,T^ny)=\infty\Bigr\}
              \]
              is a dense $G_\delta$ subset of $X$;
        \item the set
              \[
                  \Bigl\{(x,y)\in X\times X\colon  \limsup_{n\to\infty}\rho(T^nx,T^ny)=\infty\Bigr\}
              \]
              is a dense $G_\delta$ subset of $X\times X$.
    \end{enumerate}
\end{lem}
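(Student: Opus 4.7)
The plan is to mirror the proof of Lemma~\ref{lem:sen-equivalent}, replacing its fixed sensitivity constant $\delta$ by an unbounded family of thresholds $M\to\infty$ and invoking Lemma~\ref{lem:ext-G-delta} in place of Lemma~\ref{lem:dyn-prox-delta}. As in that earlier lemma, the implications (2)$\Rightarrow$(3)$\Rightarrow$(1) are routine adaptations, so the substantive step is (1)$\Rightarrow$(2).

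For (1)$\Rightarrow$(2), fix $x\in X$ and write
\[
\Bigl\{y\in X\colon \limsup_{n\to\infty}\rho(T^nx,T^ny)=\infty\Bigr\}=\bigcap_{M,N\geq 1} X_{M,N},
\]
where $X_{M,N}=\{y\in X\colon \exists\, k>N \text{ s.t. } \rho(T^kx,T^ky)>M\}$. Continuity of $T$ and of $\rho$ makes each $X_{M,N}$ open, so by the Baire category theorem it suffices to show that each $X_{M,N}$ is dense. Given a nonempty open $U\subset X$, I would use continuity of $T^0,\dotsc,T^N$ and of $\rho$ to shrink $U$ to a nonempty open $V$ on which the $\rho$-diameter of $T^iV$ is less than $M$ for $i=0,1,\dotsc,N$. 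Pick $z_0\in V$ and $\eps>0$ small enough that $B(z_0,\eps)\subset V$ and $1/\eps>2M$. Extreme sensitivity applied at $z_0$ produces $y_0\in B(z_0,\eps)$ and $k\in\bbn$ with $\rho(T^kz_0,T^ky_0)>1/\eps>2M$; the diameter bound on $V$ forces $k>N$, and the triangle inequality at $T^kx$ places at least one of $z_0,y_0$ in $X_{M,N}\cap U$.

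For (2)$\Rightarrow$(3), Lemma~\ref{lem:ext-G-delta}(2) supplies the $G_\delta$ property, while density in $X\times X$ follows from a section argument: for any basic open box $U\times V$, pick an arbitrary $x\in U$ and use (2) to find $y\in V$ with $\limsup_n\rho(T^nx,T^ny)=\infty$. For (3)$\Rightarrow$(1), given $x\in X$ and $\eps>0$, intersect the pair set with the nonempty open $B(x,\eps/2)\times B(x,\eps/2)$ to obtain $(x_1,x_2)$ with $\limsup_n\rho(T^nx_1,T^nx_2)=\infty$, pick $n$ with $\rho(T^nx_1,T^nx_2)>4/\eps$, and apply the triangle inequality at $T^nx$ to force $\rho(T^nx,T^nx_i)>2/\eps>1/\eps$ for some $i\in\{1,2\}$; since $d(x,x_i)<\eps/2<\eps$, this $x_i$ witnesses extreme sensitivity at $x$ for the tolerance $\eps$.

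The only delicate point is the balancing in the density argument for (1)$\Rightarrow$(2): the threshold $1/\eps$ in the definition of extreme sensitivity is tied to the size of the perturbation $\eps$, so one has to choose $\eps$ simultaneously small enough to keep $B(z_0,\eps)$ inside $V$ and with reciprocal large enough to exceed the $\rho$-diameter bound $M$ on the first $N$ iterates. Once this interplay between $\eps$, $M$ and $V$ is arranged, the remaining steps are direct translations of the corresponding ordinary-sensitivity argument in Lemma~\ref{lem:sen-equivalent}.
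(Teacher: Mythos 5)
Your proof is correct and is exactly the adaptation the paper intends: the paper omits the proof of this lemma, stating only that the idea is the same as in Lemmas~\ref{lem:dyn-prox-delta} and~\ref{lem:sen-equivalent}, and your argument carries out that adaptation faithfully, including the one genuinely new point (choosing $\eps$ small enough that $B(z_0,\eps)\subset V$ while $1/\eps>2M$ exceeds the $\rho$-diameter bound on the first $N$ iterates).
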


Let $(X,T)$ be a dynamical system. 
A pair $(x,y)\in X\times X$ is called \emph{Li-Yorke $\rho$-extremely chaotic}
if
\[
    \liminf_{n\to\infty} d(T^nx,T^ny)=0\qquad \text{and}\qquad \limsup_{n\to\infty} \rho(T^nx,T^ny)=\infty.
\]

Similarly, we can define \emph{Li-Yorke $\rho$-extremely scrambled set} and \emph{Li-Yorke $\rho$-extreme chaos} and \emph{Li-Yorke $\rho$-extreme sensitivity}.

The proof of the following result is analogous to that of Proposition~\ref{prop:dense-delta-scrambled}.

\begin{prop}\label{prop:dense-extreme-scrambled}
    Let $(X,T)$ be a dynamical system  with $X$ being completely metrizable.
    Then the following assertions are equivalent:
    \begin{enumerate}
        \item for every sequence $(O_j)_j$ of nonempty open subsets of $X$, there exists a sequence $(K_j)_j$ of Cantor sets with $K_j\subset O_j$ such that $\bigcup_{j=1}^\infty K_j$ is Li-Yorke $\rho$-extremely scrambled;
        \item the proximal relation of $(X,T)$ is dense in $X\times X$ and $(X,T)$ is $\rho$-extremely sensitive.
    \end{enumerate}
\end{prop}

\begin{rem}
    Suppose that a pseudometric $\rho$ is uniformly continuous, that is, for every $\eps>0$ there exists $\delta>0$ such that for any $x,y\in X$ with $d(x,y)<\delta$, one has $\rho(x,y)<\eps$. So in this case, extreme sensitivity implies sensitivity, and Li-Yorke extreme chaos implies Li-Yorke $\delta$-chaos for some $\delta>0$.
\end{rem}

\subsection{Li-Yorke chaos for continuous endomorphisms of completely metrizable groups}
\label{subsec:LYC-group}
In \cite{BBMP2011} and \cite{BBMP2015} the authors characterized Li-Yorke chaos for linear operators on Banach and Fr\'echet spaces.
As explained in the introduction, one of the motivations of this paper is using the techniques from topological dynamics to show that many results also hold in the general setting of continuous endomorphisms of completely metrizable groups.

In this subsection, unless otherwise specified, $G$ denotes a completely metrizable group endowed with a left-invariant compatible metric $d$.
Let $T\in\eend(G)$, that is, $T$ is a continuous endomorphism of the completely metrizable group $G$.
Then $(G, T)$ forms a dynamical system and, when stating dynamical properties of $(G, T)$, we will simplify the notation only referring to the endomorphism $T$.

In \cite{B1988}, Beauzamy introduced the concept of irregular vectors for continuous linear operators on Banach spaces.
In \cite{BBMP2011}, Berm\'{u}dez et al.\ characterized Li-Yorke chaos in terms of the existence of irregular vectors for continuous linear operators on Banach spaces. Later in \cite{BBMP2015}, Bernardes et al.\ introduced the concept of semi-irregular vectors for continuous linear operators on Fr\'echet spaces.
In this paper, we introduce semi-irregular points for continuous endomorphisms of groups.
Let $T\in\eend(G)$. 
A point $x\in G$ is called \emph{semi-irregular} for $T$ if
\[
    \liminf_{n\to\infty} d(T^nx,e)=0\qquad \text{and}\qquad \limsup_{n\to\infty} d(T^nx,e)>0,
\]
that is, $(x,e)$ is a Li-Yorke chaotic pair.

Based on the left-invariance of the metric, we have the following auxiliary result.
\begin{lem}\label{lem:g-asym-prox-left-inv}
    Let $T\in\eend(G)$.
    \begin{enumerate}
        \item For every $x\in G$,
              $\asym(T,x)=x\cdot \asym(T,e)$,
              $\prox(T,x)=x\cdot \prox(T,e)$.
        \item $\asym(T,e)$ is dense in $G$ if and only if
              $\asym(T)$ is dense in $G\times G$.
        \item If $\asym(T,e)$ is residual in $G$, then $\asym(T,e)=G$.
        \item $\prox(T,e)$ is dense in $G$ if and only if $\prox(T)$ is dense in $ G\times G$.
        \item For every $x,y\in G$, $(x,y)$ is a Li-Yorke chaotic pair if and only if $x^{-1}y$ is a semi-irregular point.
        \item If $K$ is a  Li-Yorke scrambled subset of $G$,
              then for any $x\in G$, $xK$ is also a Li-Yorke scrambled set. In particular, for any $x\in K$, $x^{-1}K\setminus \{e\}$ consists of semi-irregular points.
        \item For a given $\delta>0$, if $K$ is a Li-Yorke $\delta$-scrambled subset of $G$,
              then for any  $x\in G$, $xK$ is also a Li-Yorke $\delta$-scrambled set.
    \end{enumerate}
\end{lem}
\begin{proof}
    Since $d$ is left-invariant, for every $x,y\in G$ and $n\in\bbn$, one has
    \[
        d(T^nx,T^ny)= d(T^n(x^{-1})T^nx,T^n(x^{-1})T^n y)
        =d(e,T^n(x^{-1}y)).
    \]
    It is easy to see that (1), (5), (6) and (7) follow from the above formula.

    (2). If $\asym(T,e)$ is dense in $G$, then by (1) for every $x\in G$,
    $\asym(T,x)$ is also dense in $G$. As $\asym(T)=\bigcup_{x\in G}\{x\}\times \asym(T,x)$, $\asym(T)$ is dense in $G\times G$.

    Now assume that $\asym(T)$ is dense in $G\times G$. For every nonempty open subset $W$ of $G$, as $G$ is a topological group, there exists an open neighborhood $U$ of $e$ and a nonempty open subset $V$ of $W$ such that $U^{-1}\cdot V\subset W$. Pick a point $(x,y)\in \asym(T)\cap (U\times V)$.
    Then $(e,x^{-1}y)\in \asym(T)$. Therefore, $x^{-1}y\in \asym(T,e)\cap W$, which implies that $\asym(T,e)$ is dense in $G$.

    (3). For every $x\in G$, $\asym(T,e)x^{-1}$ is also residual in $G$.
    Take $y\in \asym(T,e)\cap \asym(T,e)x^{-1}$. Then $(y,e)$ and $(yx,e)$ are asymptotic. Since the asymptotic relation is transitive, we have $(y,yx)\in \asym(T)$ and then $(e,x)\in \asym(T)$, Hence, $x\in \asym(T,e)$.

    (4). The proof is similar to (2).
\end{proof}

\begin{lem}\label{lem:subseq-asym-subgroup}
    Let $T\in\eend(G)$ and
    $(n_k)_k$ be an increasing sequence of positive integers.
    Then
    \[
        G_0=\Bigl\{x\in G\colon \lim_{k\to \infty}T^{n_k}x=e\Bigr\}
    \] is a $T$-invariant subgroup of $G$.
\end{lem}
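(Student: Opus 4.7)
The plan is to verify the four closure conditions directly, exploiting the fact that convergence in the group topology is preserved by the continuous operations of multiplication, inversion, and by the continuous endomorphism $T$.

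First I would note that $e \in G_0$ since $T^{n_k} e = e$ for all $k$ (as $T$ is an endomorphism), so $G_0$ is nonempty. The statement $\lim_{k\to\infty} T^{n_k} x = e$ means $d(T^{n_k} x, e) \to 0$, i.e.\ convergence in the topology of $G$, so I will work directly with topological convergence rather than with the specific left-invariant metric.

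Next I would show closure under the group operation. If $x, y \in G_0$, then $T^{n_k}(xy) = T^{n_k}(x)\cdot T^{n_k}(y)$ because $T$ is a homomorphism, and since the two factors on the right each converge to $e$, the joint continuity of multiplication in the topological group $G$ gives $T^{n_k}(xy) \to e \cdot e = e$, so $xy \in G_0$. For closure under inversion, if $x \in G_0$ then $T^{n_k}(x^{-1}) = (T^{n_k} x)^{-1}$, and continuity of the inversion map in $G$ yields $(T^{n_k} x)^{-1} \to e^{-1} = e$, hence $x^{-1} \in G_0$. For $T$-invariance, if $x \in G_0$ then $T^{n_k}(Tx) = T(T^{n_k} x)$; continuity of $T$ at $e$ (with $T(e) = e$) gives $T(T^{n_k} x) \to T(e) = e$, so $Tx \in G_0$.

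There is no real obstacle here: the entire argument is a routine application of the fact that $G$ is a topological group (so multiplication and inversion are continuous) together with the fact that $T$ is a continuous endomorphism. The only point worth flagging is that left-invariance of $d$ plays no role in this particular lemma; what is used is purely the topology of $G$ and the algebraic identities $T^{n_k}(xy) = T^{n_k}(x) T^{n_k}(y)$, $T^{n_k}(x^{-1}) = (T^{n_k} x)^{-1}$, and $T^{n_k}(Tx) = T(T^{n_k} x)$.
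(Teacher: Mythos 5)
Your proof is correct. It differs mildly from the paper's in mechanism: the paper runs the one-step subgroup test, showing $y^{-1}x\in G_0$ for $x,y\in G_0$ by the identity $d(T^{n_k}(y^{-1}x),e)=d(T^{n_k}x,T^{n_k}y)$ (left-invariance) followed by the triangle inequality $d(T^{n_k}x,T^{n_k}y)\leq d(T^{n_k}x,e)+d(e,T^{n_k}y)$, and then notes $T$-invariance from continuity of $T$ and $Te=e$. You instead check closure under products and inverses separately, using only the joint continuity of multiplication and the continuity of inversion in the topological group $G$ (sequential arguments suffice since $G$ is metrizable). Your observation that left-invariance of $d$ is irrelevant here is accurate: the set $G_0$ is defined by topological convergence, so your argument works for any topological group and any compatible metric, which is a small gain in generality and conceptual cleanliness. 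The paper's version is equally routine and has the minor advantage of staying within its standing convention of estimating everything through the fixed left-invariant metric, a pattern it reuses verbatim in the mean and distributional analogues (Lemma~\ref{lem:subseq-mean-asym-subgroup}), where the triangle-inequality form of the argument is the one that generalizes directly to Ces\`aro averages.
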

\begin{proof}
    Since $T$ is continuous and $Te=e$, one can easily verify that
    $G_0$ is  $T$-invariant.
    For every $x,y\in G_0$, we have

    \begin{equation*}
        \begin{split}
            \limsup_{k\to \infty}d(T^{n_k}(y^{-1}x),e)&=
            \limsup_{k\to \infty}d(T^{n_k}x,T^{n_k}y)\\
            &\leq\lim_{k\to \infty}d(T^{n_k}x,e)+\lim_{k\to\infty} d(e,T^{n_k}y)=0.
        \end{split}
    \end{equation*}
    Then $G_0$ is a subgroup of $G$.
\end{proof}

Auslander and Yorke proved in~\cite{AY1980} that if a compact dynamical system is minimal (that is, it does not contain any proper closed invariant subsets), then it is either equicontinuous or sensitive.
We also have the following dichotomy of equicontinuity and sensitivity for continuous endomorphisms of completely metrizable groups.
\begin{thm}\label{thm:dich-eq-sen}
    Let $T\in\eend(G)$.
    Then $T$ is either equicontinuous or sensitive.
\end{thm}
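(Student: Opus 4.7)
The plan is to exploit the left-invariance of $d$ together with the fact that $T$ is a group homomorphism to reduce the entire statement to a condition at the identity $e$. The key identity, already noted in the paper, is that for all $x,y \in G$ and $n \in \bbn$,
\[
    d(T^n x, T^n y) = d\bigl(T^n(x)^{-1} T^n(x),\, T^n(x)^{-1} T^n(y)\bigr) = d\bigl(e,\, T^n(x^{-1} y)\bigr).
\]
Combined with $d(x,y) = d(e, x^{-1} y)$, this lets me replace any pair of points by the pair $(e, w)$ where $w = x^{-1} y$, so local behavior at $e$ controls behavior at every point.

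Using this, I first reformulate equicontinuity: $T$ is equicontinuous if and only if for every $\eps>0$ there exists $\delta>0$ such that every $w\in G$ with $d(e,w)<\delta$ satisfies $d(e, T^n w)<\eps$ for every $n\in\bbn$. Negating, \emph{$T$ is not equicontinuous} if and only if there exists $\eps_0>0$ such that for every $\delta>0$ there exist $w\in G$ and $n\in\bbn$ with $d(e,w)<\delta$ and $d(e, T^n w)\geq \eps_0$.

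Assuming now $T$ is not equicontinuous, I claim $\delta_0 := \eps_0/2$ is a sensitivity constant. Fix any $x\in G$ and $\eta>0$; by the negated condition applied with $\delta = \eta$, choose $w\in G$ and $n\in\bbn$ with $d(e,w)<\eta$ and $d(e, T^n w)\geq \eps_0$. Setting $y := xw$, left-invariance gives $d(x,y)=d(e,w)<\eta$ while the boxed identity gives $d(T^n x, T^n y) = d(e, T^n w) \geq \eps_0 > \delta_0$. This shows $T$ is sensitive with sensitivity constant $\delta_0$, and the two options are manifestly exclusive when $G$ has more than one point (and trivially compatible otherwise).

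I do not anticipate any serious obstacle: the whole argument is really the observation that, for endomorphisms of a metric group with left-invariant metric, equicontinuity is a condition purely about the behavior of the orbit of a neighborhood of $e$, and its failure immediately translates to the existence of nearby points with diverging orbits at every base point. The mildest care is required only in correctly manipulating the negation of equicontinuity and in verifying that the constants $\eps_0$ and $\delta_0 = \eps_0/2$ give strict inequality as required by the sensitivity definition used in the paper.
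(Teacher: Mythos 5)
Your proof is correct and takes essentially the same route as the paper: both arguments use left-invariance and the homomorphism property to reduce everything to the identity $e$ (via $d(T^nx,T^ny)=d(e,T^n(x^{-1}y))$), and then observe that the failure of equicontinuity at $e$ transports by left translation to every base point, yielding sensitivity. The only cosmetic difference is that you make the negation explicit and halve $\eps_0$ to secure the strict inequality, whereas the paper states the second case directly with a strict bound.
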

\begin{proof}
    Let $e$ be the identity of $G$. We have the following two cases:

    Case 1: for any $\eps>0$, there exists $\delta>0$ such that for every $x\in G$ with $d(x,e)<\delta$,
    we have $d(T^nx,e)<\eps$ for all $n\in\bbn$.
    Let $x_1, x_2\in G$.
    If $d(x_1,x_2)<\delta$, by the left-invariance of $d$ we have $d(x_1,x_2)=d(x_2^{-1}x_1,e)<\delta$. Then for every $n\in\bbn$, we have 
    \begin{align*}
        d(T^nx_1,T^nx_2)=d( T^n (x_2^{-1}) T^n x_1, e)
        = d( T^n (x_2^{-1} x_1), e)<\eps.
    \end{align*}
    This implies that $T$ is equicontinuous.

    Case 2: there exists $\delta>0$ such that for every $\eps>0$ one can find $y_\eps\in G$  with $d(y_\eps,e)<\eps$ and $n\in\bbn$ such that $d(T^ny_\eps,e)>\delta$.
    For every $x\in G$ and $\eps>0$,  we have
    $d(xy_\eps,x)<\eps$ and
    \[
        d(T^n(xy_\eps), T^n x)=d(T^n(x^{-1}) T^n(xy_\eps), e)
        = d(T^n y_\eps,e)>\delta.
    \]
    Thus $T$ is sensitive.
\end{proof}

Combining Lemma~\ref{lem:eq-prox-asym} and Theorem~\ref{thm:dich-eq-sen} we have the following.
\begin{coro}\label{cor:scrambled-to-sensitive}
    Let $T\in\eend(G)$.
    If there exists a Li-Yorke chaotic pair, then $T$ is sensitive.
\end{coro}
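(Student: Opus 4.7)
The plan is to argue by contraposition using the dichotomy in Theorem~\ref{thm:dich-eq-sen} together with Lemma~\ref{lem:eq-prox-asym}. Suppose, for the sake of contradiction, that $T$ is not sensitive. Since $T$ is a continuous endomorphism of the completely metrizable group $G$, Theorem~\ref{thm:dich-eq-sen} forces $T$ to be equicontinuous.

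Now I invoke Lemma~\ref{lem:eq-prox-asym}: for an equicontinuous dynamical system, every proximal pair is asymptotic. By hypothesis there exists a Li-Yorke scrambled pair $(x,y) \in G \times G$, meaning
\[
    \liminf_{n\to\infty} d(T^nx,T^ny)=0 \quad\text{and}\quad \limsup_{n\to\infty} d(T^nx,T^ny)>0.
\]
The first condition says $(x,y)$ is proximal, so by equicontinuity $(x,y)$ would have to be asymptotic, i.e.\ $\lim_{n\to\infty} d(T^nx,T^ny)=0$, directly contradicting the second condition.

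There is essentially no obstacle here: the work has already been done in Theorem~\ref{thm:dich-eq-sen} (which exploits the left-invariance of $d$ to transfer the local equicontinuity condition at $e$ to a global one) and in Lemma~\ref{thm:dich-eq-sen}'s companion Lemma~\ref{lem:eq-prox-asym}. The corollary is just the logical combination: equicontinuity rules out Li-Yorke scrambled pairs, and the dichotomy rules out everything between equicontinuity and sensitivity, so the presence of a single scrambled pair immediately upgrades to sensitivity of the whole system.
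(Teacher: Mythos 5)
Your proof is correct and is exactly the argument the paper intends: the corollary is stated as an immediate combination of Theorem~\ref{thm:dich-eq-sen} (the equicontinuity/sensitivity dichotomy) with Lemma~\ref{lem:eq-prox-asym} (equicontinuity forces proximal pairs to be asymptotic), which is precisely your contrapositive. No further comment is needed.
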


Now we give a characterization of the existence of a  dense set of semi-irregular points for continuous endomorphisms of completely metrizable groups.

\begin{thm}\label{thm:equi-of-dense-LY}
    Let $T\in\eend(G)$.
    Then the following assertions are equivalent:
    \begin{enumerate}
        \item $T$ admits a dense set of semi-irregular points;
        \item $T$ admits a residual set of semi-irregular points;
        \item there exists $\delta>0$ such that for every sequence $(O_j)_j$ of nonempty open subsets of $G$, there exists a sequence $(K_j)_j$ of Cantor sets with $K_j\subset O_j$ such that $\bigcup_{j=1}^\infty K_j$ is Li-Yorke $\delta$-scrambled;
        \item the proximal relation of $T$ is dense in $G\times G$ and $T$ is sensitive;
        \item the proximal cell of $e$ is dense in $G$ and there exists $x\in G$ such that
              \[
                  \limsup_{n\to\infty} d(T^nx,e)>0.
              \]
    \end{enumerate}
\end{thm}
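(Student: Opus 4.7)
The plan is to establish the cycle $(2)\Rightarrow(1)\Rightarrow(5)\Rightarrow(4)\Rightarrow(3)\Rightarrow(2)$. The implications $(2)\Rightarrow(1)$ and $(1)\Rightarrow(5)$ are immediate from the definitions: a residual set in a completely metrizable space is dense, and every semi-irregular point by definition lies in $\prox(T,e)$ and witnesses $\limsup_n d(T^nx,e)>0$. The implication $(4)\Rightarrow(3)$ is exactly Proposition~\ref{prop:dense-delta-scrambled}.

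For $(5)\Rightarrow(4)$, I would first apply Lemma~\ref{lem:g-asym-prox-left-inv}(4) to upgrade density of $\prox(T,e)$ in $G$ to density of $\prox(T)$ in $G\times G$, and then invoke the dichotomy in Theorem~\ref{thm:dich-eq-sen} to deduce sensitivity. The step requiring care is ruling out equicontinuity. If $T$ were equicontinuous, Lemma~\ref{lem:eq-prox-asym} would force $\prox(T,e)\subset\asym(T,e)$, so the asymptotic cell $\asym(T,e)$ would be dense. A short uniform argument (if $y_k\to y$ with $T^ny_k\to e$, the equicontinuity transfers the limit to $y$) shows that $\asym(T,e)$ is also closed. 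The dense closed set $\asym(T,e)$ would then coincide with $G$, contradicting the existence of the point $x$ in (5) with $\limsup_n d(T^nx,e)>0$. Hence $T$ must be sensitive.

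The main work lies in $(3)\Rightarrow(2)$. With $\delta>0$ as in (3), consider
\[
    S_\delta:=\prox(T,e)\cap\Bigl\{z\in G:\limsup_{n\to\infty}d(T^nz,e)\geq\delta\Bigr\}.
\]
Every point of $S_\delta$ is semi-irregular, and $S_\delta$ is $G_\delta$ by Lemma~\ref{lem:dyn-prox-delta}(1),(3), so it will suffice to show $S_\delta$ is dense. From (3), the argument used for $(1)\Rightarrow(2)$ in Proposition~\ref{prop:dense-delta-scrambled} shows that both $\prox(T)$ and $R_\delta:=\{(x,y)\in G\times G:\limsup_n d(T^nx,T^ny)\geq\delta\}$ are dense in $G\times G$. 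To push this density down to the single coordinate $G$, I will exploit the self-homeomorphism $\phi(x,z)=(x,xz)$ of $G\times G$: since $T$ is an endomorphism and $d$ is left-invariant, the computation $d(T^nx,T^n(xz))=d(T^nx\cdot e,T^nx\cdot T^nz)=d(e,T^nz)$ yields $\phi^{-1}(\prox(T))=G\times\prox(T,e)$ and $\phi^{-1}(R_\delta)=G\times\{z:\limsup_n d(T^nz,e)\geq\delta\}$. Density of the preimages in $G\times G$ then forces density of both factor sets in $G$, and an application of the Baire category theorem makes $S_\delta$ a dense $G_\delta$, hence residual. I expect this pull-back step to be the main obstacle, since the product-space results of Section~3.1 do not directly provide coordinate-wise information; the key trick is combining the left-invariance of $d$ with the homeomorphism $\phi$ to translate one into the other.
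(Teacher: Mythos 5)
Your proof is correct, and its overall skeleton (reduce everything to the density of $\prox(T,e)$ and of $\{z:\limsup_n d(T^nz,e)\geq\delta\}$, intersect two dense $G_\delta$ sets, and invoke Proposition~\ref{prop:dense-delta-scrambled} for the Mycielski step) matches the paper's. Two sub-arguments differ genuinely. For $(5)\Rightarrow(4)$, the paper does not argue via closedness: it notes that $\prox(T,e)$ is residual, observes that if $\prox(T,e)=\asym(T,e)$ then Lemma~\ref{lem:g-asym-prox-left-inv}(3) (a translation-plus-transitivity argument, valid with no equicontinuity hypothesis) forces $\asym(T,e)=G$, extracts a semi-irregular point from $\prox(T,e)\setminus\asym(T,e)$, and concludes sensitivity from Corollary~\ref{cor:scrambled-to-sensitive}. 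Your route instead assumes equicontinuity and shows $\asym(T,e)$ is then closed; this is a correct and slightly more hands-on alternative, at the cost of an extra $\eps/2$-argument that the paper's citation of Lemma~\ref{lem:g-asym-prox-left-inv}(3) avoids. For the final step, the paper goes $(3)\Leftrightarrow(4)$ and then $(4)\Rightarrow(2)$ directly, getting density of $\{x:\limsup_n d(T^nx,e)\geq\delta\}$ from Lemma~\ref{lem:sen-equivalent}(2) applied at the point $e$ (using $T^ne=e$), and density of $\prox(T,e)$ from Lemma~\ref{lem:g-asym-prox-left-inv}(4); your pull-back under $\phi(x,z)=(x,xz)$ re-derives exactly the content of those two lemmas in one stroke, which is self-contained but duplicates machinery the paper already has. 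Both versions are sound; yours proves $(3)\Rightarrow(2)$ without passing through $(4)$, the paper's is shorter because it leans on the stated lemmas.
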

\begin{proof}

    The implications (2) $\Rightarrow$(1)$\Rightarrow$(5) are clear.

    (5)$\Rightarrow$(4). By Lemma~\ref{lem:dyn-prox-delta}~(1), $\prox(T,e)$ is residual in $G$.  If $\prox(T,e)=\asym(T,e)$, then by Lemma~\ref{lem:g-asym-prox-left-inv}~(3), one has $\asym(T,e)=G$, which is a contradiction. So $\prox(T,e)\setminus \asym(T,e)\neq\emptyset$, that is, $\prox(T,e)$ contains a semi-irregular point. Now by Corollary~\ref{cor:scrambled-to-sensitive}, $T$ is sensitive.

    (3) $\Leftrightarrow$ (4). It follows from Proposition~\ref{prop:dense-delta-scrambled}.

    (4) $\Rightarrow$ (2).
    As $T$ is sensitive, by Lemma~\ref{lem:sen-equivalent} there exists $\delta>0$ such that
    \[
        \Bigl\{x \in G\colon \limsup_{n\to\infty} d(T^nx,e)\geq \delta\Bigr\}
    \]
    is a dense $G_\delta$ subset of $G$.
    Let
    \[
        G_0=\prox(T,e)\cap \Bigl\{x \in G\colon \limsup_{n\to\infty} d(T^nx,e)\geq \delta\Bigr\}.
    \]
    Then $G_0$ is a dense $G_\delta$ subset of $G$ consisting of semi-irregular points.
\end{proof}

According to Theorem~\ref{thm:equi-of-dense-LY}\,(5), we have the following consequence.
\begin{coro}
    Let $T\in\eend(G)$. If the proximal cell of $e$ is dense in $G$ and $T$ has a non-trivial periodic point, then it has a dense set of semi-irregular points.
\end{coro}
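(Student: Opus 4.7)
The plan is to reduce this to Theorem~\ref{thm:equi-of-dense-LY}, specifically its condition (5): the proximal cell of $e$ is dense in $G$ and there exists some $x\in G$ with $\limsup_{n\to\infty} d(T^n x, e) > 0$. The first clause is given by hypothesis, so everything comes down to producing a single point whose orbit does not converge to $e$.

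For this witness, I would take the non-trivial periodic point itself. Let $x_0\in G$ with $x_0\neq e$ and $T^p x_0 = x_0$ for some $p\in\bbn$. Along the arithmetic subsequence $n=kp$ the orbit is constant: $T^{kp}x_0 = x_0$, so $d(T^{kp}x_0, e) = d(x_0,e)$. Since $d$ is a metric and $x_0\neq e$, this common value is strictly positive, which forces $\limsup_{n\to\infty}d(T^n x_0, e)\geq d(x_0,e)>0$. Hence condition~(5) of Theorem~\ref{thm:equi-of-dense-LY} holds for $T$.

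Invoking the implication (5)$\Rightarrow$(1) (indeed (5)$\Rightarrow$(2)) of that theorem then gives a dense — even residual — set of semi-irregular points, which is exactly the conclusion. There is no real obstacle here: the only content beyond Theorem~\ref{thm:equi-of-dense-LY} is the trivial observation that a periodic orbit passing through a point distinct from the identity stays bounded away from $e$ along the multiples of its period.
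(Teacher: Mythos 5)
Your proof is correct and is exactly the argument the paper intends: the corollary is stated as an immediate consequence of condition (5) of Theorem~\ref{thm:equi-of-dense-LY}, with the non-trivial periodic point $x_0$ serving as the witness since $d(T^{kp}x_0,e)=d(x_0,e)>0$ along multiples of the period. Nothing further is needed.
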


By applying Theorem~\ref{thm:equi-of-dense-LY},
we also can characterize Li-Yorke chaos
for continuous endomorphisms of completely metrizable groups.
\begin{thm}\label{thm:equi-of-LYC}
    Let $T\in\eend(G)$.
    Then the following assertions are equivalent:
    \begin{enumerate}
        \item $T$ admits a semi-irregular point;
        \item $T$ admits a Li-Yorke chaotic pair;
        \item $T$ is Li-Yorke chaotic;
        \item $T$ is Li-Yorke sensitive;
        \item the restriction of $T$ to some invariant closed subgroup $\widetilde {G}$ has a dense set of semi-irregular points.
    \end{enumerate}
\end{thm}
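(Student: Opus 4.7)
The plan is to set up a cycle of implications that reduces most of the statement to earlier results, so that only one substantive new step remains. The equivalence (1) $\Leftrightarrow$ (2) is immediate from Lemma~\ref{lem:g-asym-prox-left-inv}(5), and both (3) $\Rightarrow$ (2) and (4) $\Rightarrow$ (2) follow directly from the definitions of Li-Yorke chaos and Li-Yorke sensitivity. The implications (5) $\Rightarrow$ (3) and (5) $\Rightarrow$ (4) come from applying Theorem~\ref{thm:equi-of-dense-LY} to the restriction $T|_{\widetilde{G}}$. For (5) $\Rightarrow$ (3), condition (3) of Theorem~\ref{thm:equi-of-dense-LY} gives a $\delta > 0$ and a Cantor Li-Yorke $\delta$-scrambled subset of $\widetilde{G}$, which is automatically Li-Yorke scrambled in $G$. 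For (5) $\Rightarrow$ (4), the argument behind (4) $\Rightarrow$ (2) of Theorem~\ref{thm:equi-of-dense-LY} furnishes a $\delta > 0$ and a dense $G_\delta$ set $D \subseteq \widetilde{G}$ of points $y$ satisfying $\liminf d(T^n y, e) = 0$ and $\limsup d(T^n y, e) \geq \delta$; for any $x \in G$ and $\eps > 0$, selecting $y \in D$ with $d(y, e) < \eps$ and invoking the left-invariance of $d$ shows that $(x, xy)$ is Li-Yorke $\delta$-scrambled with $d(xy, x) < \eps$, so $T$ is Li-Yorke sensitive.

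The core step is (1) $\Rightarrow$ (5). Let $x_0$ be a semi-irregular point of $T$. Extract from $\liminf d(T^n x_0, e) = 0$ an increasing sequence $(n_k)_k$ with $T^{n_k} x_0 \to e$, and set
\[
    G_0 = \Bigl\{y \in G \colon \lim_{k \to \infty} T^{n_k} y = e\Bigr\}.
\]
Lemma~\ref{lem:subseq-asym-subgroup} ensures that $G_0$ is a $T$-invariant subgroup of $G$ containing $x_0$. Take $\widetilde{G} = \overline{G_0}$; continuity of the group operations and of $T$ make $\widetilde{G}$ a closed $T$-invariant subgroup of $G$. Since every $y \in G_0$ satisfies $\liminf d(T^n y, e) = 0$, we have $G_0 \subseteq \prox(T|_{\widetilde{G}}, e)$, so the proximal cell of $e$ in $\widetilde{G}$ contains the dense set $G_0$ and is therefore dense in $\widetilde{G}$. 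Moreover $x_0 \in \widetilde{G}$ with $\limsup d(T^n x_0, e) > 0$, so condition (5) of Theorem~\ref{thm:equi-of-dense-LY} is met for $T|_{\widetilde{G}}$, which consequently admits a residual (and in particular dense) set of semi-irregular points.

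The main obstacle I foresee is upgrading one semi-irregular point to a dense supply of them. The decisive device is the return-time set $G_0$, whose closure under the group operation is exactly what Lemma~\ref{lem:subseq-asym-subgroup} provides; this is where left-invariance of $d$ and the endomorphism property of $T$ are both genuinely used. Once this closed invariant subgroup is identified, Theorem~\ref{thm:equi-of-dense-LY} inside $\widetilde{G}$ absorbs everything else with no further work.
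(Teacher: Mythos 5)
Your proposal is correct and follows essentially the same route as the paper: the same reduction of everything to Theorem~\ref{thm:equi-of-dense-LY}, the same construction of $G_0$ via Lemma~\ref{lem:subseq-asym-subgroup} and its closure $\widetilde{G}$ for (1)$\Rightarrow$(5), and the same left-translation argument for (5)$\Rightarrow$(4). The only cosmetic difference is that in (1)$\Rightarrow$(5) you verify condition (5) of Theorem~\ref{thm:equi-of-dense-LY} directly, whereas the paper verifies condition (4) by first invoking Corollary~\ref{cor:scrambled-to-sensitive}; both are equivalent items of the same theorem.
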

\begin{proof}
    The implications (3)$\Rightarrow$(2)$\Rightarrow$(1) and (4)$\Rightarrow$(2) are clear.

    (5)$\Rightarrow$(3). It follows from Theorem~\ref{thm:equi-of-dense-LY}.

    (1)$\Rightarrow$(5). Let $x$ be a semi-irregular point.
    Then there exists an increasing sequence $(n_k)_k$ of positive integers 
    such that $ \lim_{k\to \infty}T^{n_k}x=e$.
    Let
    \[
        G_0=  \Bigl\{y\in G\colon  \lim_{k\to \infty}T^{n_k}y= e\Bigr\}.
    \]
    It is clear that $x\in G_0$.
    By Lemma~\ref{lem:subseq-asym-subgroup}, $G_0$ is a $T$-invariant subgroup of $G$.
    Let $\widetilde{G}$ be the closure of $G_0$.
    Then $\widetilde{G}$ is also a $T$-invariant subgroup of $G$.
    As $(x,e)$ is Li-Yorke chaotic for $T|_{\widetilde{G}}$, by Corollary~\ref{cor:scrambled-to-sensitive} $T|_{\widetilde{G}}$ is sensitive.
    It is clear that $G_0\subset \prox(T|_{\widetilde{G}},e)$.
    So $\prox (T|_{\widetilde{G}})$ is dense in $\widetilde{G}\times \widetilde{G}$. Now (5) follows from Theorem~\ref{thm:equi-of-dense-LY}.

    (5)$\Rightarrow$(4).
    By applying Theorem~\ref{thm:equi-of-dense-LY} to $T|_{\widetilde{G}}$, there exists $\delta>0$ such that \[
        G_0=\prox(T,e)\cap \Bigl\{x \in \widetilde{G}\colon \limsup_{n\to\infty} d(T^nx,e)\geq \delta\Bigr\}
    \]
    is residual in $\widetilde{G}$.
    For every $x\in G$ and $y\in xG_0$, $(x,y)$ is Li-Yorke $\delta$-chaotic. Note that $x$ is an accumulation point of $xG_0$. Then $T$ is Li-Yorke sensitive.
\end{proof}

\begin{rem}\label{rem:separable-G-LY-chaos}
    (a) If $G$ is separable in Theorem~\ref{thm:equi-of-dense-LY}, then the assertion (3) can be replaced by
    the following one:
    \begin{enumerate}
        \item[(3$'$)] there exists a dense $\sigma$-Cantor Li-Yorke $\delta$-scrambled subset of $G$ for some $\delta>0$.
    \end{enumerate}

    (b) In the proof of  (1)$\Rightarrow$(5) in Theorem~\ref{thm:equi-of-LYC}, if we let $G_1$ be the countable group generated by the orbit of $x$, then $G_1$ is a $T$-invariant subgroup of $G_0$.
    Let $\widetilde{G_1}$ be the closure of $G_1$.
    Then $\widetilde{G_1}$ is a $T$-invariant closed separable subgroup of $G$.  So the assertion (5) in Theorem~\ref{thm:equi-of-LYC}  can be replaced by the following one:
    \begin{enumerate}
        \item[(5$'$)] the restriction of $T$ to some invariant closed separable subgroup $\widetilde {G}$ has a dense set of semi-irregular points.
    \end{enumerate}
\end{rem}

Due to the group structure, the following result shows that the existence of  a residual Li-Yorke scrambled set implies that the whole space is a Li-Yorke scrambled set.

\begin{prop}\label{prop:residul-scrambled-set}
    Let $T\in\eend(G)$.
    Then the following assertions are equivalent:
    \begin{enumerate}
        \item $T$ admits a residual Li-Yorke scrambled set;
        \item every point in $G\setminus \{e\}$ is semi-irregular for $T$;
        \item $X$ is a Li-Yorke scrambled set for $T$.
    \end{enumerate}
\end{prop}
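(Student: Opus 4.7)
The plan is to prove the cycle $(3) \Rightarrow (1) \Rightarrow (2) \Rightarrow (3)$. The implication $(3) \Rightarrow (1)$ is immediate since $G$ itself is residual. For $(2) \Rightarrow (3)$, given any two distinct $x,y\in G$, Lemma~\ref{lem:g-asym-prox-left-inv}(5) tells us that $(x,y)$ is Li-Yorke scrambled if and only if $x^{-1}y\in G\setminus\{e\}$ is semi-irregular, which holds by (2); hence every pair of distinct points of $G$ is Li-Yorke scrambled, i.e., $G$ is a Li-Yorke scrambled set.

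The main step is $(1) \Rightarrow (2)$. Fix an arbitrary $z\in G\setminus\{e\}$; by Lemma~\ref{lem:g-asym-prox-left-inv}(5) it suffices to exhibit some $x\in G$ for which $(x,xz)$ is a Li-Yorke scrambled pair, because then $x^{-1}(xz)=z$ is semi-irregular. Let $K$ be a residual Li-Yorke scrambled set provided by (1). The key observation is that the right-translation map $R_{z^{-1}}\colon y\mapsto yz^{-1}$ is a homeomorphism of $G$—its inverse $R_z$ is continuous because the group operation is jointly continuous—so $Kz^{-1}:=R_{z^{-1}}(K)$ is again residual in $G$. Hence $K\cap Kz^{-1}$ is residual, in particular nonempty; any $x$ in this intersection satisfies $x\in K$ and $xz\in K$, with $xz\neq x$ because $z\neq e$. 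Therefore $(x,xz)$ is a Li-Yorke scrambled pair lying in $K$, and the conclusion follows.

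The only potentially subtle point is that the fixed metric $d$ is merely \emph{left}-invariant, so a priori only left translations are isometries. The argument does not need right translations to be isometries, however: it uses only that they are homeomorphisms, which is enough to preserve the dense $G_\delta$ subsets defining residuality. No tools beyond Lemma~\ref{lem:g-asym-prox-left-inv}(5) and this homeomorphism observation are required.
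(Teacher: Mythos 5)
Your proof is correct and follows essentially the same route as the paper: the paper also dispatches $(2)\Rightarrow(3)\Rightarrow(1)$ as immediate and proves $(1)\Rightarrow(2)$ by picking a point of the residual intersection $K\cap Kz^{-1}$ and invoking the left-invariance observation of Lemma~\ref{lem:g-asym-prox-left-inv}. Your added remark that only the homeomorphism property of right translations (not any isometry property) is needed to preserve residuality is a correct and worthwhile clarification of a point the paper leaves implicit.
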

\begin{proof}
    The implications (2) $\Rightarrow$ (3) $\Rightarrow$(1) are clear.

    (1) $\Rightarrow$ (2).
    Let $K$ be a residual Li-Yorke scrambled subset of $G$.
    For every $x\in G\setminus \{e\}$, $K\cap Kx^{-1}$ is also residual.
    Take $y\in K\cap Kx^{-1}$. One has that $(y,yx)$ is Li-Yorke chaotic. Then $(e,x)$ is also Li-Yorke chaotic, which shows that $x$ is a semi-irregular point.
\end{proof}

\begin{rem}
It is worth noticing that a point $x\in G$ is semi-irregular for a continuous endomorphism $T$ if and only if the orbit of $x$ has a subsequence that is convergent to the identity $e$ but the whole orbit is not convergent. 
So, being semi-irregular is a topological property and is not dependent on the compatible metric.
Even though we fix a left-invariant compatible metric $d$ on $G$ at the beginning of this subsection, the Li-Yorke chaos in Theorems~\ref{thm:equi-of-dense-LY} and~\ref{thm:equi-of-LYC} holds true for any left-invariant compatible metric on $G$.
\end{rem}

Now we present two examples of inner automorphisms of  Polish groups which are densely Li-Yorke chaotic.

\begin{exam}\label{exam:SZ-LY-chaos}
Let $\bbs(\bbz)$ be the group of all permutations of the integers $\bbz$. One can see that $\bbs(\bbz)$ is a Polish topological group 
with respect to the topology induced by pointwise convergence. 
Let $\sigma\in \bbs(\bbz)$ be the unit shift permutation, that is, $\sigma(n)=n-1$ for each $n\in\bbz$.
Define $T\colon \bbs(\bbz)\to \bbs(\bbz)$ by $T(f)=\sigma f \sigma^{-1}$.
Then for every $f\in \bbs(\bbz)$, $n,k\in\bbz$, we have $T^n(f)(k)=f(k+n)-n$. 
It is shown in \cite[Example 1]{M2009} that $(\bbs(\bbz),T)$ is mixing.

Let $A$ be the subgroup of $\bbs(\bbz)$ consisting of the permutations fixing all but a finite set in $\bbz$. 
It is easy to see that $A$ is dense in $\bbs(\bbz)$ and for any $f\in A$, $T^n(f)$ converges in $\bbs(\bbz)$ to the identity permutation as $n\to\infty$.
Then $A\times A$ is contained in the asymptotic relation with respect to any
 compatible metric on $\bbs(\bbz)$.
It is clear that $T(\sigma)=\sigma$. 
Then, by Theorem~\ref{thm:equi-of-dense-LY} and Remark \ref{rem:separable-G-LY-chaos}, $(\bbs(\bbz), T)$ is densely Li-Yorke chaotic for any left-invariant compatible metric on $\bbs(\bbz)$.

A precise left-invariant compatible metric $d$ on $\bbs(\bbz)$ is defined by 
$d(f,g)=\frac{1}{2^k}$ where $k=\min \{|n| \colon f(n)\neq g(n), n\in\bbz\}$ for any $f\neq g\in \bbs(\bbz)$.
Define $\rho\colon\bbs(\bbz)\times \bbs(\bbz) \to\bbr$ by  $\rho(f,g)=|f(0)-g(0)|$.
Then, $\rho$ is a continuous pseudometric on $\bbs(\bbz)$.
One can easily see that  $(\bbs(\bbz), T)$ is $\rho$-extremely sensitive.
By Proposition~\ref{prop:dense-extreme-scrambled}, $(\bbs(\bbz), T)$ is densely Li-Yorke $\rho$-extremely chaotic.
\end{exam}

\begin{exam}\label{exam:HR-LY-chaos}
Let $H_+(\bbr)$ be the group of all increasing homeomorphisms of $\bbr$. 
$H_+(\bbr)$ is a Polish topological group with respect to the compact-open topology. 
Let $\sigma\in H_+(\bbr)$ be the unit translation, that is, $\sigma(x)=x-1$ for each $x\in\bbr$.
Define $T\colon H_+(\bbr)\to H_+(\bbr)$ by $T(f)=\sigma f \sigma^{-1}$.
Then, for every $f\in H_+(\bbr)$, $n\in\bbz$ and $x\in\bbr$, we have $T^n(f)(x)=f(x+n)-n$. 
It is shown in \cite[Example 3]{M2009} that $(H_+(\bbr),T)$ is mixing.

Let $A$ be the subgroup of $H_+(\bbr)$  consisting of the increasing homeomorphisms fixing all but a bounded set in $\bbr$. 
It is easy to see that $A$ is dense in $H_+(\bbr)$ and for any $f\in A$, $T^n(f)$ converges in $H_+(\bbr)$ to the identity  as $n\to\infty$.
Then $A\times A$ is contained in the asymptotic relation with respect to any compatible metric on $H_+(\bbr)$.
It is clear that $T(\sigma)=\sigma$. 
Then, by Theorem~\ref{thm:equi-of-dense-LY} and Remark \ref{rem:separable-G-LY-chaos}, $(H_+(\bbr), T)$ is densely Li-Yorke chaotic for any left-invariant compatible metric on $H_+(\bbr)$.

Precisely, a left-invariant compatible metric $d$ on $H_+(\bbr)$ is defined by 
$d(f,g)=2^{-a}$, where $a=\inf \{|x|\colon f(x)\neq g(x), x\in\bbr\}$ for any $f\neq g\in H_+(\bbr)$.
Define $\rho\colon H_+(\bbr)\times H_+(\bbr) \to\bbr$ by  $\rho(f,g)=|f(0)-g(0)|$.
Then $\rho$ is a continuous pseudometric on $H_+(\bbr)$.
It is easy to see that  $(H_+(\bbr), T)$ is $\rho$-extremely sensitive.
By Proposition~\ref{prop:dense-extreme-scrambled}, $(H_+(\bbr), T)$ is densely Li-Yorke $\rho$-extremely chaotic.
\end{exam}

\subsection{Li-Yorke extreme chaos for continuous endomorphisms of completely metrizable groups}
\label{subsec:LYeC-group}
With the same notation as in the previous subsection,
$G$ denotes a completely
metrizable group endowed with a left-invariant compatible metric $d$.
Since every Fr\'echet space has a separating increasing sequence of seminorms, for metrizable groups we consider its counterpart, the pseudometric.
In this subsection we also fix a left-invariant continuous pseudometric $\rho$ on $G$.

Let $T\in\eend(G)$. 
A point $x\in G$ is called \emph{irregular} for $T$ if
\[
    \liminf_{n\to\infty} d(T^nx,e)=0\qquad \text{and}\qquad \limsup_{n\to\infty} \rho(T^nx,e)=\infty,
\]
that is, $(x,e)$ is a Li-Yorke extremely chaotic pair.

Similar to Lemma~\ref{lem:g-asym-prox-left-inv}, we have the following  observation.

\begin{lem} \label{lem:g-ext-left-inv}
    Let $T\in\eend(G)$.
    \begin{enumerate}
        \item For every $x,y\in G$, $(x,y)$ is a Li-Yorke extremely chaotic pair if and only if $x^{-1}y$ is an irregular point.
        \item If $K$ is a Li-Yorke extremely scrambled subset of $G$,
              then for any $x\in G$, $xK$ is also a Li-Yorke extremely scrambled set. In particular, for any $x\in K$, $x^{-1}K\setminus \{e\}$ consists of  irregular points.
    \end{enumerate}
\end{lem}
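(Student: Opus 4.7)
The plan is to mimic the proof of Lemma~\ref{lem:g-asym-prox-left-inv}, exploiting the left-invariance of both $d$ and $\rho$ together with the homomorphism property of $T$. The main computation, which I would state once and then invoke throughout, is that for all $x,y\in G$ and $n\in\bbn$,
\[
    d(T^n x, T^n y) = d\bigl(T^n(x^{-1})T^n x,\, T^n(x^{-1})T^n y\bigr) = d\bigl(e,\, T^n(x^{-1}y)\bigr),
\]
and, since $\rho$ is likewise left-invariant, exactly the same identity holds for $\rho$ in place of $d$. This single observation reduces all two-point statements to one-point statements about the orbit of $x^{-1}y$ under $T$.

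For part~(1), I would apply the identity above to both the $\liminf$ (with $d$) and the $\limsup$ (with $\rho$) appearing in the definition of a Li-Yorke extremely scrambled pair. The left-hand sides become the two quantities defining ``$(x,y)$ is Li-Yorke extremely scrambled'' and the right-hand sides become the two quantities defining ``$x^{-1}y$ is irregular'' (comparing with the definition given just above the lemma). The equivalence is then immediate.

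For part~(2), given distinct $a,b\in xK$, write $a=xk_1$ and $b=xk_2$ with distinct $k_1,k_2\in K$, so that $a^{-1}b=k_1^{-1}k_2$. By part~(1), $(a,b)$ is Li-Yorke extremely scrambled iff $a^{-1}b$ is irregular, and $(k_1,k_2)$ is Li-Yorke extremely scrambled iff $k_1^{-1}k_2$ is irregular; since $a^{-1}b=k_1^{-1}k_2$, the two conditions coincide, and the former holds by hypothesis on $K$. Hence $xK$ is Li-Yorke extremely scrambled. The ``in particular'' statement then follows by taking $x\in K$: for every $y\in K\setminus\{x\}$, the pair $(x,y)$ is Li-Yorke extremely scrambled, so by part~(1) the element $x^{-1}y\in x^{-1}K\setminus\{e\}$ is irregular.

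There is no real obstacle here: the proof is essentially bookkeeping once the left-invariance identity for $\rho$ is written down. The only point worth emphasising explicitly (to justify the use of the identity for $\rho$) is that $\rho$ was declared at the start of the subsection to be a left-invariant continuous pseudometric, so the same manipulation that works for $d$ in Lemma~\ref{lem:g-asym-prox-left-inv} carries over verbatim.
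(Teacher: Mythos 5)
Your proposal is correct and follows exactly the route the paper intends: the lemma is stated as an analogue of Lemma~\ref{lem:g-asym-prox-left-inv}, whose proof rests on the identity $d(T^nx,T^ny)=d(e,T^n(x^{-1}y))$ from left-invariance, and you correctly observe that the same identity holds for the left-invariant pseudometric $\rho$, after which both parts are immediate. Nothing is missing.
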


\begin{prop}\label{prop:g-ext-sen-cond}
    Let $T\in\eend(G)$. Then $T$ is extremely sensitive if and only if there exists a sequence $(y_k)_k$ in $G$ such that $\lim_{k\to\infty}y_k=e$ and $\limsup_{k\to\infty}\rho(T^{k}y_k,e)=\infty$.
\end{prop}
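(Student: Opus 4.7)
My plan is to prove both implications after a preliminary reduction to the identity. Since $d$ is left-invariant on $G$, $\rho$ is a left-invariant continuous pseudometric, and $T$ is a homomorphism (so $T^n(xy) = (T^nx)(T^ny)$), for all $x,y\in G$ and $n\in\bbn$ one has $d(x,xy)=d(e,y)$ and $\rho(T^nx,T^n(xy))=\rho(e,T^ny)$. Consequently, $T$ is extremely sensitive if and only if for every $\eps>0$ there exist $y\in G$ with $d(y,e)<\eps$ and $n\in\bbn$ with $\rho(T^ny,e)>1/\eps$. This is the same translation trick used in the proof of Theorem~\ref{thm:dich-eq-sen}, and it makes both implications tractable.

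For the sufficiency direction, given the sequence $(y_k)_k$ with $y_k\to e$ and $\limsup_k \rho(T^ky_k,e)=\infty$, and given $\eps>0$, I would pick $k_0$ so that $d(y_k,e)<\eps$ for all $k\geq k_0$, then use the limsup condition to select some $k\geq k_0$ with $\rho(T^ky_k,e)>1/\eps$. Taking $y=y_k$ and $n=k$ witnesses the identity-based form of extreme sensitivity established in the reduction.

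For the necessity direction, applying extreme sensitivity with $\eps=1/m$ for each $m\in\bbn$ gives witnesses $u_m\in G$ and $n_m\in\bbn$ with $d(u_m,e)<1/m$ and $\rho(T^{n_m}u_m,e)>m$. The desired sequence can then be assembled by setting $y_{n_m}=u_m$ and $y_k=e$ for all other $k$, provided $n_m\to\infty$, so that $y_k\to e$. Arranging this is the main obstacle, since nothing in the definition of extreme sensitivity a priori prevents the witnessing times $n_m$ from being bounded. I would handle it by a contradiction argument: if some value $n^*$ appeared infinitely often among the $n_m$, then along the corresponding subsequence $u_{m_j}\to e$, continuity of $T^{n^*}$ would give $T^{n^*}u_{m_j}\to e$, and continuity of $\rho$ at $(e,e)$ would force $\rho(T^{n^*}u_{m_j},e)\to 0$, contradicting $\rho(T^{n^*}u_{m_j},e)>m_j$. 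Hence after passing to a subsequence we may assume $n_1<n_2<\cdots$, and the construction above yields a sequence $(y_k)_k$ with $y_k\to e$ and $\rho(T^{n_m}y_{n_m},e)>m$, so $\limsup_k \rho(T^ky_k,e)=\infty$.
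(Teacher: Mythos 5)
Your proof is correct, but it reaches the sufficiency direction by a genuinely different route than the paper. The paper fixes an arbitrary $x\in G$, shows that each open set $G_n=\{y\colon \exists k>n \text{ s.t. } \rho(T^kx,T^ky)>n\}$ is dense by perturbing a point $z\in U$ to $zy_k$ and using the triangle inequality together with left-invariance, concludes via Baire category that $\{y\colon \limsup_n\rho(T^nx,T^ny)=\infty\}$ is residual, and then invokes Lemma~\ref{lem:ext-sen-equivalent}. You instead first reduce extreme sensitivity to its formulation at the identity via the translation trick of Theorem~\ref{thm:dich-eq-sen} (valid here because both $d$ and $\rho$ are left-invariant and $T^n(xy)=T^nx\cdot T^ny$), after which sufficiency is an immediate unwinding of definitions with no appeal to completeness or Baire category; this is more elementary and arguably cleaner, though it yields only the definition of extreme sensitivity rather than directly exhibiting the residual set that the paper's argument produces (that residuality is recovered anyway from Lemma~\ref{lem:ext-sen-equivalent}). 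On the necessity direction you are in fact more careful than the paper, which dismisses it as immediate from the definition: the point you isolate --- that the witnessing times $n_m$ must tend to infinity, proved by the continuity of $T^{n^*}$ and of $\rho$ at $(e,e)$ --- is exactly the detail needed to assemble a single sequence $(y_k)_k$ with $\limsup_k\rho(T^ky_k,e)=\infty$, and your contradiction argument (no value $n^*$ can recur infinitely often, hence $n_m\to\infty$ after passing to a subsequence) closes that gap correctly.
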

\begin{proof}
    The "only if" implication follows immediately from the definition. Now we prove the "if" implication. For any $\eps>0$ and $x\in G$, take $k\in\bbn$ such that $ d(y_k,e)<\eps$
    and $\rho(T^k y_k,e)>\frac{1}{\eps}$.
    By the left-invariance of $d$, we have $d(xy_k,x)=d(y_k,e)<\eps$ and $\rho(T^k(xy_k),T^kx)=\rho(T^k y_k,e)>\frac{1}{\eps}$.
    Then $T$ is extremely sensitive.
\end{proof}

\begin{prop} \label{prop:irregular-ext-sen}
    Let $T\in\eend(G)$.
    If there exists an irregular point, then $T$ is extremely sensitive.
\end{prop}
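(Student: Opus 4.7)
The plan is to verify the defining inequality of extreme sensitivity directly at an arbitrary base point $z \in G$ and arbitrary $\eps > 0$, using the given irregular point $x$ to manufacture a nearby perturbation $y$ of $z$ whose orbit separates from the orbit of $z$ in $\rho$ at a suitable time. The key enablers are the left-invariance of both $d$ and $\rho$ together with the endomorphism property of $T$, which jointly let us transport the two qualitative behaviors of $x$ (proximity to $e$ along one subsequence, unboundedness in $\rho$ along another) into behaviors relative to $z$.

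The concrete steps are as follows. First, using $\liminf_{n\to\infty} d(T^n x, e) = 0$, choose $n_0 \in \bbn$ with $d(T^{n_0} x, e) < \eps$. Second, using $\limsup_{n\to\infty} \rho(T^n x, e) = \infty$, choose $m \in \bbn$ with $m > n_0$ and $\rho(T^m x, e) > 1/\eps$. Third, set $y := z \cdot T^{n_0} x$ and $n := m - n_0 \in \bbn$. Left-invariance of $d$ gives $d(z, y) = d(e, T^{n_0} x) < \eps$. Since $T$ is a group endomorphism, $T^n y = T^n z \cdot T^{n+n_0} x = T^n z \cdot T^m x$, and then left-invariance of $\rho$ yields $\rho(T^n z, T^n y) = \rho(e, T^m x) > 1/\eps$, as required.

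There is no real obstacle in the argument; the only delicate point is the independence of the choice of $m$ from $n_0$, which is automatic since the $\limsup$ condition supplies indices exceeding any prescribed bound. An alternative route more in the spirit of the preceding material would invoke Proposition~\ref{prop:g-ext-sen-cond}: one would exhibit a sequence $(y_k)_k$ with $y_k \to e$ and $\limsup_k \rho(T^k y_k, e) = \infty$, for instance by using the fact that $T^j x$ is itself irregular for every $j \geq 0$ and interleaving suitably chosen iterates of $x$ into a sequence whose separation times coincide with the indices. The direct construction above avoids this bookkeeping and reduces the proposition to a two-line computation.
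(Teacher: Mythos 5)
Your proof is correct, and it is genuinely different from the one in the paper. The paper does exactly what you sketch as the "alternative route": it observes that each $T^{i_k}x$ (for $i_k$ with $T^{i_k}x\to e$) still has $\limsup_n\rho(T^n(T^{i_k}x),e)=\infty$, interleaves these into a sequence $(y_k)_k$ with $y_k\to e$ and $\limsup_k\rho(T^ky_k,e)=\infty$, and then invokes Proposition~\ref{prop:g-ext-sen-cond}, whose sufficiency direction is a Baire-category argument (open dense sets $G_n$, residuality, then Lemma~\ref{lem:ext-sen-equivalent}); in particular that route leans on the complete metrizability of $G$. Your argument instead verifies the definition of extreme sensitivity directly at an arbitrary base point $z$: picking $n_0$ with $d(T^{n_0}x,e)<\eps$ and $m>n_0$ with $\rho(T^mx,e)>1/\eps$, the point $y=z\cdot T^{n_0}x$ satisfies $d(z,y)=d(e,T^{n_0}x)<\eps$ by left-invariance of $d$, and $\rho(T^{m-n_0}z,T^{m-n_0}y)=\rho(e,T^mx)>1/\eps$ by the endomorphism property and left-invariance of $\rho$; since $m>n_0$ the time $m-n_0$ lies in $\bbn$. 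This is shorter, uses no completeness and no Baire category, and so proves the statement for any metrizable group with left-invariant $d$ and $\rho$. What the paper's route buys in exchange is that it passes through the residuality statement of Lemma~\ref{lem:ext-sen-equivalent}, which is the form actually consumed later (e.g.\ in Theorem~\ref{thm:equi-of-dense-irr-points}); with your proof one would simply apply that lemma afterwards, so nothing is lost.
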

\begin{proof}
    Let $y\in G$ be an irregular point. Then there exists a sequence $(i_k)_k$ in $\bbn$ such that $\lim_{k\to\infty}T^{i_k}y=e$ and
    $\limsup_{n\to\infty} \rho(T^ny,e)=\infty$.
    Note that for any $k\in\bbn$,
    $\limsup_{n\to\infty} \rho(T^n(T^{i_k}y),e)=\infty$.
    Then, by Proposition~\ref{prop:g-ext-sen-cond}, $T$ is extremely sensitive.
\end{proof}

Similar to Theorem~\ref{thm:equi-of-dense-LY}, we have the following  characterization for the existence of a dense set of irregular points for continuous endomorphisms of completely metrizable groups.
\begin{thm} \label{thm:equi-of-dense-irr-points}
    Let $T\in\eend(G)$.
    Then the following assertions are equivalent:
    \begin{enumerate}
        \item $T$ admits a dense set of irregular points;
        \item $T$ admits a residual set of irregular points;
        \item for every sequence $(O_j)_j$ of nonempty open subsets of $G$, there exists a sequence $(K_j)_j$ of Cantor sets with $K_j\subset O_j$ such that $\bigcup_{j=1}^\infty K_j$ is Li-Yorke extremely scrambled;
        \item the proximal relation of $T$ is dense in $G\times G$ and $T$ is extremely sensitive.
    \end{enumerate}
\end{thm}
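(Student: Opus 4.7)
The plan is to mirror the proof of Theorem~\ref{thm:equi-of-dense-LY}, replacing each appeal to the sensitivity machinery with its extreme-sensitivity counterpart established earlier in this subsection. The implication (2)$\Rightarrow$(1) is trivial, and (3)$\Leftrightarrow$(4) is already the content of Proposition~\ref{prop:dense-extreme-scrambled}. So it suffices to prove (1)$\Rightarrow$(4) and (4)$\Rightarrow$(2).

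For (1)$\Rightarrow$(4), note first that every irregular point lies in $\prox(T,e)$, so a dense set of irregular points forces $\prox(T,e)$ to be dense in $G$; by Lemma~\ref{lem:g-asym-prox-left-inv}~(4) this is equivalent to $\prox(T)$ being dense in $G\times G$. Since a dense set of irregular points in particular contains at least one irregular point, Proposition~\ref{prop:irregular-ext-sen} yields that $T$ is extremely sensitive.

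For (4)$\Rightarrow$(2), I would intersect two dense $G_\delta$ sets. Using $T^n e = e$, Lemma~\ref{lem:ext-sen-equivalent} applied at the point $e$ shows that extreme sensitivity makes
\[
    \Bigl\{x\in G\colon \limsup_{n\to\infty}\rho(T^nx,e)=\infty\Bigr\}
\]
a dense $G_\delta$ subset of $G$. On the other hand, Lemma~\ref{lem:dyn-prox-delta}~(1) gives that $\prox(T,e)$ is a $G_\delta$ subset of $G$, and by Lemma~\ref{lem:g-asym-prox-left-inv}~(4) the density of $\prox(T)$ in $G\times G$ makes $\prox(T,e)$ dense in $G$. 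The Baire category theorem then makes the intersection of these two sets a dense $G_\delta$, and by definition this intersection consists exactly of irregular points of $T$, proving (2).

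No step presents a real obstacle since the left-invariance of $d$ and $\rho$ together with $Te=e$ allow us to transfer all statements from arbitrary pairs to pairs of the form $(x,e)$; the only point requiring a bit of care is verifying that Lemma~\ref{lem:ext-sen-equivalent} can be invoked at $x=e$ (which is immediate because $T^n e = e$ for every $n$), and that the two dense $G_\delta$ sets used in (4)$\Rightarrow$(2) indeed live in the completely metrizable space $G$ so that Baire's theorem applies.
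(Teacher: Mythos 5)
Your proposal is correct and follows essentially the same route as the paper: (1)$\Rightarrow$(4) via $A\subset\prox(T,e)$ together with Proposition~\ref{prop:irregular-ext-sen}, (3)$\Leftrightarrow$(4) via Proposition~\ref{prop:dense-extreme-scrambled}, and (4)$\Rightarrow$(2) by writing the set of irregular points as the intersection of the dense $G_\delta$ sets $\prox(T,e)$ and $\bigl\{x\colon \limsup_{n\to\infty}\rho(T^nx,e)=\infty\bigr\}$. Your explicit citation of Lemma~\ref{lem:dyn-prox-delta}~(1) for the $G_\delta$ property of $\prox(T,e)$ is a minor clarification of a step the paper attributes wholesale to Lemma~\ref{lem:g-asym-prox-left-inv}, but the argument is the same.
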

\begin{proof}
    The implication (2)$\Rightarrow$(1) is clear.

    (1)$\Rightarrow$(4). Let $A$ be a dense set of irregular points.
    Then $A\subset \prox(T,e)$.
    By Lemma~\ref{lem:g-asym-prox-left-inv}, $\prox(T)$ is dense in  $G\times G$. Lemma~\ref{prop:irregular-ext-sen} ensures that $T$ is extremely sensitive.

    (3)$\Leftrightarrow$(4). It follows from Proposition~\ref{prop:dense-extreme-scrambled}.

    (4)$\Rightarrow$(2). The set of irregular points  equals to the intersection
    \[
        \prox(T,e)\cap  \Bigl\{x\in G\colon \limsup_{n\to\infty}\rho(T^nx,e)=\infty\Bigr\}.
    \]
    By Lemma~\ref{lem:g-asym-prox-left-inv}, $\prox(T,e)$ is a dense $G_\delta$ subset of $G$. Since $T$ is extremely sensitive, by Lemma~\ref{lem:ext-sen-equivalent}, the set
    \[
        \Bigl\{x\in G\colon \limsup_{n\to\infty}\rho(T^nx,e)=\infty\Bigr\}
    \]
    is also a dense $G_\delta$ subset of $G$.
    Then the set of all irregular points is residual.
\end{proof}

Using the same arguments as in the proof of Theorem~\ref{thm:equi-of-LYC}, we can apply Theorem~\ref{thm:equi-of-dense-irr-points} to obtain a characterization of Li-Yorke extreme chaos for continuous endomorphisms of completely metrizable groups.
\begin{thm}\label{thm:equi-of-irr-point}
    Let $T\in\eend(G)$.
    Then the following assertions are equivalent:
    \begin{enumerate}
        \item $T$ admits an irregular point;
        \item $T$ admits a Li-Yorke extremely chaotic pair;
        \item $T$ is Li-Yorke extremely chaotic;
        \item $T$ is Li-Yorke extremely sensitive;
        \item the restriction of $T$ to some invariant closed subgroup $\widetilde {G}$ has a dense set of irregular points.
    \end{enumerate}
\end{thm}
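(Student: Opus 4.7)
The plan is to follow the same blueprint as the proof of Theorem~\ref{thm:equi-of-LYC}, substituting the extreme-chaos analogs at each step: Proposition~\ref{prop:irregular-ext-sen} in place of Corollary~\ref{cor:scrambled-to-sensitive}, Lemma~\ref{lem:g-ext-left-inv} in place of the corresponding parts of Lemma~\ref{lem:g-asym-prox-left-inv}, and Theorem~\ref{thm:equi-of-dense-irr-points} in place of Theorem~\ref{thm:equi-of-dense-LY}. The implications $(3)\Rightarrow(2)\Rightarrow(1)$ and $(4)\Rightarrow(2)$ are immediate from the definitions, so only $(1)\Rightarrow(5)$, $(5)\Rightarrow(3)$, and $(5)\Rightarrow(4)$ require work.

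For the key step $(1)\Rightarrow(5)$, I would start with an irregular point $x\in G$, extract an increasing sequence $(n_k)_k$ of positive integers with $\lim_{k\to\infty}T^{n_k}x=e$, and set
\[
    G_0=\Bigl\{y\in G\colon \lim_{k\to\infty}T^{n_k}y=e\Bigr\}.
\]
Lemma~\ref{lem:subseq-asym-subgroup} shows that $G_0$ is a $T$-invariant subgroup of $G$ containing $x$; its closure $\widetilde G$ is then a closed $T$-invariant subgroup, hence a completely metrizable group under the restricted left-invariant metric $d$ together with the restricted left-invariant continuous pseudometric $\rho$. The pair $(x,e)$ witnesses that $x$ is an irregular point of $T|_{\widetilde G}$, so Proposition~\ref{prop:irregular-ext-sen} applied inside $\widetilde G$ makes $T|_{\widetilde G}$ extremely sensitive. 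Moreover $G_0\subset \prox(T|_{\widetilde G},e)$ and $G_0$ is dense in $\widetilde G$, so by Lemma~\ref{lem:g-asym-prox-left-inv} the proximal relation $\prox(T|_{\widetilde G})$ is dense in $\widetilde G\times \widetilde G$. Theorem~\ref{thm:equi-of-dense-irr-points} now supplies a residual — hence dense — set of irregular points for $T|_{\widetilde G}$.

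For $(5)\Rightarrow(3)$, I would apply the Mycielski-type assertion (3) of Theorem~\ref{thm:equi-of-dense-irr-points} to $T|_{\widetilde G}$ with a single nonempty open subset of $\widetilde G$ to extract a Cantor set $K\subset \widetilde G$ that is Li-Yorke extremely scrambled in $\widetilde G$, which automatically remains so in $G$; this gives an uncountable Li-Yorke extremely scrambled set, so $T$ is Li-Yorke extremely chaotic. For $(5)\Rightarrow(4)$, apply Theorem~\ref{thm:equi-of-dense-irr-points} to get a residual set $G_1\subset \widetilde G$ of irregular points. For any $x\in G$, Lemma~\ref{lem:g-ext-left-inv}(1) ensures every $y\in xG_1$ forms a Li-Yorke extremely scrambled pair with $x$; since $T|_{\widetilde G}$ is extremely sensitive, Remark~\ref{rem:sensitve-uncountable} says $\widetilde G$ has no isolated points, so $G_1$ accumulates at $e$ and $xG_1$ accumulates at $x$, verifying Li-Yorke extreme sensitivity at every $x\in G$.

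The main obstacle is the transfer step inside $(1)\Rightarrow(5)$: one must check that the irregular-point property of $x$ is preserved when passing to the subgroup $\widetilde G$ endowed with the restricted structures, and that the extreme-sensitivity criterion of Proposition~\ref{prop:irregular-ext-sen} (which relies on the left-invariance of $\rho$ and on Lemma~\ref{lem:ext-sen-equivalent}) applies unchanged to $T|_{\widetilde G}$. Both facts are clear but must be noted, since unlike the Li-Yorke case the two witnesses in the definition of an irregular point involve two different metric objects. Once this verification is done, the rest of the argument is a direct transcription of the proof of Theorem~\ref{thm:equi-of-LYC}.
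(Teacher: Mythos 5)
Your proposal is correct and follows exactly the route the paper intends: the paper gives no separate proof of this theorem but states that it follows "using the same arguments as in the proof of Theorem~\ref{thm:equi-of-LYC}, applying Theorem~\ref{thm:equi-of-dense-irr-points}," which is precisely the substitution scheme you carry out (Lemma~\ref{lem:subseq-asym-subgroup} for the subgroup $G_0$, Proposition~\ref{prop:irregular-ext-sen} in place of Corollary~\ref{cor:scrambled-to-sensitive}, and Lemma~\ref{lem:g-ext-left-inv} for the translation argument in $(5)\Rightarrow(4)$). Your extra remark about checking that the restricted metric and pseudometric on $\widetilde G$ remain left-invariant so that the cited results apply is a sound, if routine, addition.
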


\begin{rem}
    (a) If $G$ is separable in Theorem~\ref{thm:equi-of-dense-irr-points}, then the assertion (3) can be replaced by
    the following one:
    \begin{enumerate}
        \item[(3$'$)] there exists a dense $\sigma$-Cantor Li-Yorke  extremely scrambled subset of $G$.
    \end{enumerate}

    (b) In the proof of  (1)$\Rightarrow$(5) in Theorem~\ref{thm:equi-of-irr-point}, if we let $G_1$ be the countable group generated by the orbit of an irregular point $x$, then $G_1$ is a $T$-invariant subgroup of $G_0$.
    Let $\widetilde{G_1}$ be the closure of $G_1$.
    Then $\widetilde{G_1}$ is a $T$-invariant closed separable subgroup of $G$.  So the assertion (5) in  Theorem~\ref{thm:equi-of-irr-point} can be replaced by the following one:
    \begin{enumerate}
        \item[(5$'$)] the restriction of $T$ to some invariant closed separable subgroup $\widetilde {G}$ has a dense set of irregular points.
    \end{enumerate}
\end{rem}

\subsection{Li-Yorke chaos for continuous linear operators on Fr{\'e}chet spaces} \label{subsec:LYC-Frechet}
In this subsection, unless otherwise specified, $X$ denotes an arbitrary infinite-dimensional Fr{\'e}chet space.
Let $(p_j)_j$ be a separating increasing sequence of seminorms endowing the topology of $X$. The Fr{\'e}chet-space metric $d$ on $X$ is given by
\[
    d(x,y):=\sum_{j=1}^\infty \frac{1}{2^j} \min(1,p_j(x-y)),\quad x,y\in X.
\]
Even though Li-Yorke chaos for linear operators on Banach and Fr\'echet spaces has been characterized in \cite{BBMP2011} and \cite{BBMP2015},
here we will use the results from Subsections~\ref{subsec:LYC-group} and~\ref{subsec:LYeC-group} to reobtain some of the results from
\cite{BBMP2011} and \cite{BBMP2015}, and also to slightly generalize some of them. 

First, we have the following auxiliary result.
\begin{lem}\label{lem:subseq-asym-subspace}
    Let $T\in L(X)$ and
    $(n_k)_{k}$ be an increasing sequence of positive integers.
    Then
    \[
        X_0:=\Bigl\{x\in X\colon \lim_{k\to \infty}T^{n_k}x=\vecz\Bigr\}
    \] is a $T$-invariant subspace  of $X$.
\end{lem}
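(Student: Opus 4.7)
The plan is to mimic the proof of Lemma~\ref{lem:subseq-asym-subgroup} but work with the linear structure of $X$ instead of just the group structure, so that we get a subspace rather than merely a subgroup. There are three things to verify: $T$-invariance, closure under addition, and closure under scalar multiplication.

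For $T$-invariance, I would use that $T$ is continuous and linear, so $T\vecz=\vecz$; hence for any $x\in X_0$, $T^{n_k}(Tx)=T(T^{n_k}x)\to T\vecz=\vecz$, giving $Tx\in X_0$. Closure under addition follows from the continuity of the addition map $X\times X\to X$: for $x,y\in X_0$, $T^{n_k}(x+y)=T^{n_k}x+T^{n_k}y\to \vecz+\vecz=\vecz$, so $x+y\in X_0$. Closure under scalar multiplication is analogous, using continuity of scalar multiplication $\bbk\times X\to X$: for $\lambda\in\bbk$ and $x\in X_0$, $T^{n_k}(\lambda x)=\lambda T^{n_k}x\to \lambda\vecz=\vecz$.

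There is no real obstacle here, since everything is forced by the definitions of a topological vector space and of a continuous linear operator. The only thing worth noting, if one wants a seminorm-level argument parallel to the triangle-inequality argument used in Lemma~\ref{lem:subseq-asym-subgroup}, is that convergence to $\vecz$ in the metric $d$ is equivalent to $p_j(T^{n_k}x)\to 0$ for every $j\geq 1$; then for $x,y\in X_0$ one has
\[
    p_j(T^{n_k}(x+y))\leq p_j(T^{n_k}x)+p_j(T^{n_k}y)\to 0,
\]
and similarly $p_j(T^{n_k}(\lambda x))=|\lambda|\,p_j(T^{n_k}x)\to 0$, recovering the two closure properties directly. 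Either route gives the result in a few lines.
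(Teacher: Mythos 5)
Your proof is correct and takes essentially the same route as the paper: $T$-invariance from continuity of $T$ together with $T\vecz=\vecz$, and closure under the linear operations from continuity of addition and scalar multiplication (the paper simply merges the latter two into a single limit of $\alpha x+\beta y$). The seminorm-level variant you mention is a harmless rephrasing of the same argument.
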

\begin{proof}
    Since $T$ is continuous and $T\vecz=\vecz$, it is easy verify that
    $X_0$ is $T$-invariant.
    For every $x,y\in X_0$ and $\alpha,\beta \in \bbk$, we have
    \begin{equation*}
        \begin{split}
            \lim_{k\to \infty}T^{n_k}(\alpha x+\beta y)&=\alpha\lim_{k\to \infty}T^{n_k} x+\beta \lim_{k\to \infty}T^{n_k}y =\vecz.
        \end{split}
    \end{equation*}
    Thus $X_0$ is a subspace of $X$.
\end{proof}

Recall that $T\in L(X)$ is equicontinuous as a dynamical property if and only if the family $\{T^n\colon n\in\bbn\}$ of operators is equicontinuous in the theory of functional analysis.
The next result follows directly from the well-known Banach-Steinhaus Theorem.

\begin{prop}\label{prop:f-space-eq}
    Let $T\in L(X)$. Then $T$ is equicontinuous if and only if for every $x\in X$, the orbit of $x$ is bounded.
\end{prop}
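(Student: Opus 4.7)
The plan is to translate the dynamical notion of equicontinuity of $T$ into equicontinuity of the family $\{T^n : n \in \bbn\}$ of linear operators on $X$, and then invoke the Banach--Steinhaus theorem (uniform boundedness principle) for Fréchet spaces. Since the metric $d$ is translation-invariant and $T$ is linear, one has $d(T^n x, T^n y) = d(T^n(x-y), \vecz)$ for every $n$ and all $x,y\in X$. Therefore dynamical equicontinuity of $T$ is equivalent to: for every $\eps>0$ there exists $\delta>0$ such that $d(T^n z,\vecz)<\eps$ whenever $d(z,\vecz)<\delta$ and $n\in\bbn$, which in turn is exactly topological equicontinuity of the family $\{T^n\}$ at $\vecz$, hence (by linearity) equicontinuity of $\{T^n\}$ as a family of continuous linear maps on $X$.

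For the forward implication, assume $\{T^n\}$ is equicontinuous. Fix $x\in X$ and an index $j\geq 1$; I want to show $\sup_n p_j(T^n x)<\infty$. The neighborhood $V=\{y\in X\colon p_j(y)<1\}$ of $\vecz$ has, by equicontinuity, a neighborhood $U$ of $\vecz$ with $T^n U\subset V$ for every $n\in\bbn$. Since $U$ is absorbing, there is $t>0$ with $x\in tU$, so $T^n x\in tV$ and $p_j(T^n x)<t$ for every $n$. As $j$ was arbitrary, the orbit $\{T^n x\colon n\geq 0\}$ is bounded.

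For the converse, suppose every orbit is bounded. Then the family $\{T^n\}\subset L(X)$ is pointwise bounded in the Fréchet space $X$, and the Banach--Steinhaus theorem for Fréchet spaces asserts that any pointwise bounded family of continuous linear operators on a Fréchet space is equicontinuous. Hence $\{T^n\}$ is equicontinuous, which by the opening observation gives dynamical equicontinuity of $T$.

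There is no real obstacle here: the only substantive ingredient is the Banach--Steinhaus principle, and the work consists in checking that the two flavors of equicontinuity coincide via the translation-invariance of $d$ and in noting that a single point is absorbed by any neighborhood of $\vecz$.
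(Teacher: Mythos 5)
Your proposal is correct and follows essentially the same route as the paper, which likewise identifies dynamical equicontinuity of $T$ with equicontinuity of the family $\{T^n\colon n\in\bbn\}$ (via the translation-invariance of $d$) and then invokes the Banach--Steinhaus theorem; you merely spell out the easy forward direction via absorbing neighborhoods, which the paper leaves implicit.
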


\begin{rem}
    Assume in addition that $X$ is a Banach space.
    Then $T\in L(X)$ is equicontinuous if and only if $T$ is power-bounded, that is, $\sup_n \Vert T^n\Vert<\infty$.
\end{rem}

Each seminorm $p_j$ induces a continuous left-invariant pseudometric $d_j$ defined by $d_j(x,y)=p_j(x-y)$.
Let $T\in L(X)$ and $m\in\bbn$. A vector $x\in X$ is called \emph{$m$-irregular} if it is irregular with respect to the seminorm $p_m$.
A pair $(x,y)\in X\times X$ is called \emph{Li-Yorke $m$-extremely chaotic} if it is Li-Yorke extremely chaotic with respect to the seminorm $p_m$.
Similarly, we can define \emph{$m$-extreme sensitivity}, \emph{Li-Yorke $m$-extremely scrambled set}, \emph{Li-Yorke $m$-extreme chaos} and
\emph{Li-Yorke $m$-extreme sensitivity}.

\begin{rem}
    Let $(X,\lVert\cdot \rVert)$ be a Banach space and $T\in L(X)$.
    In this case, the sequence of seminorms $(p_j)_j$ is just the norm $\lVert\cdot \rVert$ on $X$.  Then
    $x\in X$ is an irregular vector if and only if
    \[
        \liminf_{n\to\infty} \lVert T^nx\rVert =0\qquad \text{and}\qquad \limsup_{n\to\infty} \lVert T^nx\rVert=\infty.
    \]
    Moreover, a pair $(x,y)\in X\times X$ is Li-Yorke extremely chaotic if and only if
    \[
        \liminf_{n\to\infty} \lVert T^nx-T^ny\rVert=0\qquad \text{and}\qquad \limsup_{n\to\infty} \lVert T^nx-T^ny\rVert=\infty.
    \]
\end{rem}

\begin{prop}\label{prop:f-space-sen}
    Let $T\in L(X)$.
    Then the following assertions are equivalent:
    \begin{enumerate}
        \item $T$ is sensitive;
        \item there exists an unbounded orbit;
        \item there exists a bounded sequence $(y_k)_k$ in $X$ such that
              $(T^ky_k)_k$ is unbounded;
        \item $T$ is $m$-extremely sensitive for some $m\in\bbn$.
    \end{enumerate}
\end{prop}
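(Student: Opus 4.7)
The plan is to prove the cycle $(1)\Leftrightarrow(2)\Rightarrow(3)\Rightarrow(4)\Rightarrow(1)$. The equivalence $(1)\Leftrightarrow(2)$ is immediate from the dichotomy in Theorem~\ref{thm:dich-eq-sen} combined with the Banach--Steinhaus characterization in Proposition~\ref{prop:f-space-eq}: $T$ fails to be sensitive iff it is equicontinuous iff every orbit is bounded, which is precisely the negation of $(2)$.

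For $(2)\Rightarrow(3)$, I start from a vector $x$ with unbounded orbit. By the characterization of unbounded subsets recalled in the preliminaries, there exist $m\in\bbn$ and an increasing sequence $(n_k)_k$ of positive integers such that $p_m(T^{n_k}x)\to\infty$. Defining $y_{n_k}=x$ and $y_j=\vecz$ for $j\notin\{n_k\colon k\in\bbn\}$ produces a sequence taking values only in the finite set $\{\vecz, x\}$, hence bounded, whose diagonal trajectory satisfies $p_m(T^{n_k}y_{n_k})=p_m(T^{n_k}x)\to\infty$, so $(T^jy_j)_j$ is unbounded.

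The step $(3)\Rightarrow(4)$ is the place where the linear structure really does the work. Given a bounded sequence $(y_k)_k$ with $(T^ky_k)_k$ unbounded, pick $m$ and a subsequence $(k_j)_j$ with $c_j:=\sqrt{p_m(T^{k_j}y_{k_j})}\to\infty$, then set $z_{k_j}=y_{k_j}/c_j$ and $z_k=\vecz$ otherwise. Since $(y_k)$ is bounded, each seminorm $p_\ell$ is uniformly bounded on the set $\{y_k\}$, so $p_\ell(z_{k_j})\leq p_\ell(y_{k_j})/c_j\to 0$ for every $\ell$; a standard tail-cutting in the defining series of $d$ then yields $z_k\to\vecz$ in the metric $d$. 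Meanwhile $p_m(T^{k_j}z_{k_j})=c_j\to\infty$. Applying Proposition~\ref{prop:g-ext-sen-cond} to the left-invariant continuous pseudometric $\rho(x,y):=p_m(x-y)$ on the additive group $X$ now delivers $m$-extreme sensitivity.

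Finally, $(4)\Rightarrow(1)$ is forced by the weighting in the metric $d$. Given $x\in X$ and $\eps\in(0,1)$, $m$-extreme sensitivity supplies $y\in X$ with $d(x,y)<\eps$ and $n\in\bbn$ with $p_m(T^nx-T^ny)>1/\eps\geq 1$, whence $d(T^nx,T^ny)\geq 2^{-m}\min(1,p_m(T^nx-T^ny))=2^{-m}$, giving sensitivity with constant $2^{-m}$. The only real obstacle lies in $(3)\Rightarrow(4)$, and it is purely one of bookkeeping: the scaling rate $c_j$ must grow fast enough to force $z_{k_j}\to\vecz$ in every seminorm yet slowly enough that $p_m(T^{k_j}z_{k_j})$ still blows up, and the square-root choice is exactly the sweet spot.
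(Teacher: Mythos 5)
Your proposal is correct and follows essentially the same route as the paper: $(1)\Leftrightarrow(2)$ via Theorem~\ref{thm:dich-eq-sen} and Proposition~\ref{prop:f-space-eq}, and $(3)\Rightarrow(4)$ by rescaling the bounded sequence so that it tends to $\vecz$ while $p_m$ of the images still blows up, then invoking Proposition~\ref{prop:g-ext-sen-cond}. The only difference is cosmetic — you normalize by $\sqrt{p_m(T^{k_j}y_{k_j})}$ where the paper uses factors $\tfrac1N$ chosen uniformly over $k$ — and your explicit write-outs of the "clear" implications $(2)\Rightarrow(3)$ and $(4)\Rightarrow(1)$ are accurate.
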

\begin{proof}
    (1)$\Leftrightarrow$(2). It follows immediately from Theorem~\ref{thm:dich-eq-sen} and Proposition~\ref{prop:f-space-eq}.

    The implications (2)$\Rightarrow$(3)  and $(4)\Rightarrow$(1) are clear.

    (3)$\Rightarrow$(4). Since $(T^ky_k)_k$ is unbounded, there exists $m\in\bbn$ such that $\sup_k p_m(T^ky_k)=\infty$.
    For each $n>0$, there exists $N\in\bbn$ such that $d(\frac{1}{N} y_k,\vecz)<\frac{1}{n}$ for all $k\in\bbn$. Take $k\in\bbn$ such that $p_m(T^ky_k)>nN$.
    Then $p_m(T^k(\frac{1}{N}y_k))>n$. By Proposition~\ref{prop:g-ext-sen-cond}, $T$ is $m$-extremely sensitive.
\end{proof}

Combining Proposition~\ref{prop:f-space-sen} and Theorems~\ref{thm:equi-of-dense-LY} and \ref{thm:equi-of-dense-irr-points}, we have the following characterization of the existence of a dense set of semi-irregular vectors for continuous linear operators on Fr\'echet spaces, which generalizes 
\cite[Theorem 10]{BBMP2015} from separable Fr\'echet spaces to general Fr\'echet spaces.

\begin{thm}\label{thm:F-space-dense-LY-chaos}
    Let $T\in L(X)$. Then the following assertions are equivalent:
    \begin{enumerate}
        \item $T$ admits a dense set of semi-irregular vectors;
        \item $T$ admits a residual set of $m$-irregular vectors for some $m\in\bbn$;
        \item there exists $m\in\bbn$ such that for every sequence $(O_j)_j$ of nonempty open subsets of $X$, there exists a sequence $(K_j)_j$ of Cantor sets with $K_j\subset O_j$ such that $\bigcup_{j=1}^\infty K_j$ is Li-Yorke $m$-extremely scrambled;
        \item the proximal relation of $T$ is dense in $X\times X$ and
              $T$ is sensitive;
        \item the proximal cell of $\vecz$ is dense in $X$ and there exists $x\in X$ such that
              \[\limsup_{n\to\infty}d(T^nx,\vecz)>0.\]
    \end{enumerate}
\end{thm}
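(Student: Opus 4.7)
The plan is to observe that the equivalences (1) $\Leftrightarrow$ (4) $\Leftrightarrow$ (5) are essentially a direct transcription of Theorem~\ref{thm:equi-of-dense-LY} into the linear setting, since $X$ endowed with addition is a completely metrizable abelian group with $d$ as a left (actually translation) invariant compatible metric, and a point $x$ is semi-irregular for $T$ in the group sense exactly when it is semi-irregular in the vector-space sense. So these three conditions transfer for free. The real content is to relate them to the ``uniform'' statements (2) and (3), which single out one specific seminorm $p_m$, and this is where Proposition~\ref{prop:f-space-sen} will be the key ingredient.

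For (4) $\Rightarrow$ (2), I would argue as follows. Condition (4) says in particular that $T$ is sensitive, so by Proposition~\ref{prop:f-space-sen} there exists $m \in \bbn$ such that $T$ is $m$-extremely sensitive, that is, $T$ is extremely sensitive with respect to the left-invariant continuous pseudometric $\rho := d_m$ induced by $p_m$. Since (4) also gives that the proximal relation of $T$ is dense in $X \times X$, Theorem~\ref{thm:equi-of-dense-irr-points} applied with this pseudometric yields a residual set of points that are irregular with respect to $p_m$, i.e.\ a residual set of $m$-irregular vectors; this is exactly (2).

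For the two remaining implications I would use Theorem~\ref{thm:equi-of-dense-irr-points} in the other direction. Assuming (2), a residual set of $m$-irregular vectors is in particular a dense set of irregular points with respect to $\rho = d_m$, so Theorem~\ref{thm:equi-of-dense-irr-points} (1)$\Rightarrow$(3) (applied with that pseudometric) yields (3). Assuming (3), the same theorem gives that the proximal relation of $T$ is dense in $X \times X$ and $T$ is $m$-extremely sensitive; since $m$-extreme sensitivity clearly implies sensitivity (take large $n$ in the definition), we obtain (4), closing the circle (2) $\Rightarrow$ (3) $\Rightarrow$ (4) $\Rightarrow$ (2).

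The main obstacle, and the only place where the Fr\'echet-space structure (rather than just the group structure) really enters, is the step that bootstraps ``semi-irregular in the metric $d$'' to ``$m$-irregular for a concrete seminorm $p_m$''. This is exactly the content of Proposition~\ref{prop:f-space-sen}, whose proof in turn rests on the fact that unboundedness of orbits is detected by a single seminorm. Once that translation is available, everything else is bookkeeping between Theorem~\ref{thm:equi-of-dense-LY} (applied with the group structure and the metric $d$) and Theorem~\ref{thm:equi-of-dense-irr-points} (applied with the pseudometric $\rho = d_m$).
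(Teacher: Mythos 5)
Your proposal is correct and follows exactly the route the paper intends: the paper's proof consists of the single sentence ``Combining Proposition~\ref{prop:f-space-sen}, Theorems~\ref{thm:equi-of-dense-LY} and \ref{thm:equi-of-dense-irr-points}, we have the following characterization,'' and your write-up supplies precisely the missing bookkeeping, namely (1)$\Leftrightarrow$(4)$\Leftrightarrow$(5) from Theorem~\ref{thm:equi-of-dense-LY} and the cycle (4)$\Rightarrow$(2)$\Rightarrow$(3)$\Rightarrow$(4) from Proposition~\ref{prop:f-space-sen} together with Theorem~\ref{thm:equi-of-dense-irr-points} applied to the pseudometric induced by $p_m$. (Only a cosmetic slip: $m$-extreme sensitivity implies sensitivity by taking $\eps$ small, not $n$ large, since $d\geq 2^{-m}\min(1,p_m)$; this is Proposition~\ref{prop:f-space-sen}~(4)$\Rightarrow$(1), which the paper also declares clear.)
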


By applying Theorem~\ref{thm:F-space-dense-LY-chaos}, we obtain the following characterization of Li-Yorke $m$-extreme chaos for continuous linear operators on Fr\'echet spaces, which slightly strengthens 
\cite[Theorem 9]{BBMP2015}. Moreover, as far as we know, the assertion (5) is new.

\begin{thm}\label{thm:f-space-LY-chaos}
    Let $T\in L(X)$. Then the following assertions are equivalent:
    \begin{enumerate}
        \item $T$ admits a semi-irregular vector;
        \item $T$ admits a Li-Yorke chaotic pair;
        \item $T$ admits an $m$-irregular vector for some $m\in\bbn$;
        \item $T$ is Li-Yorke $m$-extremely chaotic for some $m\in\bbn$;
        \item $T$ is Li-Yorke $m$-extremely sensitive for some $m\in\bbn$;
        \item the restriction of $T$ to some closed $T$-invariant subspace $\widetilde{X}$ has a residual set of $m$-irregular points for some $m\in\bbn$.
    \end{enumerate}
\end{thm}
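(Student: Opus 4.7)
The plan is to follow the template of the proof of Theorem~\ref{thm:equi-of-LYC}, substituting the linear-space machinery (Lemma~\ref{lem:subseq-asym-subspace}, Theorem~\ref{thm:F-space-dense-LY-chaos}) for the group-theoretic ingredients. I first dispose of the easy implications. The implications (4)$\Rightarrow$(2) and (5)$\Rightarrow$(2) are immediate from the definitions of Li-Yorke chaos and Li-Yorke sensitivity. For (3)$\Rightarrow$(1), I observe that $d(x,\vecz)\geq 2^{-m}\min\{1,p_m(x)\}$, so $\limsup_n p_m(T^n x)=\infty$ forces $p_m(T^n x)\geq 1$ along a subsequence and hence $\limsup_n d(T^n x,\vecz)\geq 2^{-m}>0$. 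The equivalence (1)$\Leftrightarrow$(2) is a direct consequence of translation-invariance of $d$: the pair $(x,\vecz)$ is Li-Yorke scrambled exactly when $x$ is semi-irregular, and conversely given any Li-Yorke scrambled pair $(x_0,y_0)$, the vector $y_0-x_0$ is semi-irregular.

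The principal construction is (1)$\Rightarrow$(6). Starting from a semi-irregular vector $x$, I pick an increasing sequence $(n_k)_k$ in $\bbn$ with $T^{n_k}x\to\vecz$ and define
\[
    X_0=\Bigl\{y\in X\colon\lim_{k\to\infty}T^{n_k}y=\vecz\Bigr\}.
\]
By Lemma~\ref{lem:subseq-asym-subspace}, $X_0$ is a $T$-invariant subspace of $X$ containing $x$. Let $\widetilde{X}$ be its closure; this is a $T$-invariant closed subspace of $X$, hence itself a Fr\'echet space. Since $(x,\vecz)$ is Li-Yorke scrambled for $T|_{\widetilde{X}}$, Corollary~\ref{cor:scrambled-to-sensitive} applied to the completely metrizable additive group $(\widetilde{X},+)$ forces $T|_{\widetilde{X}}$ to be sensitive, while the inclusion $X_0\subset\prox(T|_{\widetilde{X}},\vecz)$ shows that the proximal cell of $\vecz$ is dense in $\widetilde{X}$. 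Thus condition (5) of Theorem~\ref{thm:F-space-dense-LY-chaos} is satisfied by $T|_{\widetilde{X}}$, and that theorem delivers a residual set of $m$-irregular vectors in $\widetilde{X}$ for some $m\in\bbn$, which is precisely (6).

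It remains to close the loop via (6)$\Rightarrow$(3), (4), (5). Let $R_m\subset\widetilde{X}$ be the residual set of $m$-irregular vectors for $T|_{\widetilde{X}}$. Any $z\in R_m$ is an $m$-irregular vector for $T$ (using the very same seminorm $p_m$ on $X$), which gives (3) at once. For (4), applying Theorem~\ref{thm:F-space-dense-LY-chaos}~(3) to $T|_{\widetilde{X}}$ produces a Cantor subset of $R_m$, which is uncountable and, being a subset of $R_m$, is Li-Yorke $m$-extremely scrambled. For (5), for every $x\in X$ and every $z\in R_m$, linearity shows that $(x,x+z)$ is Li-Yorke $m$-extremely scrambled; and since $R_m$ is dense in the non-trivial subspace $\widetilde{X}$ (which, being a non-trivial topological vector subspace, has no isolated points), $\vecz$ is an accumulation point of $R_m$ and hence $x$ is an accumulation point of $x+R_m$, giving Li-Yorke $m$-extreme sensitivity.

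The main obstacle is the verification that hypothesis~(5) of Theorem~\ref{thm:F-space-dense-LY-chaos} is genuinely available for $T|_{\widetilde{X}}$; this is exactly where Lemma~\ref{lem:subseq-asym-subspace} and Corollary~\ref{cor:scrambled-to-sensitive} combine to translate the bare existence of one semi-irregular vector into a dense proximal cell of $\vecz$ together with sensitivity on a well-chosen closed invariant subspace. Once that step is in place, the remaining arguments amount to routine bookkeeping using the linearity of $T$ and the translation-invariance of $d$.
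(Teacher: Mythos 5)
Your proposal is correct and follows essentially the same route as the paper: the easy implications, the construction of $\widetilde{X}$ as the closure of $X_0=\{y\colon T^{n_k}y\to\vecz\}$ via Lemma~\ref{lem:subseq-asym-subspace} and Corollary~\ref{cor:scrambled-to-sensitive}, and the return implications via Theorem~\ref{thm:F-space-dense-LY-chaos}. The only differences are cosmetic (you route (6)$\Rightarrow$(3) directly and spell out the comparison $d(x,\vecz)\geq 2^{-m}\min\{1,p_m(x)\}$, which the paper leaves implicit).
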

\begin{proof}
    The implications (4) $\Rightarrow$(3)$\Rightarrow$(2)$\Rightarrow$(1)
    and (5) $\Rightarrow$(1) are clear.

    (6)$\Rightarrow$(4). It follows from Theorem~\ref{thm:F-space-dense-LY-chaos}.

    (1)$\Rightarrow$(6).
    Let $x$ be a semi-irregular vector.
    There exists an increasing sequence of positive integers $(n_k)$
    such that $ \lim_{k\to \infty}T^{n_k}x=\vecz$.
    Let
    \[ X_0=  \Bigl\{y\in X\colon  \lim_{k\to \infty}T^{n_k}y=\vecz\Bigr\}.\]
    It is clear that $x\in X_0$.
    By Lemma~\ref{lem:subseq-asym-subspace} $X_0$ is a $T$-invariant subspace of $X$.
    Let $\widetilde{X}$ be the closure of $X_0$.
    Then $\widetilde{X}$ is also a $T$-invariant subspace of $X$.
    As $(x,e)$ is Li-Yorke chaotic for $T|_{\widetilde{X}}$, by Corollary~\ref{cor:scrambled-to-sensitive} $T|_{\widetilde{X}}$ is sensitive.
    It is clear that $X_0\subset \prox(T|_{\widetilde{X}},e)$.
    So $\prox (T|_{\widetilde{X}})$ is dense in $\widetilde{X}\times \widetilde{X}$. Now (6) readily follows from  Theorem~\ref{thm:F-space-dense-LY-chaos}.

    (6)$\Rightarrow$(5).
    By applying Theorem~\ref{thm:F-space-dense-LY-chaos} to $T|_{\widetilde{X}}$, there exists  a residual subset $X_0$ of $\widetilde{X}$ consisting of $m$-irregular vector for some $m\in \bbn$.
    For every $x\in X$ and $y\in x+X_0$, $(x,y)$ is Li-Yorke $m$-extremely chaotic.
    Note that $x$ is an accumulation point of $x+X_0$. Then $T$ is Li-Yorke $m$-extremely sensitive.
\end{proof}

By the proof of (1)$\Rightarrow$(6) in Theorem \ref{thm:f-space-LY-chaos}, we have the following.
\begin{coro}
    Let $T\in L(X)$. Then the set of $m$-irregular vectors for some $m\in\bbn$ is dense in the set of semi-irregular vectors.
\end{coro}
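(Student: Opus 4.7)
The plan is to show that every semi-irregular vector $x_0$ can be approximated in $X$ by vectors that are $m$-irregular for some $m\in\bbn$ (where $m$ may depend on $x_0$). The key observation is that the proof of the implication (1)$\Rightarrow$(6) in Theorem~\ref{thm:f-space-LY-chaos} already produces, starting from $x_0$, a closed $T$-invariant subspace $\widetilde X$ containing $x_0$ inside which the $m$-irregular vectors form a residual set; this residual set is then automatically dense in $\widetilde X$, so it meets every neighborhood of $x_0$ relative to $X$.

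First I would fix a semi-irregular vector $x_0$ and an arbitrary open neighborhood $U$ of $x_0$ in $X$, and set out to exhibit an $m$-irregular vector in $U$ for some $m\in\bbn$. Using that $x_0$ is semi-irregular, I would pick an increasing sequence $(n_k)_k$ of positive integers with $\lim_{k\to\infty}T^{n_k}x_0=\vecz$, form the subspace
\[
    X_0=\Bigl\{y\in X\colon \lim_{k\to\infty}T^{n_k}y=\vecz\Bigr\},
\]
and let $\widetilde X$ denote its closure in $X$. By Lemma~\ref{lem:subseq-asym-subspace}, both $X_0$ and $\widetilde X$ are $T$-invariant linear subspaces, and by construction $x_0\in X_0\subset\widetilde X$.

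Next, I would verify that $T|_{\widetilde X}$ satisfies hypothesis (5) of Theorem~\ref{thm:F-space-dense-LY-chaos}: the inclusion $X_0\subset\prox(T|_{\widetilde X},\vecz)$ together with the density of $X_0$ in $\widetilde X$ gives the density of the proximal cell of $\vecz$, while $x_0$ itself witnesses a point with $\limsup_n d(T^n x_0,\vecz)>0$. Theorem~\ref{thm:F-space-dense-LY-chaos} then supplies an integer $m\in\bbn$ and a residual, hence dense, subset $R$ of $\widetilde X$ consisting of vectors that are $m$-irregular for $T|_{\widetilde X}$. Since $U\cap\widetilde X$ is a nonempty open subset of $\widetilde X$ (it contains $x_0$), it meets $R$; any $y$ in this intersection lies in $U$ and, because the forward orbits under $T|_{\widetilde X}$ and $T$ agree on $\widetilde X$, is $m$-irregular for $T$ itself.

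I do not foresee any substantial obstacle: the whole argument is essentially a repackaging of the construction performed inside the proof of Theorem~\ref{thm:f-space-LY-chaos}. The only point that deserves a moment's care is the routine verification that $\limsup_n d(T^n x_0,\vecz)>0$ forces $\limsup_n p_m(T^n x_0)>0$ for at least one index $m$, via a standard tail estimate on the defining series of $d$; everything else is bookkeeping to observe that the residual set of $m$-irregular vectors delivered by the earlier theorem sits inside a subspace that is forced to contain the prescribed semi-irregular vector $x_0$.
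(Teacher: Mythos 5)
Your argument is correct and is essentially the paper's own proof: the paper simply cites the construction in the implication (1)$\Rightarrow$(6) of Theorem~\ref{thm:f-space-LY-chaos}, which is exactly the passage to $X_0$, its closure $\widetilde X$ containing the given semi-irregular vector, and the application of Theorem~\ref{thm:F-space-dense-LY-chaos} to $T|_{\widetilde X}$ to obtain a residual (hence dense) set of $m$-irregular vectors in $\widetilde X$. Your closing remark about deducing $\limsup_n p_m(T^nx_0)>0$ is not actually needed, since condition (5) of Theorem~\ref{thm:F-space-dense-LY-chaos} is stated directly in terms of the metric $d$.
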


Following \cite{BBMP2015}, we say that $T\in L(X)$ satisfies the \emph{Li-Yorke chaos criterion} if there exists a subset $X_0$ of $X$ with the following properties:
\begin{enumerate}
    \item for every $x\in X_0$, $(T^nx)_n$ has a subsequence converging to $\vecz$;
    \item there is a bounded sequence $(a_n)_n$ in $\overline{\sspan(X_0)}$ such that the sequence $(T^na_n)_n$ is unbounded.
\end{enumerate}

Using the ideas developed in this subsection, we can give a new proof of the following result.
\begin{prop}[{\cite[Theorem 15]{BBMP2015}}] \label{prop:T-chaos-criterion}
    Let $T\in L(X)$. Then $T$ is Li-Yorke chaotic if and only if $T$ satisfies the Li-Yorke chaos criterion.
\end{prop}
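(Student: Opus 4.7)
The plan is to deduce both directions from Theorems \ref{thm:F-space-dense-LY-chaos} and \ref{thm:f-space-LY-chaos} together with Proposition \ref{prop:f-space-sen}, bypassing the explicit constructions used in the original proof.

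For necessity, I would invoke Theorem \ref{thm:f-space-LY-chaos} to obtain an $m$-irregular vector $v \in X$ for some $m$, and then verify the criterion with $X_0 := \{v\}$ and the constant sequence $a_n := v$. Property (1) is immediate from $\liminf_{n\to\infty} d(T^n v, \vecz) = 0$, and property (2) holds because $(a_n)$ is constant (hence bounded) while $(T^n v)$ is unbounded thanks to $\limsup_{n\to\infty} p_m(T^n v) = \infty$; note that $\overline{\sspan(X_0)} = \bbk v$ is a closed one-dimensional subspace.

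For sufficiency, the key observation is that $\asym(T, \vecz) = \{y \in X : T^n y \to \vecz\}$ is a linear subspace of $X$, by linearity of each iterate $T^n$. I would distinguish two cases. If some $x \in X_0$ satisfies $\limsup_{n\to\infty} d(T^n x, \vecz) > 0$, then property (1) of the criterion already makes $x$ semi-irregular, and $T$ is Li-Yorke chaotic by Theorem \ref{thm:f-space-LY-chaos}. Otherwise $X_0 \subseteq \asym(T, \vecz)$, so $\sspan(X_0) \subseteq \asym(T, \vecz) \subseteq \prox(T, \vecz)$ by the subspace property. Setting $Y := \overline{\sspan(X_0)}$, which is a closed $T$-invariant subspace, we see that $\prox(T|_Y, \vecz)$ contains the dense subset $\sspan(X_0)$ of $Y$ and is therefore dense in $Y$. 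Property (2) of the criterion provides a bounded sequence in $Y$ with unbounded iterates, so $T|_Y$ is sensitive by Proposition \ref{prop:f-space-sen}; in particular there exists $v \in Y$ with $\limsup_{n\to\infty} d(T^n v, \vecz) > 0$. Applying Theorem \ref{thm:F-space-dense-LY-chaos}(5) to $T|_Y$ then yields a dense set of semi-irregular vectors in $Y$, each of which is also semi-irregular for $T$, and Theorem \ref{thm:f-space-LY-chaos} completes the argument.

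The conceptual difficulty I anticipate is that $\prox(T, \vecz)$ is not in general a linear subspace — different elements of $X_0$ may require different subsequences along which their orbits converge to $\vecz$ — so one cannot a priori conclude $\sspan(X_0) \subseteq \prox(T, \vecz)$ from property (1) of the criterion alone. The case split above circumvents this obstacle: Case 1 extracts a semi-irregular vector outright, while in Case 2 the stronger inclusion $X_0 \subseteq \asym(T, \vecz)$ upgrades to a span-level inclusion precisely because $\asym(T, \vecz)$ \emph{is} a linear subspace, after which Theorem \ref{thm:F-space-dense-LY-chaos} takes over.
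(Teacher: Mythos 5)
Your overall strategy coincides with the paper's: the necessity direction via an irregular vector (your choice $X_0=\{v\}$ with the constant sequence $a_n=v$ is a clean, correct verification of the criterion, slightly more economical than the paper's choice $X_0=\prox(T|_{\widetilde X},\vecz)$), and the sufficiency direction via exactly the same dichotomy on whether $X_0\subset\asym(T,\vecz)$, feeding into Theorems~\ref{thm:F-space-dense-LY-chaos} and~\ref{thm:f-space-LY-chaos}. Your closing remark about why one must pass through $\asym(T,\vecz)$ rather than $\prox(T,\vecz)$ is precisely the point of the case split in the paper.

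There is, however, one genuine gap in Case 2: you assert that $Y:=\overline{\sspan(X_0)}$ is a closed \emph{$T$-invariant} subspace, and this is false in general. The criterion imposes no invariance on $X_0$; already in your own necessity argument $X_0=\{v\}$ gives $\overline{\sspan(X_0)}=\bbk v$, which is $T$-invariant only when $v$ is an eigenvector. Without invariance, $T|_Y$ is not a self-map of $Y$, so Proposition~\ref{prop:f-space-sen} and Theorem~\ref{thm:F-space-dense-LY-chaos} cannot be applied to it. The repair is exactly the paper's move: in Case 2 take $\widetilde X$ to be the closure of $\asym(T,\vecz)$ itself. Since $\asym(T,\vecz)=\{y\colon \lim_n T^ny=\vecz\}$ is a $T$-invariant subspace (Lemma~\ref{lem:subseq-asym-subspace} with $n_k=k$), its closure is a closed $T$-invariant subspace containing $\overline{\sspan(X_0)}$; the asymptotic cell is dense in $\widetilde X$ and contained in $\prox(T|_{\widetilde X},\vecz)$, and the bounded sequence $(a_n)$ from property (2) of the criterion lies in $\widetilde X$ and makes $T|_{\widetilde X}$ sensitive by Proposition~\ref{prop:f-space-sen}. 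With that substitution the rest of your argument goes through verbatim.
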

\begin{proof}
    If $T$ is Li-Yorke chaotic, by Theorem~\ref{thm:f-space-LY-chaos} the restriction of $T$ to some closed $T$-invariant subspace $\widetilde{X}$ has a residual set of $m$-irregular points for some $m\in\bbn$.
    Let $X_0=\prox(T_{\widetilde{X}},\vecz)$.
    Then, by Theorem~\ref{thm:F-space-dense-LY-chaos} and Proposition~\ref{prop:f-space-sen}, $X_0$ is as required for the Li-Yorke chaos criterion.

    Now assume that $T$ satisfies the Li-Yorke chaos criterion. If $X_0\setminus  \asym(T,e)\neq\emptyset$, that is, if there exists a semi-irregular vector in $X_0$, then by Theorem~\ref{thm:f-space-LY-chaos} $T$ is Li-Yorke chaotic. Otherwise, $X_0\subset \asym(T,e)$.
    In this case, let $\widetilde{X}$ be the closure of $\asym(T,e)$. By Lemma~\ref{lem:subseq-asym-subspace}, $\widetilde{X}$ is a $T$-invariant closed subspace of $X$.
    Then, the proximal  cell of $e$ with respect to $T|_{\widetilde{X}}$ is dense in $\widetilde{X}$.
    By Proposition~\ref{prop:f-space-sen}, $T|_{\widetilde{X}}$ is sensitive.
    By applying Theorem~\ref{thm:F-space-dense-LY-chaos} to $T|_{\widetilde{X}}$, one has that
    $T$ is Li-Yorke chaotic.
\end{proof}

Let $T\in L(X)$. 
A vector subspace $Y$ of $X$ is called an \emph{irregular manifold} for $T$ if every vector $y\in Y\setminus\{0\}$ is irregular for $T$.

The following result is inspired by \cite[Theorem 20]{BBMP2015}, which states that if the asymptotic cell of $\vecz$ is dense in $X$ and $T$ is sensitive then $T$ admits a dense irregular manifold. The proof here is  different from that of \cite[Theorem 20]{BBMP2015} and the statement also contains that of \cite[Corollary 21]{BBMP2015}.

\begin{thm}\label{thm:dense-irregular-manifold}
    Let $X$ be a separable Fr\'echet space and $T\in L(X)$.
    If the asymptotic cell of $\vecz$ is dense in $X$,
    then either the asymptotic cell of $\vecz$ is $X$ or $T$ admits a dense irregular manifold.
\end{thm}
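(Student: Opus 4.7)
The plan is to prove the dichotomy by showing that when $Y_0 := \asym(T,\vecz) \ne X$ there is a dense irregular manifold. The proof proceeds in three stages: a reduction to the $m$-extreme sensitive case, the identification of a key translation-invariance, and an inductive Baire-type construction.

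\textbf{Reduction.} Arguing as in Lemma~\ref{lem:subseq-asym-subspace}, $Y_0$ is a $T$-invariant linear subspace of $X$. Assume $Y_0 \ne X$. If $T$ were equicontinuous, then by Lemma~\ref{lem:eq-prox-asym} any $x \in X$ would be asymptotic to a nearby $y \in Y_0$, hence asymptotic to $\vecz$, forcing $Y_0 = X$. Thus $T$ is not equicontinuous; Theorem~\ref{thm:dich-eq-sen} gives that $T$ is sensitive; Proposition~\ref{prop:f-space-sen} yields an $m \in \bbn$ for which $T$ is $m$-extremely sensitive; and since $Y_0 \subseteq \prox(T,\vecz)$ is dense, Theorem~\ref{thm:F-space-dense-LY-chaos} produces a residual set $R_m \subseteq X$ of $m$-irregular vectors.

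\textbf{Invariance.} For $x \in R_m$ and $y \in Y_0$, the reverse-triangle inequality $|p_m(T^n(x+y)) - p_m(T^n x)| \le p_m(T^n y) \to 0$ preserves $\limsup_n p_m(T^n(\cdot)) = \infty$, while $\liminf_n d(T^n(x+y),\vecz) \le \liminf_n d(T^n x,\vecz) + \lim_n d(T^n y,\vecz) = 0$ preserves proximality. Hence $x + y \in R_m$, and symmetry in $y$ gives $R_m + Y_0 = R_m$. In particular $R_m \cap Y_0 = \emptyset$ and $R_m$ is a union of $Y_0$-cosets.

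\textbf{Construction.} Fix a countable dense sequence $(v_n)_{n \ge 1}$ in the separable space $X$. I inductively build linearly independent vectors $x_n$ with $d(x_n, v_n) < 1/n$ so that $V_n := \sspan\{x_1, \dots, x_n\}$ satisfies $V_n \setminus \{\vecz\} \subseteq R_m$. The base step picks $x_1 \in R_m$ near $v_1$ (density of $R_m$). The inductive step, given $V_n$ with $V_n \cap Y_0 = \{\vecz\}$ (forced by $R_m \cap Y_0 = \emptyset$), seeks $x_{n+1} \notin V_n$ close to $v_{n+1}$ with $x_{n+1} + V_n \subseteq R_m$; scalar invariance of $R_m$ then yields $V_{n+1} \setminus \{\vecz\} \subseteq R_m$. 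The union $Y := \bigcup_n V_n$ is dense (by $d(x_n, v_n) < 1/n$ and density of $\{v_n\}$), and each nonzero $y \in Y$ lies in some $V_n \setminus \{\vecz\} \subseteq R_m$, so $Y$ is the desired dense irregular manifold.

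The \textbf{main obstacle} is the inductive step's uncountable conjunction $x_{n+1} + V_n \subseteq R_m$: there is one translation constraint per $v \in V_n$, and $V_n$ has $|\bbk|^n$ elements, precluding a direct Baire-category argument. To resolve this I combine the $Y_0$-invariance of $R_m$ with the $G_\delta$-representation $R_m = \bigcap_k U_k$ and the density of $Y_0$. After choosing a candidate $z \in R_m$ near $v_{n+1}$, perturbing it by an element $y \in Y_0$ keeps $z + y \in R_m$ and allows one to arrange $z + y + v \in U_k$ for every $v$ in a countable dense subset of $V_n$ and every $k$; the fact that $R_m$-membership is constant on $Y_0$-cosets then propagates the inclusion to all of $V_n$. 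This refinement of the Baire approach, rather than the direct construction used in \cite{BBMP2015}*{Theorem 20}, is the distinguishing feature of the present proof.
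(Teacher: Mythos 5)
Your reduction and the overall skeleton (get $m$-extreme sensitivity, hence a residual set $R_m$ of $m$-irregular vectors, then inductively build a dense linearly independent sequence whose span is the manifold) match the paper, and your translation-invariance $R_m+Y_0=R_m$ is correct. But the central inductive step — guaranteeing $x_{n+1}+V_n\subset R_m$, which you yourself identify as the main obstacle — is not actually resolved by your proposed mechanism, and in fact it cannot be. You plan to secure $z+y+v\in R_m$ for $v$ in a countable dense subset $D_n$ of $V_n$ and then ``propagate'' to all of $V_n$ using the fact that $R_m$-membership is constant on $Y_0$-cosets. For that propagation you would need every $v\in V_n$ to satisfy $v-v'\in Y_0$ for some $v'\in D_n$; but $v-v'\in V_n$ and, as you note, $V_n\cap Y_0=\{\vecz\}$ is forced, so $v-v'\in Y_0$ happens only when $v=v'$. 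Distinct points of $V_n$ lie in distinct $Y_0$-cosets, so the propagation is vacuous and you only ever control the countable set $D_n$. A second problem is the selection of the perturbation $y$: you need $y\in Y_0$ lying in a countable intersection of open dense sets, but $Y_0=\asym(T,\vecz)$ is dense and \emph{not} residual in the nontrivial branch of the dichotomy (if it were residual, Lemma~\ref{lem:g-asym-prox-left-inv}(3) would force $Y_0=X$), so Baire category gives no such $y$ inside $Y_0$.

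The paper circumvents the uncountable conjunction differently: rather than asking that $x_{n+1}+v$ be $m$-irregular for every $v\in V_n$ (irregularity taken over all times), it records for each $x_i$ explicit time sequences $(s_k^{(i,j)})_k$ along which $T^{s_k}x_i\to\vecz$ and $(t_k^{(i)})_k$ along which $p_m(T^{t_k}x_i)\to\infty$, refines them at each stage so that they are nested subsequences, and chooses $x_{n+1}$ in the countable intersection of the dense $G_\delta$ sets $P(s_k^{(n,j)})$, $P(t_k^{(n)})$ and $D$. Irregularity of an arbitrary nonzero combination $\sum_\ell\alpha_\ell x_\ell$ is then verified directly: proximality along the deepest common subsequence, and unboundedness along the subsequence attached to the lowest index $i$ with $\alpha_i\neq 0$, along which $p_m(T^{\cdot}x_i)\to\infty$ while all later terms tend to $\vecz$. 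This diagonal bookkeeping is the idea your argument is missing, and without it (or a substitute) the inductive step does not close.
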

\begin{proof}
    Note that  $\prox(T,\vecz)$ is residual in $X$, because $\asym(T,\vecz)\subset \prox(T,\vecz)$.
    If $\asym(T,\vecz)= \prox(T,\vecz)$ then, by Lemma~\ref{lem:g-asym-prox-left-inv}\,(3), $\asym(T,\vecz)=X$.
    Otherwise $\prox(T,\vecz)\setminus \asym(T,\vecz)\neq\emptyset$, that is, $T$ admits a semi-irregular vector.
    Then, by Corollary~\ref{cor:scrambled-to-sensitive}, $T$ is sensitive.
    Combining Lemma~\ref{lem:ext-sen-equivalent} and Proposition~\ref{prop:f-space-sen},
    there exists $m\in\bbn$ such that the set
    \[
    D:=\Bigl\{x\in X\colon \limsup_{n\to\infty} p_m(T^nx)=\infty\Bigr\}
    \]
    is a dense $G_\delta$ subset of $X$. We need the following claim.
\smallskip 

    \noindent\textbf{Claim}: For any increasing sequence $(s_k)_k$ in $\bbn$, the set
    \[
    P(s_k)=\Bigl\{x\in X\colon \liminf_{k\to\infty} d(T^{s_k}x,\vecz)=0\Bigr\}
    \]
    is a dense $G_\delta$ subset of $X$.
    \begin{proof}[Proof of the Claim]
    Since $\asym(T,\vecz)\subset P(s_k)$, $P(s_k)$ is dense in $X$. Note that 
    \[
    P(s_k)=\bigcap_{n=1}^\infty
    \bigl\{x\in X\colon \exists k>n \text{ s.t. } d(T^{s_k}x,\vecz)<\tfrac{1}{n}\bigr\}.
    \]
    Then it is easy to see that $P(s_k)$ is a $G_\delta$ subset of $X$.
    \end{proof}
    Fix a dense sequence $(y_i)_i$ in $X$. We will construct recursively a sequence $(x_i)_i$ in $X$
    such that $d(x_i,y_i)<\frac{1}{i}$ and $\sspan\{x_i\colon i\in\bbn\}$ is an irregular manifold.

    By the completeness of $X$, $X_1:=P(k)\cap D$ is a dense $G_\delta$ subset of $X$. We pick $x_1\in X_1$ with $d(x_1,y_1)<1$.
    Then, there exist two increasing sequences $(s^{(1,1)}_k)_k$
    and $(t^{(1)}_k)_k$ in $\bbn$ such that
    \[
    \lim_{k\to\infty} d(T^{s^{(1,1)}_k}x_1,\vecz)=0
    \text{ and }
    \lim_{k\to\infty} p_m(T^{t^{(1)}_k}x_1)=\infty.
    \]
    It is clear that $\bbk x_1$ is an irregular manifold.

Assume that $x_i\in X$, sequences $(s_k^{(i,j)})_k$ and $(t_k^{(i)})_k$ in $\bbn$ have been constructed for $i=1,2,\dotsc,n$ and $j=1,\dotsc,i$ such that 
\begin{enumerate}
\item $d(x_i,y_i)<\frac{1}{i}$ for $i=1,2,\dotsc,n$;
\item for $i=2,\dotsc,n$ and $j=1,\dotsc,i-1$, $(s_k^{(i,j)})_k$ is a subsequence of $(s_k^{(i-1,j)})_k$,
and $(s_k^{(i,i)})_k$ is a subsequence of $(t_k^{(i-1)})_k$;
\item for $i=1,2,\dotsc,n$ and $j=1,\dotsc,i$, 
\[ 
\lim_{k\to\infty}d(T^{s_k^{(i,j)}}{x_i},\vecz)=0
\text{ and }
\lim_{k\to\infty}p_m(T^{t_k^{(i)}}{x_i})=\infty;
\]
\item $\sspan\{x_1,x_2,\dotsc,x_n\}$ is an irregular manifold.
\end{enumerate}
By the Claim, 
\[
X_{n+1}:=\bigcap_{j=1}^n P(s_k^{(n,j)}) \cap P(t_k^{(n)})\cap D
\]
is also a dense $G_\delta$ subset of $X$.
Now take $x_{n+1}\in X_{n+1}$ with $d(x_{n+1},y_{n+1})<\frac{1}{n+1}$.
For $j=1,\dotsc,n$, there exists a subsequence $(s_k^{(n+1,j)})_k$ of $(s_k^{(n,j)})_k$
such that 
\[ 
\lim_{k\to\infty}d(T^{s_k^{(n+1,j)}}{x_{n+1}},\vecz)=0,
\]
a  subsequence $(s_k^{(n+1,n+1)})_k$ of $(t_k^{(n)})_k$
such that 
\[ 
\lim_{k\to\infty}d(T^{s_k^{(n+1,n+1)}}{x_{n+1}},\vecz)=0,
\]
and an increasing sequence $(t_k^{(n+1)})_k$ in $\bbn$ such that
\[ 
\lim_{k\to\infty}p_m(T^{t_k^{(n+1)}}{x_{n+1}})=\infty.
\]
For any $\sum_{\ell=1}^{n+1} \alpha_\ell x_\ell\in  \sspan\{x_1,\dotsc,x_n,x_{n+1}\}\setminus\{\vecz\}$,
\begin{align*}
    \liminf_{k\to\infty} d\biggl(T^{s^{(n+1,1)}_k} \biggl(\sum_{\ell=1}^{n+1} \alpha_\ell x_\ell\biggl),\vecz\biggl)&
    \leq 
     \sum_{\ell=1}^{n+1}(|\alpha_\ell| +1)\lim_{k\to\infty}d(T^{s^{(n+1,1)}_k} x_{\ell},\vecz)
    =0.
\end{align*}
Let $i=\min\{\ell\in \{1,\dotsc,n+1\}\colon \alpha_\ell\neq 0\}$.
If $i<n+1$, then
\begin{align*}
    \limsup_{k\to\infty} p_m\biggl(T^{s^{(n+1,i+1)}_k}\biggl(\sum_{\ell=i}^{n+1} \alpha_\ell x_\ell\biggl)\bigg)
    &\geq 
     |\alpha_i| 
     \lim_{k\to\infty}p_m(T^{s^{(n+1,i+1)}_k} x_i)\\
    &\qquad -\sum_{\ell=i+1}^{n+1} |\alpha_\ell|\limsup_{k\to\infty} p_m(T^{s^{(n+1,i+1)}_k} x_\ell)
    \\
    &\geq 
     \infty 
    -\sum_{\ell=i+1}^{n+1} |\alpha_\ell|
    2^m\limsup_{k\to\infty} d(T^{s^{(n+1,i+1)}_k} x_\ell,\vecz)\\
    &=\infty-0=\infty.
\end{align*}
If $i=n+1$, then by the construction one has 
\[
\lim_{k\to\infty}p_m(T^{t_k^{(n+1)}}{x_{n+1}})=\infty.
\]
So $\sspan\{x_1,\dotsc,x_n,x_{n+1}\}$ is an irregular manifold. A recursive argument gives us a sequence $(x_i)_i$ fulfilling that the subspace $\sspan\{x_i\colon i\in\bbn\}$ is a dense irregular manifold.
\end{proof}

\section{Mean Li-Yorke chaos}\label{sec:MLYC}

\subsection{Mean equicontinuity, mean sensitivity and mean Li-Yorke chaos in topological dynamics}
\label{subsec:Meq-Msen-MLYC}
Let $(X, T)$ be a dynamical system with a metric $d$ on $X$.
A pair $(x,y)\in X\times X$ is called \emph{mean asymptotic}
if
\[
    \lim_{n\to\infty}\frac{1}{n}\sum_{i=1}^{n}d(T^ix,T^iy)=0,
\]
and \emph{mean proximal}
if
\[
    \liminf_{n\to\infty}\frac{1}{n}\sum_{i=1}^{n}d(T^ix,T^iy)=0.
\]
The \emph{mean asymptotic relation} and the \emph{mean proximal relation} of $(X, T)$, denoted by $\masym(T)$ and $\mprox(T)$, are the set of all mean asymptotic pairs and mean proximal pairs respectively.
For any $x\in X$, the \emph{mean asymptotic cell} and \emph{the mean proximal cell} of $x$ are defined by
\[
    \masym(T,x)=\{y\in X\colon (x,y)\in \masym(T)\}
\]
and
\[
    \mprox(T,x)=\{y\in X \colon (x,y) \in \mprox(T)\},
\]
respectively.

The following three lemmas are well-known, see e.g.~\cite{LTY2015,LY2016}.
Similar to Lemma~\ref{lem:dyn-prox-delta}, we have the following $G_\delta$ sets.

\begin{lem}\label{lem:mean-prox-G-delta}
    Let $(X, T)$ be a dynamical system and $\delta>0$.
    Then
    \begin{enumerate}
        \item for every $x\in X$, the mean proximal cell of $x$ is a $G_\delta$ subset of $X$;
        \item the mean proximal relation of $(X,T)$ is a $G_\delta$ subset of $X\times X$;
        \item for every $x\in X$,
              the set
              \[
                  \biggl\{y\in X\colon \limsup_{n\to\infty}\frac{1}{n}\sum_{i=1}^{n}d(T^ix,T^iy)\geq \delta\biggr\}
              \]
              is a $G_\delta$ subset of $ X$;
        \item the set
              \[
                  \biggl\{(x,y)\in X\times X\colon \limsup_{n\to\infty}\frac{1}{n}\sum_{i=1}^{n}d(T^ix,T^iy)\geq \delta\biggr\}
              \]
              is a  $G_\delta$ subset of $X\times X$.
    \end{enumerate}
\end{lem}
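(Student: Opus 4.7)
The plan is to mimic the proof of Lemma~\ref{lem:dyn-prox-delta} verbatim, replacing the instantaneous distance $d(T^nx,T^ny)$ by the Ces\`aro average $a_n(x,y) := \frac{1}{n}\sum_{i=1}^n d(T^ix,T^iy)$. The only facts needed are that $T$ is continuous, hence each map $(x,y)\mapsto a_n(x,y)$ is continuous on $X\times X$ (it is a finite sum of continuous functions), and the standard $\liminf$/$\limsup$ rewritings in terms of countable unions and intersections of inequalities.

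For (1), the key identity is
\[
    \mprox(T,x) = \bigcap_{k=1}^\infty \bigcap_{N=1}^\infty \bigcup_{n>N}\Bigl\{y\in X \colon a_n(x,y) < \tfrac{1}{k}\Bigr\},
\]
since $\liminf_n a_n(x,y)=0$ if and only if for each $k$ there are infinitely many $n$ with $a_n(x,y)<1/k$. By continuity of $T$ (and hence of each $a_n(x,\cdot)$), every set inside the union is open, so the union is open and the double intersection exhibits $\mprox(T,x)$ as a $G_\delta$ subset of $X$. The proof of (2) is the same with the continuous function $a_n\colon X\times X\to\bbr$ in place of $a_n(x,\cdot)$, giving a $G_\delta$ subset of $X\times X$.

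For (3), the analogous rewriting is
\[
    \Bigl\{y\in X\colon \limsup_{n\to\infty} a_n(x,y)\geq \delta\Bigr\}
    = \bigcap_{k=1}^\infty \bigcap_{N=1}^\infty \bigcup_{n>N}\Bigl\{y\in X\colon a_n(x,y) > \delta - \tfrac{1}{k}\Bigr\},
\]
because $\limsup_n a_n(x,y)\geq\delta$ if and only if for every $k$ and every $N$ some $n>N$ satisfies $a_n(x,y)>\delta-1/k$. Again each inner set is open, so we get a $G_\delta$ set. Statement (4) follows by the identical formula viewed in $X\times X$.

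There is no real obstacle here: the argument is a routine adaptation of Lemma~\ref{lem:dyn-prox-delta}, and the only thing to verify is that the Ces\`aro-averaged distance function $a_n$ is jointly continuous for each fixed $n$, which is immediate. Accordingly, I would present only the argument for (1) and (3) in detail and indicate that (2) and (4) follow by the same formulas applied on $X\times X$, exactly as the authors did in Lemma~\ref{lem:dyn-prox-delta}.
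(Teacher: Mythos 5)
Your proof is correct and takes essentially the same approach as the paper: the authors establish (1) via the formula $\mprox(T,x)=\bigcap_{n=1}^{\infty}\bigl\{y\in X \colon \exists k>n \text{ s.t. } \frac{1}{k}\sum_{i=1}^{k}d(T^ix,T^iy)<\frac{1}{n}\bigr\}$, which is just a single-index packaging of your double intersection, and likewise dismiss (2)--(4) as similar. Nothing to add.
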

\begin{proof}
    (1). It follows from the following formula:
    \[
        \mprox(T,x)=\bigcap_{n=1}^{\infty}
        \biggl\{y\in X \colon \exists k>n \text{ s.t. } \frac{1}{k}\sum_{i=1}^{k}d(T^ix,T^iy)<\frac{1}{n}\biggr\}.
    \]

    The proofs of (2), (3) and (4) are similar to that of (1).
\end{proof}

Following~\cite{LTY2015}, we say that a dynamical system $(X,T)$ is \emph{mean equicontinuous}
if for any $\eps>0$ there exists some $\delta>0$ such that,
for any $x,y\in X$ with $d(x,y)<\delta$,
\[
    \limsup_{n\to\infty}\frac{1}{n}\sum_{i=1}^{n}d(T^ix,T^iy)<\eps.
\]

\begin{lem}\label{lem:mean-eq-prox-mean-asym}
    Let $(X, T)$ be a dynamical system.
    If $(X, T)$ is mean equicontinuous, then every proximal pair is mean asymptotic.
\end{lem}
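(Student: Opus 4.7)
The plan is to mirror the argument of Lemma~\ref{lem:eq-prox-asym}, which handled the analogous statement in the equicontinuous/asymptotic setting. The central idea is: given a proximal pair, find an iterate at which the two orbits are very close, apply mean equicontinuity to that closer pair, and then observe that a finite shift in the index does not affect a Cesàro average.

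First, fix $\eps>0$ and apply mean equicontinuity to obtain $\delta>0$ such that any $x,y\in X$ with $d(x,y)<\delta$ satisfy $\limsup_{n\to\infty}\frac{1}{n}\sum_{i=1}^n d(T^ix,T^iy)<\eps$. Given a proximal pair $(x_0,y_0)$, use $\liminf_{n\to\infty} d(T^nx_0,T^ny_0)=0$ to pick $k\in\bbn$ with $d(T^kx_0,T^ky_0)<\delta$. Mean equicontinuity applied to $(T^kx_0,T^ky_0)$ then yields $\limsup_{n\to\infty}\frac{1}{n}\sum_{i=1}^n d(T^{i+k}x_0,T^{i+k}y_0)<\eps$.

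Writing $S_n:=\sum_{i=1}^n d(T^ix_0,T^iy_0)$, the identity $S_{n+k}=S_k+\sum_{i=1}^n d(T^{i+k}x_0,T^{i+k}y_0)$ together with $S_k/(n+k)\to 0$ and $n/(n+k)\to 1$ as $n\to\infty$ gives
\[
    \limsup_{m\to\infty}\frac{S_m}{m}=\limsup_{n\to\infty}\frac{S_{n+k}}{n+k}=\limsup_{n\to\infty}\frac{1}{n}\sum_{i=1}^n d(T^{i+k}x_0,T^{i+k}y_0)<\eps.
\]
Since $\eps>0$ was arbitrary, $\lim_{m\to\infty}S_m/m=0$ and $(x_0,y_0)$ is mean asymptotic.

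The only mild obstacle relative to Lemma~\ref{lem:eq-prox-asym} is the final reindexing step. In the pointwise setting one obtains $d(T^{k+n}x_0,T^{k+n}y_0)<\eps$ for every $n$, which directly controls the $\limsup$. Here one only has a bound on the upper Cesàro average after the shift by $k$, so one needs the standard remark that a finite shift of the index does not alter a Cesàro mean; the decomposition $S_{n+k}=S_k+\sum_{i=1}^n d(T^{i+k}x_0,T^{i+k}y_0)$ makes this transparent, so no genuine difficulty arises.
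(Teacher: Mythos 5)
Your proposal is correct and follows essentially the same route as the paper's proof: apply mean equicontinuity to the pair $(T^kx_0,T^ky_0)$ obtained from proximality and use that a finite index shift does not change the upper Cesàro average. The only difference is that you spell out the reindexing identity $S_{n+k}=S_k+\sum_{i=1}^n d(T^{i+k}x_0,T^{i+k}y_0)$ explicitly, whereas the paper asserts the equality of the two $\limsup$'s without comment.
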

\begin{proof}
    Let $(x_0,y_0)\in X\times X$ be a proximal pair.
    As $(X,T)$ is mean equicontinuous, for any $\eps>0$ there exists some
    $\delta>0$ such that, for any $x,y\in X$ with $d(x,y)<\delta$,
    \[
        \limsup_{n\to\infty}\frac{1}{n}\sum_{i=1}^{n}d(T^ix,T^iy)<\eps.
    \]
    As $(x_0,y_0)$ is proximal, there exists $k\in \bbn$ such that
    $d(T^kx_0,T^ky_0)<\delta$. Then
    \[
        \limsup_{n\to\infty}\frac{1}{n}\sum_{i=1}^{n}d(T^ix_0,T^iy_0)=\limsup_{n\to\infty}\frac{1}{n}\sum_{i=1}^{n}d(T^i(T^k x_0),T^i(T^ky_0))<\eps.
    \]
    By the arbitrariness of $\eps>0$, $(x_0,y_0)$ is mean asymptotic.
\end{proof}

Following~\cite{LTY2015}, we say that a dynamical system $(X,T)$ is \emph{mean sensitive}
if there exists some $\delta>0$ such that, for every $x\in X$ and $\eps>0$, 
there exists some $y\in X$ with $d(x,y)<\eps$ and
\[
    \limsup_{n\to\infty}\frac{1}{n}\sum_{i=1}^{n}d(T^ix,T^iy)\geq \delta.
\]

The proof of the following result is similar to that of Lemma~\ref{lem:sen-equivalent}, and we leave it to the reader.

\begin{lem}\label{lem:mean-sen-eq-cond}
    Let $(X, T)$ be a dynamical system with $X$ being completely metrizable.
    Then the following assertions are equivalent:
    \begin{enumerate}
        \item $(X,T)$ is mean sensitive;
        \item there exists $\delta>0$ such that for every $x\in X$, the set
              \[
                  \biggl\{y\in X\colon \limsup_{n\to\infty}  \frac{1}{n}\sum_{i=1}^{n}d(T^ix,T^iy) \geq \delta\biggr\}
              \]
              is a dense $G_\delta$ subset of $X$;
        \item there exists $\delta>0$ such that the set
              \[
                  \biggl\{(x,y)\in X\times X\colon  \limsup_{n\to\infty}\frac{1}{n}\sum_{i=1}^{n}d(T^ix,T^iy) \geq \delta\biggr\}
              \]
              is a dense $G_\delta$ subset of $X\times X$.
    \end{enumerate}
\end{lem}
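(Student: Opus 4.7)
The plan is to mirror the proof of Lemma~\ref{lem:sen-equivalent} almost verbatim, with Ces\`aro averages replacing pointwise distances throughout and subadditivity of $\limsup$ taking over the role that was played there by the triangle inequality together with diameter shrinking. The implications (2)$\Rightarrow$(3) and (3)$\Rightarrow$(1) will be essentially free; (1)$\Rightarrow$(2) carries the substance.

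For (1)$\Rightarrow$(2), I would fix a mean sensitivity constant $\delta_0$, set $\delta=\delta_0/2$, and for each $x\in X$ and $n\in\bbn$ introduce the open set
\[
X_n=\Bigl\{y\in X\colon \exists k>n \text{ s.t. } \tfrac{1}{k}\sum_{i=1}^{k}d(T^ix,T^iy)> \delta-\tfrac{1}{n}\Bigr\},
\]
whose intersection over $n$ is exactly the target set in (2); by the Baire category theorem, it suffices to show that each $X_n$ is dense. Given a nonempty open $U\subset X$, I would pick any $x'\in U$ and apply mean sensitivity at $x'$ with radius small enough that the witness $y'$ stays in $U$. The term-by-term triangle inequality $d(T^ix',T^iy')\leq d(T^ix,T^ix')+d(T^ix,T^iy')$ averaged over $i\leq k$, combined with subadditivity of $\limsup$ in the extended reals, would force
\[
\limsup_{k\to\infty}\tfrac{1}{k}\sum_{i=1}^{k}d(T^ix,T^ix')\geq\delta
\quad\text{or}\quad
\limsup_{k\to\infty}\tfrac{1}{k}\sum_{i=1}^{k}d(T^ix,T^iy')\geq\delta,
\]
placing at least one of $x',y'\in U$ into $X_n$. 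The constraint $k>n$ in the definition of $X_n$ is then automatic: once the $\limsup$ is $\geq\delta$, infinitely many $k$ witness $\tfrac{1}{k}\sum_{i=1}^{k}d(T^ix,T^iy)>\delta-\tfrac{1}{n}$.

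For (2)$\Rightarrow$(3), the set in (3) is $G_\delta$ in $X\times X$ by Lemma~\ref{lem:mean-prox-G-delta}~(4), and density in $X\times X$ follows from (2) by the standard open-rectangle argument (every $U\times V$ meets the set by picking any $x\in U$ and using denseness of the corresponding slice against $V$). For (3)$\Rightarrow$(1), I would verify mean sensitivity with constant $\delta/2$: for any $x_0\in X$ and $\eps>0$, density of the $G_\delta$ set from (3) in $X\times X$ yields a pair $(x,y)$ with $d(x_0,x)<\eps$, $d(x_0,y)<\eps$, and averaged orbital distance having $\limsup\geq\delta$; the same subadditivity-of-$\limsup$ trick applied to $d(T^ix,T^iy)\leq d(T^ix_0,T^ix)+d(T^ix_0,T^iy)$ identifies one of $x,y$ as a witness for mean sensitivity at $x_0$ with constant $\delta/2$.

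The main (mild) obstacle is the density step in (1)$\Rightarrow$(2). The shift away from Lemma~\ref{lem:sen-equivalent} is that one can no longer force $k>n$ by shrinking $U$ to a subset $V$ with $\diam(T^iV)<\delta$ for $i\leq n$, since pointwise smallness of the first finitely many terms does not control the Ces\`aro average. The remedy, as above, is precisely that the Ces\`aro average is insensitive to any bounded initial segment: a $\limsup$ lower bound automatically supplies infinitely many admissible $k$. No $\infty-\infty$ indeterminacy enters the subadditivity argument, because all distances are nonnegative.
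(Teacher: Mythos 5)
Your proposal is correct and follows exactly the route the paper intends: the paper omits the proof, stating only that it is ``similar to the one of Lemma~\ref{lem:sen-equivalent}'', and your argument is the faithful adaptation of that proof with Ces\`aro averages in place of pointwise distances. You also correctly identify and repair the one step that does not transfer verbatim --- the diameter-shrinking trick used to force $k>n$ --- by observing that the $\limsup$ in the definition of mean sensitivity automatically supplies infinitely many admissible $k$, so the subadditivity-of-$\limsup$ argument (safe here since all terms are nonnegative) closes the density step.
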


Let $(X, T)$ be a dynamical system.
A pair $(x,y)\in X\times X$ is called \emph{mean Li-Yorke chaotic}
if
\[
    \liminf_{n\to\infty} \frac{1}{n}\sum_{i=1}^{n}d(T^ix,T^iy)=0\qquad \text{and}\qquad \limsup_{n\to\infty} \frac{1}{n}\sum_{i=1}^{n}d(T^ix,T^iy)>0,
\]
that is, $(x,y)$ is mean proximal but not mean asymptotic.
A subset $K$ of $X$ is called \emph{mean Li-Yorke scrambled} if any two distinct points $x,y\in K$ form a mean Li-Yorke chaotic pair.
We say that a dynamical system $(X,T)$ is \emph{mean Li-Yorke chaotic}
if there exists an uncountable mean Li-Yorke scrambled subset of $X$.

For a given positive number $\delta$, a pair $(x,y)\in X\times X$ is called \emph{mean Li-Yorke $\delta$-chaotic}
if
\[
    \liminf_{n\to\infty} \frac{1}{n}\sum_{i=1}^{n}d(T^ix,T^iy)=0\qquad \text{and}\qquad \limsup_{n\to\infty} \frac{1}{n}\sum_{i=1}^{n}d(T^ix,T^iy)\geq \delta.
\]
Similarly, we can define \emph{mean Li-Yorke $\delta$-scrambled set} and \emph{mean Li-Yorke $\delta$-chaos}.

We say that a dynamical system $(X,T)$ is \emph{mean Li-Yorke sensitive} if there exists some $\delta>0$ such that,
for every $x\in X$ and $\eps>0$, there exists some $y\in X$ with $d(x,y)<\eps$ such that $(x,y)$ is mean Li-Yorke $\delta$-chaotic.
It is clear that every mean Li-Yorke sensitive system is mean sensitive.

Similar to Proposition~\ref{prop:dense-delta-scrambled}, according to Lemmas~\ref{lem:mean-prox-G-delta} and~\ref{lem:mean-sen-eq-cond} we have the following result.

\begin{prop}\label{prop:dense--mean-delta-scrambled}
    Let $(X, T)$ be a dynamical system with $X$ being completely metrizable.
    Then the following assertions are equivalent:
    \begin{enumerate}
        \item there exists $\delta>0$ such that for every sequence $(O_j)_j$ of nonempty open subsets of $X$, there exists a sequence $(K_j)_j$ of Cantor sets with $K_j\subset O_j$ such that $\bigcup_{j=1}^\infty K_j$ is mean Li-Yorke $\delta$-scrambled;
        \item the mean proximal relation of $(X,T)$ is dense in $X\times X$ and $(X,T)$ is mean sensitive.
    \end{enumerate}
\end{prop}

Recall that every Fr\'echet space has a separating increasing sequence of seminorms, and that each of these seminorms can be seen as a pseudometric.
In order to apply the theory developed previously to the case of Fr\'echet spaces,
we introduce mean extreme sensitivity and mean Li-Yorke extreme chaos with respect to a continuous pseudometric.
From now on in this subsection, we fix a continuous pseudometric $\rho$ on $X$.
We say that a dynamical system $(X,T)$ is \emph{mean $\rho$-extremely sensitive} if  for every $x\in X$ and $\eps>0$ there exists $y\in X$ with $d(x,y)<\eps$ satisfying
\[
    \limsup_{n\to\infty} \frac{1}{n} \sum_{i=1}^n \rho(T^ix,T^iy)=\infty.
\]

The proof of the following result is straightforward.

\begin{lem}
    Let $(X, T)$ be a dynamical system with $X$ being completely metrizable.
    Then the following assertions are equivalent:
    \begin{enumerate}
        \item $(X,T)$ is mean $\rho$-extremely sensitive;
        \item for every $x\in X$, the set
              \[
                  \biggl\{y\in X\colon \limsup_{n\to\infty}  \frac{1}{n}\sum_{i=1}^{n}\rho(T^ix,T^iy) =\infty\biggr\}
              \]
              is a dense $G_\delta$ subset of $X$;
        \item the set
              \[
                  \biggl\{(x,y)\in X\times X\colon  \limsup_{n\to\infty}\frac{1}{n}\sum_{i=1}^{n}\rho(T^ix,T^iy) =\infty \biggr\}
              \]
              is a dense $G_\delta$ subset of $X\times X$.
    \end{enumerate}
\end{lem}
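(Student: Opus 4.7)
The plan is to imitate the proofs of Lemma~\ref{lem:sen-equivalent} and Lemma~\ref{lem:ext-sen-equivalent}, replacing the one-step distance $d(T^n x,T^n y)$ by the Ces\`aro average $\frac{1}{n}\sum_{i=1}^n \rho(T^i x,T^i y)$. The substantive work is the implication (1)$\Rightarrow$(2) via the Baire category theorem; the implication (2)$\Rightarrow$(3) is routine given the preceding lemma, and (3)$\Rightarrow$(1) is a triangle-inequality bookkeeping argument of exactly the same form as in Lemma~\ref{lem:sen-equivalent}.

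For (1)$\Rightarrow$(2), I fix $x\in X$ and write the target set as $\bigcap_{m,N\in\bbn} X_{m,N}$, where
\[
X_{m,N} = \biggl\{y\in X \colon \exists\, k>N \text{ such that } \frac{1}{k}\sum_{i=1}^k \rho(T^i x,T^i y)>m\biggr\}.
\]
Continuity of $T$ and of $\rho$ makes each $X_{m,N}$ open, so by Baire it suffices to prove density. Given a nonempty open $U$ and a point $z\in U$, mean extreme sensitivity applied at $z$ (with the sensitivity $\eps>0$ chosen small enough that the resulting vector stays inside $U$) produces $y_1\in U$ with $\limsup_{k\to\infty} \frac{1}{k}\sum_{i=1}^k \rho(T^i z,T^i y_1)=\infty$; in particular some $k>N$ satisfies $\frac{1}{k}\sum_{i=1}^k \rho(T^i z,T^i y_1)>2m$. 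The pointwise triangle inequality $\rho(T^i z,T^i y_1)\leq \rho(T^i x,T^i z)+\rho(T^i x,T^i y_1)$, averaged over $i=1,\dotsc,k$, forces at least one of $\frac{1}{k}\sum_{i=1}^k \rho(T^i x,T^i z)$ or $\frac{1}{k}\sum_{i=1}^k \rho(T^i x,T^i y_1)$ to exceed $m$, so at least one of $z,y_1$ lies in $X_{m,N}\cap U$.

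For (2)$\Rightarrow$(3), the preceding lemma delivers the $G_\delta$ structure in $X\times X$, and density is immediate: every basic open $U\times V$ contains a pair $(x,y)$ with $x\in U$ arbitrary and $y\in V$ drawn from the dense section at $x$ supplied by (2). For (3)$\Rightarrow$(1), given $x\in X$ and $\eps>0$, density in $X\times X$ produces $(x',y')$ with $d(x,x'),d(x,y')<\eps/2$ and $\limsup_n \frac{1}{n}\sum_{i=1}^n \rho(T^i x',T^i y')=\infty$; the same triangle inequality as above, combined with the elementary fact that $\limsup_n(a_n+b_n)=\infty$ forces $\limsup_n a_n=\infty$ or $\limsup_n b_n=\infty$, shows that at least one of the Ces\`aro averages $\frac{1}{n}\sum_{i=1}^n \rho(T^i x,T^i x')$ or $\frac{1}{n}\sum_{i=1}^n \rho(T^i x,T^i y')$ has infinite $\limsup$, and the corresponding witness lies in $B(x,\eps)$. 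The only non-formal step is the Baire density trick in (1)$\Rightarrow$(2), and even that is a direct transcription of the argument already used in Lemma~\ref{lem:sen-equivalent}.
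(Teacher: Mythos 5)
Your proof is correct and follows exactly the route the paper intends: the paper omits the argument as ``routine,'' referring implicitly to the Baire-category scheme of Lemma~\ref{lem:sen-equivalent} and Lemma~\ref{lem:ext-sen-equivalent}, and your decomposition into the open sets $X_{m,N}$ together with the averaged triangle inequality is precisely that scheme transcribed to Ces\`aro averages. The one simplification you exploit (that $\limsup=\infty$ already supplies arbitrarily large $k$, so no shrinking of $U$ is needed to force $k>N$) is legitimate and makes the density step even cleaner than in the sensitive case.
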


Let $(X, T)$ be a dynamical system.
A pair $(x,y)\in X\times X$ is called \emph{mean Li-Yorke $\rho$-extremely chaotic} if
\[
    \liminf_{n\to\infty} \frac{1}{n}\sum_{i=1}^{n}d(T^ix,T^iy)=0\qquad \text{and}\qquad \limsup_{n\to\infty} \frac{1}{n}\sum_{i=1}^{n}\rho(T^ix,T^iy)=\infty.
\]
Similarly, we can define \emph{mean Li-Yorke $\rho$-extremely scrambled set},  \emph{mean Li-Yorke $\rho$-extreme chaos} and
\emph{mean Li-Yorke $\rho$-extreme  sensitivity}.

Similar to Proposition~\ref{prop:dense--mean-delta-scrambled}, we have the following result.

\begin{prop}\label{prop:dense-mean-ext-scrambled}
    Let $(X, T)$ be a dynamical system with $X$ being completely metrizable.
    Then the following assertions are equivalent:
    \begin{enumerate}
        \item for every sequence $(O_j)_j$ of nonempty open subsets of $X$, there exists a sequence $(K_j)_j$ of Cantor sets with $K_j\subset O_j$ such that $\bigcup_{j=1}^\infty K_j$ is mean Li-Yorke $\rho$-extremely scrambled;
        \item the mean proximal relation of $(X,T)$ is dense in $X\times X$ and $(X,T)$ is mean $\rho$-extremely  sensitive.
    \end{enumerate}
\end{prop}

\subsection{Mean Li-Yorke chaos for continuous endomorphisms of completely metrizable groups}
\label{subsec:MLYC-group}
In \cite{BBP2020} the authors characterized mean Li-Yorke chaos for linear operators on Banach spaces.
In this subsection, we consider the more general setting of continuous endomorphisms of completely metrizable groups. We fix a completely metrizable group $G$ and a left-invariant compatible metric $d$ on $G$.

Let $T\in\eend(G)$. A point $x\in G$ is called \emph{mean semi-irregular} for $T$ if
\[
    \liminf_{n\to\infty} \frac{1}{n}\sum_{i=1}^{n} d(T^ix,e)=0\qquad \text{and}\qquad
    \limsup_{n\to\infty} \frac{1}{n}\sum_{i=1}^{n} d(T^ix,e)>0.
\]

The proof of the following result is similar to that of Lemma~\ref{lem:g-asym-prox-left-inv}.
\begin{lem}\label{lem:g-mean-asym-prox-left-inv}
    Let $T\in\eend(G)$.
    \begin{enumerate}
        \item For every $x\in G$, $\masym(T,x) = x \cdot \masym(T,e)$,
              $\mprox(T,x)=x\cdot \mprox(T,e)$.
        \item  $\masym(T,e)$ is dense in $ G$ if and only if $\masym(T)$ is dense in $G\times G$.
        \item If $\masym(T,e)$ is residual in $G$, then $\masym(T)=G$.
        \item $\mprox(T,e)$ is dense in $G$ if and only if
              $\mprox(T)$ is dense in $ G\times G$.
        \item For every $x,y\in G$, $(x,y)$ is a mean Li-Yorke chaotic pair if and only if $x^{-1}y$ is a mean semi-irregular point.
        \item If $K$ is a mean Li-Yorke scrambled subset of $G$,
              then for any $x\in G$, $xK$ is also a mean Li-Yorke scrambled set. In particular, for any $x\in K$, $x^{-1}K\setminus \{e\}$ consists of mean semi-irregular points.
        \item For a given $\delta>0$, if $K$ is a mean Li-Yorke $\delta$-scrambled subset of $G$,
              then for any  $x\in G$, $xK$ is also a mean Li-Yorke $\delta$-scrambled set.
    \end{enumerate}
\end{lem}

\begin{lem}\label{lem:subseq-mean-asym-subgroup}
    Let $T\in\eend(G)$.
    Then $\masym(T,e)$ is a $T$-invariant subgroup of $G$.
    In addition, if the metric $d$ is bounded or the endomorphism $T$ is Lipschitz continuous, then for any increasing sequence $(n_k)_{k}$ of positive integers,
    \[
        G(n_k):=\biggl\{x\in G\colon \lim_{k\to \infty} \frac{1}{n_k}\sum_{i=1}^{n_k} d(T^{i}x,e)=0\biggr\}
    \]
    is a $T$-invariant subgroup of $G$.
\end{lem}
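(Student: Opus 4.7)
The plan is to verify the two closure properties (being closed under the group operation $y^{-1}x$, and being $T$-invariant) by exploiting left-invariance of $d$ together with the fact that $T$ is a group homomorphism, so that $T^i(y^{-1}x) = T^i(y)^{-1}T^i(x)$, and then $d(T^i(y^{-1}x),e) = d(T^ix, T^iy)$ by left-invariance.

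For the subgroup property of $G_0$, clearly $e\in G_0$. Given $x,y\in G_0$, the triangle inequality gives
\[
    d(T^i(y^{-1}x),e) = d(T^ix,T^iy) \leq d(T^ix,e) + d(T^iy,e),
\]
so the Ces\`aro means along $[1,n]$ of the left-hand side are dominated by the sum of two Ces\`aro means that tend to zero, whence $y^{-1}x\in G_0$. The same computation, using the Ces\`aro means along the subsequence $(n_k)_k$ instead, gives the subgroup property of $G(n_k)$ with no boundedness assumption needed.

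For $T$-invariance of $G_0$, given $x\in G_0$ I would compare consecutive Ces\`aro means by a telescoping identity:
\[
    \frac{1}{n}\sum_{i=1}^{n} d(T^{i+1}x,e)
    = \frac{n+1}{n}\cdot\frac{1}{n+1}\sum_{j=1}^{n+1} d(T^jx,e) - \frac{d(Tx,e)}{n},
\]
which tends to $1\cdot 0 - 0 = 0$ as $n\to\infty$, so $Tx\in G_0$. For $G(n_k)$ the analogous identity at index $n_k$ produces a boundary term $d(T^{n_k+1}x,e)/n_k$ that does not automatically vanish; this is exactly where the assumption that $d$ is bounded enters, giving the uniform bound $d(T^{n_k+1}x,e)/n_k \leq M/n_k \to 0$, so $Tx\in G(n_k)$.

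The only real obstacle is the $T$-invariance of $G(n_k)$: along a sparse subsequence $(n_k)$ one cannot a priori control the single term $d(T^{n_k+1}x,e)$, and without the boundedness of $d$ the lemma would fail in general. Everything else is a routine left-invariance and Ces\`aro-averaging argument.
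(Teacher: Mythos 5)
Your proposal is correct and follows essentially the same route as the paper's proof: the triangle inequality together with left-invariance for the subgroup property, and the telescoping comparison of consecutive Ces\`aro sums for $T$-invariance, with boundedness of $d$ used exactly to kill the boundary term $d(T^{n_k+1}x,e)/n_k$ in the subsequence case. No gaps.
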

\begin{proof}
    For every $x,y\in G(n_k)$, we have
    \begin{align*}
        \limsup_{k\to \infty} \frac{1}{n_k}\sum_{i=1}^{n_k} d(T^{i}(y^{-1}x),e)
         & =\limsup_{k\to \infty} \frac{1}{n_k}\sum_{i=1}^{n_k} d(T^{i}x,T^iy)                                                         \\
         & \leq\lim_{k\to\infty}\frac{1}{n_k}\sum_{i=1}^{n_k}d(T^{i}x,e)+\lim_{k\to\infty} \frac{1}{n_k}\sum_{i=1}^{n_k}d(e,T^{i}y)=0.
    \end{align*}
    Then $G(n_k)$ is a subgroup of $G$. Similarly, $\masym(T,e)$ is also a subgroup of $G$.

    For every $x\in \masym(T,e)$,
    \begin{align*}
        \lim_{n\to \infty} \frac{1}{n}\sum_{i=1}^{n} d(T^{i}(Tx),e)
         & = \lim_{n\to \infty} \frac{n+1}{n}\frac{1}{n+1} \biggl(\sum_{i=1}^{n+1} d(T^{i}x,e)- d(Tx,e)\biggr) \\
         & =  \lim_{n\to \infty} \frac{1}{n+1} \sum_{i=1}^{n+1} d(T^{i}x,e)=0.
    \end{align*}
    Then $Tx\in \masym(T,e)$, which implies that $\masym(T,e)$ is $T$-invariant.

    In addition, if $d$ is bounded by some constant $M>0$, then for every $x\in G(n_k)$,
    \begin{align*}
        \lim_{k\to \infty} \frac{1}{n_k}\sum_{i=1}^{n_k} d(T^{i}(Tx),e) & = \lim_{k\to \infty} \frac{1}{n_k}\biggl(\sum_{i=1}^{n_k} d(T^{i}x,e) +d(T^{n_k+1}x,e) -d(Tx,e)\biggr) \\
         & \leq \lim_{k\to \infty}\frac{1}{n_k} \biggl( \sum_{i=1}^{n_k}d(T^{i}x,e) +2M\biggr)=0.
    \end{align*}
    In this case, we have $Tx\in G(n_k)$.
    Then $G(n_k)$ is $T$-invariant.

 In addition, if $T$ is Lipschitz continuous, that is, if there exists $L>0$ such that $d(Tx,Ty)\leq Ld(x,y)$ for any $x,y\in G$, then for every $x\in G(n_k)$,
    \begin{align*}
        \lim_{k\to \infty} \frac{1}{n_k}\sum_{i=1}^{n_k} d(T^{i}(Tx),e) & = \lim_{k\to \infty} \frac{1}{n_k}\sum_{i=1}^{n_k} d(T(T^{i}x),Te)  \\
         & \leq L \lim_{k\to \infty}\frac{1}{n_k}\sum_{i=1}^{n_k} d(T^{i}x,e) =0.
    \end{align*}
    In this case, we also have $Tx\in G(n_k)$.
    Hence $G(n_k)$ is $T$-invariant.
\end{proof}

We have the following dichotomy of mean equicontinuity and mean sensitivity for continuous endomorphisms of completely metrizable groups.
\begin{thm}\label{thm:dich-mean-eq-mean-sen}
    Let $T\in\eend(G)$.
    Then $T$ is either mean equicontinuous or mean sensitive.
\end{thm}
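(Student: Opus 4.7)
The plan is to mimic the argument of Theorem~\ref{thm:dich-eq-sen}, replacing the single distance $d(T^n x, T^n y)$ by its Cesàro average $\frac{1}{n}\sum_{i=1}^n d(T^i x, T^i y)$. The core ingredient in the proof of Theorem~\ref{thm:dich-eq-sen} is the left-invariance identity
\[
    d(T^i x_1, T^i x_2) = d\bigl(T^i(x_2^{-1} x_1), e\bigr),
\]
valid for every $i \in \bbn$; because this is a pointwise equality in $i$, it passes unchanged through arithmetic averaging and $\limsup$, so the mean behavior of $T$ along any pair is determined by the mean behavior at the identity $e$.

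I would then split into two cases. First, suppose that for every $\eps > 0$ there exists $\delta > 0$ such that $d(x, e) < \delta$ implies $\limsup_{n \to \infty} \frac{1}{n} \sum_{i=1}^n d(T^i x, e) < \eps$. Given any $x_1, x_2 \in G$ with $d(x_1, x_2) < \delta$, left-invariance gives $d(x_2^{-1} x_1, e) = d(x_1, x_2) < \delta$; applying the hypothesis to $x_2^{-1} x_1$ and using the identity above termwise yields
\[
    \limsup_{n\to\infty} \frac{1}{n}\sum_{i=1}^n d(T^i x_1, T^i x_2) = \limsup_{n\to\infty} \frac{1}{n}\sum_{i=1}^n d\bigl(T^i(x_2^{-1} x_1), e\bigr) < \eps,
\]
which is precisely mean equicontinuity.

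Otherwise the negation supplies a constant $\eps_0 > 0$ with the property that for every $\delta > 0$ there exists $y_\delta \in G$ with $d(y_\delta, e) < \delta$ and $\limsup_{n\to\infty} \frac{1}{n} \sum_{i=1}^n d(T^i y_\delta, e) \geq \eps_0$. I would then show that $\eps_0$ is a mean sensitivity constant: given $x \in G$ and $\eps > 0$, set $y = x y_\eps$, observe $d(y, x) = d(y_\eps, e) < \eps$ by left-invariance, and compute
\[
    \frac{1}{n}\sum_{i=1}^n d(T^i y, T^i x) = \frac{1}{n}\sum_{i=1}^n d(T^i y_\eps, e),
\]
whose $\limsup$ is at least $\eps_0$, giving mean sensitivity.

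I do not foresee a genuine obstacle; the only check worth making carefully is that the left-invariance identity is pointwise in $i$ and therefore commutes with both the sum $\frac{1}{n}\sum_{i=1}^n$ and the outer $\limsup$, so that the averaging structure intrinsic to mean equicontinuity and mean sensitivity is undisturbed by the translation to the identity. Everything else is a direct transcription of the proof of Theorem~\ref{thm:dich-eq-sen}.
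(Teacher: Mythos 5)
Your proposal is correct and is essentially identical to the paper's own proof: both split on whether the mean-equicontinuity condition holds at the identity and use the pointwise left-invariance identity $d(T^ix_1,T^ix_2)=d(T^i(x_2^{-1}x_1),e)$ to transport the conclusion to arbitrary pairs, with the negation of the first case supplying the mean sensitivity constant. No gaps.
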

\begin{proof}
    Let $e$ be the identity of $G$. We have the following two cases:

    Case 1: for any $\eps>0$, there exists  $\delta>0$ such that for every $x\in G$ with $d(x,e)<\delta$,
    \[
        \limsup_{n\to\infty}\frac{1}{n}\sum_{i=1}^{n}d(T^ix,e)<\eps.
    \]
    Let $x_1, x_2\in G$.
    If $d(x_1,x_2)<\delta$, by the left-invariance of $d$, we have $d(x_1,x_2)=d(x_2^{-1}x_1,e)<\delta$. Then
    \[
        \limsup_{n\to\infty}\frac{1}{n}\sum_{i=1}^{n}d(T^ix_1,T^ix_2)=\limsup_{n\to\infty}\frac{1}{n}\sum_{i=1}^{n}d(T^i(x_2^{-1}x_1),e)<\eps.
    \]
    This implies that $T$ is mean equicontinuous.

    Case 2: there exists $\delta>0$ such that for every $\eps>0$ one can find $y_\eps\in G$  with $d(y_\eps,e)<\eps$ such that
    \[
        \limsup_{n\to\infty}\frac{1}{n}\sum_{i=1}^{n}d(T^iy_\eps,e)\geq \delta.
    \]
    For every $x\in G$ and $\eps>0$,  we have
    $d(xy_\eps,x)<\eps$ and
    \[
        \limsup_{n\to\infty}\frac{1}{n}\sum_{i=1}^{n}d(T^i(xy_\eps), T^i x) =
        \limsup_{n\to\infty}\frac{1}{n}\sum_{i=1}^{n}d(T^i y_\eps, e) \geq \delta.
    \]
    Thus, $T$ is mean sensitive.
\end{proof}

Combining Lemma~\ref{lem:mean-eq-prox-mean-asym} and Theorem~\ref{thm:dich-mean-eq-mean-sen}, we have the following.

\begin{coro}\label{coro:g-mean-scrambled-sen}
    Let $T\in\eend(G)$. If there exists a mean Li-Yorke chaotic pair, then $T$ is mean sensitive.
\end{coro}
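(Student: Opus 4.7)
The plan is to chase a short contradiction using the dichotomy in Theorem~\ref{thm:dich-mean-eq-mean-sen} together with Lemma~\ref{lem:mean-eq-prox-mean-asym}. Let $(x,y)$ be a mean Li-Yorke scrambled pair for $T$. By definition
\[
\liminf_{n\to\infty}\frac{1}{n}\sum_{i=1}^{n}d(T^ix,T^iy)=0,
\qquad
\limsup_{n\to\infty}\frac{1}{n}\sum_{i=1}^{n}d(T^ix,T^iy)>0,
\]
so $(x,y)$ is mean proximal but not mean asymptotic.

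Next I would observe the easy fact that every mean proximal pair is proximal: if $a_i=d(T^ix,T^iy)\ge 0$ satisfies $\liminf_n \frac{1}{n}\sum_{i=1}^n a_i=0$, then $\liminf_i a_i=0$, for otherwise a uniform positive lower bound on $a_i$ would force all Ces\`aro averages to be bounded away from $0$. Hence $(x,y)\in\prox(T)$.

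Now I would finish with a dichotomy argument. Suppose, for contradiction, that $T$ is not mean sensitive. By Theorem~\ref{thm:dich-mean-eq-mean-sen}, $T$ must be mean equicontinuous. Applying Lemma~\ref{lem:mean-eq-prox-mean-asym} to the proximal pair $(x,y)$ yields that $(x,y)$ is mean asymptotic, which contradicts the choice of $(x,y)$. Therefore $T$ is mean sensitive.

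There is no real obstacle here; the only non-bookkeeping step is the small lemma that mean proximality implies proximality, which is immediate from the positivity of the summands. Everything else is a direct invocation of the two previously established results.
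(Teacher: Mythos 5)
Your proof is correct and follows the same route as the paper, which simply combines Theorem~\ref{thm:dich-mean-eq-mean-sen} with Lemma~\ref{lem:mean-eq-prox-mean-asym}. The only addition is that you explicitly verify the small step the paper leaves implicit --- that a mean proximal pair is proximal (so that Lemma~\ref{lem:mean-eq-prox-mean-asym}, stated for proximal pairs, actually applies) --- and your justification of that step via the nonnegativity of the Ces\`aro summands is correct.
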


\begin{prop}\label{prop:g-mean-sen-eq-cond}
    Let $T\in \eend(G)$. Then the following assertions are equivalent:
    \begin{enumerate}
        \item $T$ is mean sensitive;
        \item there exists $\delta>0$ such that for every $x\in G$ and $\eps>0$
              there exists $y\in G$ with $d(x,y)<\eps$ and
              \[
                  \sup_{n\in\bbn} \frac{1}{n}\sum_{i=1}^{n}d(T^ix,T^iy)\geq \delta;
              \]
        \item there exists a sequence $(y_k)_k$ in $G$ and a sequence $(N_k)_k$ in $\bbn$ such that
              $\lim_{k\to\infty} y_k=e$ and
              \[
                 \inf_{k\in\bbn} \frac{1}{N_k} \sum_{i=1}^{N_k} d(T^i y_k,e)>0.
              \]
    \end{enumerate}
\end{prop}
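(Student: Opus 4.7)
My plan is to prove the circle (1)$\Rightarrow$(2)$\Rightarrow$(3)$\Rightarrow$(1). The first implication is immediate from $\limsup_n\leq\sup_n$. For (2)$\Rightarrow$(3) I apply the hypothesis at $x=e$ with $\eps=1/k$: this yields $y_k$ with $d(y_k,e)<1/k$ and $\sup_n\tfrac1n\sum_{i=1}^n d(T^iy_k,e)\geq\delta$; for each $k$ I then pick $N_k\in\bbn$ so that $\tfrac{1}{N_k}\sum_{i=1}^{N_k}d(T^iy_k,e)\geq\delta-1/k$, which manifestly produces the sequences required in (3).

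The substantive direction is (3)$\Rightarrow$(1), which I would model on the proof of Proposition~\ref{prop:g-ext-sen-cond}. After passing to a subsequence I may assume $a_k:=\tfrac{1}{N_k}\sum_{i=1}^{N_k}d(T^iy_k,e)$ converges to some $s\geq\delta$, so $a_k\geq\delta/2$ eventually; along this subsequence $N_k\to\infty$, for otherwise pigeonhole would extract a sub-subsequence on which $N_k$ equals a fixed $N_0$, and since $y_k\to e$ with $T^i$ continuous at $e$ for each $i\leq N_0$ we would get $a_k\to0$, contradicting $a_k\geq\delta/2$. Now fix $x\in G$. By Lemma~\ref{lem:mean-sen-eq-cond}, to establish mean sensitivity it suffices to show that $\bigl\{y:\limsup_n \tfrac1n\sum_{i=1}^n d(T^ix,T^iy)\geq \delta/4\bigr\}$ is a dense $G_\delta$ subset of $G$; writing it as $\bigcap_{n,m\in\bbn} G_{n,m}$ with
\[
G_{n,m}=\Bigl\{y\in G:\exists\, N>n\text{ with }\tfrac{1}{N}\sum_{i=1}^N d(T^ix,T^iy)>\tfrac{\delta}{4}-\tfrac{1}{m}\Bigr\},
\]
openness is clear by continuity, so the task reduces by Baire to density of each $G_{n,m}$.

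The key creative step is a triangle inequality trick. Given a nonempty open $U\subset G$ and $z\in U$, I choose $k$ large enough that $zy_k\in U$, $N_k>n$, and $a_k>\delta/2$. Left-invariance of $d$ gives $d(T^i(zy_k),T^iz)=d(T^iy_k,e)$, so
\[
d(T^i(zy_k),T^ix)+d(T^iz,T^ix)\geq d(T^i(zy_k),T^iz)=d(T^iy_k,e).
\]
Averaging $i=1,\dotsc,N_k$, the two Ces\`aro sums on the left add to at least $a_k>\delta/2$, so at least one exceeds $\delta/4$, thereby placing either $z$ or $zy_k$ in $G_{n,m}\cap U$. The main obstacle I anticipate is the preparatory subsequence step, since all three conditions on $k$ (closeness of $y_k$ to $e$, $N_k>n$, and $a_k>\delta/2$) must be met simultaneously; this forces reading (3) so that the supremum is genuinely approached along a subsequence of indices — a strengthening which (2)$\Rightarrow$(3) already delivers automatically via the $\delta-1/k$ choice. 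Once the subsequence is in hand, the openness, triangle estimate, and Baire's theorem complete the argument.
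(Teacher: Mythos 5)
Your proof is correct and follows essentially the same route as the paper's: open dense sets indexed by the approximation parameters, density obtained by perturbing a point of $U$ by $y_k$ and applying the triangle inequality together with left-invariance, and then Baire's theorem; the only cosmetic difference is that you verify condition (2) of Lemma~\ref{lem:mean-sen-eq-cond} directly for each $x$ with the constant $\delta/4$, whereas the paper establishes the residual set relative to $e$ with constant $\delta/2$ and then invokes the translation argument from the proof of Theorem~\ref{thm:dich-mean-eq-mean-sen}. Your observation that (3) must be read so that the supremum is approached along indices $k\to\infty$ is apt: the paper's own proof makes the same implicit strengthening, since it uses $\frac{1}{N_{k_0}}\sum_{i=1}^{N_{k_0}}d(T^iy_{k_0},e)\geq\delta$ for arbitrarily large $k_0$.
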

\begin{proof}
    The implications (1)$\Rightarrow$(2)$\Rightarrow$(3) are clear.

    (3)$\Rightarrow$(1). Since $Te=e$, without loss of generality, we can assume the sequence $(N_k)_k$ to be increasing.
    Let 
    \[
        \delta=\inf_{k\in\bbn} \frac{1}{N_k} \sum_{i=1}^{N_k} d(T^i y_k,e).
    \]
    For each $n\in \bbn$, let
    \[
        X_n =\biggl\{x\in G\colon \exists k>n \text{ s.t. }
        \frac{1}{k} \sum_{i=1}^{k} d(T^ix,e)> \frac{\delta}{2}-\frac{1}{n}\biggr\}.
    \]
    Then each $X_n$ is an open subset of $G$. Let us show that it is also dense in $G$.
    Let $U$ be a nonempty open subset of $G$ and pick $x\in U$.
    If
    \[
        \limsup_{k\to\infty} \frac{1}{k}\sum_{i=1}^{k} d(T^ix,e)\geq \frac{\delta}{2},
    \]
    then $x\in X_n$. Now assume that
    \[
        \limsup_{k\to\infty} \frac{1}{k}\sum_{i=1}^{k} d(T^ix,e)< \frac{\delta}{2}.
    \]
    Then there exists $N\in\bbn$ such that for any $k\geq N$,
    \[
        \frac{1}{k}\sum_{i=1}^{k} d(T^ix,e) \leq \frac{\delta}{2}.
    \]
    Since $\lim_{k\to\infty} y_k x=x\in U$, there exists $k_0\in\bbn$ such that $y_{k_0}x\in U$ and $N_{k_0}>\max\{N,n\}$.
    Then
    \begin{align*}
        \frac{1}{N_{k_0}}\sum_{i=1}^{N_{k_0}}  d(T^i(y_{k_0}  x) ,e)
         & \geq \frac{1}{N_{k_0}}\sum_{i=1}^{N_{k_0}}
        d(T^iy_{k_0},e)
        - \frac{1}{N_{k_0}}\sum_{i=1}^{N_{k_0}} d(T^i(y_{k_0}  x),T^iy_{k_0}) \\
         & \geq \delta - \frac{1}{N_{k_0}}\sum_{i=1}^{N_{k_0}} d(T^i x,e)   \\
         & \geq \delta - \frac{\delta}{2}  = \frac{\delta}{2}.
    \end{align*}
    This implies that $y_{k_0}x\in X_n\cap U$.
    Therefore, the set
    \[
        \biggl\{x\in G\colon \limsup_{n\to\infty} \frac{1}{n}\sum_{i=1}^n d(T^i x,e)\geq \frac{\delta}{2}\biggr\}
        =\bigcap_{n=1}^\infty X_n
    \]
    is residual in $G$.
    By the proof of Theorem~\ref{thm:dich-mean-eq-mean-sen}, $T$ is mean sensitive.
\end{proof}

Combining Theorem~\ref{thm:dich-mean-eq-mean-sen} and Proposition~\ref{prop:g-mean-sen-eq-cond}, we have the following.
\begin{coro}\label{coro:g-mean-equi}
    Let $T\in\eend(G)$. Then $T$ is mean equicontinuous if and only if for any $\eps>0$ there exists some $\delta>0$ such that
    for every $x,y\in G$ with $d(x,y)<\delta$, 
    \[
        \sup_{n\in\bbn}\frac{1}{n}\sum_{i=1}^{n}d(T^ix,T^iy)<\eps.
    \]
\end{coro}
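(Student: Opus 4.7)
The corollary upgrades $\limsup$ to $\sup$ in the definition of mean equicontinuity. The plan is to derive it by combining the dichotomy in Theorem~\ref{thm:dich-mean-eq-mean-sen} with the sup-form characterization of mean sensitivity already obtained in Proposition~\ref{prop:g-mean-sen-eq-cond}~(3).

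The ``if'' direction is essentially free: since $\limsup_n a_n \le \sup_n a_n$ for any nonnegative sequence $(a_n)$, any $(x,y)$ with $d(x,y)<\delta$ satisfying $\sup_n \frac{1}{n}\sum_{i=1}^n d(T^ix,T^iy)<\eps$ automatically satisfies $\limsup_n \frac{1}{n}\sum_{i=1}^n d(T^ix,T^iy)<\eps$, which is mean equicontinuity.

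For the ``only if'' direction, I would argue by contrapositive. Suppose the sup condition fails. Then there exists $\eps>0$ such that for every $k\in\bbn$ one can find $x_k,y_k\in G$ with $d(x_k,y_k)<1/k$ and $\sup_{n}\frac{1}{n}\sum_{i=1}^n d(T^ix_k,T^iy_k)\ge \eps$. Setting $z_k=x_k^{-1}y_k$, the left-invariance of $d$ and the fact that each $T^i$ is an endomorphism give $d(z_k,e)<1/k$ and
\[
\sup_{n\in\bbn}\frac{1}{n}\sum_{i=1}^{n}d(T^iz_k,e) = \sup_{n\in\bbn}\frac{1}{n}\sum_{i=1}^{n}d(T^ix_k,T^iy_k) \ge \eps.
\]
Choose $N_k\in\bbn$ with $\frac{1}{N_k}\sum_{i=1}^{N_k} d(T^iz_k,e)\ge \eps/2$. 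Then $z_k\to e$ and $\sup_k \frac{1}{N_k}\sum_{i=1}^{N_k}d(T^iz_k,e)\ge \eps/2$, which is precisely assertion~(3) of Proposition~\ref{prop:g-mean-sen-eq-cond} with $\delta=\eps/2$. Hence $T$ is mean sensitive, and by Theorem~\ref{thm:dich-mean-eq-mean-sen} (together with the elementary observation that mean equicontinuity and mean sensitivity are mutually exclusive, since applying the mean equicontinuity condition with $\eps=\delta_0/2$ rules out $\delta_0$ as a mean sensitivity constant) $T$ is not mean equicontinuous.

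I do not anticipate any genuine obstacle: the corollary is essentially a repackaging of Proposition~\ref{prop:g-mean-sen-eq-cond}, and the substantive work of promoting $\limsup$ to $\sup$ has already been carried out in the implication (3)$\Rightarrow$(1) there. The only minor bookkeeping is the reduction from a pair $(x_k,y_k)$ to a single sequence $z_k\to e$, which is routine via left-invariance.
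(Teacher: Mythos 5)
Your proof is correct and follows essentially the same route the paper intends: the paper derives this corollary precisely by combining the dichotomy of Theorem~\ref{thm:dich-mean-eq-mean-sen} with the sup-form characterization of mean sensitivity in Proposition~\ref{prop:g-mean-sen-eq-cond}, and your reduction to a single sequence $z_k\to e$ via left-invariance, together with the observation that mean equicontinuity and mean sensitivity are mutually exclusive, supplies exactly the bookkeeping the paper leaves implicit.
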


Now we give a characterization of the existence of a dense set of mean semi-irregular points for continuous endomorphisms of completely metrizable groups. Since the proof is similar to that of Theorem~\ref{thm:equi-of-dense-LY}, we leave it to the reader.

\begin{thm}\label{thm:equi-dense-mean-chaos}
    Let $T\in\eend(G)$.
    Then the following assertions are equivalent:
    \begin{enumerate}
        \item $T$ admits a dense set of mean semi-irregular points;
        \item $T$ admits a residual set of mean semi-irregular points;
        \item there exists $\delta>0$ such that for every sequence $(O_j)_j$ of nonempty open subsets of $G$, there exists a sequence $(K_j)_j$ of Cantor sets with $K_j\subset O_j$ such that $\bigcup_{j=1}^\infty K_j$ is mean Li-Yorke $\delta$-scrambled;
        \item the mean proximal relation of $T$ is dense in $G\times G$ and $T$ is mean sensitive;
        \item the mean proximal cell of $e$ is dense in $G$ and there exists $x\in G$ such that
              \[
                  \limsup_{n\to\infty} \frac{1}{n}\sum_{i=1}^{n} d(T^ix,e)>0.
              \]
    \end{enumerate}
\end{thm}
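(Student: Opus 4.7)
The plan is to mirror the proof of Theorem \ref{thm:equi-of-dense-LY}, replacing every ingredient with its mean analogue already developed in Section 4.1 and Section 4.2. Concretely, I will close the loop via $(2)\Rightarrow(1)\Rightarrow(5)\Rightarrow(4)\Leftrightarrow(3)$ and $(4)\Rightarrow(2)$, the implications $(2)\Rightarrow(1)$ and $(1)\Rightarrow(5)$ being essentially definitional (any mean semi-irregular point lies in $\mprox(T,e)$ and satisfies the required $\limsup$ condition, so a dense set of them gives density of $\mprox(T,e)$ together with the existence of an $x$ witnessing the positivity of the Ces\`aro $\limsup$).

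For $(5)\Rightarrow(4)$, the key move is exactly the one used in Theorem \ref{thm:equi-of-dense-LY}: Lemma~\ref{lem:mean-prox-G-delta}(1) says $\mprox(T,e)$ is a $G_\delta$ subset of $G$, so by hypothesis it is a dense $G_\delta$ and hence residual. If one had $\mprox(T,e)=\masym(T,e)$, then Lemma~\ref{lem:g-mean-asym-prox-left-inv}(3) would force $\masym(T,e)=G$, contradicting the existence of $x\in G$ with $\limsup_n \frac{1}{n}\sum_{i=1}^{n}d(T^ix,e)>0$. Thus $\mprox(T,e)\setminus\masym(T,e)\neq\emptyset$, i.e.\ $T$ has a mean Li-Yorke scrambled pair, so Corollary~\ref{coro:g-mean-scrambled-sen} gives mean sensitivity. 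Density of $\mprox(T)$ in $G\times G$ then follows from Lemma~\ref{lem:g-mean-asym-prox-left-inv}(4).

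The equivalence $(3)\Leftrightarrow(4)$ is precisely the content of Proposition~\ref{prop:dense--mean-delta-scrambled}, so nothing further is required there. For $(4)\Rightarrow(2)$, mean sensitivity together with Lemma~\ref{lem:mean-sen-eq-cond} (applied at the point $x=e$) produces $\delta>0$ such that
\[
A_\delta:=\biggl\{y\in G\colon \limsup_{n\to\infty}\frac{1}{n}\sum_{i=1}^{n}d(T^iy,e)\geq\delta\biggr\}
\]
is a dense $G_\delta$ subset of $G$. Density of $\mprox(T)$ in $G\times G$ together with Lemma~\ref{lem:g-mean-asym-prox-left-inv}(4) yields density of $\mprox(T,e)$ in $G$, and Lemma~\ref{lem:mean-prox-G-delta}(1) makes it $G_\delta$; intersecting with $A_\delta$ gives a residual subset of mean semi-irregular points by the Baire category theorem.

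I do not anticipate a genuine obstacle: every nontrivial step has been isolated into a previously proved lemma (the mean dichotomy Theorem~\ref{thm:dich-mean-eq-mean-sen}, the $G_\delta$ identities of Lemma~\ref{lem:mean-prox-G-delta}, the mean-sensitivity characterization of Lemma~\ref{lem:mean-sen-eq-cond}, the left-invariance package of Lemma~\ref{lem:g-mean-asym-prox-left-inv}, Corollary~\ref{coro:g-mean-scrambled-sen}, and Proposition~\ref{prop:dense--mean-delta-scrambled}), so the argument is essentially a bookkeeping exercise parallel to the Li-Yorke case. The only spot worth double-checking is $(5)\Rightarrow(4)$, since one must verify that the residual set $\mprox(T,e)$ does not collapse into $\masym(T,e)$; this is exactly where Lemma~\ref{lem:g-mean-asym-prox-left-inv}(3) (a subgroup-style argument using left-invariance of $d$) is indispensable.
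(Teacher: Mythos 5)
Your proposal is correct and follows exactly the route the paper intends: the paper omits the proof of Theorem~\ref{thm:equi-dense-mean-chaos} with the remark that it is analogous to Theorem~\ref{thm:equi-of-dense-LY}, and your argument is precisely that analogue, with each step (the $G_\delta$ property of $\mprox(T,e)$, the residual-implies-everything trick via Lemma~\ref{lem:g-mean-asym-prox-left-inv}(3), Corollary~\ref{coro:g-mean-scrambled-sen}, Proposition~\ref{prop:dense--mean-delta-scrambled}, and the Baire intersection for $(4)\Rightarrow(2)$) correctly substituted by its mean counterpart. No gaps.
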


According to 
Theorem~\ref{thm:equi-dense-mean-chaos}\,(5), we have the following.
\begin{coro}
    Let $T\in\eend(G)$. If the mean proximal cell of $e$ is dense in $G$ and $T$ has a non-trivial periodic point, then it has a dense set of mean semi-irregular points.
\end{coro}

Similar to the proof of Theorem~\ref{thm:equi-of-LYC}, by applying Theorem~\ref{thm:equi-dense-mean-chaos} we also can characterize mean Li-Yorke chaos for continuous endomorphisms of completely metrizable groups.
Observe that, since we need Lemma~\ref{lem:subseq-mean-asym-subgroup}, the metric $d$ on $G$ should be bounded or the map $T$ should be Lipschitz continuous.

\begin{thm} \label{thm:equi-mean-chaos}
    Let $T\in\eend(G)$.
    If $d$ is  a bounded left-invariant compatible metric on $G$
    or $T$ is Lipschitz continuous,
    then the following assertions are equivalent:
    \begin{enumerate}
        \item $T$ admits a mean semi-irregular point;
        \item $T$ admits a mean Li-Yorke chaotic pair;
        \item $T$ is mean Li-Yorke chaotic;
        \item $T$ is mean Li-Yorke sensitive;
        \item the restriction of $T$ to some closed $T$-invariant subgroup $\widetilde{G}$ has a dense set of mean semi-irregular points.
    \end{enumerate}
\end{thm}

Similar to Proposition~\ref{prop:residul-scrambled-set}, we have the following result.
\begin{prop}
    Let $T\in\eend(G)$.
    Then the following assertions are equivalent:
    \begin{enumerate}
        \item $T$ admits a residual mean Li-Yorke scrambled set;
        \item every point in $G\setminus \{e\}$ is mean semi-irregular for $T$;
        \item $X$ is a mean Li-Yorke scrambled set for $T$.
    \end{enumerate}
\end{prop}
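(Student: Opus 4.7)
The plan is to follow the same strategy as in Proposition~\ref{prop:residul-scrambled-set}, simply replacing every occurrence of the Li-Yorke notions by their mean Li-Yorke analogues; the only input that changes is which part of Lemma~\ref{lem:g-mean-asym-prox-left-inv} gets invoked. The implications $(2)\Rightarrow(3)\Rightarrow(1)$ are essentially tautological. Indeed, assuming (2), for any two distinct points $x,y\in G$ the element $x^{-1}y$ lies in $G\setminus\{e\}$ and so is mean semi-irregular, whence by Lemma~\ref{lem:g-mean-asym-prox-left-inv}(5) the pair $(x,y)$ is mean Li-Yorke scrambled. Thus the whole group $G$ is a mean Li-Yorke scrambled set, giving (3); and (3) trivially yields (1) since $G$ is residual in itself.

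The substantive implication is $(1)\Rightarrow(2)$, and it proceeds by a Baire-category argument identical in shape to the one already used in Proposition~\ref{prop:residul-scrambled-set}. Let $K$ be a residual mean Li-Yorke scrambled subset of $G$ and fix $x\in G\setminus\{e\}$. Right translation by $x^{-1}$ is a homeomorphism of the topological group $G$, so the translate $Kx^{-1}$ contains a dense $G_\delta$ set and is therefore residual. Applying the Baire category theorem to the completely metrizable space $G$, the intersection $K\cap Kx^{-1}$ is residual, and in particular nonempty. Choose any $y\in K\cap Kx^{-1}$; then both $y$ and $yx$ lie in $K$, and they are distinct because $x\neq e$. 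Hence $(y,yx)$ is a mean Li-Yorke scrambled pair, and Lemma~\ref{lem:g-mean-asym-prox-left-inv}(5) then yields that $x=y^{-1}(yx)$ is mean semi-irregular for $T$.

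There is no real obstacle here: the argument is purely structural and never touches the Ces\`aro averages directly, since all the analytic content has already been packaged into Lemma~\ref{lem:g-mean-asym-prox-left-inv}. The only mild point of care is verifying that a translate of a residual set in a topological group is residual, which follows because homeomorphisms preserve dense $G_\delta$ sets. No boundedness assumption on the metric is needed for this proposition, in contrast to Theorem~\ref{thm:equi-mean-chaos}, because we never pass to a subsequential invariant subgroup.
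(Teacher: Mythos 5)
Your proof is correct and follows exactly the same route as the paper's argument for Proposition~\ref{prop:residul-scrambled-set}, which the paper simply transfers verbatim to the mean setting: the implications $(2)\Rightarrow(3)\Rightarrow(1)$ are immediate, and $(1)\Rightarrow(2)$ picks $y\in K\cap Kx^{-1}$ so that $(y,yx)$ is scrambled and invokes the left-invariance lemma. Your added remarks (right translation preserves residual sets; no boundedness of $d$ is needed) are accurate but not points the paper needed to spell out.
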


\begin{exam}\label{exam:SZ-mean-LY-chaos}
Let $\bbs(\bbz),\sigma, T, A$ as in Example~\ref{exam:SZ-LY-chaos}. 
As the mean proximal relation contains the asymptotic relation 
and $\sigma$ is a fixed point for $T$, 
according to Theorem~\ref{thm:equi-dense-mean-chaos}, $(\bbs(\bbz), T)$ is densely mean Li-Yorke chaotic for any left-invariant compatible metric on $\bbs(\bbz)$.
\end{exam}

\begin{exam}\label{exam:HR-mean-LY-chaos}
Let $H_+(\bbr),\sigma, T, A$ as in Example~\ref{exam:HR-LY-chaos}.
Similar to the idea in Example~\ref{exam:SZ-mean-LY-chaos}, one has $(H_+(\bbr), T)$ is densely mean Li-Yorke chaotic for any left-invariant compatible metric on $H_+(\bbr)$.
\end{exam}

\subsection{Mean Li-Yorke extreme chaos for continuous endomorphisms of completely metrizable groups}
\label{subsec:MLYeC-group}
With the same notation as in the previous subsection, we fix a completely metrizable group $G$ and a left-invariant compatible metric $d$ on $G$.
In order to study the case of Fréchet spaces in the next section, in which we will consider the seminorms on such a space, in this subsection we consider mean Li-Yorke chaos with respect to a left-invariant continuous pseudometric for metrizable groups.

In this subsection, we also fix a left-invariant continuous pseudometric $\rho$ on $G$.
Let $T\in\eend(G)$. A point $x\in G$ is called \emph{mean irregular} for $T$ if
\[
    \liminf_{n\to\infty} \frac{1}{n}\sum_{i=1}^{n} d(T^ix,e)=0\qquad \text{and}\qquad
    \limsup_{n\to\infty} \frac{1}{n}\sum_{i=1}^{n} \rho(T^ix,e)=\infty.
\]
The proofs of the following three results  are similar to those of Lemma~\ref{lem:g-asym-prox-left-inv} and Propositions~\ref{prop:irregular-ext-sen} and~\ref{prop:g-mean-sen-eq-cond} respectively.

\begin{lem}\label{lem:g-ext-scrambled-irr}
    Let $T\in\eend(G)$.
    \begin{enumerate}
        \item For every $x,y\in G$, $(x,y)$ is a mean Li-Yorke extremely chaotic pair if and only if $x^{-1}y$ is a mean irregular point.
        \item If $K$ is a mean Li-Yorke extremely scrambled subset of $G$,
              then for any $x\in G$, $xK$ is also a mean Li-Yorke extremely scrambled set. In particular, for any $x\in K$, $x^{-1}K\setminus \{e\}$ consists of mean  irregular points.
    \end{enumerate}
\end{lem}

\begin{prop}\label{prop:g-mean-ext-sen-eq-cond}
    Let $T\in \eend(G)$. Then the following assertions are equivalent:
    \begin{enumerate}
        \item $T$ is mean extremely sensitive;
        \item for any $x\in G$ and $\eps>0$
              there exists $y\in G$ with $d(x,y)<\eps$ and
              \[
                  \sup_{n\in\bbn} \frac{1}{n}\sum_{i=1}^{n}\rho(T^ix,T^iy)=\infty;
              \]
        \item there exists a sequence $(y_k)_k$ in $G$ and a sequence $(N_k)_k$ in $\bbn$ such that
              $\lim_{k\to\infty} y_k=e$ and
              \[
                 \frac{1}{N_k} \sum_{i=1}^{N_k} \rho(T^i y_k,e)\geq k,\ \forall k\in\bbn.
              \]
    \end{enumerate}
\end{prop}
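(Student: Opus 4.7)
The proof will mimic the template of Proposition~\ref{prop:g-mean-sen-eq-cond}, with the positive lower bound $\delta$ upgraded to an unbounded sequence. The implications (1)$\Rightarrow$(2) and (2)$\Rightarrow$(3) are essentially formal. Mean extreme sensitivity gives $\limsup_n \frac{1}{n}\sum_{i=1}^n \rho(T^i x, T^i y) = \infty$, which in particular makes the supremum infinite, yielding (2). Applying (2) with $x = e$ and $\eps = 1/k$, and using $Te = e$ together with the symmetry of $\rho$, produces $y_k \in G$ with $d(y_k, e) < 1/k$ and $\sup_n \frac{1}{n}\sum_{i=1}^{n} \rho(T^i y_k, e) = \infty$; choosing $N_k$ large enough so that the $N_k$-th partial average exceeds $k$ then yields (3).

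The core of the proof is (3)$\Rightarrow$(1). After passing to a subsequence I may assume $N_k$ is increasing and $\frac{1}{N_k}\sum_{i=1}^{N_k} \rho(T^i y_k, e) \to \infty$, which is possible because continuity of $T$ forces $\rho(T^i y_k, e) \to 0$ for each fixed $i$, so a bounded sequence $(N_k)$ would keep these averages uniformly bounded. By the density characterization of mean extreme sensitivity stated just before the proposition, it suffices to show that for each $M > 0$ and $n \in \bbn$ the set
\[
X_{n,M} := \biggl\{z \in G : \exists\, j > n \text{ such that } \frac{1}{j}\sum_{i=1}^{j} \rho(T^i z, e) > M\biggr\}
\]
is open and dense in $G$. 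Openness is immediate from the continuity of $T$ and $\rho$; the resulting dense $G_\delta$ intersection $\bigcap_{n, M} X_{n, M}$ consists of points $z$ with $\sup_j \frac{1}{j}\sum_{i=1}^{j} \rho(T^i z, e) = \infty$, and left-invariance transfers this density from $e$ to every point.

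For density, fix a nonempty open $U$ and $x \in U$. If $\limsup_j \frac{1}{j}\sum_{i=1}^j \rho(T^i x, e) = \infty$ then $x$ already lies in $X_{n,M}$, so assume this limsup is a finite number $C$. Choose $k_0$ so large that $y_{k_0} x \in U$, $N_{k_0} > n$, $\frac{1}{N_{k_0}}\sum_{i=1}^{N_{k_0}} \rho(T^i x, e) \leq C + 1$, and $\frac{1}{N_{k_0}}\sum_{i=1}^{N_{k_0}} \rho(T^i y_{k_0}, e) > M + C + 1$. Using $T^i(y_{k_0} x) = (T^i y_{k_0})(T^i x)$ together with the left-invariance of $\rho$, we obtain the reverse triangle estimate $\rho(T^i(y_{k_0} x), e) \geq \rho(T^i y_{k_0}, e) - \rho(T^i x, e)$; averaging yields $\frac{1}{N_{k_0}}\sum_{i=1}^{N_{k_0}} \rho(T^i(y_{k_0} x), e) > M$, placing $y_{k_0} x \in X_{n,M} \cap U$. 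The main obstacle is coordinating these four constraints on $k_0$ simultaneously, which is exactly where the preliminary reduction to $N_k \to \infty$ matters and where the unbounded growth of the $y_k$-averages (rather than the fixed threshold $\delta$ used in Proposition~\ref{prop:g-mean-sen-eq-cond}) is essential to absorb the contribution from $x$ while still clearing the arbitrary $M$.
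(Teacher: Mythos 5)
Your proof is correct and follows essentially the same route the paper intends: the authors state that this proposition's proof is ``similar to'' that of Proposition~\ref{prop:g-mean-sen-eq-cond}, and your argument is exactly that template with the fixed threshold $\delta$ replaced by the diverging averages, using the translated open sets $X_{n,M}$, the reverse triangle inequality via left-invariance of $\rho$, and the Baire category argument. The only point worth flagging is that condition (3) as printed (``$\sup_{k}(\cdots)\geq\frac{1}{k}$'') is evidently a typo, and you have silently read it in the intended way, namely that $\frac{1}{N_k}\sum_{i=1}^{N_k}\rho(T^iy_k,e)\to\infty$, which is what your subsequence reduction and the absorption of the constant $C$ actually require.
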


\begin{prop}\label{prop:mean-irr-ext-sen}
    Let $T\in \eend(G)$. If there exists a mean irregular point, then $T$ is mean extremely sensitive.
\end{prop}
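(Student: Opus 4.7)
The plan is to apply Proposition~\ref{prop:g-mean-ext-sen-eq-cond}, so I need to produce a sequence $(y_k)_k$ in $G$ converging to $e$ together with positive integers $(N_k)_k$ such that the averages $\frac{1}{N_k}\sum_{i=1}^{N_k}\rho(T^iy_k,e)$ grow without bound. Let $y \in G$ be mean irregular, so $\liminf_n \frac{1}{n}\sum_{i=1}^n d(T^iy,e)=0$ and $\limsup_n \frac{1}{n}\sum_{i=1}^n \rho(T^iy,e)=\infty$. The candidate sequence will be $y_k := T^{j_k}y$ for a carefully chosen $j_k\to\infty$.

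First I would observe that the mean proximality of $(y,e)$ forces $d(T^jy,e)$ to be arbitrarily small along some subsequence of indices: if for some $\eps>0$ only finitely many $j$ (say those with $j\leq I$) satisfy $d(T^jy,e)<\eps$, then for $n>I$ one has $\frac{1}{n}\sum_{i=1}^n d(T^iy,e)\geq \frac{n-I}{n}\eps$, which stays bounded below, contradicting $\liminf = 0$. Hence we can choose $j_k\to\infty$ with $d(T^{j_k}y,e)<\tfrac{1}{k}$, so that $y_k=T^{j_k}y$ satisfies $y_k\to e$.

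For the second ingredient, fix $k$ and note the identity
\[
\frac{1}{N}\sum_{i=1}^N\rho(T^iy_k,e)=\frac{1}{N}\sum_{i=j_k+1}^{j_k+N}\rho(T^iy,e)=\frac{j_k+N}{N}\cdot\frac{1}{j_k+N}\sum_{i=1}^{j_k+N}\rho(T^iy,e)-\frac{1}{N}\sum_{i=1}^{j_k}\rho(T^iy,e).
\]
As $N\to\infty$ along a subsequence witnessing $\limsup_n \frac{1}{n}\sum_{i=1}^n\rho(T^iy,e)=\infty$, the first term is unbounded (the factor $\frac{j_k+N}{N}\to 1$), while the second term tends to $0$ because $C_k:=\sum_{i=1}^{j_k}\rho(T^iy,e)$ is a finite constant depending only on $k$. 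Consequently we may pick $N_k$ so large that $\frac{1}{N_k}\sum_{i=1}^{N_k}\rho(T^iy_k,e)\geq k$, which gives the sequence required by Proposition~\ref{prop:g-mean-ext-sen-eq-cond}~(3); the conclusion that $T$ is mean extremely sensitive then follows from the equivalence stated there.

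I expect the only genuinely delicate point to be the Markov-style argument extracting a subsequence $j_k\to\infty$ with $d(T^{j_k}y,e)\to 0$ from the Cesàro liminf condition; the rest is essentially a bookkeeping observation that the initial segment $\frac{1}{N}\sum_{i=1}^{j_k}\rho(T^iy,e)$ becomes negligible as $N$ grows with $j_k$ held fixed, so that large Cesàro averages for $y$ transfer to large Cesàro averages for the shifted point $T^{j_k}y$.
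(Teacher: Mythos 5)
Your proof is correct and takes essentially the approach the paper intends: the paper proves Proposition~\ref{prop:mean-irr-ext-sen} "similarly to Proposition~\ref{prop:irregular-ext-sen}", i.e.\ by shifting the (mean) irregular point along return times to the identity and feeding the resulting sequence $y_k=T^{j_k}y$ into the sensitivity criterion, here Proposition~\ref{prop:g-mean-ext-sen-eq-cond}. Your two supplementary observations --- that $\liminf_n\frac{1}{n}\sum_{i=1}^n d(T^iy,e)=0$ forces $d(T^{j_k}y,e)\to 0$ along some $j_k\to\infty$, and that the initial segment $\frac{1}{N}\sum_{i=1}^{j_k}\rho(T^iy,e)$ becomes negligible so that large Ces\`aro averages transfer to the shifted point --- are exactly the adjustments needed to pass from the pointwise setting of Proposition~\ref{prop:irregular-ext-sen} to the mean setting.
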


Similar to Theorem~\ref{thm:equi-dense-mean-chaos}, we give a characterization of the existence of a dense set of mean irregular points for continuous endomorphisms of completely metrizable groups. 

\begin{thm}\label{thm:equi-dense-mean-irr}
    Let $T\in \eend(G)$. Then the following assertions are equivalent:
    \begin{enumerate}
        \item $T$ admits a dense set of mean irregular points;
        \item $T$ admits a residual set of mean irregular points;
        \item  for every sequence $(O_j)_j$ of nonempty open subsets of $G$, there exists a sequence $(K_j)_j$ of Cantor sets with $K_j\subset O_j$ such that $\bigcup_{j=1}^\infty K_j$ is mean Li-Yorke extremely scrambled;
        \item the mean proximal relation of $T$ is dense in $G\times G$ and $T$ is mean extremely sensitive.
    \end{enumerate}
\end{thm}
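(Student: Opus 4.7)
The plan is to mirror the proof of Theorem~\ref{thm:equi-of-dense-irr-points} step by step, simply replacing each ingredient by its mean analogue that has already been prepared in Subsection~4.1 and in the present subsection. Concretely, I would verify the ring of implications (2)$\Rightarrow$(1)$\Rightarrow$(4)$\Leftrightarrow$(3) and (4)$\Rightarrow$(2).

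First, (2)$\Rightarrow$(1) is immediate since every residual subset of a completely metrizable space is dense. For (1)$\Rightarrow$(4), let $A$ be a dense set of mean irregular points. By definition every $x\in A$ satisfies $\liminf_{n\to\infty}\frac{1}{n}\sum_{i=1}^n d(T^ix,e)=0$, so $A\subset\mprox(T,e)$, and hence $\mprox(T,e)$ is dense in $G$. Applying the left-invariance statement Lemma~\ref{lem:g-mean-asym-prox-left-inv}(4), the mean proximal relation $\mprox(T)$ is dense in $G\times G$. Since $A$ is nonempty, Proposition~\ref{prop:mean-irr-ext-sen} guarantees that $T$ is mean extremely sensitive, which gives (4). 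The equivalence (3)$\Leftrightarrow$(4) is exactly Proposition~\ref{prop:dense-mean-ext-scrambled}.

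The main step is (4)$\Rightarrow$(2). Here I would express the set of mean irregular points as the intersection
\[
    \mprox(T,e)\cap \biggl\{x\in G\colon \limsup_{n\to\infty}\frac{1}{n}\sum_{i=1}^{n}\rho(T^ix,e)=\infty\biggr\}.
\]
By Lemma~\ref{lem:mean-prox-G-delta}(1), $\mprox(T,e)$ is a $G_\delta$ subset of $G$, and density follows from the assumption that $\mprox(T)$ is dense in $G\times G$ together with Lemma~\ref{lem:g-mean-asym-prox-left-inv}(4). For the second set, the hypothesis that $T$ is mean extremely sensitive combined with the mean analogue of Lemma~\ref{lem:ext-sen-equivalent} (the second unnamed lemma in Subsection~4.1) shows that this set is a dense $G_\delta$ subset of $G$; note this lemma only needs mean extreme sensitivity at a single point since left-invariance transports the property to every $x\in G$. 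By the Baire category theorem the intersection is residual, so the set of mean irregular points is residual in $G$, establishing (2).

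The only real obstacle is making sure the pieces (left-invariance for the mean proximal cell, $G_\delta$-ness of the extreme set, and the implication from existence of a mean irregular point to mean extreme sensitivity) have been assembled in enough generality; all of these are already provided by Lemma~\ref{lem:g-mean-asym-prox-left-inv}, Lemma~\ref{lem:mean-prox-G-delta}, Proposition~\ref{prop:g-mean-ext-sen-eq-cond} and Proposition~\ref{prop:mean-irr-ext-sen}, so the proof is a direct translation of the corresponding argument for Li-Yorke extreme chaos and requires no new construction. In particular, no appeal to Lemma~\ref{lem:subseq-mean-asym-subgroup} (and hence no boundedness assumption on $d$) is needed at this stage, since we are only asserting the existence of a residual set of mean irregular points rather than passing to a separable invariant closed subgroup.
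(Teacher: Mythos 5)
Your proposal is correct and is exactly the argument the paper intends: the paper omits the proof with the remark that it is "similar to" the earlier characterizations, and the intended proof is precisely your translation of the proof of Theorem~\ref{thm:equi-of-dense-irr-points} — the cycle (2)$\Rightarrow$(1)$\Rightarrow$(4)$\Rightarrow$(2) plus (3)$\Leftrightarrow$(4) via Proposition~\ref{prop:dense-mean-ext-scrambled}, with the key step writing the set of mean irregular points as the intersection of the two dense $G_\delta$ sets $\mprox(T,e)$ and $\{x\in G\colon \limsup_{n\to\infty}\frac{1}{n}\sum_{i=1}^{n}\rho(T^ix,e)=\infty\}$. Your closing observation that no boundedness of $d$ (and hence no appeal to Lemma~\ref{lem:subseq-mean-asym-subgroup}) is needed here is also accurate, since that lemma only enters when passing to an invariant closed subgroup in Theorem~\ref{thm:equi-mean-ext-chaos}.
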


By applying Theorem~\ref{thm:equi-dense-mean-irr}, we also can characterize mean Li-Yorke extreme chaos for continuous endomorphisms of completely metrizable groups.

\begin{thm} \label{thm:equi-mean-ext-chaos}
    Let $T\in\eend(G)$.
    If $d$ is  a bounded left-invariant compatible metric on $G$
    or $T$ is Lipschitz continuous,
    then the following assertions are equivalent:
    \begin{enumerate}
        \item $T$ admits a mean  irregular point;
        \item $T$ admits a mean Li-Yorke extremely chaotic pair;
        \item $T$ is mean Li-Yorke extremely chaotic;
        \item $T$ is mean Li-Yorke extremely sensitive;
        \item the restriction of $T$ to some closed $T$-invariant subgroup $\widetilde{G}$ has a dense set of mean irregular points.
    \end{enumerate}
\end{thm}

\subsection{Mean Li-Yorke chaos for continuous linear operators on Fr\'echet spaces}
\label{subsec:MLYC-Frechet}
In this subsection, unless otherwise specified, $X$ denotes an arbitrary infinite-dimensional Fr{\'e}chet space. The separating increasing sequence $(p_j)_j$ of seminorms on $X$ and the Fr{\'e}chet-space metric $d$ on $X$ are as in Subsection~\ref{subsec:LYC-Frechet}.
Even though mean Li-Yorke chaos for linear operators on Banach spaces has been characterized in \cite{BBP2020}, in this subsection we generalize the results to the setting of linear operators on Fr\'echet spaces by applying the results from Subsections~\ref{subsec:MLYC-group} and~\ref{subsec:MLYeC-group} to continuous linear operators on $X$.
We will also obtain some new results utilizing the linear structure of $X$.

Since the Fr{\'e}chet-space metric $d$ on $X$ is bounded and $d(\lambda x,\vecz)\leq (1+|\lambda|)d(x,\vecz)$ for any $\lambda\in\bbk$ and $x\in X$, following Lemmas~\ref{lem:subseq-asym-subspace} and~\ref{lem:subseq-mean-asym-subgroup}, we have the following result.
\begin{lem}\label{lem:seq-mean-asym-subspace}
    Let $T\in L(X)$ and
    $(n_k)_{k}$ be an increasing sequence of positive integers.
    Then
    \[
        \biggl\{x\in X\colon \lim_{k\to \infty} \frac{1}{n_k}\sum_{i=1}^{n_k} d(T^{i}x,\vecz)=0\biggr\}
    \]
    is a $T$-invariant subspace of $X$.
\end{lem}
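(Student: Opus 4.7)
The plan is to verify, for the set
\[
    X_0:=\biggl\{x\in X\colon \lim_{k\to\infty}\frac{1}{n_k}\sum_{i=1}^{n_k}d(T^ix,\vecz)=0\biggr\},
\]
three closure properties: $T$-invariance, closure under vector addition, and closure under scalar multiplication. The set is nonempty since $T\vecz=\vecz$ forces $\vecz\in X_0$. The overall strategy is simply to transcribe the argument in the bounded-metric portion of Lemma~\ref{lem:subseq-mean-asym-subgroup} for the $T$-invariance and the additive closure, and then to supply the extra estimate required for scalar multiplication, which is the genuinely linear ingredient not present in the group setting.

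For $T$-invariance I would reuse the telescoping identity from Lemma~\ref{lem:subseq-mean-asym-subgroup}: for $x\in X_0$,
\[
    \frac{1}{n_k}\sum_{i=1}^{n_k}d(T^i(Tx),\vecz) = \frac{1}{n_k}\biggl(\sum_{i=1}^{n_k}d(T^ix,\vecz) + d(T^{n_k+1}x,\vecz) - d(Tx,\vecz)\biggr),
\]
and since $d$ is bounded on $X$ (it is the sum $\sum_j 2^{-j}\min(1,p_j(\cdot))$), the correction term $d(T^{n_k+1}x,\vecz)-d(Tx,\vecz)$ remains uniformly bounded, so dividing by $n_k\to\infty$ the limit is $0$; hence $Tx\in X_0$.

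For closure under addition I would exploit the translation invariance of $d$: for $x,y\in X_0$ and every $i\in\bbn$,
\[
    d(T^i(x+y),\vecz) = d(T^ix + T^iy,\vecz) \leq d(T^ix,\vecz) + d(T^iy,\vecz),
\]
and averaging along $(n_k)_k$ immediately gives $x+y\in X_0$. For closure under scalar multiplication, for $\lambda\in\bbk$ and $x\in X_0$, I would invoke the estimate $d(\lambda y,\vecz)\leq(1+|\lambda|)d(y,\vecz)$ recorded in the sentence introducing the lemma (which follows pointwise from $\min(1,|\lambda|p_j(y))\leq(1+|\lambda|)\min(1,p_j(y))$) to obtain
\[
    \frac{1}{n_k}\sum_{i=1}^{n_k}d(T^i(\lambda x),\vecz) = \frac{1}{n_k}\sum_{i=1}^{n_k}d(\lambda T^ix,\vecz) \leq (1+|\lambda|)\frac{1}{n_k}\sum_{i=1}^{n_k}d(T^ix,\vecz)\to 0,
\]
so $\lambda x\in X_0$.

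Since the three ingredients are all either provided by the two preceding lemmas or highlighted in the paragraph introducing the statement, there is no real obstacle. The only two points distinguishing the Fr\'echet-space case from the general group case are the absorption estimate $d(\lambda x,\vecz)\leq(1+|\lambda|)d(x,\vecz)$, without which scalar multiplication would have no reason to preserve the Ces\`aro-average vanishing along $(n_k)_k$, and the fact that boundedness of $d$ (automatic from its definition via $\min(1,p_j)$) is indispensable for the subsequential $T$-invariance step—the same restriction that appeared in the group statement.
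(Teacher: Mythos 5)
Your proposal is correct and assembles exactly the ingredients the paper itself points to for this lemma: the telescoping argument with the bounded metric from Lemma~\ref{lem:subseq-mean-asym-subgroup} for $T$-invariance, translation invariance of $d$ for additive closure, and the estimate $d(\lambda x,\vecz)\leq(1+|\lambda|)d(x,\vecz)$ for scalar multiplication. This is the same approach as the paper, which states the result by citing precisely these facts rather than writing the verification out.
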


With the same idea as in Subsection~\ref{subsec:LYC-Frechet}, 
we say that $T\in L(X)$ is \emph{mean $m$-extremely sensitive} if it is mean extremely sensitive with respect to the seminorm $p_m$.

\begin{prop}\label{prop:p-m-infty}
    Let $T\in L(X)$ and $m\in\bbn$. Then the following assertions are equivalent:
    \begin{enumerate}
        \item $T$ is mean $m$-extremely sensitive;
        \item there exists  a vector $x\in X$ such that
              \[
                  \limsup_{n\to\infty} \frac{1}{n}\sum_{i=1}^n p_m( T^ix)=\infty;
              \]
        \item there exists a bounded sequence $(y_k)_k$ in $X$ and an increasing sequence $(N_k)_k$ in $\bbn$ such that
              \[
                  \sup_{k\in\bbn} \frac{1}{N_k} \sum_{i=1}^{N_k} p_m(T^i y_k)=\infty;
              \]
        \item there exists  a  sequence $(y_k)_k$ in $X$ and an increasing sequence $(N_k)_k$ in $\bbn$ such that $\lim_{k\to\infty}y_k=\vecz$ and
              \[
                  \inf_{k\in\bbn} \frac{1}{N_k} \sum_{i=1}^{N_k} p_m(T^i y_k)>0.
              \]
    \end{enumerate}
\end{prop}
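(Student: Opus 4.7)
The plan is to prove the cycle (1)$\Rightarrow$(2)$\Rightarrow$(3)$\Rightarrow$(4)$\Rightarrow$(1), each implication being reduced to a rescaling that exploits the homogeneity $p_m(T^i(\lambda y))=|\lambda|\,p_m(T^iy)$ coming from linearity. The result is the Fr\'echet-space/extreme analogue of Proposition~\ref{prop:g-mean-sen-eq-cond}, the distinguishing feature being that the partial Ces\`aro averages must be pushed to $\infty$ rather than merely past a positive constant.

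For (1)$\Rightarrow$(2), apply Proposition~\ref{prop:g-mean-ext-sen-eq-cond} with the left-invariant continuous pseudometric $\rho(x,y):=p_m(x-y)$ and base point $\vecz$; this yields a vector $x\in X$ with $\sup_n\frac{1}{n}\sum_{i=1}^n p_m(T^ix)=\infty$, and nonnegativity of the partial averages upgrades the supremum to $\limsup_n\frac{1}{n}\sum_{i=1}^n p_m(T^ix)=\infty$. For (2)$\Rightarrow$(3), the constant (hence bounded) sequence $y_k:=x$ together with any increasing $(N_k)_k$ realising the $\limsup$ does the job.

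For (3)$\Rightarrow$(4), write $A_k:=\frac{1}{N_k}\sum_{i=1}^{N_k}p_m(T^iy_k)$. By hypothesis $\sup_kA_k=\infty$, so extract a subsequence $(k_j)$ with $A_{k_j}\to\infty$ and set $\lambda_j:=1/\sqrt{A_{k_j}}$ and $z_j:=\lambda_jy_{k_j}$. Boundedness of $(y_k)_k$ gives $M_\ell:=\sup_kp_\ell(y_k)<\infty$ for every $\ell$, whence $p_\ell(z_j)\le\lambda_jM_\ell\to 0$, so $z_j\to\vecz$; meanwhile homogeneity yields $\frac{1}{N_{k_j}}\sum_{i=1}^{N_{k_j}}p_m(T^iz_j)=\sqrt{A_{k_j}}\to\infty$, so (4) holds with any positive $\delta$.

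The reverse implication (4)$\Rightarrow$(1) is the main obstacle. After passing to a subsequence we may assume $\frac{1}{N_k}\sum_{i=1}^{N_k}p_m(T^iy_k)\ge\delta$ for every $k$; what we need is an amplification by scalars $\lambda_k\to\infty$ such that $\lambda_ky_k$ still tends to $\vecz$. This is where the Fr\'echet structure (countable family of seminorms) is crucial: choose $N_1<N_2<\cdots$ with $p_j(y_k)<4^{-j}$ for every $k\ge N_j$ and set $\lambda_k:=2^j$ on each block $N_j\le k<N_{j+1}$. Then $\lambda_k\to\infty$, and for every fixed $j_0$ and every $k\ge N_{j_0}$ we have $\lambda_kp_{j_0}(y_k)\le 2^j\cdot 4^{-j}=2^{-j}\to 0$, so $\lambda_ky_k\to\vecz$; by homogeneity $\frac{1}{N_k}\sum p_m(T^i(\lambda_ky_k))\ge\lambda_k\delta\to\infty$, and the rescaled sequence verifies condition~(3) of Proposition~\ref{prop:g-mean-ext-sen-eq-cond}, whence $T$ is mean $m$-extremely sensitive. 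The delicacy of the diagonal choice lies in beating countably many seminorms simultaneously while still letting $\lambda_k$ blow up, which is exactly what the block construction achieves.
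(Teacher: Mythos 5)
Your proof is correct, and for (1)$\Rightarrow$(2)$\Rightarrow$(3) and the rescaling in (3)$\Rightarrow$(4) it matches the paper's argument in substance (the paper divides by $a_k$ itself rather than by $\sqrt{A_{k_j}}$, which is an immaterial difference). Where you genuinely diverge is in (4)$\Rightarrow$(1). The paper proves this implication by redoing the Baire category argument from scratch: it shows each set $X_n=\{x\colon \exists k>n,\ \frac1k\sum_{i=1}^k p_m(T^ix)>n\}$ is dense by perturbing a given $y$ to $y+\frac{2M+2n}{\delta}y_{k_0}$, i.e.\ the amplification factor depends on the target point and on $n$. You instead perform a single global amplification -- the block construction producing $\lambda_k\to\infty$ with $\lambda_k y_k\to\vecz$ and Ces\`aro averages $\geq\lambda_k\delta\to\infty$ -- and then delegate the Baire step to Proposition~\ref{prop:g-mean-ext-sen-eq-cond}(3)$\Rightarrow$(1). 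This is cleaner and reuses the group-level machinery as the paper's philosophy suggests, at the cost of leaning on a proposition whose proof the paper omits and whose condition (3) is garbled as printed (``$\sup_k(\cdots)\geq\frac1k$''); your rescaled sequence satisfies the evidently intended condition (averages tending to infinity), and the omitted proof does go through for it, so nothing is actually broken. One further caveat you share with the paper: condition (4) literally asserts only $\sup_k\frac{1}{N_k}\sum_{i=1}^{N_k}p_m(T^iy_k)\geq\delta$, which does not by itself yield an infinite subsequence along which the average exceeds $\delta/2$ (a single large term suffices for the supremum, and then the statement would be false, e.g.\ for the identity operator). Your ``after passing to a subsequence'' silently upgrades this to ``for all $k$'', but the paper's own proof of (4)$\Rightarrow$(1) makes exactly the same implicit reading when it picks $k_0$ with $N_{k_0}$ large, so this is a defect of the statement rather than of your argument.
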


\begin{proof}
    The implications (1)$\Rightarrow$(2)$\Rightarrow$(3) are clear.

    (3)$\Rightarrow$(4). For each $k\in\bbn$, let $a_k= \frac{1}{N_k} \sum\limits_{i=1}^{N_k} p_m(T^i y_k)$. Without loss of generality, we can assume that $(a_k)_k$ is increasing. For each $k\in\bbn$, let $x_k=\frac{y_k}{a_k}$.
    Then $\lim_{k\to\infty}x_k=\vecz$ and
    \[
        \inf_{k} \frac{1}{N_k} \sum_{i=1}^{N_k} p_m(T^i x_k)\geq 1.
    \]

    (4)$\Rightarrow$(1).
Let  
\[ 
\delta= \inf_{k\in\bbn} \frac{1}{N_k} \sum_{i=1}^{N_k} p_m(T^i y_k).
\]
    For each $n\in\bbn$, let
    \[
        X_n=\biggl\{x\in X\colon \exists k>n \text{ s.t. } \frac{1}{k} \sum_{i=1}^k p_m(T^ix)>n\biggr\}.
    \]
    It is clear that $X_n$ is open.
    Let us show that $X_n$ is dense.
    Let $U$ be a nonempty open subset of $X$ and pick $y\in X$.
    If
    \[
        \limsup_{k\to\infty} \frac{1}{k}\sum_{i=1}^k  p_m(T^i y)=\infty,
    \]
    then $y\in X_n$.
    Now assume that
    \[
        \limsup_{k\to\infty} \frac{1}{k}\sum_{i=1}^k  p_m(T^i y)=M<\infty.
    \]
    Then there exists $N\in\bbn$ such that for any $k\geq N$, we have
    \[
        \frac{1}{k}\sum_{i=1}^k  p_m(T^i y)\leq 2M.
    \]
    There exists $k_0\in\bbn$ such that $y+\frac{2M+2n}{\delta} y_{k_0}\in U$ and $N_{k_0}>\max\{N,n\}$.
    Then
    \begin{align*}
        \frac{1}{N_{k_0}}\sum_{i=1}^{N_{k_0}}  p_m(T^i(y + \tfrac{M+2n}{\delta} y_{k_0}))
         & \geq \frac{1}{N_{k_0}}\sum_{i=1}^{N_{k_0}}
        p_m(T^i( \tfrac{2M+2n}{\delta}y_{k_0}))
        - \frac{1}{N_{k_0}}\sum_{i=1}^{N_{k_0}}  p_m(T^iy) \\
         & \geq \tfrac{2M+2n}{\delta} \cdot\delta -2M >n
    \end{align*}
    This implies that $y+\frac{2M+2n}{\delta} x_{j_0}\in X_n$.
    Therefore, the set
    \[
        \biggl\{x\in X\colon \limsup_{n\to\infty} \frac{1}{n}\sum_{i=1}^n  p_m(T^i x)=\infty\biggr\}
        =\bigcap_{n=1}^\infty X_n
    \]
    is residual in $X$, which implies that $T$ is mean $m$-extremely sensitive.
\end{proof}

\begin{prop}\label{prop:f-mean-sen-2-mean-ext-sen}
    Let $T\in L(X)$. If $T$ is mean sensitive then $T$ is mean $m$-extremely sensitive for some $m\in\bbn$.
\end{prop}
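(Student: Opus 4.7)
The plan is to invoke the linear-dynamics instance of Proposition~\ref{prop:g-mean-sen-eq-cond} to produce a witness sequence for mean sensitivity, decompose the translation-invariant metric $d$ into its seminorm components, and pigeonhole to isolate a single seminorm carrying a positive fraction of the mass. Applying Proposition~\ref{prop:p-m-infty} then yields mean $m$-extreme sensitivity.

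In detail, first I would apply Proposition~\ref{prop:g-mean-sen-eq-cond} to $T$ viewed as a continuous endomorphism of the additive group of $X$. This supplies $\delta>0$, a sequence $(y_k)_k$ in $X$ with $y_k\to\vecz$, and positive integers $(N_k)_k$ such that, after passing to a subsequence,
\[
\frac{1}{N_k}\sum_{i=1}^{N_k} d(T^i y_k,\vecz)\geq \delta \quad \text{for every } k\in\bbn.
\]

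Second, I would substitute the defining formula $d(x,\vecz)=\sum_{j=1}^\infty 2^{-j}\min(1,p_j(x))$ and choose $J\in\bbn$ large enough that $\sum_{j>J} 2^{-j}<\delta/2$. Since each truncated average $\frac{1}{N_k}\sum_{i=1}^{N_k}\min(1,p_j(T^iy_k))$ lies in $[0,1]$, this choice of $J$ forces the tail contribution to be at most $\delta/2$, so
\[
\sum_{j=1}^{J} \frac{1}{2^j}\cdot \frac{1}{N_k}\sum_{i=1}^{N_k} \min\bigl(1,p_j(T^i y_k)\bigr)\geq \frac{\delta}{2}.
\]
For each $k$, pigeonhole produces some $j(k)\in\{1,\dotsc,J\}$ with $\frac{1}{N_k}\sum_{i=1}^{N_k}\min(1,p_{j(k)}(T^iy_k))\geq \delta/(2J)$. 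Because $j(k)$ takes only finitely many values, a fixed $m\in\{1,\dotsc,J\}$ occurs for infinitely many $k$; passing to that subsequence and using $\min(1,p_m)\leq p_m$ gives
\[
\lim_{k\to\infty}y_k=\vecz,\quad \sup_{k\in\bbn}\frac{1}{N_k}\sum_{i=1}^{N_k} p_m(T^i y_k)\geq \frac{\delta}{2J}.
\]

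Finally, condition (4) of Proposition~\ref{prop:p-m-infty} applied to the seminorm $p_m$ with constant $\delta/(2J)$ yields that $T$ is mean $m$-extremely sensitive, as required. The argument has no substantial obstacle: the only real step is the pigeonhole extraction of a single working index $m$ along a subsequence; the rest is a routine tail estimate for the Fr{\'e}chet metric.
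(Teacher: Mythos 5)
Your argument is correct, but it reaches Proposition~\ref{prop:p-m-infty} by a genuinely different route than the paper. The paper keeps the metric $d$ whole: it exploits the bound $d\leq 1$ through a Markov-type inequality to convert the lower bound on the Ces\`aro averages of $d(T^iy_k,\vecz)$ into a positive \emph{upper density} of times $i$ with $d(T^iy_k,\vecz)>\delta_2$, and then uses $d(x,\vecz)<p_m(x)+2^{-m}$ to turn ``$d$ large'' into ``$p_m$ large'' at those times, so that the Ces\`aro average of $p_m$ is bounded below by a constant times that density. You instead expand $d$ into its defining series $\sum_j 2^{-j}\min(1,p_j(\cdot))$, interchange the (nonnegative) sums, kill the tail $j>J$ by choosing $2^{-J}<\delta/2$, and pigeonhole over the finitely many remaining indices to isolate one seminorm carrying a definite fraction $\delta/(2J)$ of the Ces\`aro mass; dropping the truncation $\min(1,\cdot)$ then feeds directly into condition (4) of Proposition~\ref{prop:p-m-infty}. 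Both proofs are sound; yours is arguably more elementary, while the paper's density detour produces as a by-product exactly the kind of positive-density information that is reused in the distributional-chaos section. Two small remarks. First, since the sequence $(p_j)_j$ is increasing, you can skip the pigeonhole altogether: $\min(1,p_J)\geq\min(1,p_j)$ for $j\leq J$, so the single index $m=J$ already satisfies $\frac{1}{N_k}\sum_{i=1}^{N_k}\min(1,p_J(T^iy_k))\geq\delta/2$ for every $k$. Second, your opening step should be credited to the definition of mean sensitivity (applied at $x=\vecz$ with $\eps=1/k$, choosing $N_k$ increasing and accepting $\delta/2$ in place of $\delta$) rather than to the literal statement of Proposition~\ref{prop:g-mean-sen-eq-cond}(3), whose conclusion only asserts a supremum over $k$; this is the same normalization the paper performs and costs nothing.
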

\begin{proof}
By Proposition~\ref{prop:g-mean-sen-eq-cond}\,(3), there exist sequences $(y_k)_k$ and $(N_k)_k$ in $\bbn$ such that $\lim_{k\to \infty}y_k=0$ and also
\[
    \delta:=\inf_{k\in \bbn}\frac{1}{N_k}\sum_{i=1}^{N_k}d(T^i y_k,\vecz)>0.
\]
Since for every $x\in X$ and every $m\in\bbn$ we have that $d(x,0)\leq p_m(x)+\frac{1}{2^m}$, we deduce
\[
    \delta=\inf_{k\in \bbn}\frac{1}{N_k}\sum_{i=1}^{N_k}d(T^i y_k,\vecz)\leq \inf_{k\in \bbn}\frac{1}{N_k}\sum_{i=1}^{N_k}(p_m (T^i y_k)+\frac{1}{2^m})
    \leq \frac{1}{2^m}+\inf_{k\in \bbn}\frac{1}{N_k}\sum_{i=1}^{N_k}p_{m}(T^i y_k),
\]
for every $m\in\bbn$. If now we take $m\in \bbn$ such that $\delta>\frac{1}{2^m}$, we obtain
\[
    0<\delta-\frac{1}{2^m}\leq \inf_{k\in \bbn}\frac{1}{N_k}\sum_{i=1}^{N_k}p_{m}(T^i y_k).
\]
By Proposition~\ref{prop:p-m-infty}\,(4), it follows that $T$ is mean $m$-extremely sensitive.
\end{proof}

In \cite{BBP2020}, the authors introduced the concepts of \emph{absolutely mean semi-irregular vector} and \emph{absolutely mean irregular vector}
for linear operators on Banach spaces. These notions can be naturally extended to the Fréchet space case with respect to the seminorm $p_m$. 
Following as in Subsection~\ref{subsec:LYC-Frechet}, we can define  \emph{mean Li-Yorke $m$-extremely scrambled set}, \emph{mean Li-Yorke $m$-extreme chaos} and \emph{mean Li-Yorke $m$-extreme sensitivity}.

Now we give a characterization of the existence of a  dense set of absolutely mean $m$-irregular vectors for continuous linear operators on Fr\'echet spaces. Since the proof is similar to that of Theorem~\ref{thm:F-space-dense-LY-chaos}, we leave it to the reader.

\begin{thm}\label{thm:f-dense-mean-extreme-chaos}
Let $T\in L(X)$. Then we have the implications 
    (1) $\Leftrightarrow$ (2) $\Rightarrow$ (3) $\Leftrightarrow$ (4) $\Leftrightarrow$ (5), where
    \begin{enumerate}
        \item $T$ admits a dense set of absolutely mean semi-irregular vectors;
        \item the mean proximal  cell of $\vecz$ is dense in $X$ and there exists $x\in X$ such that 
         \[
        \limsup_{n\to\infty} \frac{1}{n}\sum_{i=1}^nd(T^ix,\vecz)>0;
         \]
        \item $T$ admits a residual set of absolutely mean $m$-irregular vectors for some $m\in\bbn$;
        \item there exists $m\in\bbn$ such that for every sequence $(O_j)_j$ of nonempty open subsets of $X$, there exists a sequence $(K_j)_j$ of Cantor sets with $K_j\subset O_j$ such that $\bigcup_{j=1}^\infty K_j$ is mean Li-Yorke $m$-extremely scrambled;
        \item the mean proximal relation of $T$ is dense in $X\times X$ and $T$ is mean  $m$-extremely sensitive for some $m\in\bbn$.
    \end{enumerate}
\end{thm}


Similar to the proof of Theorem~\ref{thm:f-space-LY-chaos}, by applying Theorem~\ref{thm:f-dense-mean-extreme-chaos} we get the following characterization of mean Li-Yorke $m$-extreme chaos for continuous linear operators on Fr\'echet spaces.

\begin{thm}
    Let $T\in L(X)$ and $m\in\bbn$. Then the following assertions are equivalent:
    \begin{enumerate}
        \item $T$ admits an absolutely mean $m$-irregular vector;
        \item $T$ admits a mean Li-Yorke $m$-extremely chaotic pair;
        \item $T$ is mean Li-Yorke $m$-extremely chaotic;
        \item $T$ is mean Li-Yorke $m$-extremely sensitive;
        \item the restriction of $T$ to some closed $T$-invariant subspace $\widetilde{X}$ has a dense set of  absolutely mean $m$-irregular vectors.
    \end{enumerate}
\end{thm}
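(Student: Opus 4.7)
The plan is to mirror the proof of Theorem \ref{thm:f-space-LY-chaos}, but with every occurrence of ``Li-Yorke $m$-extreme'' replaced by ``mean Li-Yorke $m$-extreme'' and with the single-orbit tools replaced by their Cesàro counterparts. The implications (3)$\Rightarrow$(2)$\Rightarrow$(1) and (4)$\Rightarrow$(2) are immediate from the definitions (a mean $m$-irregular vector $x$ immediately gives the pair $(x,\vecz)$, and a mean Li-Yorke $m$-extremely scrambled set of cardinality $>1$ gives a scrambled pair). Moreover, (5)$\Rightarrow$(3) is a direct application of Theorem \ref{thm:f-dense-mean-extreme-chaos} to the restriction $T|_{\widetilde{X}}$, which produces even a dense $G_\delta$ set of mean $m$-irregular vectors in $\widetilde{X}$, every uncountable subset of which is mean Li-Yorke $m$-extremely scrambled.

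The core implication is (1)$\Rightarrow$(5). Given a mean $m$-irregular vector $x$, choose an increasing sequence $(n_k)_k$ in $\bbn$ witnessing
\[
    \lim_{k\to\infty}\frac{1}{n_k}\sum_{i=1}^{n_k}d(T^ix,\vecz)=0,
\]
and set
\[
    X_0=\biggl\{y\in X\colon \lim_{k\to\infty}\frac{1}{n_k}\sum_{i=1}^{n_k}d(T^iy,\vecz)=0\biggr\}.
\]
By Lemma \ref{lem:seq-mean-asym-subspace}, $X_0$ is a $T$-invariant subspace of $X$, and clearly $x\in X_0$. Put $\widetilde{X}:=\overline{X_0}$; it is a $T$-invariant closed subspace. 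Since $X_0\subset \mprox(T|_{\widetilde{X}},\vecz)$, the mean proximal cell of $\vecz$ relative to $T|_{\widetilde{X}}$ is dense in $\widetilde{X}$. On the other hand, because $x$ is mean $m$-irregular one has $\limsup_{n\to\infty}\frac{1}{n}\sum_{i=1}^n p_m(T^ix)=\infty$, and $x\in\widetilde{X}$. Applying Proposition \ref{prop:p-m-infty} (condition (2) $\Rightarrow$ (1)) to $T|_{\widetilde{X}}$ shows that $T|_{\widetilde{X}}$ is mean $m$-extremely sensitive. Hence condition (4) of Theorem \ref{thm:f-dense-mean-extreme-chaos} is satisfied for $T|_{\widetilde{X}}$ with this very $m$, and that theorem provides a residual set of mean $m$-irregular vectors in $\widetilde{X}$, which is (5).

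Finally, (5)$\Rightarrow$(4) is the standard translation trick: let $D\subset\widetilde{X}$ be the residual set of mean $m$-irregular vectors of $T|_{\widetilde{X}}$. For any $x\in X$ and $\eps>0$, pick $y\in D$ with $d(y,\vecz)<\eps$ (possible since $D$ is dense in $\widetilde{X}$ and $\vecz\in\widetilde{X}$). Translation invariance of $d$ and $p_m$ under additive shifts gives
\[
    \liminf_{n\to\infty}\tfrac{1}{n}\sum_{i=1}^n d(T^i(x+y),T^ix)=\liminf_{n\to\infty}\tfrac{1}{n}\sum_{i=1}^n d(T^iy,\vecz)=0,
\]
and similarly the $\limsup$ with $p_m$ equals $\infty$. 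Thus $(x,x+y)$ is mean Li-Yorke $m$-extremely scrambled with $d(x,x+y)<\eps$, so $T$ is mean Li-Yorke $m$-extremely sensitive.

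The main obstacle is keeping track of the index $m$ throughout the argument. The general results Theorem \ref{thm:equi-mean-ext-chaos} and Theorem \ref{thm:f-dense-mean-extreme-chaos} quantify $m$ existentially, so one must verify that Proposition \ref{prop:p-m-infty} and the proof of Theorem \ref{thm:f-dense-mean-extreme-chaos} are ``$m$-local''; this is precisely what the construction above requires, and inspection shows each step uses only the fixed seminorm $p_m$, so the quantifier stays attached to the given $m$.
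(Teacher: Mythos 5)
Your proposal follows exactly the route the paper intends (the paper only says ``similar to the proof of Theorem~\ref{thm:f-space-LY-chaos}, applying Theorem~\ref{thm:f-dense-mean-extreme-chaos}''), and your care in keeping the index $m$ fixed throughout --- in particular using Proposition~\ref{prop:p-m-infty}~(2)$\Rightarrow$(1) on $T|_{\widetilde{X}}$ rather than the $m$-existential sensitivity results --- is precisely the point the paper leaves to the reader, and you handle it correctly.

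One justification is wrong as written, though the step it supports is still fine. In (5)$\Rightarrow$(3) you assert that the residual set of mean $m$-irregular vectors in $\widetilde{X}$ is a set ``every uncountable subset of which is mean Li-Yorke $m$-extremely scrambled.'' That is false: being mean $m$-irregular is a property of a single vector relative to $\vecz$, and for two such vectors $x,y$ nothing forces $\liminf_n \frac{1}{n}\sum_{i=1}^n d(T^i(x-y),\vecz)=0$ or $\limsup_n \frac{1}{n}\sum_{i=1}^n p_m(T^i(x-y))=\infty$ (the relevant subsequences for $x$ and $y$ need not be compatible, and $x-y$ need not be irregular). The correct way to get (3) from (5) is to invoke the equivalence (2)$\Leftrightarrow$(3) of Theorem~\ref{thm:f-dense-mean-extreme-chaos} for $T|_{\widetilde{X}}$: the Mycielski-type condition produces an uncountable ($\sigma$-Cantor) mean Li-Yorke $m$-extremely scrambled subset, which is what mean Li-Yorke $m$-extreme chaos requires. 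With that one-line repair the argument is complete.
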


 The following result reveals that any absolutely mean semi-irregular vector is an accumulation point of the set of absolutely mean $m$-irregular vectors for some $m\in\bbn$.

\begin{prop}
    Let $T\in L(X)$. Then the set of absolutely mean semi-irregular vectors is contained in the closure of the set of absolutely mean $m$-irregular vectors for some $m\in\bbn$.
\end{prop}
\begin{proof}
    Let $y$ be an absolutely mean semi-irregular vector.
    There exists  an increasing sequence $(n_k)_{k}$ of positive integers such that
    \[
        \lim_{k\to \infty} \frac{1}{n_k}\sum_{i=1}^{n_k} d(T^{i}y,\vecz)=0.
    \]
    By Lemma~\ref{lem:seq-mean-asym-subspace}, we have that
    \[
        X_0:=\biggl\{x\in X\colon \lim_{k\to \infty} \frac{1}{n_k}\sum_{i=1}^{n_k} d(T^{i}x,\vecz)=0\biggr\}
    \]
    is a $T$-invariant subspace of $X$.
    Let $\widetilde{X}$ be the closure of $X_0$.
    Then $\widetilde{X}$ is a $T$-invariant closed subspace of $X$.
    As the mean proximal cell of $e$ contains $X_0$, the mean proximal relation of $T|_ {\widetilde{X}}$ is dense in  $\widetilde{X}\times  \widetilde{X}$.
    Since $y\in  \widetilde{X}$ and $(y,e)$ is a mean Li-Yorke chaotic  pair, then by Corollary~\ref{coro:g-mean-scrambled-sen}, $T|_ {\widetilde{X}}$ is mean sensitive. By Proposition~\ref{prop:f-mean-sen-2-mean-ext-sen}, we have that $T|_{\widetilde{X}}$ is mean $m$-extremely sensitive for some $m\in\bbn$, so that by Proposition~\ref{prop:p-m-infty} there exists a vector $x\in \widetilde{X}$ with
    \[
        \limsup_{n\to\infty} \frac{1}{n}\sum_{i=1}^n p_m(T^i x)=\infty.
    \]
    By applying Theorem~\ref{thm:f-dense-mean-extreme-chaos} to $T|_ {\widetilde{X}}$ and $m$, the set of absolutely mean $m$-irregular vectors is residual in $\widetilde{X}$.
    Then $y$ is an accumulation point of the set of absolutely mean $m$-irregular vectors.
\end{proof}

Let $T\in L(X)$ and $m\in\bbn$. A vector subspace $Y$ of $X$ is called a \emph{absolutely mean $m$-irregular manifold} for $T$ if every vector $y\in Y\setminus\{0\}$ is absolutely mean $m$-irregular for $T$. 
The following result was proved in \cite[Theorem 29]{BBP2020} for the case in which $X$ is a Banach space.
Since the proof is similar to that of  Theorem~\ref{thm:dense-irregular-manifold}, we leave it to the reader.

\begin{prop} \label{prop:dense-mean-irregular-manifold}
    Let $X$ be a separable Fr\'echet space and $T\in L(X)$. If the mean asymptotic cell of $\vecz$ is dense in $X$  
    then either the mean asymptotic cell of $\vecz$ is  $X$ or $T$ admits a dense absolutely mean $m$-irregular manifold for some $m\in\bbn$.
\end{prop}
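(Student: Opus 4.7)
The plan is to mirror the proof of Theorem~\ref{thm:dense-irregular-manifold}, with pointwise convergence replaced by Cesaro-average convergence where appropriate. Since $\masym(T,\vecz)\subset\mprox(T,\vecz)$ and $\masym(T,\vecz)$ is dense, Lemma~\ref{lem:mean-prox-G-delta} makes $\mprox(T,\vecz)$ residual. If $\masym(T,\vecz)=\mprox(T,\vecz)$, then Lemma~\ref{lem:g-mean-asym-prox-left-inv}(3) forces $\masym(T,\vecz)=X$ and we are done. Otherwise there is a mean semi-irregular vector, so Corollary~\ref{coro:g-mean-scrambled-sen} yields mean sensitivity of $T$, and Proposition~\ref{prop:f-mean-sen-2-mean-ext-sen} upgrades it to mean $m$-extreme sensitivity for some $m\in\bbn$. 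Proposition~\ref{prop:p-m-infty} then makes
\[
    D_m:=\biggl\{x\in X\colon\limsup_{n\to\infty}\frac{1}{n}\sum_{i=1}^n p_m(T^ix)=\infty\biggr\}
\]
a dense $G_\delta$ subset of $X$, and an analog of the auxiliary Claim in Theorem~\ref{thm:dense-irregular-manifold} shows that for every increasing sequence $(s_k)_k$ in $\bbn$ the set
\[
    P(s_k):=\biggl\{x\in X\colon\liminf_{k\to\infty}\frac{1}{s_k}\sum_{i=1}^{s_k}d(T^ix,\vecz)=0\biggr\}
\]
is also dense $G_\delta$, with density coming from the inclusion $\masym(T,\vecz)\subset P(s_k)$.

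Fixing a dense sequence $(y_i)_i$ in $X$, I would then inductively build vectors $x_i$ with $d(x_i,y_i)<1/i$ and sequences $(s_k^{(i,j)})_k$ (for $j=1,\dots,i$) and $(t_k^{(i)})_k$ satisfying the same subsequence relations as in Theorem~\ref{thm:dense-irregular-manifold}, but with the limits now phrased in Cesaro form:
\[
    \lim_{k\to\infty}\frac{1}{s_k^{(i,j)}}\sum_{j'=1}^{s_k^{(i,j)}}d(T^{j'}x_i,\vecz)=0,\quad \lim_{k\to\infty}\frac{1}{t_k^{(i)}}\sum_{j'=1}^{t_k^{(i)}}p_m(T^{j'}x_i)=\infty.
\]
At step $n+1$, Baire's theorem selects $x_{n+1}$ from the dense $G_\delta$ set $\bigcap_{j=1}^n P(s_k^{(n,j)})\cap P(t_k^{(n)})\cap D_m$ with $d(x_{n+1},y_{n+1})<1/(n+1)$; the required sub-sequences $(s_k^{(n+1,j)})_k$ and $(t_k^{(n+1)})_k$ are extracted as usual. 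The mean proximality of any non-zero $v=\sum_\ell\alpha_\ell x_\ell$ along $(s_k^{(n+1,1)})_k$ then follows from the triangle inequality together with $d(\lambda x,\vecz)\le(1+|\lambda|)d(x,\vecz)$, since this sequence is a common sub-sequence of each $(s_k^{(\ell,1)})_k$ for $\ell\le n+1$.

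The main obstacle will be verifying the mean $p_m$-unboundedness of $v$. For $v$ with smallest nonzero index $i<n+1$, along $(s_k^{(n+1,i+1)})_k$ (which is a sub-sequence of $(t_k^{(i)})_k$) one has $\frac{1}{N}\sum p_m(T^jx_i)\to\infty$; the difficulty is bounding the cross terms $\frac{1}{N}\sum p_m(T^jx_\ell)$ for $\ell>i$, which in Theorem~\ref{thm:dense-irregular-manifold} were tamed by the pointwise bound $p_m(T^jx_\ell)\le 2^m d(T^jx_\ell,\vecz)$ that need not persist in Cesaro mean. The remedy is to split $p_m=\min(1,p_m)+(p_m-1)^+$: the Cesaro average of the first summand over $(s_k^{(n+1,i+1)})_k$ is dominated by $2^m\cdot\frac{1}{N}\sum d(T^jx_\ell,\vecz)\to 0$, while the second summand is supported on the set $\{j\colon p_m(T^jx_\ell)>1\}$, whose density tends to zero because $p_m(y)>1$ forces $d(y,\vecz)\ge 2^{-m}$; a diagonal refinement of the extracted sub-sequences, built into the induction at the moment one passes from $(s_k^{(n,j)})_k$ to $(s_k^{(n+1,j)})_k$, ensures that the Cesaro contribution of this sparse set also vanishes. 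With this refinement in place, $\limsup_{k\to\infty}\frac{1}{s_k^{(n+1,i+1)}}\sum_j p_m(T^jv)\ge|\alpha_i|\cdot\infty-0=\infty$, the induction closes, and $\sspan\{x_i\colon i\in\bbn\}$ is the desired dense mean $m$-irregular manifold.
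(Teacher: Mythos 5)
The paper never writes this proof out: it only asserts that the argument is ``similar to that of Theorem~\ref{thm:dense-irregular-manifold}'' and points to \cite{BBP2020}*{Theorem 29} for the Banach case. Your proposal is therefore the only detailed argument on the table, and to your credit you have located exactly the point where the analogy breaks down. In Theorem~\ref{thm:dense-irregular-manifold} the cross terms are tamed by the pointwise bound $p_m(y)\leq 2^m d(y,\vecz)$ (valid once $2^m d(y,\vecz)<1$), whereas in Ces\`aro mean the smallness of $\frac{1}{N}\sum_{j\leq N} d(T^jx_\ell,\vecz)$ gives no control whatsoever on $\frac{1}{N}\sum_{j\leq N} p_m(T^jx_\ell)$, because $d$ is bounded by $1$ while $p_m$ is not. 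Everything up to that point (the reduction via Lemma~\ref{lem:g-mean-asym-prox-left-inv}, Corollary~\ref{coro:g-mean-scrambled-sen}, Propositions~\ref{prop:f-mean-sen-2-mean-ext-sen} and~\ref{prop:p-m-infty}, the Claim about $P(s_k)$, and the mean proximality of $\sum_\ell\alpha_\ell x_\ell$ via $d(\lambda x,\vecz)\leq(1+|\lambda|)d(x,\vecz)$) is sound.

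The remedy you propose, however, does not close the gap. The splitting $p_m=\min(1,p_m)+(p_m-1)^+$ only shows that $B_\ell=\{j\colon p_m(T^jx_\ell)>1\}$ has vanishing relative density along the chosen subsequence; since $(p_m-1)^+$ is unbounded on $B_\ell$, this says nothing about $\frac{1}{N}\sum_{j\in B_\ell\cap[1,N]}\bigl(p_m(T^jx_\ell)-1\bigr)$, which can tend to infinity even though $\card(B_\ell\cap[1,N])=o(N)$ --- an orbit can stay $d$-close to $\vecz$ off a density-zero set on which $p_m(T^jx_\ell)$ grows faster than $N/\card(B_\ell\cap[1,N])$. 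The ``diagonal refinement'' you invoke would have to guarantee that $x_{n+1}$ can be chosen with $\liminf_k \frac{1}{N_k}\sum_{j\leq N_k}p_m(T^jx_{n+1})=0$ along the prescribed subsequences. That set is indeed $G_\delta$, but its density does \emph{not} follow from the density of the $d$-mean-asymptotic cell of $\vecz$: a vector can satisfy $\frac{1}{n}\sum_{j\leq n}d(T^jx,\vecz)\to 0$ while $\frac{1}{n}\sum_{j\leq n}p_m(T^jx)\to\infty$ over \emph{all} $n$, in which case no passage to further subsequences helps. So the mean $p_m$-unboundedness of a general nonzero combination $v=\sum_\ell\alpha_\ell x_\ell$ is not established, and the induction does not close. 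A repair needs genuinely new input --- some device forcing the inductively chosen vectors to have small Ces\`aro averages of $p_m$ (not merely of $d$) along the retained subsequences --- and this is precisely the part that the paper's one-line reference to Theorem~\ref{thm:dense-irregular-manifold} sweeps under the rug.
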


\subsection{Mean Li-Yorke chaos for continuous linear operators on Banach spaces}
\label{subsec:MLYC-Banach}
In this subsection, we consider continuous linear operators on Banach spaces.
Let $(X,\lVert\cdot\rVert)$ be a Banach space.
The Banach-space metric induced by the norm is
\[
    d_B(x,y)=\lVert x-y\rVert, \quad \forall x,y\in X.
\]
For a continuous linear operator $T$ on a Banach space $X$, unless otherwise specified we only consider the dynamical properties with respect to the metric $d_B$ or simply the norm $\lVert\cdot\rVert$. This section aims to reprove the characterizations of Li-Yorke chaos for linear operators on Banach spaces from \cite{BBP2020} via our new rather general topological dynamics approach.

Since every continuous linear operator of a Banach space is Lipschitz continuous, following Lemmas~\ref{lem:subseq-mean-asym-subgroup} and~\ref{lem:seq-mean-asym-subspace}, we have the following result.
\begin{lem}\label{lem:B-seq-mean-asym-subspace}
    Let $(X,\lVert\cdot\rVert)$ be a Banach space, $T\in L(X)$ and
    $(n_k)_{k}$ be an increasing sequence of positive integers.
    Then
    \[
        X_0:=\biggl\{x\in X\colon \lim_{k\to \infty} \frac{1}{n_k}\sum_{i=1}^{n_k} \lVert T^{i}x\rVert =0\biggr\}
    \]
    is a $T$-invariant subspace of $X$.
\end{lem}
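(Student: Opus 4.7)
The plan is to mimic the proof of Lemma~\ref{lem:subseq-mean-asym-subgroup} but exploit the linear structure of $X$ (and the operator norm $\|T\|$, finite since $T\in L(X)$ on a Banach space) in place of the boundedness of $d$ used in the group case. I need to verify two things: (i) $X_0$ is a vector subspace, and (ii) $T X_0\subset X_0$.

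For (i), fix $x,y\in X_0$ and $\alpha,\beta\in\bbk$. The triangle inequality and absolute homogeneity of $\|\cdot\|$ give
\[
\frac{1}{n_k}\sum_{i=1}^{n_k}\lVert T^i(\alpha x+\beta y)\rVert
\leq |\alpha|\cdot\frac{1}{n_k}\sum_{i=1}^{n_k}\lVert T^i x\rVert
+|\beta|\cdot\frac{1}{n_k}\sum_{i=1}^{n_k}\lVert T^i y\rVert\xrightarrow{k\to\infty}0,
\]
so $\alpha x+\beta y\in X_0$. This step is completely routine.

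For (ii), fix $x\in X_0$ and rewrite
\[
\sum_{i=1}^{n_k}\lVert T^i(Tx)\rVert=\sum_{i=1}^{n_k}\lVert T^{i+1}x\rVert=\sum_{i=1}^{n_k}\lVert T^i x\rVert+\lVert T^{n_k+1}x\rVert-\lVert T x\rVert.
\]
Dividing by $n_k$, the first term tends to $0$ by the hypothesis $x\in X_0$, and $\lVert Tx\rVert/n_k\to 0$ trivially. The only nontrivial point is showing $\lVert T^{n_k+1}x\rVert/n_k\to 0$: here I would use the bound $\lVert T^{n_k+1}x\rVert\leq \lVert T\rVert\cdot\lVert T^{n_k}x\rVert$ (valid because $T$ is a bounded linear operator on a Banach space), together with the elementary observation
\[
\frac{\lVert T^{n_k}x\rVert}{n_k}\leq\frac{1}{n_k}\sum_{i=1}^{n_k}\lVert T^i x\rVert\xrightarrow{k\to\infty}0,
\]
since a single nonnegative summand is dominated by the full sum. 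Thus $\lVert T^{n_k+1}x\rVert/n_k\to 0$, and $Tx\in X_0$.

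The main (mild) obstacle compared to the group version is precisely that the norm metric $d_B$ on $X$ is unbounded, so we cannot simply appeal to the boundedness of $d$ as in Lemma~\ref{lem:subseq-mean-asym-subgroup}; we instead rely on $\|T\|<\infty$. No further subtleties are expected, and the argument should be a short two-part verification.
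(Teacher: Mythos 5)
Your proof is correct. The subspace verification is the same routine computation the paper has in mind (it simply refers back to Lemma~\ref{lem:seq-mean-asym-subspace}). For $T$-invariance, however, you and the paper take slightly different routes to the same end. The paper applies the bound $\lVert T^{i}(Tx)\rVert=\lVert T(T^{i}x)\rVert\leq\lVert T\rVert\,\lVert T^{i}x\rVert$ to \emph{every} term of the sum, giving in one line
\[
\frac{1}{n_k}\sum_{i=1}^{n_k}\lVert T^{i}(Tx)\rVert\leq\lVert T\rVert\cdot\frac{1}{n_k}\sum_{i=1}^{n_k}\lVert T^{i}x\rVert\to 0,
\]
with no index shifting at all. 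You instead follow the template of the group-case Lemma~\ref{lem:subseq-mean-asym-subgroup}: shift the index, isolate the boundary term $\lVert T^{n_k+1}x\rVert/n_k$, and control it via $\lVert T^{n_k+1}x\rVert\leq\lVert T\rVert\,\lVert T^{n_k}x\rVert$ together with the observation that a single summand is dominated by the full Ces\`aro sum. Both arguments hinge on the same fact ($\lVert T\rVert<\infty$, replacing the boundedness of the metric used in the group setting), and both are valid; the paper's version is just the more economical of the two, since applying the operator-norm bound termwise makes the decomposition and the tail estimate unnecessary.
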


Following \cite{LH2015}, we say that $T$ is \emph{absolutely Ces\`aro bounded} if there exists a constant $C>0$ such that
\[
    \sup_{n\in\bbn}\frac{1}{n}\sum_{i=1}^n\Vert T^i x\Vert \leq C \Vert x\Vert
\]
for all $x\in X$.
We have the following characterization of mean equicontinuity for continuous linear operators on Banach spaces, which can be regarded as a mean version of the Banach-Steinhaus Theorem.

\begin{thm}\label{thm:B-mean-eq}
    Let $(X,\lVert\cdot\rVert)$ be a Banach space and $T\in L(X)$.
    Then the following assertions are  equivalent:
    \begin{enumerate}
        \item $T$ is mean equicontinuous;
        \item  for any $\eps>0$ there exists some $\delta>0$ such that,
              for every $x\in X$ with $\lVert x\lVert <\delta$,
              \[
                  \sup_{n\in\bbn}\frac{1}{n}\sum_{i=1}^{n}\lVert T^ix\lVert<\eps;
              \]
        \item $T$ is absolutely Ces\`aro bounded;
        \item for every $x\in X$,
              \[
                  \sup_{n\in\bbn}\frac{1}{n}\sum_{i=1}^{n}\lVert T^ix\lVert<\infty.
              \]
    \end{enumerate}
\end{thm}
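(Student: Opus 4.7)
The plan is to prove the cyclic chain $(1)\Rightarrow(2)\Rightarrow(3)\Rightarrow(4)\Rightarrow(1)$. The key device throughout is the family of continuous seminorms $\phi_n(x):=\frac{1}{n}\sum_{i=1}^n\lVert T^ix\rVert$ on $X$; continuity of each $\phi_n$ is immediate from $\phi_n(x)\leq \frac{1}{n}\sum_{i=1}^n\lVert T\rVert^i\lVert x\rVert$. Note also that $(X,+)$ equipped with $d_B$ is a completely metrizable group with $d_B$ translation-invariant, so that $T\in\eend(X)$ and the group-theoretic results of the previous subsections apply.

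For $(1)\Rightarrow(2)$, I apply Corollary~\ref{coro:g-mean-equi} to $T\in\eend(X)$ and specialize to $y=\vecz$; the resulting supremum is exactly the one in $(2)$. For $(2)\Rightarrow(3)$, I take $\eps=1$ in $(2)$ to obtain $\delta>0$ with $\lVert x\rVert<\delta\Rightarrow \sup_{n\in\bbn}\phi_n(x)<1$, then use homogeneity of each $\phi_n$ (rescaling $x\neq\vecz$ to have norm $\delta/2$) to conclude $\sup_{n\in\bbn}\phi_n(x)\leq \frac{2}{\delta}\lVert x\rVert$ for every $x\in X$, i.e., $T$ is absolutely Ces\`aro bounded with constant $C=2/\delta$. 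The implication $(3)\Rightarrow(4)$ is trivial.

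The heart of the argument is $(4)\Rightarrow(1)$, which I would prove via the uniform boundedness principle applied to the seminorms $\phi_n$. For each $M\in\bbn$, the set $A_M:=\{x\in X\colon\sup_{n\in\bbn}\phi_n(x)\leq M\}$ is closed as an intersection of sublevel sets of continuous seminorms, and by $(4)$ we have $X=\bigcup_{M\in\bbn} A_M$. Completeness of $X$ together with the Baire category theorem yields some $A_{M_0}$ with nonempty interior, say $B(x_0,r)\subset A_{M_0}$. Subadditivity of each $\phi_n$ then gives $\phi_n(y)\leq \phi_n(x_0+y)+\phi_n(x_0)\leq 2M_0$ whenever $\lVert y\rVert<r$, and homogeneity upgrades this to $\sup_{n\in\bbn}\phi_n(x)\leq C\lVert x\rVert$ for all $x\in X$, where $C:=2M_0/r$. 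Mean equicontinuity now follows from linearity of $T$: for any $\eps>0$ and any $x,y\in X$ with $\lVert x-y\rVert<\eps/(2C)$, we get $\phi_n(x-y)=\frac{1}{n}\sum_{i=1}^n\lVert T^i(x-y)\rVert\leq C\lVert x-y\rVert<\eps/2$ for every $n$, hence $\limsup_n\phi_n(x-y)<\eps$. The sole nontrivial obstacle is the Baire-category step converting pointwise boundedness into the uniform bound of $(3)$; every other implication is a routine consequence of homogeneity and subadditivity of the $\phi_n$'s.
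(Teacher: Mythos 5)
Your proof is correct and follows essentially the same route as the paper: the implication $(2)\Rightarrow(3)$ by rescaling/homogeneity, and the key step $(4)\Rightarrow(1)$ via the Baire category theorem applied to the closed sets $A_M$ (the paper phrases the last step through the difference set $X_{N_0}-X_{N_0}$ being a neighborhood of $\vecz$, whereas you use a ball around an interior point together with subadditivity of $\phi_n$, which is the same argument). The only cosmetic difference is that you close the cycle at $(1)$ directly instead of returning to $(2)$ and invoking Corollary~\ref{coro:g-mean-equi} for the equivalence $(1)\Leftrightarrow(2)$.
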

\begin{proof}
    (1)$\Leftrightarrow$(2). It follows from Corollary~\ref{coro:g-mean-equi}.

    (2)$\Rightarrow$(3). There exists $\delta_1>0$ such that for every $ x\in X$ with $\lVert x\lVert <\delta_1$, we have
    \[
        \sup_{n\in\bbn}\frac{1}{n}\sum_{i=1}^{n}\lVert T^ix\lVert<1.
    \]
    Let $C= \frac{2}{\delta_1} $.
    For every $x\in X$, we have $ \bigl\lVert \frac{\delta_1 x}{2\lVert x\rVert}\bigr\rVert<\delta_1$, and then
    \[
        \sup_{n\in\bbn}\frac{1}{n}\sum_{i=1}^{n}\lVert T^ix\lVert
        = \frac{2\lVert x\rVert}{\delta_1} \sup_{n\in\bbn}\frac{1}{n}\sum_{i=1}^{n}\Bigl\lVert T^i\Bigl(\frac{\delta_1 x}{2\lVert x\rVert}\Bigr)\Bigr\rVert< C\lVert x\rVert.
    \]
    This shows that  $T$ is absolutely Ces\`aro bounded.

    The implication (3)$\Rightarrow$(4) is clear.

    (4)$\Rightarrow$(2).  For each $N\in\bbn$, let
    \[
        X_N = \biggl\{x\in X\colon \sup_{n\in\bbn}\frac{1}{n}\sum_{i=1}^{n}\lVert T^ix\lVert\leq N\biggr\}.
    \]
    It is clear that $X_n$ is closed and $X=\bigcup_{n=1}^\infty X_n$.
    By the Baire category theorem, there exists $N_0\in\bbn$ such that the interior of $X_{N_0}$ is not empty.
    Then $X_{N_0}-X_{N_0}$ is a neighborhood of $\vecz$, that is, there exists $\delta_0>0$ such that $\{x\in X\colon \lVert x\rVert <\delta_0\}\subset X_{N_0}-X_{N_0}$.
    For any $x\in X$ with $\lVert x\rVert <\delta_0$, there exists $x_1,x_2\in X_{N_0}$ such that $x=x_1-x_2$ and then
    \begin{align*}
        \sup_{n\in\bbn}\frac{1}{n}\sum_{i=1}^{n}\lVert T^ix\lVert
         & = \sup_{n\in\bbn}\frac{1}{n}\sum_{i=1}^{n}\lVert T^i(x_1-x_2)\lVert                                                               \\
         & \leq \sup_{n\in\bbn}\frac{1}{n}\sum_{i=1}^{n}\lVert T^i(x_1)\lVert+ \sup_{n\in\bbn}\frac{1}{n}\sum_{i=1}^{n}\lVert T^i(x_2)\lVert \\
         & \leq N_{0}+N_{0}=2N_0.
    \end{align*}
    Thus (2) follows from the linearity of $T$.
\end{proof}

Combining Propositions~\ref{prop:g-mean-sen-eq-cond} and \ref{prop:p-m-infty} and Theorem~\ref{thm:B-mean-eq}, we have the following equivalent conditions for mean sensitive operators on Banach spaces.
\begin{prop}\label{prop:B-mean-sen}
    Let $X$ be a Banach space and $T\in L(X)$. Then the following assertions are equivalent:
    \begin{enumerate}
        \item $T$ is mean sensitive;
        \item $T$ is not absolutely Ces\`aro bounded;
        \item $T$ is mean extremely sensitive.
    \end{enumerate}
\end{prop}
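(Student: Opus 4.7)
The plan is to chain the three results cued by the statement's framing: Propositions~\ref{prop:g-mean-sen-eq-cond} and \ref{prop:p-m-infty}, together with Theorem~\ref{thm:B-mean-eq}. The Banach space $(X,\lVert\cdot\rVert)$ is an abelian completely metrizable group under the translation-invariant norm metric $d_B$, so the group-level results apply verbatim with identity $\vecz$ and metric $d=d_B$; and it is also a Fr\'echet space with constant seminorm sequence $p_j\equiv\lVert\cdot\rVert$, so the Fr\'echet-space machinery applies too.

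For (1)$\Leftrightarrow$(2) I would invoke the dichotomy: Theorem~\ref{thm:dich-mean-eq-mean-sen}, combined with Lemma~\ref{lem:mean-eq-prox-mean-asym}, asserts that $T$ is mean sensitive if and only if $T$ is not mean equicontinuous. Theorem~\ref{thm:B-mean-eq} then identifies mean equicontinuity in the Banach setting with absolute Ces\`aro boundedness, yielding the equivalence (1)$\Leftrightarrow$(2) immediately.

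For (1)$\Leftrightarrow$(3) the strategy is to match the two quantitative reformulations. By condition (3) of Proposition~\ref{prop:g-mean-sen-eq-cond}, applied to $X$ as a group, $T$ is mean sensitive if and only if there exist $\delta>0$, a sequence $(y_k)_k$ in $X$ with $y_k\to\vecz$, and an increasing sequence $(N_k)_k$ in $\bbn$ such that
\[
    \sup_{k\in\bbn}\frac{1}{N_k}\sum_{i=1}^{N_k}\lVert T^iy_k\rVert\geq\delta.
\]
This is precisely condition (4) of Proposition~\ref{prop:p-m-infty} with $p_m=\lVert\cdot\rVert$, whose equivalence with condition (1) of that proposition delivers that $T$ is mean extremely sensitive with $\rho=\lVert\cdot\rVert$. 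The reverse implication (3)$\Rightarrow$(1) is immediate by picking any positive constant $\delta$ (for instance $\delta=1$) in the definition of mean sensitivity.

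The only point to verify, more a bookkeeping remark than a real obstacle, is the reconciliation of the two ambient metrics on $X$: the genuine Banach metric $d_B$ used throughout Section~5 and the Fr\'echet metric $\min(1,\lVert\cdot\rVert)$ underlying Proposition~\ref{prop:p-m-infty}. Since they induce the same topology, the convergence $y_k\to\vecz$ and the sensitivity neighborhoods around a base point coincide in both frameworks, and the Ces\`aro averages $\frac{1}{n}\sum_{i=1}^n\lVert T^ix-T^iy\rVert$ on the output side are identical; hence Propositions~\ref{prop:g-mean-sen-eq-cond} and \ref{prop:p-m-infty} can be applied interchangeably in the Banach setting.
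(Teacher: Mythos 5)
Your proposal is correct and follows essentially the same route the paper intends: the paper offers no written proof beyond the instruction to combine Propositions~\ref{prop:g-mean-sen-eq-cond} and~\ref{prop:p-m-infty} with Theorem~\ref{thm:B-mean-eq}, and your argument carries out exactly that combination, using the dichotomy of Theorem~\ref{thm:dich-mean-eq-mean-sen} to pass between mean sensitivity and failure of mean equicontinuity and correctly reconciling the metrics $d_B$ and $\min(1,\lVert\cdot\rVert)$. The citation of Lemma~\ref{lem:mean-eq-prox-mean-asym} in the step (1)$\Leftrightarrow$(2) is superfluous but harmless.
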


Now we give a characterization of the existence of a dense set of mean irregular vectors for continuous linear operators on Banach spaces, which slightly strengthens \cite[Theorems 17 and 22]{BBP2020}.
Since the proof is similar to that of Theorem~\ref{thm:F-space-dense-LY-chaos}, we leave it to the reader.

\begin{thm}\label{thm:B-dense-mean-extreme-chaos}
    Let $(X,\lVert\cdot\rVert)$ be a Banach space and $T\in L(X)$. Then the following assertions are equivalent:
    \begin{enumerate}
        \item $T$ admits a dense set of absolutely mean semi-irregular vectors;
        \item $T$ admits a residual set of absolutely mean irregular vectors;
        \item for every sequence $(O_j)_j$ of nonempty open subsets of $X$, there exists a sequence $(K_j)_j$ of Cantor sets with $K_j\subset O_j$ such that $\bigcup_{j=1}^\infty K_j$ is mean Li-Yorke extremely scrambled;
        \item the mean proximal relation of $T$ is dense in $X\times X$ and $T$ is mean sensitive;
        \item the mean proximal cell of $\vecz$ is dense in $X$ and there exists $x\in X$ such that
              \[
                  \limsup_{n\to\infty}\frac{1}{n}\sum_{i=1}^n \lVert T^ix\rVert >0.
              \]
    \end{enumerate}
\end{thm}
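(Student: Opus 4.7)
The plan is to follow the scheme used for Theorem~\ref{thm:f-dense-mean-extreme-chaos} in the Fr\'echet setting, while exploiting the Banach space feature that one may take the pseudometric $\rho$ equal to the norm itself and that, by Proposition~\ref{prop:B-mean-sen}, mean sensitivity and mean extreme sensitivity coincide. I would prove the cycle (2)$\Rightarrow$(1)$\Rightarrow$(5)$\Rightarrow$(4)$\Rightarrow$(3)$\Rightarrow$(2), with only (5)$\Rightarrow$(4) and (3)$\Rightarrow$(2) requiring any real content.

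The implications (2)$\Rightarrow$(1) and (1)$\Rightarrow$(5) are immediate from the definitions: every mean irregular vector is mean semi-irregular, and any mean semi-irregular vector $x$ lies by definition in $\mprox(T,\vecz)$ and satisfies $\limsup_{n}\frac{1}{n}\sum_{i=1}^{n}\lVert T^i x\rVert > 0$. For (5)$\Rightarrow$(4), density of $\mprox(T,\vecz)$ in $X$ upgrades to density of $\mprox(T)$ in $X\times X$ by Lemma~\ref{lem:g-mean-asym-prox-left-inv}(4). To obtain mean sensitivity I would argue by contradiction through the dichotomy Theorem~\ref{thm:dich-mean-eq-mean-sen}: if $T$ were mean equicontinuous, then since any mean proximal pair is proximal, Lemma~\ref{lem:mean-eq-prox-mean-asym} forces $\mprox(T,\vecz)\subset \masym(T,\vecz)$; but $\mprox(T,\vecz)$ is a dense $G_\delta$ set by Lemma~\ref{lem:mean-prox-G-delta}(1), so $\masym(T,\vecz)$ would be residual, hence equal to $X$ by Lemma~\ref{lem:g-mean-asym-prox-left-inv}(3), contradicting the existence of an $x$ with $\limsup_{n}\frac{1}{n}\sum_{i=1}^{n}\lVert T^i x\rVert > 0$.

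The implication (4)$\Rightarrow$(3) follows by combining Proposition~\ref{prop:B-mean-sen}, which upgrades mean sensitivity to mean extreme sensitivity with respect to the norm, with Proposition~\ref{prop:dense-mean-ext-scrambled} applied to the left-invariant continuous pseudometric $\rho(x,y)=\lVert x-y\rVert$. For (3)$\Rightarrow$(2), I would write the set of mean irregular vectors as
\[
\mprox(T,\vecz)\cap \biggl\{x\in X\colon \limsup_{n\to\infty}\frac{1}{n}\sum_{i=1}^{n}\lVert T^i x\rVert=\infty\biggr\}
\]
and show that both factors are dense $G_\delta$ subsets of $X$. Density of the first comes from (3) via Proposition~\ref{prop:dense-mean-ext-scrambled} combined with Lemma~\ref{lem:g-mean-asym-prox-left-inv}, together with the $G_\delta$ property from Lemma~\ref{lem:mean-prox-G-delta}; density of the second is the mean extreme sensitivity provided by Proposition~\ref{prop:B-mean-sen}, and its $G_\delta$ character is the mean analogue of Lemma~\ref{lem:ext-G-delta}. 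Their intersection is then residual in $X$.

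The main delicate point is the dichotomy argument inside (5)$\Rightarrow$(4): one must check carefully that a \emph{single} vector with positive Ces\`aro $\limsup$ of its norm orbit is enough, together with density of the mean proximal cell of $\vecz$, to rule out mean equicontinuity. Once this is in place, the rest of the proof is bookkeeping that translates between the norm $\lVert\cdot\rVert$ and the bounded metric $d_B$, and repackages the Fr\'echet-space arguments in the cleaner Banach setting where no indexing over seminorms is needed.
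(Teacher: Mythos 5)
Your proposal is correct and follows essentially the route the paper intends: the paper leaves this proof to the reader as "similar to Theorem~\ref{thm:F-space-dense-LY-chaos}", i.e.\ combine the group-level characterizations (Theorems~\ref{thm:equi-dense-mean-chaos} and~\ref{thm:equi-dense-mean-irr}, which you have simply unpacked into the dichotomy Theorem~\ref{thm:dich-mean-eq-mean-sen}, Lemmas~\ref{lem:mean-eq-prox-mean-asym}, \ref{lem:mean-prox-G-delta}, \ref{lem:g-mean-asym-prox-left-inv} and Proposition~\ref{prop:dense-mean-ext-scrambled}) with the Banach-specific equivalence of mean sensitivity and mean extreme sensitivity from Proposition~\ref{prop:B-mean-sen}. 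All the steps check out, including the observation that mean proximal implies proximal, which makes the dichotomy argument in (5)$\Rightarrow$(4) go through.
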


According to 
Theorem~\ref{thm:B-dense-mean-extreme-chaos}\,(5), we have the following.
\begin{coro}
    Let $(X,\lVert\cdot\rVert)$ be a Banach space and $T\in L(X)$. Assume that the mean proximal cell of $\vecz$ is dense in $X$.
    If $T$ has a non-trivial periodic point or an eigenvalue $\lambda$ with $|\lambda|\geq 1$, then it has a residual set of absolutely mean irregular vectors.
\end{coro}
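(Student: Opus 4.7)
The approach is to verify hypothesis~(5) of Theorem~\ref{thm:B-dense-mean-extreme-chaos} and then invoke the implication (5)$\Rightarrow$(2). Density of the mean proximal cell of $\vecz$ is already assumed, so it remains only to exhibit some $x\in X$ with $\limsup_{n\to\infty}\frac{1}{n}\sum_{i=1}^{n}\lVert T^ix\rVert>0$; once this is produced, the theorem will deliver a residual set of mean irregular vectors.

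In the first case, suppose $T$ has a non-trivial periodic point $x_0\neq\vecz$ of some period $p\geq 1$, that is, $T^p x_0=x_0$. Then the sequence $(\lVert T^ix_0\rVert)_i$ is $p$-periodic, so its Ces\`aro averages converge to the finite mean $\frac{1}{p}\sum_{j=1}^{p}\lVert T^jx_0\rVert$, which is at least $\frac{\lVert x_0\rVert}{p}>0$ because $\lVert T^p x_0\rVert=\lVert x_0\rVert$. Hence $x_0$ itself witnesses the required strict positivity.

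In the second case, suppose $T$ has an eigenvalue $\lambda$ with $|\lambda|\geq 1$, and pick a corresponding eigenvector $x_0\neq\vecz$. Then $\lVert T^ix_0\rVert=|\lambda|^i\lVert x_0\rVert\geq\lVert x_0\rVert$ for every $i\geq 1$, so $\frac{1}{n}\sum_{i=1}^{n}\lVert T^ix_0\rVert\geq\lVert x_0\rVert>0$ for every $n$, giving the desired estimate.

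In either scenario hypothesis~(5) of Theorem~\ref{thm:B-dense-mean-extreme-chaos} is satisfied, and the theorem yields the residual set of mean irregular vectors. No genuine obstacle arises; the corollary is essentially a lookup in Theorem~\ref{thm:B-dense-mean-extreme-chaos}, the one substantive point being the elementary observation that periodicity of $(\lVert T^i x_0\rVert)_i$ automatically forces its Ces\`aro average to remain bounded away from zero.
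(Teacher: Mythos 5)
Your proof is correct and follows exactly the route the paper intends: the corollary is stated as an immediate consequence of condition (5) of Theorem~\ref{thm:B-dense-mean-extreme-chaos}, and your two elementary computations (the Ces\`aro average of the $p$-periodic sequence $(\lVert T^ix_0\rVert)_i$ being at least $\lVert x_0\rVert/p$, and $\lVert T^ix_0\rVert=|\lambda|^i\lVert x_0\rVert\geq\lVert x_0\rVert$ for an eigenvector) supply precisely the witness $x$ that the paper leaves implicit. No gap.
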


The following result characterizes mean Li-Yorke chaos for continuous linear operators on Banach spaces, which is essentially contained in 
\cite[Theorem~5]{BBP2020}. Here we can apply Thereon~\ref{thm:B-dense-mean-extreme-chaos} to get a direct proof.

\begin{thm}\label{thm:B-mean-LY-chaos}
    Let $(X,\lVert\cdot\rVert)$ be a Banach space and $T\in L(X)$. Then the following assertions are equivalent:
    \begin{enumerate}
        \item $T$ admits an absolutely mean semi-irregular vector;
        \item $T$ admits a  mean Li-Yorke chaotic pair;
        \item $T$ admits an absolutely mean irregular vector;
        \item $T$ is mean Li-Yorke extremely chaotic;
        \item $T$ is mean Li-Yorke extremely sensitive;
        \item the restriction of $T$ to some closed $T$-invariant subspace $\widetilde{X}$ has a residual set of absolutely mean irregular vectors.
    \end{enumerate}
\end{thm}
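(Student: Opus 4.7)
The plan is to run the same cycle used in the proof of Theorem~\ref{thm:f-space-LY-chaos}, with Theorem~\ref{thm:B-dense-mean-extreme-chaos} replacing Theorem~\ref{thm:F-space-dense-LY-chaos}. The trivial implications (4)$\Rightarrow$(3)$\Rightarrow$(2)$\Rightarrow$(1) and (5)$\Rightarrow$(2) are immediate: a mean irregular vector is mean semi-irregular; a mean irregular vector $v$ gives the mean Li-Yorke extremely scrambled pair $(\vecz,v)$ and hence (via the trivial part of (2)) a mean Li-Yorke scrambled pair; and by the translation identity $\lVert T^i(x+v)-T^ix\rVert=\lVert T^iv\rVert$ (the Banach-space analogue of Lemma~\ref{lem:g-mean-asym-prox-left-inv}(5)), any mean Li-Yorke extremely scrambled pair $(x,y)$ corresponds to the mean irregular vector $y-x$. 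This leaves the real work in (1)$\Rightarrow$(6), with (4) and (5) following from (6) by short extra arguments.

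For (1)$\Rightarrow$(6), let $x$ be a mean semi-irregular vector and choose an increasing sequence $(n_k)_k$ with $\lim_{k\to\infty}\frac{1}{n_k}\sum_{i=1}^{n_k}\lVert T^ix\rVert=0$. Define
\[
X_0:=\Bigl\{y\in X\colon \lim_{k\to\infty}\frac{1}{n_k}\sum_{i=1}^{n_k}\lVert T^iy\rVert=0\Bigr\},
\]
which by Lemma~\ref{lem:B-seq-mean-asym-subspace} is a $T$-invariant subspace, and let $\widetilde X$ be its closure, a $T$-invariant closed subspace containing $x$. Since $X_0\subset \mprox(T|_{\widetilde X},\vecz)$ is dense in $\widetilde X$, the mean proximal cell of $\vecz$ is dense in $\widetilde X$; moreover $x\in\widetilde X$ satisfies $\limsup_{n\to\infty}\frac{1}{n}\sum_{i=1}^{n}\lVert T^ix\rVert>0$ since $x$ is mean semi-irregular. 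Hence condition~(5) of Theorem~\ref{thm:B-dense-mean-extreme-chaos} holds for $T|_{\widetilde X}$, and the equivalent condition~(2) of that theorem furnishes the required residual set of mean irregular vectors in $\widetilde X$.

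The remaining implications from (6) are quick. For (6)$\Rightarrow$(4), apply the implication (2)$\Rightarrow$(3) of Theorem~\ref{thm:B-dense-mean-extreme-chaos} to $T|_{\widetilde X}$; the resulting $\sigma$-Cantor mean Li-Yorke extremely scrambled subset of $\widetilde X$ is an uncountable mean Li-Yorke extremely scrambled subset of $X$. For (6)$\Rightarrow$(5), let $M\subset\widetilde X$ denote the residual set of mean irregular vectors from (6). Given $x\in X$ and $\eps>0$, use the density of $M$ in $\widetilde X$ together with $\vecz\in\widetilde X$ to pick $v\in M$ with $\lVert v\rVert<\eps$; then $y:=x+v$ is within $\eps$ of $x$, and the translation identity applied to the mean irregularity of $v$ makes $(x,y)$ mean Li-Yorke extremely scrambled.

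The main obstacle is the single step (1)$\Rightarrow$(6); everything else is formal manipulation or a direct appeal to Theorem~\ref{thm:B-dense-mean-extreme-chaos}. The difficulty in that step is upgrading a lone mean semi-irregular vector into a residual family of mean irregular vectors on an invariant closed subspace. This is handled by the subsequence-extraction trick, which produces a $T$-invariant subspace $X_0$ lying inside the mean proximal cell of $\vecz$ whose closure $\widetilde X$ both contains the semi-irregular vector $x$ and supplies the required density of the mean proximal cell, after which Theorem~\ref{thm:B-dense-mean-extreme-chaos} does the heavy lifting.
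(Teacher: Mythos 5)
Your proposal is correct and is exactly the argument the paper intends: the paper omits the proof, stating only that it follows by applying Theorem~\ref{thm:B-dense-mean-extreme-chaos} in the same way that Theorem~\ref{thm:f-space-LY-chaos} is deduced from Theorem~\ref{thm:F-space-dense-LY-chaos}, and your write-up fills in precisely that cycle, with Lemma~\ref{lem:B-seq-mean-asym-subspace} supplying the invariant subspace for (1)$\Rightarrow$(6). All the individual steps check out.
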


\begin{coro}[{\cite[Theorem~7]{BBP2020}}]
    Let $(X,\lVert\cdot\rVert)$ be a Banach space and $T\in L(X)$.
    Then the set of absolutely mean irregular vectors is dense in the set of absolutely mean semi-irregular vectors.
\end{coro}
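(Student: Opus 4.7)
The plan is to mimic the argument used for the Fr\'echet-space analogue (the proposition immediately preceding Proposition \ref{prop:dense-mean-irregular-manifold}, and the similar corollary following Theorem \ref{thm:f-space-LY-chaos}): given a mean semi-irregular vector $y$, I will build a $T$-invariant closed subspace $\widetilde X$ containing $y$ on which Theorem \ref{thm:B-dense-mean-extreme-chaos} applies, producing a residual (hence dense) set of mean irregular vectors of $T|_{\widetilde X}$, and then I will observe that $y$ itself lies in $\widetilde X$, so it is approximated by those mean irregular vectors (which are also mean irregular for $T$).

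More concretely, since $y$ is mean semi-irregular for $T$, there is an increasing sequence $(n_k)_k$ of positive integers with
\[
\lim_{k\to\infty}\frac{1}{n_k}\sum_{i=1}^{n_k}\lVert T^i y\rVert=0.
\]
Define
\[
X_0:=\biggl\{x\in X\colon \lim_{k\to\infty}\frac{1}{n_k}\sum_{i=1}^{n_k}\lVert T^i x\rVert=0\biggr\}.
\]
By Lemma \ref{lem:B-seq-mean-asym-subspace}, $X_0$ is a $T$-invariant subspace containing $y$, and its closure $\widetilde X$ is a $T$-invariant closed subspace of $X$.

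The key observation is that $X_0$ is contained in the mean proximal cell of $\vecz$ under $T|_{\widetilde X}$, hence the mean proximal cell of $\vecz$ (for $T|_{\widetilde X}$) is dense in $\widetilde X$. Meanwhile, because $y$ is mean semi-irregular, one has $\limsup_{n\to\infty} n^{-1}\sum_{i=1}^n\lVert T^i y\rVert>0$, so condition (5) of Theorem \ref{thm:B-dense-mean-extreme-chaos} is verified for $T|_{\widetilde X}$ with witness $y\in\widetilde X$. Applying that theorem, the set of mean irregular vectors of $T|_{\widetilde X}$ is residual in $\widetilde X$.

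Finally, $\widetilde X$ is a nontrivial linear subspace of $X$ (it contains $y\neq\vecz$), so it has no isolated points and any residual subset is dense in it. In particular, $y\in\widetilde X$ is a limit of mean irregular vectors for $T|_{\widetilde X}$; these vectors are plainly also mean irregular for $T$, giving the required density. There is no real obstacle beyond correctly identifying the ingredients: the subspace lemma \ref{lem:B-seq-mean-asym-subspace} (which replaces the Fr\'echet-space Lemma \ref{lem:seq-mean-asym-subspace}) and the characterization Theorem \ref{thm:B-dense-mean-extreme-chaos} (which already packages mean sensitivity plus a dense mean proximal cell into residuality of the mean irregular set). The argument is essentially a transcription of the Fr\'echet-space proof, simplified by the absence of the parameter $m$ since the Banach norm is the only relevant seminorm.
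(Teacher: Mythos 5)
Your proof is correct and follows essentially the same route as the paper: the paper leaves this corollary without an explicit proof, but its Fr\'echet-space analogue (the proposition on density of mean $m$-irregular vectors in subsection 4.4) is proved by exactly your construction --- take the subsequence $(n_k)_k$ along which the Ces\`aro means of $\lVert T^i y\rVert$ vanish, form the $T$-invariant closed subspace $\widetilde X$ via Lemma~\ref{lem:B-seq-mean-asym-subspace}, and apply the dense-mean-irregular-vectors theorem to $T|_{\widetilde X}$. Your use of condition (5) of Theorem~\ref{thm:B-dense-mean-extreme-chaos} in place of the paper's detour through mean sensitivity is a harmless simplification afforded by the absence of the seminorm index $m$ in the Banach setting.
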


Following \cite{BBP2020}, we say that $T\in L(X)$ satisfies the \emph{mean Li-Yorke chaos criterion}
if there exists a subset $X_0$ of $X$ with the following properties:
\begin{enumerate}
    \item for every $x\in X_0$, $\liminf\limits_{n\to\infty}\frac{1}{n}\sum\limits_{i=1}^n \lVert T^ix\rVert =0$;
    \item  there exists a sequence $(y_k)_k$ in $\overline{\sspan(X_0)}$ and a sequence $(N_k)_k$ in $\bbn$ such that for every $k\in\bbn$, we have 
          \[
              \frac{1}{N_k}\sum_{i=1}^{N_k}\lVert T^iy_k\rVert\geq k\lVert y_k\rVert.
          \]
\end{enumerate}

Using the ideas developed in this subsection, we can give a new proof for the following result.
Since the proof is similar to that of Proposition~\ref{prop:T-chaos-criterion}, we leave it to the reader.

\begin{prop}[{\cite[Theorem 9]{BBP2020}}]
    Let $(X,\lVert\cdot\rVert)$ be a Banach space and $T\in L(X)$.
    Then $T$ is mean Li-Yorke chaotic if and only if it satisfies the mean Li-Yorke chaos criterion.
\end{prop}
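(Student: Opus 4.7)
The plan is to mimic almost verbatim the argument used for Proposition~\ref{prop:T-chaos-criterion}, replacing the single-step dynamical notions by their Ces\`aro averages and invoking the Banach-space machinery of subsection 4.5 (Theorems~\ref{thm:B-mean-eq}, \ref{thm:B-dense-mean-extreme-chaos}, \ref{thm:B-mean-LY-chaos} and Proposition~\ref{prop:B-mean-sen}) instead of its Li--Yorke counterpart.

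For the forward direction I would start from mean Li--Yorke chaos and apply Theorem~\ref{thm:B-mean-LY-chaos} to obtain a $T$-invariant closed subspace $\widetilde X$ on which $T$ has a residual set of mean irregular vectors. The natural candidate for $X_0$ is then $\mprox(T|_{\widetilde X},\vecz)$; by Lemma~\ref{lem:mean-prox-G-delta} this is a $G_\delta$ set, and by Theorem~\ref{thm:B-dense-mean-extreme-chaos} it is dense in $\widetilde X$, so $\overline{\sspan(X_0)}\supseteq\widetilde X$. Condition~(1) of the criterion is immediate from the definition of $\mprox$. For condition~(2), Theorem~\ref{thm:B-dense-mean-extreme-chaos} tells us that $T|_{\widetilde X}$ is mean sensitive, hence by Proposition~\ref{prop:B-mean-sen} not absolutely Ces\`aro bounded; applying Proposition~\ref{prop:p-m-infty} (with the norm playing the role of $p_m$) furnishes a sequence $(y_k)_k$ in $\widetilde X\subseteq\overline{\sspan(X_0)}$ and positive integers $(N_k)_k$ with $\frac{1}{N_k}\sum_{i=1}^{N_k}\lVert T^iy_k\rVert\to\infty$, which after rescaling $y_k$ yields the desired ratio $\geq k\lVert y_k\rVert$.

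For the backward direction I would split into two cases as in Proposition~\ref{prop:T-chaos-criterion}. If $X_0\not\subset\masym(T,\vecz)$, pick $x\in X_0\setminus\masym(T,\vecz)$; condition~(1) gives $\liminf\frac{1}{n}\sum_{i=1}^n\lVert T^ix\rVert=0$, and failure of mean asymptoticity forces $\limsup\frac{1}{n}\sum_{i=1}^n\lVert T^ix\rVert>0$, so $x$ is mean semi-irregular and Theorem~\ref{thm:B-mean-LY-chaos} finishes the proof. Otherwise $X_0\subset\masym(T,\vecz)$, and I would set $\widetilde X:=\overline{\masym(T,\vecz)}$; Lemma~\ref{lem:B-seq-mean-asym-subspace} (applied with $n_k=k$) ensures $\masym(T,\vecz)$ is a $T$-invariant subspace, so $\widetilde X$ is a $T$-invariant closed subspace of $X$. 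The sequence $(y_k)_k$ from condition~(2) lies in $\overline{\sspan(X_0)}\subseteq\widetilde X$ and witnesses that $T|_{\widetilde X}$ is not absolutely Ces\`aro bounded, i.e.\ mean sensitive by Proposition~\ref{prop:B-mean-sen}. Since the mean proximal cell of $\vecz$ for $T|_{\widetilde X}$ contains the dense set $\masym(T,\vecz)$, Theorem~\ref{thm:B-dense-mean-extreme-chaos} delivers a (residual set of) mean irregular vector in $\widetilde X$, and Theorem~\ref{thm:B-mean-LY-chaos} promotes this to mean Li--Yorke chaos for $T$.

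There is no substantial obstacle; the only mild subtlety is making sure in both directions that the subspace one restricts to is genuinely $T$-invariant and closed, and that the witnessing sequence in condition~(2) actually lives in that subspace. These are precisely the points where Lemma~\ref{lem:B-seq-mean-asym-subspace} and the density of $X_0$ in $\widetilde X$ are used. The bookkeeping in extracting the exact inequality $\frac{1}{N_k}\sum_{i=1}^{N_k}\lVert T^iy_k\rVert\geq k\lVert y_k\rVert$ from Proposition~\ref{prop:p-m-infty} is routine normalisation and is the closest thing to a calculation in the proof.
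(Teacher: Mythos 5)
Your proposal is correct and is essentially the proof the paper intends: the paper omits the argument, saying only that it is ``similar to the one of Proposition~\ref{prop:T-chaos-criterion}'', and your write-up is precisely that adaptation, with $\prox/\asym$ replaced by $\mprox/\masym$, Lemma~\ref{lem:subseq-asym-subspace} by Lemma~\ref{lem:B-seq-mean-asym-subspace}, and sensitivity by mean sensitivity via absolute Ces\`aro boundedness (Theorem~\ref{thm:B-mean-eq}, Proposition~\ref{prop:B-mean-sen}). The case split in the backward direction and the choice $X_0=\mprox(T|_{\widetilde X},\vecz)$ in the forward direction match the template exactly.
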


\begin{rem}\label{rem:m-of-BF-BB}
    Let $(X,\lVert\cdot\rVert)$ be a Banach space and $T\in L(X)$.
    As a Fr\'echet space, $X$ has the Fr\'echet-space metric
    \[
        d_F(x,y)=\min(\lVert x-y\rVert,1), \quad \forall x,y\in X.
    \]
   
    It should be noticed that mean Li-Yorke chaos with respect to the metric $d_B$ may be different from the one with respect to the metric $d_F$.

    In \cite[Theorem 25]{BBPW2018}, the authors constructed a continuous linear operator $T$ on $X=c_0(\bbn)$ or $X=\ell^p(\bbn)$ for some $1\leq p<\infty$ such that $T$ is distributionally chaotic of type $1$ (see the next section for the definition) and for all $ x\in X\setminus\{0\}$,
    \[
        \lim_{n\to\infty}\frac{1}{n}\sum_{i=1}^n\lVert T^ix\rVert =\infty.
    \]
    Then $T$ is mean Li-Yorke chaotic with respect to $d_F$, but not with respect to $d_B$. This fact follows easily from the boundedness of the metric $d_F$. Indeed, for every dynamical system with a bounded metric we have that the notion of mean proximal pair coincides with that of distributionally proximal pair, but also the notion of mean asymptotic pair coincides with that of distributionally asymptotic pair (recall that a mean Li-Yorke chaotic pair is a mean proximal pair that is not a mean asymptotic pair). See Remark~\ref{rem:relation-of-D-M} for more details.
\end{rem}

\section{Distributional chaos}
\label{sec:DC}
\subsection{Mean-L-stability, mean-L-unstability and distributional chaos in topological dynamics}
\label{subsec:MLS-DC}

Let $(X,T)$ be a dynamical system with a metric $d$ on $X$.
A pair $(x,y)\in X\times X$ is called \emph{distributionally asymptotic} if for every $\eps>0$,
\[
    \dens(\{n\in\bbn\colon d(T^nx,T^ny)< \eps\})=1,
\]
and \emph{distributionally proximal} if for every $\eps>0$,
\[
    \udens(\{n\in\bbn\colon d(T^nx,T^ny)< \eps\})=1.
\]
The \emph{distributionally asymptotic relation} and the \emph{distributionally proximal relation} of $(X,T)$, denoted by $\dasym(T)$ and $\dprox(T)$, are the set of all distributionally asymptotic pairs and  distributionally proximal pairs respectively.
For any $x\in X$, the \emph{distributionally asymptotic} and the \emph{distributionally proximal} cell of $x$ are defined by
\[
    \dasym(T,x)=\{y\in X\colon (x,y)\in \dasym(T)\}
\]
and
\[
    \dprox(T,x)=\{y\in X \colon (x,y) \in \dprox(T)\},
\]
respectively.

The following three lemmas are folklore. Similar to Lemmas~\ref{lem:dyn-prox-delta} and ~\ref{lem:mean-prox-G-delta}, we have the following $G_\delta$ sets. 
\begin{lem}
    Let $(X,T)$ be a dynamical system and $\delta,\eta>0$.
    Then
    \begin{enumerate}
        \item for every $x\in X$, the distributionally proximal cell of $x$ is a $G_\delta$ subset of $X$;
        \item the distributionally proximal relation is a $G_\delta$ subset of $X\times X$;
        \item for every $x\in X$, the set
              \[
                  \bigl\{y\in X\colon \udens(\{n\in\bbn\colon d(T^nx,T^ny)> \delta \})\geq \eta\bigr\}
              \]
              is a $G_\delta$ subset of $ X$;
        \item the set
              \[
                   \bigl\{(x,y)\in X\times X\colon \udens(\{n\in\bbn\colon d(T^nx,T^ny)> \delta \})\geq \eta \bigr\}
              \]
              is a  $G_\delta$ subset of $X\times X$.
    \end{enumerate}
\end{lem}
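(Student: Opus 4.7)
The plan is to mirror the proof strategy of Lemma~\ref{lem:dyn-prox-delta}, rewriting each of the four sets as a countable intersection of open sets. The central observation is the combinatorial reformulation of upper-density conditions: one has $\udens(A)\geq\eta$ if and only if for every $k,N\in\bbn$ there is some $n>N$ with $|A\cap[1,n]|/n>\eta-1/k$, and in particular $\udens(A)=1$ if and only if for every $k,N\in\bbn$ there is some $n>N$ with $|A\cap[1,n]|/n>1-1/k$. This lets me replace the $\limsup$ and the universal quantifier over $\eps>0$ by countable quantifiers over integers, at which point continuity of $T$ supplies openness of the resulting sets.

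For (1), I would first record that $y\in\dprox(T,x)$ if and only if, for every $m\in\bbn$, $\udens(\{n\in\bbn\colon d(T^nx,T^ny)<1/m\})=1$. Combining this with the reformulation above yields
\begin{equation*}
\dprox(T,x)=\bigcap_{m=1}^{\infty}\bigcap_{k=1}^{\infty}\bigcap_{N=1}^{\infty}\bigcup_{n>N}\Bigl\{y\in X\colon \tfrac{1}{n}\card\bigl(\{i\leq n\colon d(T^ix,T^iy)<\tfrac{1}{m}\}\bigr)>1-\tfrac{1}{k}\Bigr\}.
\end{equation*}
For fixed $m,k,n$ the innermost set is the union, taken over all $S\subset\{1,\dotsc,n\}$ with $|S|/n>1-1/k$, of $\bigcap_{i\in S}\{y\colon d(T^ix,T^iy)<1/m\}$, which is open by the continuity of $T$. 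Hence $\dprox(T,x)$ is a $G_\delta$ subset of $X$.

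For (3), exactly the same scheme gives
\begin{equation*}
\bigl\{y\in X\colon\udens(\{n\colon d(T^nx,T^ny)>\delta\})\geq\eta\bigr\}=\bigcap_{k=1}^{\infty}\bigcap_{N=1}^{\infty}\bigcup_{n>N}\Bigl\{y\in X\colon\tfrac{1}{n}\card\bigl(\{i\leq n\colon d(T^ix,T^iy)>\delta\}\bigr)>\eta-\tfrac{1}{k}\Bigr\},
\end{equation*}
and openness of each inner set follows from the fact that $\{y\colon d(T^ix,T^iy)>\delta\}$ is open by continuity of $T^i$. The proofs of (2) and (4) are word-for-word identical, with $X$ replaced by $X\times X$ and the open sets $\{(x,y)\colon d(T^ix,T^iy)<1/m\}$ or $\{(x,y)\colon d(T^ix,T^iy)>\delta\}$ used instead.

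There is no real obstacle here; the only point requiring care is the two-sided equivalence for upper density (where one must note, for $\udens\geq\eta$, that $\limsup_n|A\cap[1,n]|/n\geq\eta$ is the same as saying for every $\eps>0$ infinitely many $n$ satisfy $|A\cap[1,n]|/n>\eta-\eps$) and the bookkeeping needed to exhibit the inner sets as open. Once these are in hand, the four statements reduce to routine verifications, so I would prove (1) in detail along the lines above and simply remark that (2), (3) and (4) follow by the same argument.
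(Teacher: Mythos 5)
Your proof is correct and follows essentially the same route as the paper: both rewrite each set as a countable intersection of open sets obtained by discretizing the upper-density condition via counting inequalities of the form $\card(\{1\leq i\leq n\colon \dotso\})>n(\eta-\tfrac{1}{k})$, with openness coming from continuity of the iterates $T^i$. The only difference is bookkeeping — the paper merges your three indices $m,k,N$ into a single index, which does not change the argument.
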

\begin{proof}
    (1) follows from the following formula:
    \begin{align*}
        \dprox(T,x)=\bigcap_{n=1}^\infty
        \bigl\{ y\in X\colon 
        & \exists k> n \text{ s.t. }\\                
        & \card\bigl(\{1\leq i\leq k\colon  d(T^ix,T^iy)<\tfrac{1}{n}\}\bigr)> k(1-\tfrac{1}{n})\bigr\}.
             \end{align*}
    The proofs of (2), (3) and (4) are similar to that of (1).
\end{proof}

Following~\cite{F1951}, we say that a dynamical system $(X,T)$ is \emph{stable in the mean in the sense of Lyapunov} (or \emph{mean-L-stable} briefly)
if for any $\eps>0$ there exists some $\delta>0$ such that
for every $x,y\in X$ with $d(x,y)<\delta$,
\[
    \udens(\{n\in\bbn\colon d(T^nx,T^ny)\geq \eps\})<\eps.
\]

\begin{lem}\label{lem:mean-L-stable-prox}
    Let $(X,T)$ be a dynamical system.
    If $(X,T)$ is mean-L-stable, then every proximal pair $(x,y)\in X\times X$ is distributionally asymptotic.
\end{lem}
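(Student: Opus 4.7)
The plan is to adapt the argument of Lemma~\ref{lem:eq-prox-asym} (and its mean version Lemma~\ref{lem:mean-eq-prox-mean-asym}) to the mean-L-stability setting. The key reformulation is that, by Lemma~\ref{lem:density-limit}~(2), a pair $(x_0,y_0)$ is distributionally asymptotic if and only if for every $\eps>0$ one has
\[
\udens(\{n\in\bbn\colon d(T^nx_0,T^ny_0)\geq \eps\})=0.
\]
So fix a proximal pair $(x_0,y_0)$, fix $\eps>0$, and aim to bound this upper density by an arbitrary $\eta>0$; sending $\eta\to 0$ will then finish the proof.

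Given such $\eps,\eta>0$, set $\eps_0=\min(\eps,\eta)$. By mean-L-stability applied to $\eps_0$, there exists $\delta>0$ such that $d(u,v)<\delta$ implies $\udens(\{n\in\bbn\colon d(T^nu,T^nv)\geq \eps_0\})<\eps_0$. Proximality of $(x_0,y_0)$ supplies some $k\in\bbn$ with $d(T^kx_0,T^ky_0)<\delta$, and then applying mean-L-stability to the pair $(T^kx_0,T^ky_0)$ yields
\[
\udens(\{n\in\bbn\colon d(T^{n+k}x_0,T^{n+k}y_0)\geq \eps_0\})<\eps_0\leq \eta.
\]
Translating the index by the fixed constant $k$ changes the counting function on $[1,N]$ by at most $k$, so the upper density on the left equals $\udens(\{n\in\bbn\colon d(T^nx_0,T^ny_0)\geq \eps_0\})$. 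Finally, since $\eps_0\leq \eps$, the set $\{n\colon d(T^nx_0,T^ny_0)\geq \eps\}$ is contained in $\{n\colon d(T^nx_0,T^ny_0)\geq \eps_0\}$, and therefore has upper density at most $\eta$. As $\eta>0$ was arbitrary, this upper density is $0$.

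There is no serious obstacle: the structure is exactly the two-step argument (use proximality to land deep inside the $\delta$-stable regime, then apply the stability hypothesis to the shifted pair) already used in Lemmas~\ref{lem:eq-prox-asym} and \ref{lem:mean-eq-prox-mean-asym}. The only mildly non-trivial point, compared with those, is the $\eps$-gymnastic that couples the two occurrences of the parameter in the definition of mean-L-stability (taking $\eps_0=\min(\eps,\eta)$), which is needed because mean-L-stability quantifies both the size of the bad set and its density by the \emph{same} $\eps$; the routine verification that upper density is invariant under finite shifts supplies the rest.
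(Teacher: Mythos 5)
Your proof is correct and follows essentially the same route as the paper's: fix a small parameter, use mean-L-stability to get $\delta$, use proximality to find $k$ with $d(T^kx_0,T^ky_0)<\delta$, apply the stability estimate to the shifted pair $(T^kx_0,T^ky_0)$, and invoke shift-invariance of upper density before letting the parameter tend to $0$. Your version merely makes explicit two steps the paper leaves implicit (decoupling the two roles of $\eps$ in the definition via $\eps_0=\min(\eps,\eta)$, and the finite-shift invariance of $\udens$), which is a fair tidying rather than a different argument.
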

\begin{proof}
    Let $(x_0,y_0)\in X\times X$ be a proximal pair.
    As $(X,T)$ is mean-L-stable, for any $\eps>0$ there exists $\delta>0$ such that
    for every $x,y\in X$ with $d(x,y)<\delta$,
    \[
        \udens(\{n\in\bbn\colon d(T^nx,T^ny)\geq \eps\})<\eps.
    \]
    Since $(x_0,y_0)$ is proximal, there exists $k\in\bbn$ such that $d(T^kx_0,T^ky_0)<\delta$. Then
    \[
        \udens(\{n\in\bbn\colon d(T^nx_0,T^ny_0)\geq \eps \})
        =\udens(\{n\in\bbn\colon d(T^n(T^kx_0),T^n(T^ky_0))\geq \eps\})<\eps.
    \]
    By the arbitrariness of $\eps>0$, $(x_0,y_0)$ is distributionally asymptotic.
\end{proof}

We say that a dynamical system $(X,T)$ is \emph{unstable in the mean in the sense of Lyapunov} (or \emph{mean-L-unstable} briefly)
if there exists some $\delta>0$ such that, for every $x\in X$ and $\eps>0$,
there exists some $y\in X$ with $d(x,y)<\eps$ such that
\[
    \udens(\{n\in\bbn\colon d(T^nx,T^ny)>\delta\})\geq \delta.
\]

The proof of the following result is similar to that of Lemma~\ref{lem:sen-equivalent}, and we leave it to the reader.

\begin{lem}
    Let $(X,T)$ be a dynamical system with $X$ being completely metrizable.
    Then the following assertions are equivalent:
    \begin{enumerate}
        \item $(X,T)$ is mean-L-unstable;
        \item there exists $\delta>0$ such that for every $x\in X$, the set
              \[
                   \bigl\{ y\in X\colon \udens(\{n\in\bbn\colon d(T^nx,T^ny)>\delta\})\geq \delta \bigr\}
              \]
              is a dense $G_\delta$ subset of $X$;
        \item there exists $\delta>0$ such that the set
              \[
                   \bigl\{ (x,y)\in X\times X \colon \udens(\{n\in\bbn\colon d(T^nx,T^ny)> \delta\})\geq \delta \bigr\}
              \]
              is a dense $G_\delta$ subset of $X\times X$.
    \end{enumerate}
\end{lem}

Let $(X,T)$ be a dynamical system.
For a given $\delta>0$, a pair $(x,y)\in X\times X$ is called \emph{distributionally $\delta$-chaotic of type $2$}
if $(x,y)$ is distributionally proximal and
\[
    \udens(\{n\in\bbn\colon d(T^nx,T^ny)>\delta\})\geq \delta.
\]
A subset $K$ of $X$ is called \emph{distributionally scrambled of type $2$} if any two distinct points $x,y\in K$ form a distributionally $\delta$-chaotic pair of type $2$ for some $\delta=\delta(x,y)>0$.
We say that the dynamical system $(X,T)$ is \emph{distributionally chaotic of type $2$}
if there exists an uncountable distributionally scrambled subset of type $2$.

Similarly, we can define \emph{distributionally $\delta$-scrambled set of type $2$} and \emph{distributional $\delta$-chaos of type $2$}.

The proof of the following result is analogous to that of Proposition~\ref{prop:dense-delta-scrambled}.

\begin{prop}
    Let $(X,T)$ be a dynamical system with $X$ being completely metrizable.
    Then the following assertions are equivalent:
    \begin{enumerate}
        \item there exists $\delta>0$ such that for every sequence $(O_j)_j$ of nonempty open subsets of $X$, there exists a sequence $(K_j)_j$ of Cantor sets with $K_j\subset O_j$ such that $\bigcup_{j=1}^\infty K_j$ is distributionally $\delta$-scrambled of type $2$;
        \item the distributionally proximal relation of $(X,T)$ is dense in $X\times X$ and $(X,T)$ is mean-L-unstable.
    \end{enumerate}
\end{prop}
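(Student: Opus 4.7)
The plan is to imitate the proof of Proposition~\ref{prop:dense-delta-scrambled}, substituting the distributionally proximal relation $\dprox(T)$ for $\prox(T)$ and replacing the $\limsup$-separation condition by the mean-L-unstability condition $\udens(\{n:d(T^nx,T^ny)>\delta\})\geq\delta$.

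For (1)$\Rightarrow$(2), I would take any two nonempty open subsets $O_1,O_2\subset X$ and apply the hypothesis to the sequence $(O_1,O_2)$ to produce Cantor sets $K_j\subset O_j$ with $K_1\cup K_2$ distributionally $\delta$-scrambled of type $2$. Then
\[
(K_1\cup K_2)\times(K_1\cup K_2)\subset \dprox(T)\cup\Delta_X
\]
and
\[
(K_1\cup K_2)\times(K_1\cup K_2)\subset \bigl\{(x,y)\in X\times X\colon \udens(\{n\in\bbn\colon d(T^nx,T^ny)>\delta\})\geq\delta\bigr\}\cup\Delta_X.
\]
Picking any $(x_1,x_2)\in K_1\times K_2$ shows $O_1\times O_2$ meets both sets. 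As $O_1,O_2$ are arbitrary, $\dprox(T)$ is dense in $X\times X$ and the $\udens\geq\delta$ set is dense as well, so by the previous lemma (characterizing mean-L-unstability) $(X,T)$ is mean-L-unstable.

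For (2)$\Rightarrow$(1), by the previous lemma there exists $\delta>0$ such that
\[
R_2:=\bigl\{(x,y)\in X\times X\colon \udens(\{n\in\bbn\colon d(T^nx,T^ny)>\delta\})\geq\delta\bigr\}
\]
is a dense $G_\delta$ subset of $X\times X$. The $G_\delta$ lemma immediately preceding the statement gives that $\dprox(T)$ is $G_\delta$ in $X\times X$, and it is dense by assumption; therefore $R_2\cap\dprox(T)$ is a dense $G_\delta$ subset of $X\times X$. Mean-L-unstability forces $X$ to have no isolated points (an isolated point $x$ would have $y=x$ as the only point in small neighborhoods, contradicting the existence of a $\delta$-separated orbit for arbitrarily close $y$), so the Mycielski theorem applies to $R_2\cap\dprox(T)$: for every sequence $(O_j)_j$ of nonempty open subsets of $X$, there exist Cantor sets $K_j\subset O_j$ with
\[
\bigcup_{j=1}^\infty K_j\times \bigcup_{j=1}^\infty K_j\subset (R_2\cap\dprox(T))\cup\Delta_X,
\]
which says exactly that $\bigcup_{j=1}^\infty K_j$ is distributionally $\delta$-scrambled of type $2$.

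No step presents a real obstacle; the only mildly delicate point is ensuring the Mycielski hypothesis (no isolated points) holds under mean-L-unstability, which is essentially the same observation as in Remark~\ref{rem:sensitve-uncountable}.
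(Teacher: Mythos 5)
Your proof is correct and is exactly the argument the paper intends: the paper omits the proof, stating only that it is analogous to Proposition~\ref{prop:dense-delta-scrambled}, and your write-up carries out that analogy faithfully (using the $G_\delta$ lemma for $\dprox(T)$ and the upper-density separation set, the equivalence lemma for mean-L-unstability, and Mycielski's theorem). The point you flag about isolated points is handled correctly and mirrors Remark~\ref{rem:sensitve-uncountable}.
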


\begin{rem}\label{rem:relation-of-D-M}
  Given a dynamical system $(X,T)$, it is not hard to check that a pair $(x,y)$ is distributionally $\delta$-chaotic of type 2, for some $\delta>0$, if and only if $(x,y)$ is distributionally proximal but not distributionally asymptotic. Note also that we have the following inclusions

\[
\begin{array}{ccccccc}
\asym(T)&\subset& \masym(T)& \subset& \dasym(T)& &\\[3pt]
& &\cap & & \cap &  & \\[3pt]
& &\mprox(T)& \subset& \dprox(T)& \subset & \prox(T),
\end{array}
\]
but also the implications
\[
   \text{Equicontinuity}\Rightarrow \text{Mean Equicontinuity}\Rightarrow \text{Mean-L-Stability}.
\]
These inclusions and implications follow either trivially, or else because of the following inequality exposed in \cite[Proposition 20]{BBPW2018} and that holds for every $x,y\in X$ and $\delta>0$:
\[
    \frac{\card(\{1\leq j\leq N: d(T^j x, T^j y)\geq \delta\})}{N}\leq \frac{1}{N}\sum_{j=1}^{N}\frac{d(T^j x,T^j y)}{\delta}=\frac{1}{\delta}\frac{1}{N}\sum_{j=1}^{N}d(T^jx, T^j y).
\]
Moreover, it is well-known (see references \cite{D2014} and \cite{LTY2015}) that when the metric $d$ on $X$ is bounded then we have the equalities
\[
    \masym(T)=\dasym(T) \qquad \text{and} \qquad \mprox(T)=\dprox(T),
\]
but also the equivalence
\[
    \text{Mean Equicontinuity}\Leftrightarrow\text{Mean-L-Stability}.
\]
These equalities and equivalence, when $d$ is bounded by some value $K>0$, follow since for every $x, y\in X$ and $\delta>0$ we have that 
  \begin{align*}
        \frac{1}{N}\sum_{j=1}^{N}d(T^j x,T^j y)&\leq \frac{1}{N}\bigl (\delta \cdot \card(\{1\leq j\leq N: d(T^j x, T^j y)<\delta\}) \\
        &\qquad\quad +K \cdot \card(\{1\leq j\leq N: d(T^j x, T^j y)\geq\delta\})\bigr)\\
        &\leq \delta+K \cdot \frac{\card(\{1\leq j\leq N: d(T^j x, T^j y)\geq \delta\})}{N}.
  \end{align*}
These arguments trivially explain why the operator considered in Remark~\ref{rem:m-of-BF-BB} is mean Li-Yorke chaotic with respect to the metric $d_F$ but not with respect to the metric $d_B$.
\end{rem}

For a given $\delta>0$, a pair $(x,y)\in X\times X$ is called \emph{distributionally $\delta$-chaotic of type $1$}
if $(x,y)$ is distributionally proximal and
\[
    \udens(\{n\in\bbn\colon d(T^nx,T^ny)> \delta\})=1.
\]
A subset $K$ of $X$ is called \emph{distributionally scrambled of type $1$} if any two distinct points $x,y\in K$ form a distributionally $\delta$-chaotic pair of type $1$ for some $\delta=\delta(x,y)>0$.
We say that the dynamical system $(X,T)$ is \emph{distributionally chaotic of type $1$}
if there exists an uncountable distributionally scrambled subset of type $1$.

Similarly, we can define \emph{distributionally $\delta$-scrambled set of type $1$} and \emph{distributional $\delta$-chaos of type $1$}.

The proof of the following result is analogous to that of Proposition~\ref{prop:dense-delta-scrambled}.

\begin{prop}\label{prop:dense-DC1}
    Let $(X,T)$ be a dynamical system with $X$ being completely metrizable.
    Then the following assertions are equivalent:
    \begin{enumerate}
        \item there exists $\delta>0$ such that for every sequence $(O_j)_j$ of nonempty open subsets of $X$, there exists a sequence $(K_j)_j$ of Cantor sets with $K_j\subset O_j$ such that $\bigcup_{j=1}^\infty K_j$ is distributionally $\delta$-scrambled of type $1$;
        \item the distributionally proximal relation of $(X,T)$ is dense in $X\times X$, and there exists $\delta>0$ such that the set
              \[
                   \bigl\{(x,y)\in X\times X\colon \udens(\{n\in\bbn\colon d(T^nx,T^ny)> \delta\})=1 \bigr\}
              \] is dense in $X\times X$.
    \end{enumerate}
\end{prop}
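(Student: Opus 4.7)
The plan is to mirror the proof of Proposition~\ref{prop:dense-delta-scrambled}, replacing the separation via $\limsup$ by separation via upper density one.

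For the implication (1)$\Rightarrow$(2), I would fix any two nonempty open subsets $O_1,O_2$ of $X$. Applying assumption (1) to the sequence whose terms are $O_1,O_2$ (and then any choice for the rest) produces Cantor sets $K_1\subset O_1$, $K_2\subset O_2$ whose union is distributionally $\delta$-scrambled of type $1$. Then every pair $(x,y)\in K_1\times K_2$ lies both in $\dprox(T)$ and in
\[
R_\delta:=\{(x,y)\in X\times X\colon \udens(\{n\in\bbn\colon d(T^nx,T^ny)>\delta\})=1\}.
\]
Since $O_1,O_2$ are arbitrary, both $\dprox(T)$ and $R_\delta$ are dense in $X\times X$.

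For the implication (2)$\Rightarrow$(1), I would first check that $R_\delta$ is a $G_\delta$ subset of $X\times X$. This follows by writing
\[
R_\delta=\bigcap_{m=1}^{\infty}\Bigl\{(x,y)\in X\times X\colon \udens(\{n\in\bbn\colon d(T^nx,T^ny)>\delta\})\geq 1-\tfrac{1}{m}\Bigr\},
\]
each factor being $G_\delta$ by part (4) of the preceding lemma on distributional $G_\delta$ sets. Combined with the fact (also from that lemma) that $\dprox(T)$ is $G_\delta$, the intersection $R:=\dprox(T)\cap R_\delta$ is a $G_\delta$ subset of $X\times X$, and by hypothesis (2) both factors are dense, so $R$ is dense $G_\delta$. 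Observe that $R$ misses the diagonal: if $x=y$, then $d(T^nx,T^ny)=0$ for all $n$, so $(x,x)\notin R_\delta$. Hence density of $R$ forces $X$ to have no isolated points (otherwise some $(x_0,x_0)$ would be an isolated point of $X\times X$ not in $R$).

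Finally, I would apply the Mycielski theorem (Theorem~\ref{Mycielski}) to the dense $G_\delta$ set $R\subset X\times X$: for any sequence $(O_j)_j$ of nonempty open subsets of $X$, one obtains Cantor sets $K_j\subset O_j$ with
\[
\bigcup_{j=1}^\infty K_j\times \bigcup_{j=1}^\infty K_j\subset R\cup \Delta_X,
\]
which immediately exhibits $\bigcup_{j} K_j$ as distributionally $\delta$-scrambled of type $1$. I do not expect a serious obstacle; the only point requiring verification beyond the routine analogue of Proposition~\ref{prop:dense-delta-scrambled} is the $G_\delta$ presentation of $R_\delta$, which is precisely the reason the authors formulated the preceding lemma in terms of a parameter $\eta$ rather than requiring density exactly equal to $1$.
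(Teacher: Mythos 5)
Your proposal is correct and follows essentially the same route the paper intends, namely the direct analogue of the proof of Proposition~\ref{prop:dense-delta-scrambled}: extract density of both relations from two-term sequences of open sets for (1)$\Rightarrow$(2), and for (2)$\Rightarrow$(1) intersect the two dense $G_\delta$ sets and apply the Mycielski theorem. Your two explicit verifications --- writing the density-one set as a countable intersection of the $\udens\geq 1-\tfrac{1}{m}$ sets to get the $G_\delta$ property, and deducing the absence of isolated points from the density of a set disjoint from the diagonal --- are exactly the routine adaptations the authors leave to the reader.
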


Recall that every Fr\'echet space has a separating increasing sequence of seminorms, and that each of these seminorms can be seen as a pseudometric.
In order to apply the theory developed previously to the case of Fr\'echet spaces,
we introduce the concepts of extreme mean-L-unstability and distributional extreme chaos with respect to a continuous pseudometric.
From now on in this subsection, we fix a continuous pseudometric $\rho$ on $X$.
We say that a dynamical system $(X,T)$ is \emph{$\rho$-extremely mean-L-unstable} 
if there exists some $\delta>0$ such that, for every $x\in X$ and $\eps>0$,
there exists some $y\in X$ with $d(x,y)<\eps$ such that, for any $M>0$,
\[
    \udens(\{n\in\bbn\colon \rho(T^nx,T^ny)>M\})\geq \delta.
\]

The proof of the following result is similar to that of Lemma~\ref{lem:sen-equivalent} and we leave it to the reader.

\begin{lem}
    Let $(X, T)$ be a dynamical system with $X$ being completely metrizable.
    Then the following assertions are equivalent:
    \begin{enumerate}
        \item $(X,T)$ is $\rho$-extremely mean-L-unstable;
        \item there exists $\delta>0$ such that for every $x\in X$, the set
              \[
                   \bigl\{ y\in X\colon \udens(\{n\in\bbn\colon \rho(T^nx,T^ny)>M\})\geq \delta,\ \forall M>0 \bigr\}
              \]
              is a dense $G_\delta$ subset of $X$;
        \item there exists $\delta>0$ such that the set
              \[
                   \bigl\{ (x,y)\in X\times X \colon \udens(\{n\in\bbn\colon \rho(T^nx,T^ny)> M\})\geq \delta, \forall M>0 \bigr\}
              \]
              is a dense $G_\delta$ subset of $X\times X$.
    \end{enumerate}
\end{lem}

For a given $\delta>0$, a pair $(x,y)\in X\times X$ is called \emph{distributionally $\rho$-extremely $\delta$-chaotic of type $2$}
if $(x,y)$ is distributionally proximal and for any $M>0$
\[
    \udens(\{n\in\bbn\colon \rho(T^nx,T^ny)>M\})\geq \delta,
\]
and \emph{distributionally $\rho$-extremely chaotic of type $1$}
if $(x,y)$ is distributionally proximal and for any $M>0$
\[
    \udens(\{n\in\bbn\colon \rho(T^nx,T^ny)> M\})=1.
\]
Similarly,
we can define \emph{distributionally $\rho$-extremely $\delta$-scrambled set of type $2$} and \emph{distributional $\rho$-extreme  $\delta$-chaos of type $2$}, \emph{distributionally $\rho$-extremely scrambled set of type $1$} and \emph{distributional $\rho$-extreme chaos of type $1$}.

The proof of the following two results is analogous to that of Proposition~\ref{prop:dense-delta-scrambled}.

\begin{prop}
    Let $(X,T)$ be a dynamical system with $X$ being completely metrizable.
    Then the following assertions are equivalent:
    \begin{enumerate}
        \item there exists $\delta>0$ such that for every sequence $(O_j)_j$ of nonempty open subsets of $X$, there exists a sequence $(K_j)_j$ of Cantor sets with $K_j\subset O_j$ such that $\bigcup_{j=1}^\infty K_j$ is distributionally $\rho$-extremely $\delta$-scrambled of type $2$;
        \item the distributionally proximal relation of $(X,T)$ is dense in $X\times X$ and $(X,T)$ is $\rho$-extremely mean-L-unstable.
    \end{enumerate}
\end{prop}

\begin{prop}
    Let $(X,T)$ be a dynamical system with $X$ being completely metrizable.
    Then the following assertions are equivalent:
    \begin{enumerate}
        \item  for every sequence $(O_j)_j$ of nonempty open subsets of $X$, there exists a sequence $(K_j)_j$ of Cantor sets with $K_j\subset O_j$ such that $\bigcup_{j=1}^\infty K_j$ is distributionally $\rho$-extremely scrambled of type $1$;
        \item the distributionally proximal relation of $(X,T)$ is dense in $X\times X$, and the set
              \[
                   \bigl\{(x,y)\in X\times X\colon \udens(\{n\in\bbn\colon \rho(T^nx,T^ny)>M\})=1,\ \forall M>0 \bigr\}
              \] is dense in $X\times X$.
    \end{enumerate}
\end{prop}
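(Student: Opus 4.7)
The plan follows exactly the template of Propositions~\ref{prop:dense-delta-scrambled} and \ref{prop:dense-extreme-scrambled}: reformulate the type~$1$ extreme condition as membership in a dense $G_{\delta}$ subset $R$ of $X\times X$ and then invoke the Mycielski theorem to extract the Cantor sets.

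For $(1)\Rightarrow(2)$, I fix two nonempty open sets $O_{1}, O_{2}\subset X$ and apply (1) to any sequence starting with $O_{1}, O_{2}$, producing Cantor sets $K_{i}\subset O_{i}$ such that $K_{1}\cup K_{2}$ is distributionally extremely scrambled of type~$1$. Since each Cantor set is uncountable I can pick distinct $x\in K_{1}$, $y\in K_{2}$, and by definition the pair $(x,y)$ lies both in $\dprox(T)$ and in the set displayed in~(2). As $O_{1},O_{2}$ were arbitrary, both sets are dense in $X\times X$.

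For $(2)\Rightarrow(1)$, I first observe that $X$ has no isolated points: if $x_{0}$ were isolated then the open set $\{(x_{0},x_{0})\}$ would meet neither $\dprox(T)\setminus\Delta_{X}$ nor the second set in~(2), since $\rho(T^{n}x_{0},T^{n}x_{0})=0$ for every $n$; this contradicts the assumed density. Next I verify that the set
\[
    E:=\Bigl\{(x,y)\in X\times X : \forall M\in\bbn,\ \udens(\{n\in\bbn : \rho(T^{n}x,T^{n}y)>M\})=1\Bigr\}
\]
is a $G_{\delta}$ subset of $X\times X$ by rewriting it, in the spirit of Lemma~\ref{lem:density-limit}(3), as
\[
    E=\bigcap_{M,N,k\in\bbn}\bigcup_{m>k}\Bigl\{(x,y) : \card\{1\le i\le m : \rho(T^{i}x,T^{i}y)>M\}>m\bigl(1-\tfrac{1}{N}\bigr)\Bigr\},
\]
each inner set being open by continuity of $T$ and $\rho$ and by the finiteness of the cardinality condition. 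Combined with the $G_{\delta}$ property of $\dprox(T)$ (proved exactly as in the distributionally proximal analogue of Lemma~\ref{lem:dyn-prox-delta}(2) already sketched in this section), the intersection $R:=\dprox(T)\cap E$ is a dense $G_{\delta}$ subset of $X\times X$. Applying Theorem~\ref{Mycielski} to $R$ and the prescribed sequence $(O_{j})_{j}$ yields Cantor sets $K_{j}\subset O_{j}$ with $\bigcup_{j}K_{j}\times\bigcup_{j}K_{j}\subset R\cup\Delta_{X}$; the inclusion in $R$ is precisely the assertion that $\bigcup_{j}K_{j}$ is distributionally extremely scrambled of type~$1$.

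The only step demanding genuine bookkeeping is the $G_{\delta}$ rewriting of $E$, and even there the work is a straightforward composition of the standard ``for all $M$'', ``$\limsup=1$'', and continuity reductions. Every remaining ingredient is either a direct invocation of the Mycielski theorem or an elementary consequence of the density hypotheses in~(2), so I do not expect a substantive obstacle.
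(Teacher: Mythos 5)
Your proposal is correct and follows exactly the route the paper intends: the paper omits the proof, stating only that it is analogous to Proposition~\ref{prop:dense-delta-scrambled}, and your argument is precisely that template — density of the two relations from the scrambled Cantor sets in one direction, and in the other the verification that the type~$1$ condition defines a $G_\delta$ set, intersection with the $G_\delta$ set $\dprox(T)$, and an application of the Mycielski theorem. The bookkeeping in your $G_\delta$ rewriting of $E$ and the no-isolated-points observation are both sound.
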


\begin{rem}
    According to Lemma~\ref{lem:density-limit}, we can give  equivalent definitions of distributional chaos of type $1$ and $2$ by limits along subsets of $\bbn$.
\end{rem}

\subsection{Distributional chaos for continuous endomorphisms of completely metrizable groups}
\label{subsec:DC-group}
In \cite{BBMP2011}, \cite{BBMP2013} and \cite{BBPW2018} the authors characterized distributional chaos for linear operators on Banach and Fr\'echet spaces. In this section, we will generalize some of their results to the setting of continuous endomorphisms of completely metrizable groups.

In this subsection, we fix a completely metrizable group $G$ and a left-invariant compatible metric $d$ on $G$.
Let $T\in\eend(G)$. A point $x\in G$ is called \emph{distributionally semi-irregular of type $2$} for $T$ if
there exists a subset $A$ of $\bbn$ with $\udens(A)=1$ such that
$\lim\limits_{A\ni n\to\infty} T^n x=e$ and there exists a subset $B$ of $\bbn$ with $\udens(B)>0$ such that $\liminf\limits_{B\ni n\to\infty} d(T^n x,e)>0$,
and \emph{distributionally semi-irregular of type $1$} for $T$ if
there exists a subset $A$ of $\bbn$ with $\udens(A)=1$ such that
$\lim\limits_{A\ni n\to\infty} T^n x=e$ and there exists a subset $B$ of $\bbn$ with $\udens(B)=1$ such that $\liminf\limits_{B\ni n\to\infty} d(T^n x,e)>0$.

The proof of the following result is similar to that of Lemma~\ref{lem:g-asym-prox-left-inv}.

\begin{lem}
    Let $T\in\eend(G)$.
    \begin{enumerate}
        \item For every $x\in G$, $\dasym(T,x) = x \cdot \dasym(T,e)$,
              $\dprox(T,x)=x\cdot \dprox(T,e)$;
        \item  $\dasym(T,e)$ is dense in $G$ if and only if $\dasym(T)$ is dense in $G\times G$;
        \item If $\dasym(T,e)$ is residual in $G$, then $\dasym(T,e)=G$;
        \item $\dprox(T,e)$ is dense in $G$ if and only if $\dprox(T)$ is dense in $ G\times G$;
        \item For $i=1,2$ and every $x,y\in G$, $(x,y)$ is distributionally chaotic of type $i$ if and only if $x^{-1}y$ is a distributionally semi-irregular point of type $i$.
        \item For $i=1,2$, if a subset $K$  of $G$ is distributionally scrambled of type $i$, then for any $x\in G$, $xK$ is also  distributionally scrambled of type $i$.  In particular, for any $x\in K$, $x^{-1}K\setminus \{e\}$ consists of distributionally semi-irregular points of type $i$.
        \item For $i=1,2$ and a given $\delta>0$, if a subset $K$  of $G$ is distributionally $\delta$-scrambled of type $i$, then for any  $x\in G$, $xK$ is also distributionally $\delta$-scrambled of type $i$ .
    \end{enumerate}
\end{lem}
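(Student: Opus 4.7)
The plan is to exploit the left-invariance of $d$ through the identity
\[
d(T^n x, T^n y) = d(T^n(x^{-1}y), e), \qquad x,y \in G,\ n \in \bbn,
\]
which already underlies the proof of Lemma~\ref{lem:g-asym-prox-left-inv}. For every $\eps > 0$ this forces $\{n \in \bbn : d(T^n x, T^n y) < \eps\} = \{n \in \bbn : d(T^n(x^{-1}y), e) < \eps\}$, so the upper and lower densities of these two sets agree. Consequently $(x,y) \in \dprox(T)$ iff $x^{-1}y \in \dprox(T,e)$, and similarly for $\dasym(T)$ and for the large-distance sets $\{n : d(T^n x, T^n y) > \delta\}$ that control the distributional type~$1$ and type~$2$ scrambled conditions. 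Statements (1), (5), (6), and (7) all fall out at once from this translation invariance.

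For (2) and (4), I would copy the argument of Lemma~\ref{lem:g-asym-prox-left-inv}(2),(4) without modification. One direction is immediate from (1) together with the decomposition $\dasym(T) = \bigcup_{x \in G}\{x\} \times \dasym(T,x)$: if $\dasym(T,e)$ is dense then each translate $x \cdot \dasym(T,e) = \dasym(T,x)$ is dense and the union fills $G \times G$. For the reverse direction, given a nonempty open $W \subset G$, the continuity of the group operation produces an open neighborhood $U$ of $e$ and a nonempty open $V \subset W$ with $U^{-1} V \subset W$; any pair $(x,y) \in \dasym(T) \cap (U \times V)$ yields $x^{-1}y \in \dasym(T,e) \cap W$. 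The $\dprox$ case is identical.

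Finally, (3) is proved as in Lemma~\ref{lem:g-asym-prox-left-inv}(3): if $\dasym(T,e)$ is residual in $G$ then so is $\dasym(T,e)\, x^{-1}$ for each $x \in G$, and Baire category provides $y$ in the intersection, giving $(y,e), (yx,e) \in \dasym(T)$ and hence $(e,x) \in \dasym(T)$ by transitivity, i.e.\ $x \in \dasym(T,e)$. The only ingredient not obtained by a direct translation from the asymptotic case is the transitivity of $\dasym(T)$, which I expect to be the main (and essentially only) obstacle; it follows from the elementary fact that the intersection of two subsets of $\bbn$ of density one again has density one, so if $\{n : d(T^n u, T^n v) < \eps/2\}$ and $\{n : d(T^n v, T^n w) < \eps/2\}$ both have density one then by the triangle inequality $\{n : d(T^n u, T^n w) < \eps\}$ also has density one.
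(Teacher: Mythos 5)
Your proposal is correct and follows essentially the same route as the paper, which simply states that the proof is similar to that of Lemma~\ref{lem:g-asym-prox-left-inv}: everything reduces via the left-invariance identity $d(T^nx,T^ny)=d(T^n(x^{-1}y),e)$, and you correctly isolate the one genuinely new ingredient needed for (3), namely the transitivity of $\dasym(T)$, which you justify properly by intersecting two density-one subsets of $\bbn$. No gaps.
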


\begin{rem}
    Let $T\in \eend(G)$. If $(x,e)$ is distributionally proximal, then  there exists a subset $A$ of $\bbn$ with $\udens(A)=1$ such that
    $\lim\limits_{A\ni n\to\infty} T^n x=e$. Let
    \[
        G_0=\Bigl\{ y\in G\colon \lim\limits_{A\ni n\to\infty} T^n y=e\Bigr\}.
    \]
    It is clear that $G_0\subset \dprox(T,e)$.
    By Lemma~\ref{lem:subseq-asym-subgroup}, $G_0$ is a subgroup of $G$.
\end{rem}

The proof of the following result is analogous that of Theorems~\ref{thm:dich-eq-sen} and~\ref{thm:dich-mean-eq-mean-sen}.
\begin{thm}\label{thm:dich-mean-L-stable-sens-mean}
    Let $T\in\eend(G)$.
    Then $T$ is either mean-L-stable or  mean-L-unstable.
\end{thm}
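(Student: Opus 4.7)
The plan is to mirror the proof of Theorem~\ref{thm:dich-mean-eq-mean-sen}, replacing Cesàro averages by upper densities of deviation sets and exploiting the left-invariance of $d$ together with the fact that $T$ is a group endomorphism. The key observation, used throughout that earlier argument, is the identity
\[
d(T^n x_1, T^n x_2) \;=\; d\bigl(T^n(x_2^{-1}) T^n x_1,\, e\bigr) \;=\; d\bigl(T^n(x_2^{-1} x_1),\, e\bigr),
\]
which reduces everything about pairs $(x_1,x_2)$ to the behaviour of orbits of points near the identity.

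I would split into a dichotomy according to whether the mean-L-stability condition holds at the identity.

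\textbf{Case 1.} Suppose that for every $\eps>0$ there exists $\delta>0$ such that for every $x\in G$ with $d(x,e)<\delta$ one has $\udens(\{n\in\bbn\colon d(T^n x, e)\geq \eps\})<\eps$. Then for any $x_1,x_2\in G$ with $d(x_1,x_2)<\delta$, left-invariance gives $d(x_2^{-1}x_1,e)<\delta$, hence the displayed identity above yields
\[
\udens(\{n\in\bbn\colon d(T^n x_1, T^n x_2)\geq \eps\}) \;=\; \udens(\{n\in\bbn\colon d(T^n(x_2^{-1}x_1), e)\geq \eps\}) \;<\; \eps,
\]
so $T$ is mean-L-stable.

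\textbf{Case 2.} Otherwise, the negation provides $\eps_0>0$ such that for every $\delta>0$ there is $y_\delta\in G$ with $d(y_\delta,e)<\delta$ and $\udens(\{n\in\bbn\colon d(T^n y_\delta, e)\geq \eps_0\})\geq \eps_0$. Set $\delta_0 = \eps_0/2$. Given any $x\in G$ and any $\eps>0$, choose $y_\eps$ as above and put $y=x y_\eps$. Left-invariance gives $d(y,x)=d(y_\eps,e)<\eps$, and since $T$ is a homomorphism,
\[
d(T^n y, T^n x) \;=\; d\bigl(T^n x \cdot T^n y_\eps,\, T^n x\bigr) \;=\; d(T^n y_\eps, e).
\]
Therefore $\{n\colon d(T^n y_\eps, e)\geq \eps_0\}\subset \{n\colon d(T^n y, T^n x) > \delta_0\}$, and the upper density of the latter set is at least $\eps_0 \geq \delta_0$. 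Hence $T$ is mean-L-unstable with constant $\delta_0$.

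There is no real obstacle here; the entire argument is formally parallel to the proof of Theorem~\ref{thm:dich-mean-eq-mean-sen}, with $\frac{1}{n}\sum_{i=1}^n d(T^ix,T^iy)$ replaced by $\udens(\{n\colon d(T^nx,T^ny)\geq\eps\})$. The only minor care required is the bookkeeping between the strict inequality ``$>\delta$'' used in the definition of mean-L-unstable and the non-strict ``$\geq \eps_0$'' coming from the negation of mean-L-stable, which is handled cleanly by taking $\delta_0=\eps_0/2$ (or any constant strictly smaller than $\eps_0$).
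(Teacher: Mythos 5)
Your proof is correct and is exactly the argument the paper intends: the paper's "proof" of this theorem is the single sentence that it is analogous to Theorem~\ref{thm:dich-mean-eq-mean-sen}, and your two-case dichotomy at the identity, using left-invariance and the homomorphism identity $d(T^nx_1,T^nx_2)=d(T^n(x_2^{-1}x_1),e)$, is precisely that analogue; the $\delta_0=\eps_0/2$ adjustment to pass from ``$\geq\eps_0$'' to ``$>\delta_0$'' is also handled correctly.
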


Combining Lemma~\ref{lem:mean-L-stable-prox} and Theorem ~\ref{thm:dich-mean-L-stable-sens-mean}, we have the following.
\begin{coro}\label{coro:dist-scambled-sen}
    Let $T\in\eend(G)$. If there exists a distributionally chaotic pair of type $2$,
    then $T$ is mean-L-unstable.
\end{coro}

The proof of the following result is analogous that of Theorem~\ref{prop:g-mean-sen-eq-cond}.
\begin{prop} \label{prop:g-sen-in-mean-eq-cond}
    Let $T\in \eend(G)$. Then the following assertions are equivalent:
    \begin{enumerate}
        \item $T$ is mean-L-unstable;
        \item there exists $\delta>0$ such that for any $x\in G$ and $\eps>0$
              there exists $y\in X$ with $d(x,y)<\eps$ and
              \[
                  \sup_{n\in\bbn} \frac{1}{n}\card(\{1\leq i\leq n\colon d(T^ix,T^iy)>\delta\}) \geq \delta;
              \]
        \item there exists $\delta>0$, a sequence $(y_k)_k$ in $G$ and a sequence $(N_k)_k$ in $\bbn$ such that
              $\lim_{k\to\infty} y_k=e$ and
              \[
                  \inf_{k\in\bbn} \frac{1}{N_k}\card(\{1\leq i\leq N_k\colon d(T^iy_k,e)>\delta\}) \geq \delta;
              \]
    \end{enumerate}
\end{prop}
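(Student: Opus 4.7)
The plan is to adapt the Baire category argument from the proof of Proposition~\ref{prop:g-mean-sen-eq-cond}, replacing time averages of $d(T^i\cdot,e)$ with upper frequencies of the event $\{d(T^i\cdot,e)>\delta/2\}$. The implications (1)$\Rightarrow$(2) and (2)$\Rightarrow$(3) are routine: the bound $\udens(A)\ge\delta$ trivially gives $\sup_n\card(A\cap[1,n])/n\ge\delta$, and specializing (2) to $x=e$ and $\eps=1/k$ produces the witness sequences required by (3). After re-indexing I may assume $\tfrac{1}{N_k}\card\{1\le i\le N_k:d(T^iy_k,e)>\delta\}\ge\delta$ for each $k$, with $N_k\to\infty$.

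For (3)$\Rightarrow$(1), write $C_k(y)=\card\{1\le i\le k:d(T^iy,e)>\delta/2\}$ and for each $n\in\bbn$ set
\[
X_n=\Bigl\{y\in G:\exists\, k>n \text{ with } \tfrac{1}{k}C_k(y)>\tfrac{\delta}{2}-\tfrac{1}{n}\Bigr\}.
\]
Each $X_n$ is open because the sets $\{y:d(T^iy,e)>\delta/2\}$ are open and $\sum_{i=1}^k\chi_{\{d(T^iy,e)>\delta/2\}}$ is lower semicontinuous in $y$. To prove density, I would fix a nonempty open $U$ and $z\in U$; if $\udens\{i:d(T^iz,e)>\delta/2\}\ge\delta/2$ already, then $z\in X_n$, so I assume otherwise and pick $M$ with $C_k(z)/k\le\delta/2$ for all $k\ge M$. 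Using $\lim_k y_kz=z$, choose $k_0$ with $y_{k_0}z\in U$ and $N_{k_0}>\max(M,n)$. Left-invariance of $d$ yields
\[
d(T^i(y_{k_0}z),T^iy_{k_0})=d(T^iy_{k_0}\cdot T^iz,T^iy_{k_0}\cdot e)=d(T^iz,e),
\]
so by the triangle inequality $d(T^i(y_{k_0}z),e)\ge d(T^iy_{k_0},e)-d(T^iz,e)$. Hence every index $i\le N_{k_0}$ with $d(T^iy_{k_0},e)>\delta$ and $d(T^iz,e)\le\delta/2$ contributes to $C_{N_{k_0}}(y_{k_0}z)$, and a subtraction gives
\[
\tfrac{1}{N_{k_0}}C_{N_{k_0}}(y_{k_0}z)\ge\delta-\tfrac{\delta}{2}=\tfrac{\delta}{2}>\tfrac{\delta}{2}-\tfrac{1}{n},
\]
so $y_{k_0}z\in X_n\cap U$.

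By the Baire category theorem, $R:=\bigcap_n X_n$ is a dense $G_\delta$ subset of $G$, and every $y\in R$ satisfies $\udens\{i:d(T^iy,e)>\delta/2\}\ge\delta/2$. To promote this to mean-L-unstability with constant $\delta/2$, I would translate exactly as in Case 2 of Theorem~\ref{thm:dich-mean-L-stable-sens-mean}: given $x\in G$ and $\eps>0$, density of $R$ provides $y_\eps\in R$ with $d(y_\eps,e)<\eps$; setting $y=xy_\eps$ gives $d(x,y)=d(y_\eps,e)<\eps$ and, by left-invariance, $d(T^nx,T^ny)=d(T^ny_\eps,e)$, so $\udens\{n:d(T^nx,T^ny)>\delta/2\}\ge\delta/2$, establishing (1).

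The main technical obstacle is the bookkeeping inside the density step: in the ``small'' case one must propagate the two simultaneous bounds $d(T^iy_{k_0},e)>\delta$ and $d(T^iz,e)\le\delta/2$ into a count of at least $(\delta/2)N_{k_0}$ indices where $d(T^i(y_{k_0}z),e)>\delta/2$, with $N_{k_0}$ chosen large enough that the frequency estimate for $z$ is valid on $[1,N_{k_0}]$. Once this count is in hand, the Baire argument and the left-translation from a residual set at $e$ to global mean-L-unstability are direct adaptations of the scheme used for Proposition~\ref{prop:g-mean-sen-eq-cond}.
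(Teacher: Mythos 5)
Your proof is correct and is precisely the adaptation of the Baire-category argument for Proposition~\ref{prop:g-mean-sen-eq-cond} that the paper itself has in mind (it only remarks that the proof is analogous): the same open sets $X_n$, the same two-case density argument via left translation by $y_{k_0}$, and the same final translation of the residual set at $e$. The only point to tidy is the re-indexing step, since $\sup_k(\cdot)\ge\delta$ literally yields only a subsequence with count $\ge(\delta-\eta_k)N_k$ rather than $\ge\delta N_k$; shrinking the constant by a factor of two absorbs this, exactly as in the paper's own treatment of Proposition~\ref{prop:g-mean-sen-eq-cond}.
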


Combining Theorem~\ref{thm:dich-mean-L-stable-sens-mean} and Proposition~\ref{prop:g-sen-in-mean-eq-cond}, we have the following.
\begin{coro}
    Let $T\in \eend(G)$. Then $T$ is mean-L-stable if and only if for any $\eps>0$ there exists some $\delta>0$ such that
    for every $x,y\in G$ with $d(x,y)<\delta$,
    \[
        \sup_{n\in\bbn} \frac{1}{n}\card(\{1\leq i\leq n\colon d(T^ix,T^iy)\geq \eps\}) <\eps.
    \]
\end{coro}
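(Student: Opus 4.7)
The plan is to prove the corollary by combining the dichotomy in Theorem~\ref{thm:dich-mean-L-stable-sens-mean} with the equivalent conditions for mean-L-unstability in Proposition~\ref{prop:g-sen-in-mean-eq-cond}, in exactly the same spirit as Corollary~\ref{coro:g-mean-equi}.

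The ``if'' direction is immediate from a trivial comparison: for any subset $A\subset\bbn$,
\[
    \udens(A)=\limsup_{n\to\infty}\frac{\card(A\cap[1,n])}{n}\leq \sup_{n\in\bbn}\frac{\card(A\cap[1,n])}{n}.
\]
So applied to $A=\{n\in\bbn\colon d(T^nx,T^ny)\geq \eps\}$, the supremum condition forces $\udens(A)<\eps$, which is exactly the definition of mean-L-stability.

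For the ``only if'' direction I would argue by contrapositive: assume the supremum condition fails and derive that $T$ is mean-L-unstable; the dichotomy in Theorem~\ref{thm:dich-mean-L-stable-sens-mean} then rules out mean-L-stability. Concretely, if the condition fails, there exists $\eps_0>0$ such that for every $\delta>0$ one finds $x,y\in G$ with $d(x,y)<\delta$ and
\[
    \sup_{n\in\bbn}\frac{1}{n}\card(\{1\leq i\leq n\colon d(T^ix,T^iy)\geq \eps_0\})\geq \eps_0.
\]
Applying this with $\delta=\tfrac{1}{k}$ for each $k\in\bbn$ and exploiting the left-invariance of $d$ via the substitution $z_k=y_k^{-1}x_k$, we obtain a sequence $(z_k)_k$ with $d(z_k,e)<\tfrac{1}{k}$ (so $z_k\to e$) and $d(T^iz_k,e)=d(T^ix_k,T^iy_k)$, hence
\[
    \sup_{n\in\bbn}\frac{1}{n}\card(\{1\leq i\leq n\colon d(T^iz_k,e)\geq \eps_0\})\geq \eps_0
\]
for each $k$.

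For each $k$, pick $N_k\in\bbn$ realizing a value at least $\eps_0/2$ inside the supremum; setting $\delta'=\eps_0/2$ and noting that $d(T^iz_k,e)\geq \eps_0$ implies $d(T^iz_k,e)>\delta'$, we get
\[
    \frac{1}{N_k}\card(\{1\leq i\leq N_k\colon d(T^iz_k,e)>\delta'\})\geq \delta',
\]
so in particular $\sup_{k\in\bbn}$ of the left-hand side is $\geq \delta'$. This is precisely condition (3) of Proposition~\ref{prop:g-sen-in-mean-eq-cond}, hence $T$ is mean-L-unstable. By the dichotomy (Theorem~\ref{thm:dich-mean-L-stable-sens-mean}) $T$ is not mean-L-stable, completing the contrapositive. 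No step presents a genuine obstacle since the proof is routine once the earlier results are in hand; the only mild care required is the passage from the nonstrict inequality $\geq \eps_0$ in the hypothesis to the strict inequality $>\delta'$ required by Proposition~\ref{prop:g-sen-in-mean-eq-cond}(3), which is handled by relaxing $\eps_0$ to $\eps_0/2$.
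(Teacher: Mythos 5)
Your proposal is correct and follows exactly the route the paper intends: the paper offers no written proof beyond ``Combining Theorem~\ref{thm:dich-mean-L-stable-sens-mean} and Proposition~\ref{prop:g-sen-in-mean-eq-cond},'' and your argument is precisely that combination (the trivial inequality $\udens(A)\leq\sup_n\card(A\cap[1,n])/n$ for one direction, and the contrapositive via Proposition~\ref{prop:g-sen-in-mean-eq-cond}(3)$\Rightarrow$(1) plus the dichotomy for the other). The handling of the strict versus nonstrict inequality by passing to $\eps_0/2$ is the right routine fix.
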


Now we give a characterization of the existence of a dense set of distributionally semi-irregular points for continuous endomorphisms of completely metrizable groups.
Since the proof is similar to that of Theorem~\ref{thm:equi-of-dense-LY}, we leave it to the reader.

\begin{thm}\label{thm:equi-of-dense-DC2}
    Let $T\in\eend(G)$.
    Then the following assertions are equivalent:
    \begin{enumerate}
        \item $T$ admits a dense set of distributionally semi-irregular points of type $2$;
        \item $T$ admits a residual set of distributionally semi-irregular points of type $2$;
        \item there exists $\delta>0$ such that for every sequence $(O_j)_j$ of nonempty open subsets of $G$, there exists a sequence $(K_j)_j$ of Cantor sets with $K_j\subset O_j$ such that $\bigcup_{j=1}^\infty K_j$ is distributionally $\delta$-scrambled of type $2$;
        \item the distributionally proximal relation of $T$ is dense in $G\times G$ and $T$ is mean-L-unstable;
        \item the distributionally proximal cell of $e$ is dense in $G$ and there exists $x\in G$ and $\delta>0$ such that
              \[
                  \udens(\{n\in\bbn\colon d(T^nx,e)> \delta )>0.
              \]
    \end{enumerate}
\end{thm}

According to Theorem~\ref{thm:equi-of-dense-DC2}\,(5), we have the following.
\begin{coro}
    Let $T\in\eend(G)$. If the distributionally proximal cell of $e$ is dense in $G$ and $T$
    has a non-trivial periodic point, then it has a dense set of distributionally semi-irregular points of type $2$.
\end{coro}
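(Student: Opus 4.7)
The plan is to apply the implication $(5)\Rightarrow(1)$ of Theorem~\ref{thm:equi-of-dense-DC2}. The density of the distributional proximal cell of $e$ is part of the hypotheses, so what remains is to exhibit some $x\in G$ and $\delta>0$ such that
\[
\udens(\{n\in\bbn\colon d(T^nx,e)>\delta\})>0.
\]

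For this, I would use the non-trivial periodic point. Let $x\in G$ be a periodic point of $T$ with period $p\geq 1$ and $x\neq e$; then $T^{kp}x=x$ for every $k\geq 1$, and by the left-invariance of $d$ together with $x\neq e$ we have $d(x,e)>0$. Setting $\delta=\tfrac{1}{2}d(x,e)$, the set $\{n\in\bbn\colon d(T^nx,e)>\delta\}$ contains the arithmetic progression $\{kp\colon k\geq 1\}$, which has density $\tfrac{1}{p}>0$. This gives condition (5) of Theorem~\ref{thm:equi-of-dense-DC2}, and invoking that theorem yields a dense (in fact residual) set of distributional semi-irregular points of type~2.

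There is essentially no obstacle here, since the work has already been done in Theorem~\ref{thm:equi-of-dense-DC2}; the only point worth checking is that the asserted upper density is positive, which is immediate from the periodicity. The same short argument works for the analogous corollaries following Theorems~\ref{thm:equi-of-dense-LY} and~\ref{thm:equi-dense-mean-chaos}, the only difference being that in the Li-Yorke case one uses the weaker conclusion $\limsup_{n\to\infty} d(T^nx,e)\geq d(x,e)>0$, and in the mean Li-Yorke case one uses $\limsup_{n\to\infty}\tfrac{1}{n}\sum_{i=1}^{n}d(T^ix,e)\geq \tfrac{1}{p}d(x,e)>0$, both of which follow from $T^{kp}x=x$ in the same way.
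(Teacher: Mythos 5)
Your proof is correct and is exactly the argument the paper intends: the corollary is stated as an immediate consequence of condition (5) of Theorem~\ref{thm:equi-of-dense-DC2}, and your verification that a non-trivial periodic point $x$ of period $p$ yields $\udens(\{n\in\bbn\colon d(T^nx,e)>\delta\})\geq 1/p>0$ for $\delta=\tfrac12 d(x,e)$ is the (implicit) missing step. No issues.
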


By applying Theorem~\ref{thm:equi-of-dense-DC2},
we also can characterize distributional chaos of type $2$
for continuous endomorphisms of completely metrizable groups.
\begin{thm}
    Let $T\in\eend(G)$.
    Then the following assertions are equivalent:
    \begin{enumerate}
        \item $T$ admits a distributionally semi-irregular point of type $2$;
        \item $T$ admits a distributionally chaotic pair of type $2$;
        \item $T$ is distributionally chaotic of type $2$;
        \item the restriction of $T$ to some closed $T$-invariant subgroup $\widetilde{G}$ has a dense set of distributionally semi-irregular points of type $2$.
    \end{enumerate}
\end{thm}

Similar to Proposition~\ref{prop:residul-scrambled-set}, we have the following result.

\begin{prop}
    Let $T\in\eend(G)$.
    Then the following assertions are equivalent:
    \begin{enumerate}
        \item $T$ admits a residual distributionally scrambled set of type $2$;
        \item every point in $G\setminus \{e\}$ is distributionally semi-irregular of type $2$ for $T$;
        \item $X$ is a distributionally scrambled set of type $2$ for $T$.
    \end{enumerate}
\end{prop}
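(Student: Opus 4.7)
The plan is to mirror the argument used in Proposition~\ref{prop:residul-scrambled-set}, relying on the left-invariance properties of the distributional relations established in this subsection. The implications \textup{(2)} $\Rightarrow$ \textup{(3)} $\Rightarrow$ \textup{(1)} should be essentially immediate, and the only non-trivial step is \textup{(1)} $\Rightarrow$ \textup{(2)}, which uses a standard Baire category intersection trick combined with the group translation structure.

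For \textup{(2)} $\Rightarrow$ \textup{(3)}, I would take any two distinct points $x,y\in G$. Since $x^{-1}y\neq e$, hypothesis \textup{(2)} gives that $x^{-1}y$ is distributional semi-irregular of type $2$, so $(e,x^{-1}y)$ is a distributional scrambled pair of type $2$. By the left-invariance property for the distributional relations (item (5) of the lemma on left-invariance of $\dasym$ and $\dprox$ in this subsection), $(x,y)$ is a distributional scrambled pair of type $2$. For \textup{(3)} $\Rightarrow$ \textup{(1)}, simply observe that $G$ is trivially residual in itself.

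For the main implication \textup{(1)} $\Rightarrow$ \textup{(2)}, let $K$ be a residual distributionally scrambled subset of type $2$. Fix any $x\in G\setminus\{e\}$. By continuity of the right translation $z\mapsto zx^{-1}$ (which is a homeomorphism of $G$), the set $Kx^{-1}$ is also residual, hence $K\cap Kx^{-1}$ is residual and in particular nonempty. Pick $y\in K\cap Kx^{-1}$; then both $y$ and $yx$ lie in $K$, and they are distinct because $x\neq e$. Thus $(y,yx)$ is a distributional scrambled pair of type $2$, and by left-invariance $(e, y^{-1}\cdot yx)=(e,x)$ is also distributional scrambled of type $2$, so $x$ is distributional semi-irregular of type $2$, as required.

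I do not anticipate a real obstacle here: the key technical ingredient (left-invariance of the distributional asymptotic and proximal relations together with the translation of $\delta$-scrambled pairs) is already recorded in the lemma at the start of this subsection, and the Baire-category step is the same one used in Proposition~\ref{prop:residul-scrambled-set}. The only small care needed is to confirm that right translation is a homeomorphism of $G$ (so that $Kx^{-1}$ is residual whenever $K$ is) — this holds in any topological group — and to match the $\delta$-scrambled bookkeeping, where $\delta$ may depend on the pair, with the definition of distributional scrambled set of type $2$ given above.
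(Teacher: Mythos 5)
Your proof is correct and follows exactly the argument the paper intends: the paper simply states this result as ``similar to Proposition~3.21'' (the Li--Yorke analogue), whose proof is precisely your Baire-category intersection $K\cap Kx^{-1}$ combined with left-invariance. Your added care about the pair-dependent $\delta$ in the definition of a distributionally scrambled set of type $2$ is the right bookkeeping and causes no difficulty.
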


\begin{exam}\label{exam:SZ-DC-LY-chaos}
Let $\bbs(\bbz),\sigma, T, A$ as in Example~\ref{exam:SZ-LY-chaos}. 
As the distributionally proximal relation contains the asymptotic relation 
and $\sigma$ is a fixed point for $T$, 
according to Theorem~\ref{thm:equi-of-dense-DC2} $(\bbs(\bbz), T)$ is densely distributionally chaotic of type $2$ for any left-invariant compatible metric on $\bbs(\bbz)$.

Let $d$ be the precise left-invariant compatible metric as in Example~\ref{exam:SZ-LY-chaos}. It is easy to show that 
there exists $\delta>0$ such that the set
\[
 \bigl\{(x,y)\in \bbs(\bbz)\times \bbs(\bbz)\colon \udens(\{n\in\bbn\colon d(T^nx,T^ny)> \delta\})=1 \bigr\}
\] 
is dense in $\bbs(\bbz)\times \bbs(\bbz)$.
Then, by Proposition~\ref{prop:dense-DC1}, $(\bbs(\bbz), T)$ is densely distributionally chaotic of type $1$ with respect to the metric $d$.
\end{exam}

\begin{exam}\label{exam:HR-DC-LY-chaos}
Let $H_+(\bbr),\sigma, T, A$ as in Example~\ref{exam:HR-LY-chaos}.
Similar to Example~\ref{exam:SZ-DC-LY-chaos}, one has $(H_+(\bbr), T)$ is densely distributionally chaotic of type $2$ for any left-invariant compatible metric on $H_+(\bbr)$, and densely distributionally chaotic of type $1$ with respect to the precise metric compatible $d$ as in Example~\ref{exam:HR-LY-chaos}.
\end{exam}

\subsection{Distributional extreme chaos for continuous endomorphisms of completely metrizable groups}
\label{subsec:DeC-group}
With the same notation as in the previous subsection, we fix a completely metrizable group $G$ and a left-invariant compatible metric $d$ on $G$.
In order to apply the results obtained in this section to the Fréchet space setting, 
in this subsection we fix a left-invariant continuous pseudometric $\rho$ on $G$.

Let $T\in\eend(G)$.
A point $x\in G$ is called \emph{distributionally irregular of type $2$} if there exists a subset $A$ of $\bbn$ with $\udens(A)=1$ such that
$\lim\limits_{A\ni n\to\infty} T^nx=e$, and there exists a subset $B$ of $\bbn$ with $\udens(B)>0$ such that
$\lim\limits_{B\ni n\to\infty} \rho(T^nx,e)=\infty$,
and \emph{distributionally irregular of type $1$} for $T$ if
there exists a subset $A$ of $\bbn$ with $\udens(A)=1$ such that
$\lim\limits_{A\ni n\to\infty} T^nx=e$, and there exists a subset $B$ of $\bbn$ with $\udens(B)=1$ such that
$\lim\limits_{B\ni n\to\infty} \rho(T^nx,e)=\infty$.

The proofs of the following three results are similar to those of Lemma~\ref{lem:g-asym-prox-left-inv} and Propositions~\ref{prop:g-mean-sen-eq-cond} and~\ref{prop:irregular-ext-sen} respectively.

\begin{lem}\label{lem:g-dis-ext-scrambled-irr}
    Let $T\in\eend(G)$.
    \begin{enumerate}
        \item For $i=1,2$, and every $x,y\in G$, $(x,y)$ is distributionally extremely chaotic of type $i$ if and only if $x^{-1}y$ is a distributionally irregular point of type $i$.
        \item For $i=1,2$, if $K\subset G$ is a distributionally extremely scrambled subset of type $i$,
              then for any $x\in G$, $xK$ is also a distributionally extremely scrambled set of type $i$. In particular, for any $x\in K$, $x^{-1}K\setminus \{e\}$ consists of distributionally irregular points of type $i$.
    \end{enumerate}
\end{lem}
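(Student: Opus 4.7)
The plan is to follow exactly the template of Lemma~\ref{lem:g-asym-prox-left-inv} and Lemma~\ref{lem:g-ext-scrambled-irr}, whose proofs rest entirely on the left-invariance of the metric $d$ (and, here, of the pseudometric $\rho$). The single computational identity driving everything is that for all $x,y\in G$ and all $n\in\bbn$,
\[
d(T^nx,T^ny)=d\bigl(T^n(x^{-1})\cdot T^nx,\,T^n(x^{-1})\cdot T^ny\bigr)=d\bigl(e,T^n(x^{-1}y)\bigr),
\]
and analogously $\rho(T^nx,T^ny)=\rho(e,T^n(x^{-1}y))$, using that $T$ is an endomorphism (so $T^n(x^{-1})T^ny=T^n(x^{-1}y)$) together with left-invariance.

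For part (1), I would fix $\eps>0$ and $M>0$ and apply the above identity to rewrite the level sets that appear in the definitions, obtaining
\[
\{n\in\bbn\colon d(T^nx,T^ny)<\eps\}=\{n\in\bbn\colon d(T^n(x^{-1}y),e)<\eps\},
\]
\[
\{n\in\bbn\colon \rho(T^nx,T^ny)>M\}=\{n\in\bbn\colon \rho(T^n(x^{-1}y),e)>M\}.
\]
Plugging these equalities into the definitions of distributionally proximal and of the extreme condition (with $\udens=1$ for type~$1$, resp.\ $\udens\geq\delta>0$ for some $\delta$ in type~$2$) immediately yields the two-way equivalence between $(x,y)$ being a distributionally extremely scrambled pair of type~$i$ and $x^{-1}y$ being a distributionally irregular point of type~$i$.

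For part~(2), I would let $K\subset G$ be distributionally extremely scrambled of type~$i$ and fix $x\in G$. Any two distinct points of $xK$ have the form $xk_1, xk_2$ with $k_1\ne k_2$ in $K$, and $(xk_1)^{-1}(xk_2)=k_1^{-1}k_2$; by~(1), the pair $(k_1,k_2)$ is distributionally extremely scrambled of type~$i$ iff $k_1^{-1}k_2$ is a distributionally irregular point of type~$i$ iff the pair $(xk_1,xk_2)$ is distributionally extremely scrambled of type~$i$. Hence $xK$ has the required property. For the ``in particular'' clause I would specialize $x\in K$, note that $x^{-1}K\ni e$ and that for every $y\in x^{-1}K\setminus\{e\}$ one has $y=x^{-1}k$ for some $k\in K$ with $k\ne x$, so $(x,k)$ is distributionally extremely scrambled of type~$i$ and (1) gives that $y=x^{-1}k$ is distributionally irregular of type~$i$.

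I do not anticipate any genuine obstacle: the entire argument is a bookkeeping exercise converting pair-conditions into point-conditions via left-invariance, with the only care needed being to handle the type~$1$ case ($\udens=1$) and the type~$2$ case ($\udens\geq\delta$ for some $\delta>0$ depending on the pair) in a uniform way, which is automatic since the set equalities above are independent of which density threshold one imposes.
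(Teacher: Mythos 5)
Your proof is correct and follows exactly the route the paper intends: the paper omits a written proof, stating only that the argument is the same as for Lemma~\ref{lem:g-asym-prox-left-inv}, namely the left-invariance identity $d(T^nx,T^ny)=d(T^n(x^{-1}y),e)$ (and its analogue for $\rho$), which makes the relevant level sets for the pair $(x,y)$ and for the point $x^{-1}y$ literally coincide. Your treatment of parts (1) and (2), including the ``in particular'' clause, matches this intended argument.
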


\begin{prop}\label{prop:g-dis-ext-sen-eq-cond}
    Let $T\in \eend(G)$. Then the following assertions are equivalent:
    \begin{enumerate}
        \item $T$ is  extremely mean-L-unstable;
        \item there exists some $\delta>0$ such that for every $x\in G$ and $\eps,M>0$
              there exists $y\in G$ with $d(x,y)<\eps$ and
              \[
                  \sup_{n\in\bbn} \frac{1}{n}\card(\{1\leq i\leq n\colon \rho(T^ix,T^iy)>M\})\geq \delta;
              \]
        \item there exists a sequence $(y_k)_k$ in $G$ and a sequence $(N_k)_k$ in $\bbn$ such that
              $\lim_{k\to\infty} y_k=e$ and
              \[
                  \inf_{k\in\bbn} \frac{1}{N_k} \card(\{1\leq i\leq N_k \colon  \rho(T^i y_k,e)> k\})>0;
              \]
    \end{enumerate}
\end{prop}

\begin{prop}\label{prop:dis-irr-ext-sen}
    Let $T\in \eend(G)$. If there exists a distributionally irregular point of type $2$, then $T$ is extremely mean-L-unstable.
\end{prop}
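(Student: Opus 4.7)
The plan is to imitate the strategy of Proposition~\ref{prop:irregular-ext-sen}, using assertion (3) of Proposition~\ref{prop:g-dis-ext-sen-eq-cond} as the target sufficient condition for extreme mean-L-unstability. Let $y\in G$ be a distributionally irregular point of type $2$. Then there exist $A,B\subset\bbn$ with $\udens(A)=1$ and $\udens(B)=\delta>0$ such that $\lim_{A\ni n\to\infty}T^n y=e$ and $\lim_{B\ni n\to\infty}\rho(T^n y,e)=\infty$.

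First I would pick an increasing sequence $(i_k)_k$ in $A$ with $\lim_{k\to\infty}T^{i_k}y=e$ and set $y_k:=T^{i_k}y$. Then $y_k\to e$ in $G$, and because $T$ is an endomorphism of $G$ we have $T^i y_k=T^{i+i_k}y$ for every $i\in\bbn$. Thus measuring $\rho(T^i y_k,e)$ along $i=1,2,\dotsc$ amounts to measuring $\rho(T^j y,e)$ along the shifted indices $j=i_k+1,i_k+2,\dotsc$.

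Next, for each fixed $k\in\bbn$ I would introduce
\[
B_k:=\{j\in B\colon j>i_k,\ \rho(T^j y,e)>k\}\quad\text{and}\quad E_k:=\{i\in\bbn\colon i+i_k\in B_k\}.
\]
Because $\lim_{B\ni n\to\infty}\rho(T^n y,e)=\infty$, only finitely many $j\in B$ violate $\rho(T^j y,e)>k$, so $B_k$ differs from $B\cap(i_k,\infty)$ by a finite set and hence $\udens(B_k)=\udens(B)=\delta$. Since upper density is invariant under translation, $\udens(E_k)=\udens(B_k-i_k)=\delta$. Therefore there exists $N_k\in\bbn$ with
\[
\frac{1}{N_k}\card(E_k\cap[1,N_k])\geq \tfrac{\delta}{2},
\]
which, via $T^i y_k=T^{i+i_k}y$, rewrites as
\[
\frac{1}{N_k}\card(\{1\leq i\leq N_k\colon\rho(T^i y_k,e)>k\})\geq \tfrac{\delta}{2}.
\]
This verifies assertion (3) of Proposition~\ref{prop:g-dis-ext-sen-eq-cond} with constant $\delta/2$, and that proposition then delivers extreme mean-L-unstability of $T$.

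The only nontrivial point is the density bookkeeping in the previous paragraph, namely checking that the ``$\rho$-large'' behaviour of $(T^n y)_n$ along $B$ survives both the shift by $i_k$ and the truncation to those $j\in B$ with $\rho(T^j y,e)>k$. This reduces to the translation invariance of upper density together with the fact that removing finitely many initial indices does not affect it, so I do not anticipate a genuine obstacle beyond careful indexing.
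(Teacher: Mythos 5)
Your argument is correct and is exactly the proof the paper intends: the paper leaves this proposition to the reader, noting only that it is ``similar to'' Proposition~\ref{prop:irregular-ext-sen}, and your write-up is precisely that argument transported to the distributional setting, i.e.\ shifting the orbit by $i_k\in A$ to produce $y_k=T^{i_k}y\to e$ and then feeding the shifted points into condition (3) of Proposition~\ref{prop:g-dis-ext-sen-eq-cond}. The density bookkeeping you supply (finite modifications and translations do not change $\udens$, so $\udens(E_k)=\udens(B)=\delta$ and a suitable $N_k$ exists for every $k$) is exactly the extra detail the distributional version requires, and it is handled correctly.
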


Now we give a characterization of the existence of a dense set of distributionally irregular points of type $2$ for continuous endomorphisms of completely metrizable groups. Since the proof is similar to that of Theorem~\ref{thm:equi-of-dense-irr-points}, we leave it to the reader.

\begin{thm}\label{thm:g-dense-dis-irr-points-2}
    Let $T\in\eend(G)$.
    Then the following assertions are equivalent:
    \begin{enumerate}
        \item $T$ admits a dense set of distributionally irregular points of type $2$;
        \item $T$ admits a residual set of distributionally irregular points of type $2$;
        \item there exists $\delta>0$ such that for every sequence $(O_j)_j$ of nonempty open subsets of $G$, there exists a sequence $(K_j)_j$ of Cantor sets with $K_j\subset O_j$ such that $\bigcup_{j=1}^\infty K_j$ is distributionally extremely $\delta$-scrambled of type $2$;
        \item the distributionally proximal relation of $T$ is dense in $G\times G$ and $T$ is extremely mean-L-unstable.
    \end{enumerate}
\end{thm}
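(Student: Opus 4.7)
The plan is to mirror the proof of Theorem~\ref{thm:equi-of-dense-irr-points} step by step, replacing each Li-Yorke/proximal ingredient by its distributional (type~2) counterpart. The implication (2)$\Rightarrow$(1) is immediate, so the substantive work is a three-step cycle (1)$\Rightarrow$(4)$\Rightarrow$(2) together with (3)$\Leftrightarrow$(4).

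For (1)$\Rightarrow$(4): given a dense set $A$ of distributionally irregular points of type~2, note that by definition every $x\in A$ satisfies $(x,e)\in\dprox(T)$, hence $A\subset \dprox(T,e)$. So $\dprox(T,e)$ is dense in $G$, and then by the (distributional analogue of) Lemma~\ref{lem:g-asym-prox-left-inv}, the distributionally proximal relation is dense in $G\times G$. Moreover, since $A$ is nonempty, Proposition~\ref{prop:dis-irr-ext-sen} yields that $T$ is extremely mean-L-unstable, which is exactly the second half of (4).

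For (3)$\Leftrightarrow$(4): this is the Mycielski-type criterion. It follows directly from the proposition stated just before Theorem~\ref{thm:g-dense-dis-irr-points-2} characterizing the existence of dense distributionally extremely $\delta$-scrambled Cantor sets of type~2 in terms of dense distributional proximality together with extreme mean-L-unstability. No new argument is needed.

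For (4)$\Rightarrow$(2), which is the main step: the set of distributionally irregular points of type~2 equals
\[
\dprox(T,e)\;\cap\; \bigcup_{\delta>0}\Bigl\{x\in G\colon \udens\bigl(\{n\in\bbn\colon \rho(T^nx,e)>M\}\bigr)\geq \delta \text{ for every }M>0\Bigr\}.
\]
By the $G_\delta$ lemma for $\dprox(T,e)$ (distributional analogue of Lemma~\ref{lem:dyn-prox-delta}) together with the hypothesis that $\dprox(T)$ is dense in $G\times G$ (which, via left-invariance, forces $\dprox(T,e)$ to be dense $G_\delta$ in $G$), the first factor is a dense $G_\delta$ set. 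By the characterization of extreme mean-L-unstability (the lemma immediately preceding this subsection), there exists $\delta>0$ such that the second factor is a dense $G_\delta$ subset of $G$ as well. Intersecting them gives a dense $G_\delta$ set of distributionally irregular points of type~2, which is (2).

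The only place requiring care will be the $G_\delta$ bookkeeping in (4)$\Rightarrow$(2): one must check that the ``for every $M>0$'' quantifier in the defining condition for extreme mean-L-unstability does indeed yield a $G_\delta$ set, which is done by writing $\{M>0\}$ as the countable sequence $\{M=k\colon k\in\bbn\}$ and applying the $G_\delta$ bookkeeping from the lemmas of the previous subsection. Everything else is a routine transcription of the Li-Yorke extreme arguments into the distributional type~2 framework; the essential group-theoretic content (left-invariance to pass from $\dprox(T,e)$ to $\dprox(T)$, and Proposition~\ref{prop:dis-irr-ext-sen} to extract mean-L-unstability from the mere existence of an irregular point) has already been separated out into auxiliary lemmas.
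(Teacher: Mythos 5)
Your proposal is correct and is exactly the transcription the paper intends: the paper omits the proof of this theorem, stating only that it is similar to that of Theorem~\ref{thm:equi-of-dense-irr-points}, and your cycle (1)$\Rightarrow$(4)$\Rightarrow$(2)$\Rightarrow$(1) together with (3)$\Leftrightarrow$(4) via the Mycielski-type proposition mirrors that argument with the correct distributional replacements (Proposition~\ref{prop:dis-irr-ext-sen} for the sensitivity half, the $G_\delta$ lemmas of Subsection 5.1 for the residuality bookkeeping, and left-invariance to pass between $\dprox(T,e)$ and $\dprox(T)$). Your reading of ``distributionally extremely sensitive'' in assertion (4) as ``extremely mean-L-unstable'' is the right one, since that is the notion actually defined in Subsection 5.1 and the one appearing in the Mycielski-type proposition you invoke for (3)$\Leftrightarrow$(4).
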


By applying Theorem~\ref{thm:g-dense-dis-irr-points-2},
we also can characterize distributional extreme chaos of type $2$
for continuous endomorphisms of completely metrizable groups.
\begin{thm}
    Let $T\in\eend(G)$.
    Then the following assertions are equivalent:
    \begin{enumerate}
        \item $T$ admits a distributionally irregular point of type $2$;
        \item $T$ admits a distributionally extremely chaotic pair of type $2$;
        \item $T$ is distributionally extremely chaotic of type $2$;
        \item the restriction of $T$ to some closed $T$-invariant subgroup $\widetilde{G}$ has a dense set of distributionally irregular points of type $2$.
    \end{enumerate}
\end{thm}

Let $T\in\eend(G)$.
A point $x\in G$ is called \emph{distributionally unbounded} if
there exists a subset $B$ of $\bbn$ with $\udens(B)=1$ such that
$\lim\limits_{B\ni n\to\infty}\rho(T^nx,e)=\infty$.
Then a point $x\in G$ is distributionally irregular of type $1$
if and only if
$(x,e)$ is distributionally proximal and $x$ is distributionally unbounded.
For each $n\in X$, let
\[
    G_n=\bigl\{x\in G\colon \exists k>n\text{ s.t. }
    \card(\{1\leq i\leq k\colon \rho(T^i x,e)>n\})> k(1-\tfrac{1}{n})\bigr\}.
\]
It is clear that $G_n$ is open and $\bigcap_{n=1}^\infty G_n$ is the set of all distributionally unbounded points.

Similar to Theorem~\ref{thm:g-dense-dis-irr-points-2}, we have the following
characterization for the existence of a dense set of distributionally irregular points of type $1$ for continuous endomorphisms of completely metrizable groups.

\begin{thm}\label{thm:g-dense-dc1}
    Let $T\in\eend(G)$.
    Then the following assertions are equivalent:
    \begin{enumerate}
        \item $T$ admits a dense set of distributionally irregular points of type $1$;
        \item $T$ admits a residual set of distributionally irregular points of type $1$;
        \item for every sequence $(O_j)_j$ of nonempty open subsets of $G$, there exists a sequence $(K_j)_j$ of Cantor sets with $K_j\subset O_j$ such that $\bigcup_{j=1}^\infty K_j$ is distributionally extremely scrambled of type $1$;
        \item the distributionally proximal cell of $e$ is dense in $G$ and $T$ admits a dense set of distributionally unbounded points.
    \end{enumerate}
\end{thm}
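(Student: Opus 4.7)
The plan is to follow the same blueprint as Theorem~\ref{thm:g-dense-dis-irr-points-2}, with the only modification being that the lower bound $\udens(B)\geq\delta$ is replaced by $\udens(B)=1$, so no sensitivity constant $\delta$ needs to be tracked. The key observation is that a point $x\in G$ is distributionally irregular of type $1$ if and only if it lies in the intersection
\[
\dprox(T,e)\cap U,\qquad\text{where } U=\bigcap_{n=1}^\infty G_n
\]
is the set of distributionally unbounded points, and each $G_n$ is open as displayed before the statement. Since $\dprox(T,e)$ is a $G_\delta$ subset of $G$ (by the type~$2$ proximal analogue of Lemma~\ref{lem:dyn-prox-delta}) and $U$ is also $G_\delta$, the set of distributionally irregular points of type $1$ is always $G_\delta$.

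The implications (2)$\Rightarrow$(1) and (1)$\Rightarrow$(4) are immediate. For (4)$\Rightarrow$(2), the hypothesis that $\dprox(T,e)$ is dense makes this $G_\delta$ set residual. For the set $U$, density is given by hypothesis, and since $U\subset G_n$ for every $n$, each open $G_n$ is dense; hence $U$ is also residual by Baire. The intersection is then residual, giving a residual set of distributionally irregular points of type $1$.

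For (4)$\Rightarrow$(3), the intended tool is Mycielski's Theorem~\ref{Mycielski}. I will show that the set
\[
R=\bigl\{(x,y)\in G\times G\colon \udens(\{n\in\bbn\colon \rho(T^nx,T^ny)>M\})=1,\ \forall M>0\bigr\}
\]
is a dense $G_\delta$ subset of $G\times G$, and intersect it with $\dprox(T)$. By left-invariance of $\rho$ (analogous to Lemma~\ref{lem:g-dis-ext-scrambled-irr}), $(x,y)\in R$ if and only if $y^{-1}x\in U$, so $R$ is the preimage of the $G_\delta$ set $U$ under the continuous map $(x,y)\mapsto y^{-1}x$, hence $G_\delta$. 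Density of $R$ follows because for any nonempty open $V\times W\subset G\times G$ we may pick $y\in W$, and then $yU$ is dense (as a translate of a dense set), so $yU\cap V\neq\emptyset$, producing a pair in $R$. By the same reasoning, $\dprox(T)$ is dense $G_\delta$ (using density of $\dprox(T,e)$). Since $T$ has a distributionally irregular point by (2), $G$ has no isolated points (in fact $G$ is not a singleton is enough, combined with the group's homogeneity). Applying Mycielski's theorem to $\dprox(T)\cap R$ yields the desired Cantor sets $(K_j)_j$ whose union is distributionally extremely scrambled of type $1$.

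Finally, (3)$\Rightarrow$(1) is obtained by fixing any $x_0\in K_1$, whence $x_0^{-1}\bigcup_j K_j\setminus\{e\}$ consists of distributionally irregular points of type $1$ by the left-invariance observation, and is dense in $G$ provided the family $(O_j)_j$ is chosen to form a basis of $G$. The main obstacle is verifying that the type~$1$ condition $\udens(B)=1$ (rather than $\udens(B)\geq\delta$) survives the Mycielski extraction and the left-translation; this is precisely where the preimage-under-group-operation characterization of $R$ is essential, since it reduces density and $G_\delta$-ness of $R$ to properties of the single set $U$ of distributionally unbounded points, bypassing the need for any quantitative $\delta$.
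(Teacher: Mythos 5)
Your proposal follows the same blueprint the paper intends (it explicitly defers to the template of Theorems~\ref{thm:equi-of-dense-irr-points} and~\ref{thm:g-dense-dis-irr-points-2}): identify the set of distributionally irregular points of type $1$ as $\dprox(T,e)\cap U$ with $U=\bigcap_n G_n$ the $G_\delta$ set of distributionally unbounded points, get residuality from the two density hypotheses, pull $R$ back to $U$ via $(x,y)\mapsto x^{-1}y$, and apply Mycielski to $\dprox(T)\cap R$. The implications (2)$\Rightarrow$(1), (1)$\Rightarrow$(4), (4)$\Rightarrow$(2) and (4)$\Rightarrow$(3) are all correct as written.

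There is, however, one concrete gap: your closing implication (3)$\Rightarrow$(1) chooses $(O_j)_j$ to be a countable basis of $G$, which exists only when $G$ is second countable; the theorem assumes only that $G$ is a completely metrizable group, and the paper is deliberately careful about this (separability is invoked only in the remarks where (3) is upgraded to a \emph{dense} $\sigma$-Cantor scrambled set). The standard repair, and the route the template actually takes (compare Proposition~\ref{prop:dense-delta-scrambled}, (1)$\Rightarrow$(2)), is to prove (3)$\Rightarrow$(4) instead: for arbitrary nonempty open $O_1,O_2$ the Cantor sets $K_1\subset O_1$, $K_2\subset O_2$ show that $\dprox(T)$ and $R$ are dense in $G\times G$, and density of $\dprox(T,e)$ and of $U$ in $G$ then follows from the left-invariance argument of Lemma~\ref{lem:g-asym-prox-left-inv}(2); the cycle then closes through your (4)$\Rightarrow$(2)$\Rightarrow$(1). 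A second, minor point: the parenthetical claim that ``$G$ is not a singleton'' suffices for $G$ to have no isolated points is false (an infinite discrete group is a counterexample); what actually rules out isolated points is that a residual, hence dense, set of distributionally irregular points cannot exist in a discrete group, since $e$ is never distributionally unbounded and, in a discrete group, distributional proximality to $e$ forces the orbit to hit $e$ and stay there.
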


\subsection{Distributional chaos for continuous linear operators on Fr{\'e}chet spaces}
\label{subsec:DC-Frechet}
Distributional chaos of type 1 for linear operators on Fr\'echet spaces has been characterized in \cite{BBMP2013}, and four types of distributional chaos for linear operators on Banach spaces are investigated in \cite{BBPW2018}. It should be noticed that in \cite{YLW21} Yin et al. studied distributional chaos for operators on Fr\'echet spaces with respect to the bounded Fr\'echet-space metric.
In this subsection using results from Subsections~\ref{subsec:DC-group} and ~\ref{subsec:DeC-group} we will 
consider distributional chaos of type 1 and 2 for linear operators on Fr\'echet spaces. With this we reprove some of the results from \cite{BBMP2013}, and we also generalize some results from \cite{BBPW2018} in the Fréchet space setting. 

In this subsection, unless otherwise specified, $X$ denotes an arbitrary infinite-dimensional Fr{\'e}chet space.
The separating increasing sequence $(p_j)_j$ of seminorms on $X$ and the metric $d$ on $X$ are as in Subsection~\ref{subsec:LYC-Frechet}.

Let $T\in L(X)$, $m\in\bbn$ and $i=1,2$.
A vector $x\in X$ is called \emph{distributionally $m$-irregular of type $i$} if it is distributionally irregular of type $i$ with respect to the seminorm $p_m$.
A pair $(x,y)\in X\times X$ is called \emph{distributionally $m$-extremely $\delta$-chaotic of type $i$} if it is distributionally extremely $\delta$-chaotic of type $i$ with respect to the seminorm $p_m$.
Similarly, we can define \emph{distributional  $m$-extreme  sensitivity}, \emph{distributionally $m$-extremely $\delta$-scrambled set of type $i$} and \emph{distributional $m$-extreme chaos of type $i$}.

\begin{prop}\label{prop:f-ext-sen-in-mean}
    Let $T\in L(X)$. Then $T$ is  mean-L-unstable if and only if it is $m$-extremely mean-L-unstable for some $m\in\bbn$.
\end{prop}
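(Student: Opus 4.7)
The plan is to establish both directions by combining a direct comparison between the Fréchet metric and the seminorms with a scaling argument that reduces the forward direction to the group-level characterization in Proposition~\ref{prop:g-dis-ext-sen-eq-cond} (applied to $T$ viewed as an endomorphism of the additive group of $X$ with pseudometric $p_m$).

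For the backward direction, suppose $T$ is $m$-extremely mean-L-unstable with some constant $\delta'>0$. Given $x\in X$ and $\eps>0$, I can produce $y\in X$ with $d(x,y)<\eps$ such that, applying the defining condition at level $M=1$, $\udens(\{n\in\bbn\colon p_m(T^n(y-x))>1\})\geq\delta'$. The defining formula for the Fréchet metric gives $d(u,v)\geq 2^{-m}\min(1,p_m(u-v))$, so on this set $d(T^nx,T^ny)\geq 2^{-m}$. Setting $\delta:=\min(\delta',2^{-(m+1)})$ then verifies the mean-L-unstable condition.

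For the forward direction, assume mean-L-unstability, so that some $\delta_1>0$ witnesses the definition. For each $k\in\bbn$, I can pick $y_k\in X$ with $d(y_k,\vecz)<\frac{1}{k}$ and $\udens(\{n\colon d(T^ny_k,\vecz)>\delta_1\})\geq\delta_1$. Fix $m\in\bbn$ so large that $2^{-m}<\delta_1/2$. The pointwise estimate $d(x,\vecz)\leq p_m(x)+2^{-m}$ used in the proof of Proposition~\ref{prop:f-mean-sen-2-mean-ext-sen} turns $d(T^ny_k,\vecz)>\delta_1$ into $p_m(T^ny_k)>\delta_1/2$, so $\udens(\{n\colon p_m(T^ny_k)>\delta_1/2\})\geq\delta_1$ for every $k$.

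Now I would scale in order to amplify the seminorm lower bound to an arbitrary height. For each $j\in\bbn$, continuity of the scalar multiplication by $\frac{2j}{\delta_1}$ yields $\frac{2j}{\delta_1}y_k\to\vecz$ as $k\to\infty$, so one can choose $k(j)\in\bbn$ with $d(z_j,\vecz)<\frac{1}{j}$, where $z_j:=\frac{2j}{\delta_1}y_{k(j)}$. Linearity of $T$ and homogeneity of $p_m$ then give $\udens(\{n\colon p_m(T^nz_j)>j\})\geq\delta_1$, so for each $j$ some $N_j\in\bbn$ satisfies $\card(\{1\leq i\leq N_j\colon p_m(T^iz_j)>j\})/N_j\geq\delta_1/2$. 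The data $(z_j)_j$ and $(N_j)_j$ verify condition (3) of Proposition~\ref{prop:g-dis-ext-sen-eq-cond} applied to $T$ on the additive group $X$ with pseudometric $p_m$, hence $T$ is $m$-extremely mean-L-unstable. The only real conceptual step is this scaling, where the linear structure converts a fixed lower bound of $\delta_1/2$ on $p_m$ into arbitrarily high lower bounds without damaging the small-metric condition; the rest is routine translation between the bounded metric $d$ and the unbounded seminorm $p_m$ together with the group-level characterization already in hand.
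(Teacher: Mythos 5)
Your proof is correct and follows essentially the same route as the paper: reduce the backward direction to the comparison $d\geq 2^{-m}\min(1,p_m)$, and for the forward direction extract a null sequence witnessing mean-L-unstability at $\vecz$, pass to a seminorm via $d<p_m+2^{-m}$, rescale by linearity to push the lower bound to arbitrary heights, and invoke Proposition~\ref{prop:g-dis-ext-sen-eq-cond}(3). Your scaling factor $\frac{2j}{\delta_1}$ is in fact the correct one (the paper's displayed choice $x_k=\frac{\delta'}{k}y_k$ is inverted and should read $\frac{k}{\delta'}y_k$), so no changes are needed.
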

\begin{proof}
    If $T$ is $m$-extremely mean-L-unstable for some $m\in\bbn$,
    as
 \[
     d(x,y)\geq \frac{1}{2^m} \min \{p_m(x,y), 1\}
 \]
 for all $x,y\in X$, then $T$ is mean-L-unstable.
    Now assume that $T$ is  mean-L-unstable. By Proposition~\ref{prop:g-sen-in-mean-eq-cond}\,(3), there exists $\delta>0$, a sequence $(y_k)_k$ in $X$ and a sequence $(N_k)_k$ in $\bbn$ such that $\lim_{k\to\infty} y_k=\vecz$ and
    \[
        \inf_{k\in\bbn} \frac{1}{N_k}\card(\{1\leq i\leq N_k\colon d(T^iy_k,\vecz)>\delta\}) \geq \delta.
    \]
    Since $d(x,\vecz)<p_m(x)+\frac{1}{2^m}$ for all $x\in X$ and $m\in\bbn$, there exists $\delta^\prime >0$ and $m\in\bbn$ such that
    \[
        \inf_{k\in\bbn} \frac{1}{N_k}\card(\{1\leq i\leq N_k\colon p_m(T^iy_k)>\delta^\prime\}) \geq \delta.
    \]
    Without loss of generality, assume that $\lim_{k\to\infty} k\cdot y_k=\vecz$, otherwise pick a subsequence of $(y_k)_k$, still denoted by $(y_k)_k$, that converges fast enough to $\vecz$ and then satisfies the requirement. 
    For each $k\in\bbn$, let $x_k=\frac{k}{\delta'} \cdot y_k$. 
    Then $\lim_{k\to\infty} x_k=\vecz$ and
    \[
        \inf_{k\in\bbn} \frac{1}{N_k}\card(\{1\leq i\leq N_k\colon p_m(T^ix_k)>k\}) \geq \delta.
    \]
    Then by Proposition~\ref{prop:g-dis-ext-sen-eq-cond}\,(3), $T$ is $m$-extremely mean-L-unstable.
\end{proof}

Combining Theorems~\ref{thm:equi-of-dense-DC2} and~\ref{thm:g-dense-dis-irr-points-2} and Proposition~\ref{prop:f-ext-sen-in-mean},
we have the following characterization for the existence of a dense set of distributionally semi-irregular vectors of type $2$ for continuous linear operators on Fr\'echet spaces.

\begin{thm}\label{thm:f-equi-of-dense-DC2}
    Let $T\in L(X)$.
    Then the following assertions are equivalent:
    \begin{enumerate}
        \item $T$ admits a dense set of distributionally semi-irregular vectors of type $2$;
        \item $T$ admits a residual set of distributionally $m$-irregular vectors of type $2$ for some $m\in\bbn$;
        \item there exists $m\in\bbn$ and $\delta>0$ such that for every sequence $(O_j)_j$ of nonempty open subsets of $X$, there exists a sequence $(K_j)_j$ of Cantor sets with $K_j\subset O_j$ such that $\bigcup_{j=1}^\infty K_j$ is distributionally $m$-extremely $\delta$-scrambled of type $2$;
        \item the distributionally proximal relation of $T$ is dense in $X\times X$ and $T$ is mean-L-unstable;
        \item the distributionally proximal cell of $\vecz$ is dense in $X$ and there exists $x\in X$ and $\delta>0$ such that
              \[
                  \udens(\{n\in\bbn\colon d(T^nx,\vecz)>\delta \})>0.
              \]
    \end{enumerate}
\end{thm}

By Theorem~\ref{thm:f-equi-of-dense-DC2}\,(5), we have the following.
\begin{coro}
    Let $T\in L(X)$. Assume that the distributionally proximal cell of $\vecz$ is dense. If $T$ has a non-trivial periodic point or an eigenvalue $\lambda$ with $|\lambda|\geq 1$, then $T$ admits a residual set of distributionally $m$-irregular vectors of type $2$ for some $m\in\bbn$.
\end{coro}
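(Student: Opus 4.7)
The plan is to invoke the equivalence (5) $\Leftrightarrow$ (2) in Theorem~\ref{thm:f-equi-of-dense-DC2}: since the density of the distributionally proximal cell of $\vecz$ is given, I only need to produce one vector $x\in X$ together with a $\delta>0$ so that
\[
\udens(\{n\in\bbn\colon d(T^nx,\vecz)>\delta\})>0,
\]
and then the theorem hands over a residual set of distributionally $m$-irregular points of type~$2$ for some $m\in\bbn$. Either of the two assumed algebraic hypotheses will supply such a point very cheaply.

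If $T$ has a non-trivial periodic point $x_0\in X\setminus\{\vecz\}$ of period $N\geq 1$, then $T^{kN}x_0=x_0$ for every $k\in\bbn$; taking $\delta=\tfrac12 d(x_0,\vecz)>0$, the set $\{kN\colon k\in\bbn\}$ is contained in $\{n\in\bbn\colon d(T^nx_0,\vecz)>\delta\}$, whose upper density is thus at least $1/N>0$. If instead $T$ has an eigenvalue $\lambda\in\bbk$ with $|\lambda|\geq 1$ and a nonzero eigenvector $x_0$, I would pick $j\in\bbn$ with $p_j(x_0)>0$; then $p_j(T^nx_0)=|\lambda|^n p_j(x_0)\geq p_j(x_0)$, so
\[
d(T^nx_0,\vecz)\geq \frac{1}{2^j}\min(1,p_j(x_0))=:\delta_0>0
\]
for every $n\in\bbn$, and the upper density of $\{n\colon d(T^nx_0,\vecz)>\delta_0/2\}$ is in fact equal to $1$.

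There is no real obstacle here: the heavy lifting was already done inside Theorem~\ref{thm:f-equi-of-dense-DC2}, and the only point to observe is that its condition (5) only demands positive upper density on the orbit of a single point, which is exactly what a periodic orbit or an eigenvector orbit delivers. The analogous group-theoretic statement for non-trivial periodic points is already recorded as the corollary after Theorem~\ref{thm:equi-of-dense-DC2}; the present statement merely adds the eigenvalue variant, which relies on the multiplicative scaling $p_j(T^nx_0)=|\lambda|^n p_j(x_0)$ together with the hypothesis $|\lambda|\geq 1$.
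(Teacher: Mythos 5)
Your proposal is correct and follows exactly the route the paper intends: the paper derives this corollary in one line from the implication (5)$\Rightarrow$(2) of Theorem~\ref{thm:f-equi-of-dense-DC2}, and your verification of condition (5) via the periodic orbit (giving a set of upper density at least $1/N$) and via the eigenvector (where $p_j(T^nx_0)=|\lambda|^np_j(x_0)\geq p_j(x_0)$ forces $d(T^nx_0,\vecz)\geq\delta_0$ for all $n$) supplies precisely the details the paper leaves implicit.
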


By applying Theorem~\ref{thm:f-equi-of-dense-DC2},
we give a characterization of distributional chaos of type $2$ for continuous linear operators on Fr\'echet spaces.

\begin{thm}\label{thm:f-equi-of-DC2}
    Let $T\in L(X)$.
    Then the following assertions are equivalent:
    \begin{enumerate}
        \item $T$ admits a distributionally semi-irregular vector of type $2$;
        \item $T$ admits a distributionally $m$-irregular vector of type $2$ for some $m\in\bbn$;
        \item $T$ is distributionally $m$-extremely $\delta$-chaotic of type $2$ for some $m\in\bbn$ and $\delta>0$;
        \item the restriction of $T$ to some closed $T$-invariant subspace $\widetilde{X}$ has a dense set of
              distributionally $m$-irregular vectors of type $2$ for some $m\in\bbn$.
    \end{enumerate}
\end{thm}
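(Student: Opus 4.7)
The plan is to establish the cycle $(4)\Rightarrow(3)\Rightarrow(2)\Rightarrow(1)\Rightarrow(4)$, following the template used in Theorem~\ref{thm:f-space-LY-chaos} and the mean-chaos analogues, and treating $X$ as a completely metrizable abelian group so that the group-theoretic results from subsections~5.2 and~5.3 are directly available. The forward implications are essentially formal: $(4)\Rightarrow(3)$ is obtained by applying assertion~(3) of Theorem~\ref{thm:f-equi-of-dense-DC2} to $T|_{\widetilde X}$; $(3)\Rightarrow(2)$ picks any distributionally $m$-extremely $\delta$-scrambled pair $(x,y)$ and invokes Lemma~\ref{lem:g-dis-ext-scrambled-irr} to conclude that $x-y$ is distributionally $m$-irregular of type~2; and $(2)\Rightarrow(1)$ is immediate from the definitions, since $p_m$-extreme separation implies $d$-separation bounded away from zero on the same set.

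The substantive implication is $(1)\Rightarrow(4)$. Let $x\in X$ be a distributionally semi-irregular vector of type~2. By definition there exists $A\subset\bbn$ with $\udens(A)=1$ such that $\lim\limits_{A\ni n\to\infty}T^nx=\vecz$. Enumerate $A$ as an increasing sequence $(n_k)_k$ of positive integers and, in the spirit of the $(1)\Rightarrow(6)$ step of Theorem~\ref{thm:f-space-LY-chaos}, set
\[
X_0:=\bigl\{y\in X\colon \lim_{k\to\infty}T^{n_k}y=\vecz\bigr\}.
\]
By Lemma~\ref{lem:subseq-asym-subspace}, $X_0$ is a $T$-invariant subspace of $X$ and it contains $x$. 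Let $\widetilde X$ be its closure; then $\widetilde X$ is an invariant closed subspace of $X$.

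I then verify the two hypotheses needed to apply Theorem~\ref{thm:f-equi-of-dense-DC2} to $T|_{\widetilde X}$. First, for every $y\in X_0$ and every $\eps>0$ the set $\{n\in\bbn\colon d(T^ny,\vecz)<\eps\}$ contains a cofinite subset of $A$, hence has upper density~$1$. Therefore $X_0$ lies in the distributional proximal cell of $\vecz$ with respect to $T|_{\widetilde X}$, so $\dprox(T|_{\widetilde X},\vecz)$ is dense in $\widetilde X$ and, by the translation-invariant argument of Lemma~\ref{lem:g-asym-prox-left-inv}(4) transplanted to the group $\widetilde X$, the relation $\dprox(T|_{\widetilde X})$ is dense in $\widetilde X\times\widetilde X$. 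Second, since $x$ is semi-irregular of type~2, the pair $(x,\vecz)$ satisfies $\udens(\{n\colon d(T^nx,\vecz)>\delta\})>0$ for some $\delta>0$, so it is a distributional scrambled pair of type~2 for $T|_{\widetilde X}$; Corollary~\ref{coro:dist-scambled-sen} then ensures that $T|_{\widetilde X}$ is mean-L-unstable. Applying Theorem~\ref{thm:f-equi-of-dense-DC2} to $T|_{\widetilde X}$ now yields a residual set of distributionally $m$-irregular vectors of type~2 in $\widetilde X$ for some $m\in\bbn$, which is exactly assertion~(4).

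The principal obstacle is checking that the single-vector hypothesis in (1) is strong enough to make the restriction $T|_{\widetilde X}$ fall under the hypotheses of Theorem~\ref{thm:f-equi-of-dense-DC2}: we must promote ``one distributionally semi-irregular vector'' into both the density of the distributional proximal cell of $\vecz$ and the mean-L-unstability of $T|_{\widetilde X}$. The first is handled by the $\udens(A)=1$ condition propagating from $x$ to every $y\in X_0$ and thence to a dense subset of $\widetilde X$; the second is where the dichotomy Theorem~\ref{thm:dich-mean-L-stable-sens-mean} combined with Lemma~\ref{lem:mean-L-stable-prox}, packaged in Corollary~\ref{coro:dist-scambled-sen}, does the decisive work. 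Once these two properties are in place, the result is a clean transfer from Theorem~\ref{thm:f-equi-of-dense-DC2}.
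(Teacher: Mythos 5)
Your proof is correct and follows exactly the route the paper intends (the paper leaves the proof implicit, saying only ``Applying Theorem~\ref{thm:f-equi-of-dense-DC2}\dots'', but the template is that of Theorem~\ref{thm:f-space-LY-chaos}): the substantive step $(1)\Rightarrow(4)$ builds the closed invariant subspace $\widetilde X$ from the density-one subsequence via Lemma~\ref{lem:subseq-asym-subspace}, checks density of the distributional proximal cell of $\vecz$ and mean-L-unstability via Corollary~\ref{coro:dist-scambled-sen}, and then transfers through Theorem~\ref{thm:f-equi-of-dense-DC2}, with the remaining implications formal. No gaps.
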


Let $T\in L(X)$ and $m\in\bbn$.
We say that $T$ is \emph{distributionally sensitive} if there exists some $\delta>0$ such that, for every $x\in X$ and $\eps>0$, 
there exists some $y\in X$ with $ d(x,y)<\eps$ such that
\[
    \udens(\{n\in\bbn \colon d(T^nx,T^ny)>\delta\})=1,
\]
and \emph{distributionally $m$-extremely sensitive} if for every $x\in X$ and $\eps,M>0$
there exists some $y\in X$ with $ d(x,y)<\eps$ such that
\[
    \udens(\{n\in\bbn \colon p_m(T^nx-T^ny)>M\})=1.
\]
A vector $x\in X$ is called \emph{distributionally $m$-unbounded} if  there exists a subset $A$ of $\bbn$ with $\udens(A)=1$ such that $\lim\limits_{A\ni n\to\infty}p_m(T^n x)=\infty$.

The following result is essentially contained in 
\cite[Propositions~7 and~8]{BBMP2013}.
One can also provide a proof using the ideas from Propositions~\ref{prop:g-dis-ext-sen-eq-cond} and~\ref{prop:f-ext-sen-in-mean}.

\begin{prop} \label{prop:f-distr-sen}
    Let $T\in L(X)$. Then the following assertions are equivalent:
    \begin{enumerate}
        \item  $T$ is distributionally sensitive;
        \item there exists $\delta>0$, a sequence $(y_k)_k$ in $X$ and
              an increasing sequence $(N_k)_k$ in $\bbn$ such that $\lim_{k\to\infty}y_k=\vecz$ and for any $k\in\bbn$
              \[
                  \card(\{1\leq i\leq N_k\colon d(T^i y_k,\vecz)>\delta\})\geq N_k(1-\tfrac{1}{k});
              \]
        \item there exists $m\in\bbn$, $\delta>0$, a sequence $(y_k)_k$ in $X$ and
              an increasing sequence $(N_k)_k$ in $\bbn$ such that $\lim_{k\to\infty}y_k=\vecz$ and for any $k\in\bbn$
              \[
                  \card(\{1\leq i\leq N_k\colon p_m(T^i y_k)>\delta\})\geq N_k(1-\tfrac{1}{k});
              \]
        \item $T$ is distributionally $m$-extremely sensitive for some $m\in\bbn$;
        \item there exists $m\in\bbn$ and $y\in X$ with distributionally $m$-unbounded orbit;
        \item there exists $m\in\bbn$ such that the set of all points $y\in X$ with distributionally $m$-unbounded orbits is residual in $X$.
    \end{enumerate}
\end{prop}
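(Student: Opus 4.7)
The plan is to establish the cycle $(1) \Rightarrow (2) \Rightarrow (3) \Rightarrow (6) \Rightarrow (5) \Rightarrow (4) \Rightarrow (1)$, using the elementary inequalities $2^{-m}\min(1, p_m(z)) \leq d(z,\vecz) \leq p_m(z) + 2^{-m}$, the continuity of scalar multiplication in Fr\'echet spaces, and Baire category applied to suitable $G_\delta$ sets.

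The implication $(1)\Rightarrow(2)$ is a routine reformulation via Lemma~\ref{lem:density-limit}: take $x=\vecz$, $\eps=1/k$ in distributional sensitivity to obtain $y_k\to\vecz$ with $\udens\{n:d(T^ny_k,\vecz)>\delta\}=1$, and select an increasing sequence $N_k$ with partial density $\geq 1-1/k$. For $(2)\Rightarrow(3)$, pick $m$ with $2^{-m}<\delta/2$; since $d(T^iy_k,\vecz)>\delta$ then forces $p_m(T^iy_k)>\delta/2$, assertion $(3)$ holds with this fixed $m$, constant $\delta/2$ and the same $(y_k,N_k)$.

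The heart of the proof is $(3)\Rightarrow(6)$. Writing
\[
D_M=\{y\in X:\udens\{n:p_m(T^ny)>M\}=1\}=\bigcap_{\ell,n\in\bbn}E_{M,\ell,n},
\]
where
\[
E_{M,\ell,n}=\bigl\{y\in X:\exists N>n,\ \tfrac{1}{N}\card\{1\leq i\leq N:p_m(T^iy)>M\}>1-\tfrac{1}{\ell}\bigr\}
\]
is open, and $D=\bigcap_{M\in\bbn}D_M$, by Baire it suffices to prove each $E_{M,\ell,n}$ dense in $X$. Given $x\in X$ and $\eta>0$, I first observe that if $x\in E_{M,\ell,n}$ we are done. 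Otherwise, setting $M_x(N)=\max_{1\leq i\leq N}p_m(T^ix)$ and $\lambda_k=(M+M_x(N_k)+1)/\delta$, a diagonal argument (possible because $p_j(y_k)\to0$ for each $j$) lets us pass to a subsequence of $(y_k,N_k)$ along which $\lambda_k y_k\to\vecz$ in $X$. Then the candidate $y=x+\lambda_k y_k$ satisfies $p_m(T^iy)\geq\lambda_k p_m(T^iy_k)-p_m(T^ix)>M$ on the subset $\{1\leq i\leq N_k:p_m(T^iy_k)>\delta\}$ of cardinality $\geq N_k(1-1/k)$; choosing $k>\ell$ and $N_k>n$ places $y\in E_{M,\ell,n}\cap B(x,\eta)$.

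Finally, $(6)\Rightarrow(5)$ is immediate; $(5)\Rightarrow(4)$ follows by scaling a distributionally $m$-unbounded vector $y_\ast$ to $y=x+\lambda y_\ast$ with $|\lambda|$ small enough for $d(\lambda y_\ast,\vecz)<\eps$, exploiting $p_m(T^n(y-x))=|\lambda|p_m(T^ny_\ast)$ together with the distributional $m$-unboundedness of $y_\ast$; and $(4)\Rightarrow(1)$ uses $d(z,\vecz)\geq 2^{-m}\min(1,p_m(z))$ applied with $M=1$ to produce sensitivity constant $\delta_0=2^{-m-1}$. The main technical obstacle is ensuring $\lambda_k y_k\to\vecz$ in the $(3)\Rightarrow(6)$ step: since the scaling factor $\lambda_k$ depends on $x$ through $M_x(N_k)$, which can grow arbitrarily fast in $k$, one must first pre-process $(y_k,N_k)$ so that $p_j(y_k)$ decays faster than any prescribed rate, and then further thin the sequence depending on $x$; the case analysis on whether $x$ already belongs to $E_{M,\ell,n}$ is essential for closing the argument in the spirit of the proof of Proposition~\ref{prop:g-sen-in-mean-eq-cond}.
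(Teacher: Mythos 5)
Your cycle is well chosen and the implications $(1)\Rightarrow(2)\Rightarrow(3)$, $(6)\Rightarrow(5)\Rightarrow(4)\Rightarrow(1)$ are all fine (the paper itself gives no proof, deferring to \cite{BBMP2013}*{Propositions 7 and 8}, so you are on your own for the core step). The genuine gap is in $(3)\Rightarrow(6)$, and you have correctly located but not resolved it. Your perturbation is $y=x+\lambda_k y_k$ with $\lambda_k=(M+M_x(N_k)+1)/\delta$, where $M_x(N_k)=\max_{1\le i\le N_k}p_m(T^ix)$. This quantity is dictated by $x$ and is non-decreasing in $k$, while the sequence $(y_k,N_k)$ is dictated by hypothesis (3); neither is at your disposal. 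Passing to a subsequence only makes $M_x(N_{k_j})$ larger, and ``pre-processing $(y_k,N_k)$ so that $p_j(y_k)$ decays faster than any prescribed rate'' is not an available move: rescaling $y_k\mapsto\mu_k y_k$ with $\mu_k\to0$ destroys the hypothesis $\card\{i\le N_k\colon p_m(T^iy_k)>\delta\}\ge N_k(1-\tfrac1k)$, since the threshold degrades to $\mu_k\delta$. If $p_m(T^ix)$ grows fast enough along $i\le N_k$, then $d(\lambda_ky_k,\vecz)\not\to0$ along any subsequence and no admissible $k$ exists. Nor does your case analysis close this: knowing $x\notin E_{M,\ell,n}$ only bounds the \emph{density} of $\{i\le N\colon p_m(T^ix)>M\}$ by $1-\tfrac1\ell$, which gives no pointwise control of $\max_{i\le N_k}p_m(T^ix)$ and leaves a ``good'' set of density only about $\tfrac1\ell$, not $1-\tfrac1\ell$. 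This is exactly where the analogy with Propositions~\ref{prop:g-sen-in-mean-eq-cond} and~\ref{prop:p-m-infty} breaks down: in the Ces\`aro setting, failure of the alternative bounds the Ces\`aro averages of $x$, which is precisely the quantity entering the perturbation estimate and makes the scaling constant independent of $k$; in the density-one setting it does not.

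The standard repair (and, in essence, the argument behind \cite{BBMP2013}*{Proposition 7}) replaces the single $x$-dependent translate by a pigeonhole over finitely many translates with a \emph{fixed} scaling. Fix $c>2M/\delta$ and $K\ge 2\ell$, and consider $x+rcy_k$ for $r=1,\dotsc,K$. If $i\in G_k:=\{i\le N_k\colon p_m(T^iy_k)>\delta\}$ and two indices $r\ne r'$ both satisfied $p_m(T^i(x+rcy_k))\le M$ and $p_m(T^i(x+r'cy_k))\le M$, then $|r-r'|c\,p_m(T^iy_k)\le 2M$, forcing $p_m(T^iy_k)\le 2M/c<\delta$, a contradiction; so each $i\in G_k$ is ``bad'' for at most one $r$. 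Summing over $r$ and pigeonholing, some $r$ satisfies
\[
\card\{1\le i\le N_k\colon p_m(T^i(x+rcy_k))>M\}\ \ge\ \card(G_k)-\tfrac{N_k}{K}\ \ge\ N_k\bigl(1-\tfrac1k-\tfrac1K\bigr)\ \ge\ N_k\bigl(1-\tfrac1\ell\bigr)
\]
for $k\ge 2\ell$, while $d(rcy_k,\vecz)\to0$ uniformly in $r\le K$ because $c$ and $K$ are fixed. This proves each $E_{M,\ell,n}$ is dense with no reference to the orbit of $x$, and the rest of your argument then goes through.
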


According to Proposition~\ref{prop:f-distr-sen}, following the idea of the proof of Proposition~\ref{prop:irregular-ext-sen}, we have the following result.

\begin{prop}\label{prop:DC1-dis-sen}
    Let $T\in L(X)$. If $T$ admits a distributionally chaotic pair of type $1$ then $T$ is distributionally sensitive.
\end{prop}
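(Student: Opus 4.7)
The plan is to mimic the proof of Proposition~\ref{prop:irregular-ext-sen}: reduce a scrambled pair to a single distributionally semi-irregular vector, exploit the shift-invariance of upper density, and then invoke Proposition~\ref{prop:f-distr-sen}.

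First, I would take a distributionally $\delta$-scrambled pair of type~$1$, say $(x,y)$, and set $z:=x-y$. Since the Fr\'echet metric $d$ is translation-invariant and $T$ is linear, $d(T^nx,T^ny)=d(T^nz,\vecz)$ for every $n$, so the pair $(z,\vecz)$ is distributionally proximal and satisfies $\udens\{n\in\bbn\colon d(T^nz,\vecz)>\delta\}=1$. Lemma~\ref{lem:density-limit}~(1) applied to the proximality then yields a set $A\subset\bbn$ with $\udens(A)=1$ along which $T^nz\to\vecz$; I would pick an increasing sequence $(n_k)_k$ in $A$ with $T^{n_k}z\to\vecz$, and set $w_k:=T^{n_k}z$.

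Writing $B:=\{n\in\bbn\colon d(T^nz,\vecz)>\delta\}$, the identity $d(T^nw_k,\vecz)=d(T^{n+n_k}z,\vecz)$ transforms the target density into $\udens(B-n_k)$; since $|\card((B-n_k)\cap[1,N])-\card(B\cap[1,N])|\leq 2n_k$, one has $\udens(B-n_k)=\udens(B)=1$. Thus each $w_k$ satisfies $\udens\{n\colon d(T^nw_k,\vecz)>\delta\}=1$, and for every $k$ I can choose $N_k\in\bbn$, arranged to be increasing, with $\card\{1\leq i\leq N_k\colon d(T^iw_k,\vecz)>\delta\}\geq N_k(1-\tfrac{1}{k})$. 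Condition~(2) of Proposition~\ref{prop:f-distr-sen} is then satisfied, which yields that $T$ is distributionally sensitive.

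The main content of the argument is the shift-invariance observation; once that is in place, the role of distributional proximality to $\vecz$ is simply to translate the distinguished orbit so that its initial segment lies arbitrarily close to $\vecz$, and I do not anticipate any genuine obstacle. As a minor variant, one can bypass Proposition~\ref{prop:f-distr-sen} entirely and argue directly from the definition: given $x\in X$ and $\eps>0$, pick $k$ with $d(w_k,\vecz)<\eps$ and set $y:=x+w_k$; translation invariance of $d$ combined with linearity of $T$ gives $\udens\{n\colon d(T^nx,T^ny)>\delta\}=\udens\{n\colon d(T^nw_k,\vecz)>\delta\}=1$, which is exactly distributional sensitivity with constant $\delta$.
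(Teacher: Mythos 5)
Your proof is correct and follows essentially the route the paper intends: reduce the scrambled pair to the single vector $z=x-y$ by translation invariance, use the shift-invariance of upper density to see that the translates $w_k=T^{n_k}z\to\vecz$ inherit the density-one separation, and conclude via condition (2) of Proposition~\ref{prop:f-distr-sen} --- which is exactly the paper's indicated combination of Proposition~\ref{prop:f-distr-sen} with the idea of the proof of Proposition~\ref{prop:irregular-ext-sen}. Your direct variant at the end (setting $y=x+w_k$) is also valid and merely inlines the criterion.
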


Note that several sufficient conditions for dense distributional chaos for linear operators on separable Fr\'echet spaces are obtained in 
\cite[Theorems 15 and 16]{BBMP2013}.
Combining Theorem~\ref{thm:g-dense-dc1} and Propositions~\ref{prop:f-distr-sen} and \ref{prop:DC1-dis-sen},
we give a characterization of the existence of a dense set of distributionally semi-irregular points of type $1$ for continuous linear operators on Fr\'echet spaces.

\begin{thm}\label{thm:f-dense-dc1}
    Let $T\in L(X)$. Then the following assertions are equivalent:
    \begin{enumerate}
        \item $T$ admits a dense set of distributionally semi-irregular vectors of type $1$;
        \item $T$ admits a residual set of distributionally $m$-irregular vectors of type $1$ for some $m\in\bbn$;
        \item there exists $m\in\bbn$ such that for every sequence $(O_j)_j$ of nonempty open subsets of $X$, there exists a sequence $(K_j)_j$ of Cantor sets with $K_j\subset O_j$ such that $\bigcup_{j=1}^\infty K_j$ is distributionally $m$-extremely scrambled of type $1$;
        \item the distributionally proximal relation of $T$ is dense in $ X\times X$ and $T$ is distributionally sensitive.
    \end{enumerate}
\end{thm}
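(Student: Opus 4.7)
The plan is to follow the pattern established in Theorem~\ref{thm:g-dense-dc1} (the group version) while exploiting Proposition~\ref{prop:f-distr-sen}, which lets me transfer the metric notion of distributional sensitivity into the stronger seminorm-based notion of distributional $m$-extreme sensitivity for some $m\in\bbn$. I will prove the chain (2)$\Rightarrow$(1)$\Rightarrow$(4)$\Rightarrow$(2) and then (3)$\Leftrightarrow$(4) separately.

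For (2)$\Rightarrow$(1), note that every distributionally $m$-irregular vector is \emph{a fortiori} distributionally semi-irregular of type $1$, and a residual subset of the completely metrizable space $X$ is dense. For (1)$\Rightarrow$(4), if $A$ denotes the given dense set, then $A\subset \dprox(T,\vecz)$, so by the translation-invariance of $d$ (the linear analogue of Lemma~\ref{lem:g-asym-prox-left-inv}(4)) the relation $\dprox(T)$ is dense in $X\times X$. Moreover, for any $x\in A$ the pair $(x,\vecz)$ is a distributionally scrambled pair of type $1$, so Proposition~\ref{prop:DC1-dis-sen} gives distributional sensitivity.

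For (4)$\Rightarrow$(2), I invoke Proposition~\ref{prop:f-distr-sen} to obtain an $m\in\bbn$ such that the set $D_m$ of vectors with distributionally $m$-unbounded orbit is a dense $G_\delta$ subset of $X$. The set $\dprox(T,\vecz)$ is $G_\delta$ (analogue of the lemma opening Section~5.1 for the pseudometric $p_m$) and dense (from (4) combined with the translation-invariance argument), hence residual. The intersection $D_m\cap \dprox(T,\vecz)$ is still residual, and every vector in it is distributionally $m$-irregular of type $1$. For (4)$\Rightarrow$(3): the set
\[
R_m:=\bigl\{(x,y)\in X\times X\colon (x,y)\in\dprox(T) \text{ and } \udens(\{n\colon p_m(T^nx-T^ny)>M\})=1 \text{ for all } M>0\bigr\}
\]
is $G_\delta$ in $X\times X$, since $\udens(\cdot)=1$ is a countable intersection of conditions of the form $\udens(\cdot)>1-1/k$, each of which unfolds into a countable $G_\delta$ expression by continuity of $T$ and $p_m$. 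Density of $R_m$ follows from (4) together with the $m$ supplied by Proposition~\ref{prop:f-distr-sen}, and then Mycielski's Theorem~\ref{Mycielski} applied to $R_m$ produces the required Cantor sets. The reverse implication (3)$\Rightarrow$(4) is routine: applied to pairs $(O_1,O_2)$ of nonempty open subsets of $X$, assertion (3) forces $\dprox(T)$ to be dense in $X\times X$ and simultaneously forces distributional $m$-extreme sensitivity for that $m$, whence distributional sensitivity via Proposition~\ref{prop:f-distr-sen}.

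The principal technical hurdle is keeping the existential quantifier ``for some $m\in\bbn$'' coherent across the four assertions, since the metric $d$ encodes all seminorms simultaneously while distributionally $m$-irregularity singles out one of them. Proposition~\ref{prop:f-distr-sen} is precisely the tool that upgrades the metric statement to one for a specific seminorm, so the proof essentially reduces the theorem to the group-theoretic framework already handled by Theorem~\ref{thm:g-dense-dc1} applied to the pseudometric $\rho=p_m$, supplemented by Proposition~\ref{prop:DC1-dis-sen} to convert a single scrambled pair into the global sensitivity property.
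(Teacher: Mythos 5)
Your proposal is correct and follows essentially the same route the paper intends: it assembles the result from the group-theoretic Theorem~\ref{thm:g-dense-dc1} (applied with $\rho=p_m$), uses Proposition~\ref{prop:f-distr-sen} to pass from metric distributional sensitivity to $m$-extreme distributional sensitivity for a single seminorm, and uses Proposition~\ref{prop:DC1-dis-sen} to derive sensitivity from a scrambled pair, with the residual-intersection and Mycielski arguments exactly as in the earlier sections. The only point worth keeping explicit in a final write-up is the density of the pair relation $R_m$ in the step (4)$\Rightarrow$(3), which you correctly reduce via translation invariance to the residuality of the set of distributionally $m$-unbounded vectors from Proposition~\ref{prop:f-distr-sen}(6).
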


By applying Theorem~\ref{thm:f-dense-dc1}, we also can characterize distributional chaos of type $1$ for continuous linear operators on Fr\'echet spaces.  It should be noticed that the equivalences of (1)-(4) are already proved in \cite[Theorem 12]{BBMP2013}.

\begin{thm} \label{thm:f-dc1}
    Let $T\in L(X)$. Then the following assertions are equivalent:
    \begin{enumerate}
        \item $T$ admits a distributionally semi-irregular vector of type $1$;
        \item $T$ admits a distributionally chaotic pair of type $1$;
        \item $T$ admits a distributionally irregular vector of type $1$;
        \item $T$ is distributionally $m$-extremely chaotic of type $1$ for some $m\in\bbn$;
        \item the restriction of $T$ to some closed $T$-invariant subspace $X$ has a dense set of distributionally $m$-irregular vectors of type $1$ for some $m\in\bbn$.
    \end{enumerate}
\end{thm}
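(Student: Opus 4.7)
The plan is to follow exactly the same strategy as in the proof of Theorem~\ref{thm:f-space-LY-chaos}, using Theorem~\ref{thm:f-dense-dc1} as the ``dense'' counterpart. The trivial implications are $(4)\Rightarrow(3)\Rightarrow(2)\Rightarrow(1)$, which follow directly from the definitions (an $m$-irregular vector of type 1 is a semi-irregular vector of type 1, a distributionally scrambled pair of type 1 produces a semi-irregular point via $x^{-1}y$, etc.). For $(5)\Rightarrow(4)$, one applies Theorem~\ref{thm:f-dense-dc1} part~(3) to $T|_{\widetilde{X}}$: this produces, for some $m$, a Cantor-union subset of $\widetilde{X}\subset X$ that is distributionally $m$-extremely scrambled of type~1, which is an uncountable distributionally $m$-extremely scrambled set of type~1, so $T$ is distributionally $m$-extremely chaotic of type~1.

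The heart of the argument is $(1)\Rightarrow(5)$. Let $x$ be a distributionally semi-irregular vector of type~1, so by Lemma~\ref{lem:density-limit}(1) there is $A\subset\bbn$ with $\udens(A)=1$ such that $\lim_{A\ni n\to\infty}T^nx=\vecz$. Set
\[
X_0=\bigl\{y\in X\colon \lim_{A\ni n\to\infty}T^n y=\vecz\bigr\}.
\]
Linearity of $T$ gives that $X_0$ is a vector subspace, and $T$-invariance follows from the identity $T^n(Ty)=T(T^ny)$ together with continuity of $T$ and $T\vecz=\vecz$ (exactly as in Lemma~\ref{lem:subseq-asym-subspace}). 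Take $\widetilde{X}$ to be the closure of $X_0$; it is a $T$-invariant closed subspace containing~$x$.

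Now every $y\in X_0$ satisfies $(y,\vecz)\in\dprox(T|_{\widetilde{X}})$ by Lemma~\ref{lem:density-limit}(1), so the distributionally proximal cell of $\vecz$ (for $T|_{\widetilde{X}}$) is dense in~$\widetilde{X}$. Moreover, $x\in\widetilde{X}$ is still a distributionally semi-irregular vector of type~1 for $T|_{\widetilde{X}}$ (the ``upper density 1'' condition on the separating set $B$ is intrinsic to the orbit of $x$ and does not depend on the ambient space). Consequently $(x,\vecz)$ is a distributionally scrambled pair of type~1 for $T|_{\widetilde{X}}$, so by Proposition~\ref{prop:DC1-dis-sen}, $T|_{\widetilde{X}}$ is distributionally sensitive. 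Applying Theorem~\ref{thm:f-dense-dc1} to $T|_{\widetilde{X}}$ yields a residual set of distributionally $m$-irregular vectors of type~1 in $\widetilde{X}$ for some $m\in\bbn$, which is assertion~(5).

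The main obstacle is the verification that $X_0$ is $T$-invariant and a subspace in this density-1 setting, together with the subtle point that the semi-irregularity of $x$ is preserved upon passing to the closed invariant subspace $\widetilde{X}$. Both are routine: the subspace/invariance argument is the density-1 analogue of Lemma~\ref{lem:subseq-asym-subspace}, while the preservation of semi-irregularity for $T|_{\widetilde{X}}$ uses only that the defining $\liminf$ and the set $A$ are intrinsic to the orbit~$(T^n x)_n$. Once these are in place, Proposition~\ref{prop:DC1-dis-sen} and Theorem~\ref{thm:f-dense-dc1} close the loop.
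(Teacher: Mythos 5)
Your proof is correct and follows exactly the route the paper intends: the paper gives no explicit proof of this theorem, merely invoking Theorem~\ref{thm:f-dense-dc1} in analogy with the proof of Theorem~\ref{thm:f-space-LY-chaos}, and your argument (the subspace $X_0=\{y\colon \lim_{A\ni n\to\infty}T^ny=\vecz\}$ for a density-one set $A$, density of the distributionally proximal cell of $\vecz$ in $\widetilde{X}$, Proposition~\ref{prop:DC1-dis-sen} for distributional sensitivity, then Theorem~\ref{thm:f-dense-dc1}) is precisely that intended argument. No gaps.
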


In the way of proving Theorem~\ref{thm:f-dc1} one can show the following:

\begin{prop}
    Let $T\in L(X)$. Then the set of distributionally $m$-irregular vectors of type $1$ for some $m\in\bbn$ is dense in the set of distributionally semi-irregular vectors of type $1$.
\end{prop}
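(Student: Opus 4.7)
The plan is to mirror the density results for Li-Yorke chaos (the corollary after Theorem~\ref{thm:f-space-LY-chaos}) and for mean Li-Yorke chaos (the proposition preceding Proposition~\ref{prop:dense-mean-irregular-manifold}), using Theorem~\ref{thm:f-dense-dc1} as the driving density theorem. Fix a distributionally semi-irregular vector $y$ of type~$1$. By Lemma~\ref{lem:density-limit}~(1) there is a set $A\subset\bbn$ with $\udens(A)=1$ such that $\lim\limits_{A\ni n\to\infty}T^ny=\vecz$. Following the pattern of Lemma~\ref{lem:subseq-asym-subspace} I would introduce
\[
X_0:=\Bigl\{x\in X\colon \lim_{A\ni n\to\infty}T^nx=\vecz\Bigr\}
\]
and check that $X_0$ is a $T$-invariant linear subspace of $X$ containing $y$. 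Linearity is automatic, while $T$-invariance follows from continuity of $T$, which gives $T^{n+1}x=T(T^nx)\to T\vecz=\vecz$ along $A$ whenever $x\in X_0$. Setting $\widetilde{X}:=\overline{X_0}$ then produces a closed $T$-invariant subspace of $X$ still containing $y$.

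The next step is to show that $T|_{\widetilde{X}}$ satisfies condition~(4) of Theorem~\ref{thm:f-dense-dc1}. Since $X_0\subset\dprox(T|_{\widetilde{X}},\vecz)$ and $X_0$ is dense in $\widetilde{X}$, the distributionally proximal cell of $\vecz$ is dense in $\widetilde{X}$; translation-invariance of the metric together with linearity of $T|_{\widetilde{X}}$ then promotes this to density of $\dprox(T|_{\widetilde{X}})$ in $\widetilde{X}\times\widetilde{X}$. Because $y\in\widetilde{X}\setminus\{\vecz\}$, the pair $(y,\vecz)$ remains distributionally scrambled of type~$1$ for $T|_{\widetilde{X}}$, so Proposition~\ref{prop:DC1-dis-sen} shows that $T|_{\widetilde{X}}$ is distributionally sensitive. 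Applying Theorem~\ref{thm:f-dense-dc1} to $T|_{\widetilde{X}}$ then yields some $m\in\bbn$ and a residual subset of $\widetilde{X}$ consisting of distributionally $m$-irregular vectors.

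Finally, since $\widetilde{X}$ is a nontrivial linear subspace of $X$ (it contains $y\neq\vecz$), it has no isolated points, so $y$ is an accumulation point of this residual set, and therefore lies in the closure inside $X$ of the union over $m\in\bbn$ of the sets of distributionally $m$-irregular vectors. As $y$ was arbitrary among distributionally semi-irregular vectors of type~$1$, this yields the required density. The only point that needs genuine care is the $T$-invariance of $X_0$, because convergence is taken only along the density-one set $A$ rather than along all of $\bbn$; this is handled by applying the continuous map $T$ to the net indexed by $A$, exactly as in the subsequence-based Lemma~\ref{lem:subseq-asym-subspace}.
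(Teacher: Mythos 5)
Your proof is correct and follows essentially the route the paper intends: the paper states this as ``a consequence of Theorem~\ref{thm:f-dc1}'', whose implication (1)$\Rightarrow$(5) is proved by exactly your construction, and your argument mirrors verbatim the paper's explicit proof of the analogous proposition for mean semi-irregular vectors (build $X_0$ from the density-one set $A$ via Lemma~\ref{lem:subseq-asym-subspace}, pass to $\widetilde{X}=\overline{X_0}$, verify dense distributional proximality and distributional sensitivity via Proposition~\ref{prop:DC1-dis-sen}, then invoke Theorem~\ref{thm:f-dense-dc1}). The points you flag as needing care ($T$-invariance of $X_0$ along $A$, and $\widetilde{X}$ having no isolated points) are handled correctly.
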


Following \cite{BBMP2013}, we say that $T\in L(X)$ satisfies the \emph{distributional chaos criterion}
if there exist sequences $(x_k)_k$ and $(y_k)_k$ in $X$ such that
\begin{enumerate}
    \item there exists $A\subset\bbn$ with $\udens(A)=1$ such that $\lim\limits_{A\ni n\to\infty} T^n x_k=\vecz$ for all $k\in\bbn$;
    \item  each $y_k$ is in $\overline{\sspan(\{x_k\colon k\in\bbn\})}$, $\lim_{k\to\infty}y_k=\vecz$ and there exists $\delta>0$ and an increasing sequence $(N_k)_k$ in $\bbn$ such that for every $k\in\bbn$,
          \[
              \card(\{1\leq i\leq N_k\colon d(T^iy_k,\vecz)>\delta\})\geq N_k(1-\tfrac{1}{k}).
          \]
\end{enumerate}

Using the ideas developed in this subsection, we can give a new proof for the following result.
Since the proof is similar to that of Proposition~\ref{prop:T-chaos-criterion}, we leave it to the reader. 

\begin{prop}[{\cite[Theorem 12]{BBMP2013}}]
    Let $T\in L(X)$.
    Then $T$ is distributionally chaotic of type $1$ if and only if it satisfies the distributional chaos criterion.
\end{prop}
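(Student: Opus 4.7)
The plan is to mirror the two-step blueprint of Proposition~\ref{prop:T-chaos-criterion}, with Theorem~\ref{thm:f-dc1} replacing Theorem~\ref{thm:f-space-LY-chaos}, Theorem~\ref{thm:f-dense-dc1} replacing Theorem~\ref{thm:F-space-dense-LY-chaos}, and Proposition~\ref{prop:f-distr-sen} playing the role of Proposition~\ref{prop:f-space-sen}.

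For the forward direction, assume $T$ is distributionally chaotic of type $1$. By Theorem~\ref{thm:f-dc1} it admits a distributionally irregular vector $x$ of type $1$, so there exist $m\in\bbn$ and sets $A,B\subset\bbn$ with $\udens(A)=\udens(B)=1$ such that $T^nx\to\vecz$ along $A$ and $p_m(T^nx)\to\infty$ along $B$. I would simply put $x_k=y_k=x/k$ for all $k\in\bbn$. Condition (1) then holds with the common set $A$ because $T^nx_k=k^{-1}T^nx\to\vecz$ along $A$. For condition (2), each $y_k$ lies in $\sspan\{x\}\subset\overline{\sspan(\{x_k\colon k\in\bbn\})}$ and $y_k\to\vecz$; and since $\{n\in\bbn\colon p_m(T^nx)>k\}$ contains $B$ up to a finite set, it has upper density $1$, so one can pick an increasing sequence $(N_k)_k$ with $\card(\{1\leq i\leq N_k\colon p_m(T^ix)>k\})\geq N_k(1-\tfrac{1}{k})$. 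On such indices $p_m(T^iy_k)>1$, hence $d(T^iy_k,\vecz)\geq 2^{-m}$, and $\delta:=2^{-m-1}$ works uniformly in $k$.

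For the backward direction, enumerate $A$ as an increasing sequence $(a_k)_k$ and invoke Lemma~\ref{lem:subseq-asym-subspace} to see that
\[
    G_A:=\Bigl\{x\in X\colon \lim_{A\ni n\to\infty}T^nx=\vecz\Bigr\}
\]
is a $T$-invariant linear subspace of $X$. Every $x_k$ lies in $G_A$, so $\sspan\{x_k\colon k\in\bbn\}\subset G_A$, and thus $\overline{\sspan\{x_k\colon k\in\bbn\}}\subset \widetilde{X}:=\overline{G_A}$, a $T$-invariant closed subspace. Since $G_A\subset\dprox(T|_{\widetilde X},\vecz)$ and $G_A$ is dense in $\widetilde X$, the distributional proximal cell of $\vecz$ under $T|_{\widetilde X}$ is dense in $\widetilde X$. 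The hypothesized sequence $(y_k)_k$ lies in $\widetilde X$ and verifies condition (2) of Proposition~\ref{prop:f-distr-sen}, so $T|_{\widetilde X}$ is distributionally sensitive. Theorem~\ref{thm:f-dense-dc1} then yields a residual set of distributionally $m$-irregular vectors in $\widetilde X$ for some $m\in\bbn$, whence Theorem~\ref{thm:f-dc1} shows that $T$ is distributionally chaotic of type $1$.

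The only subtle point is producing a \emph{common} set $A$ in condition (1) of the criterion: upper-density-$1$ sets are not closed under finite intersections, so choosing the $x_k$ from an arbitrary dense set of irregular vectors would not in general furnish such an $A$. Taking the $x_k$ to be scalar multiples of a single distributionally irregular vector is the essential trick that bypasses this obstacle; the remainder of the proof is standard bookkeeping and direct applications of the characterizations already established.
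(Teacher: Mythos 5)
Your proof is correct and follows the route the paper intends (the paper omits the proof, referring the reader to Proposition~\ref{prop:T-chaos-criterion}): restrict to the closure of the subspace $G_A$, obtain density of the distributionally proximal cell of $\vecz$ there, deduce distributional sensitivity of the restriction from Proposition~\ref{prop:f-distr-sen}, and conclude via Theorems~\ref{thm:f-dense-dc1} and~\ref{thm:f-dc1}. Your observation that the literal analogue of the Li--Yorke argument (taking $X_0$ to be the entire distributionally proximal cell of $\vecz$) would not supply the single density-one set $A$ demanded by condition (1) of the criterion is exactly right, and taking scalar multiples of one distributionally $m$-irregular vector of type $1$ is a clean and correct fix.
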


For $i=1,2$, a vector subspace $Y$ of $X$ is called a distributionally $m$-irregular manifold of type $i$ for $T$ if every vector $y\in Y$ is distributionally $m$-irregular of type $i$ for $T$.

Observe that for a given $\delta\in(0,1]$, 
any subset  of $\bbn$ with upper density at least $\delta$ can be partitioned in two disjoint subsets such that each of them has upper density at least  $\delta$.
Integrating this observation and the idea of the proof of Theorem~\ref{thm:dense-irregular-manifold}, we have the following two results. Note that a slightly weaker version of Proposition~\ref{prop:f-dense-dis-irr-manifold-1} was proved in \cite[Theorem 15]{BBMP2013}. 

\begin{prop} \label{prop:f-dense-dis-irr-manifold-2}
    Let $X$ be a separable Fr\'echet space and $T\in L(X)$. If the distributionally asymptotic cell of $\vecz$ is dense in $X$, then either the distributionally asymptotic cell of $\vecz$ is $X$ or it has a dense distributionally $m$-irregular manifold of type $2$ for some $m\in\bbn$.
\end{prop}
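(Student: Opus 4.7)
The plan is to adapt the argument of Theorem~\ref{thm:dense-irregular-manifold}, replacing convergence along subsequences by convergence along subsets of $\bbn$ with (upper) density controlled in the natural way. I would first establish the dichotomy. The distributionally proximal cell of $\vecz$ is a $G_\delta$ subset of $X$, and it contains the dense set $\dasym(T,\vecz)$, so $\dprox(T,\vecz)$ is residual. If $\dasym(T,\vecz) = \dprox(T,\vecz)$, then the transitivity of $\dasym(T)$ and translation-invariance of $d$, as in Lemma~\ref{lem:g-asym-prox-left-inv}(3), force $\dasym(T,\vecz) = X$. Otherwise there is a distributionally semi-irregular vector of type~$2$, and Corollary~\ref{coro:dist-scambled-sen} together with Proposition~\ref{prop:f-ext-sen-in-mean} produces $m\in\bbn$ for which $T$ is $m$-extremely mean-L-unstable. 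The $G_\delta$ characterization of extreme mean-L-unstability in Section~5.1 then yields $\delta_0 > 0$ such that
\[
    D := \bigl\{x\in X\colon \udens\{n\in\bbn\colon p_m(T^nx) > M\}\geq \delta_0 \text{ for every } M>0\bigr\}
\]
is a dense $G_\delta$ subset of $X$.

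Fixing a dense sequence $(y_i)_i$ in $X$, I would inductively construct vectors $x_i$ with $d(x_i,y_i) < 1/i$ and, at the $n$-th step, a density-one set $A_n\subset\bbn$ together with positive upper density sets $B_n^{(i)}\subset\bbn$ for $i\leq n$, arranged so that $T^k x_\ell\to\vecz$ as $A_n\ni k\to\infty$ for every $\ell\leq n$, that $p_m(T^k x_i)\to\infty$ as $B_n^{(i)}\ni k\to\infty$ for every $i\leq n$, and that $T^k x_\ell\to\vecz$ as $B_n^{(i)}\ni k\to\infty$ for every $\ell > i$. At the $(n+1)$-st stage one picks $x_{n+1}$ in the intersection of $D$, the open ball of radius $1/(n+1)$ around $y_{n+1}$, and $n+1$ auxiliary dense $G_\delta$ sets encoding distributional proximity to $\vecz$ along $A_n$ and along each $B_n^{(i)}$ for $i\leq n$; the $G_\delta$ structure follows from the initial-segment formula used for $\dprox$ in Section~5.1, and the denseness from $\dasym(T,\vecz)$ being dense. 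One then refines $A_n$ and each $B_n^{(i)}$ via Lemma~\ref{lem:density-limit}(1) to obtain $A_{n+1}$ and $B_{n+1}^{(i)}$ witnessing the convergence $T^k x_{n+1}\to\vecz$ along these refined sets, while $B_{n+1}^{(n+1)}$ is furnished by $x_{n+1}\in D$.

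The principal obstacle is maintaining the densities across the induction, since arbitrary intersections of density-one or positive-density subsets of $\bbn$ can collapse. My plan, in analogy with the nested-subsequence trick of Theorem~\ref{thm:dense-irregular-manifold}, is to control, at step $n+1$, a sufficiently long initial segment of each of the sets $A_n$ and $B_n^{(i)}$ before extracting the sub-convergence sets, so that $\udens(A_{n+1}) = 1$ is maintained and the lower bound $\udens(B_{n+1}^{(i)})\geq \delta_0\cdot 2^{i-n-1}$ is preserved. Once this scaffolding is in place, the conclusion follows by the same linear combination argument as in Theorem~\ref{thm:dense-irregular-manifold}: for any nonzero $y = \sum_{\ell=1}^{n+1}\alpha_\ell x_\ell$ with smallest nonzero index $i$, continuity of the vector-space operations gives $T^ky\to\vecz$ along $A_n$, so $y\in\dprox(T,\vecz)$, while along $B_n^{(i)}$ the term $\alpha_i T^k x_i$ dominates in $p_m$ and forces $p_m(T^ky)\to\infty$; hence $y$ is distributionally $m$-irregular of type~$2$, and $\sspan\{x_i\colon i\in\bbn\}$ is a dense distributionally $m$-irregular manifold of type~$2$.
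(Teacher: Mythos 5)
Your proposal is correct and takes essentially the same route as the paper, which gives no separate argument for this proposition but simply declares it ``similar to the proof of Theorem~\ref{thm:dense-irregular-manifold}'' --- i.e., exactly the adaptation you describe: the residual/$G_\delta$ dichotomy for $\dasym(T,\vecz)$ versus $\dprox(T,\vecz)$, followed by the inductive nested-witness-set construction. You have also correctly isolated the one genuinely new technical point in passing from subsequences to density sets --- preserving $\udens(A_n)=1$ and $\udens(B_n^{(i)})>0$ under successive refinements --- and your initial-segment/diagonal extraction (the relative-density analogue of Lemma~\ref{lem:density-limit}) is precisely what is needed to close it.
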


\begin{prop}\label{prop:f-dense-dis-irr-manifold-1}
    Let $X$ be a separable Fr\'echet space and  $T\in L(X)$. If the distributionally asymptotic cell of $\vecz$ is dense in $X$, then $T$ is distributionally sensitive if and only if it has a dense distributionally $m$-irregular manifold of type $1$ for some $m\in\bbn$.
\end{prop}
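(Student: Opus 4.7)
The backward direction is immediate: if $T$ has a dense distributionally $m$-irregular manifold of type $1$, it in particular contains vectors with distributionally $m$-unbounded orbits, so Proposition~\ref{prop:f-distr-sen} (equivalence (5)$\Leftrightarrow$(1)) gives distributional sensitivity. For the forward direction, I plan to adapt the Baire-category inductive construction from Theorem~\ref{thm:dense-irregular-manifold}, replacing sub-subsequences of $\bbn$ by subsets of $\bbn$ of upper density $1$. Distributional sensitivity gives, via Proposition~\ref{prop:f-distr-sen}(6), some $m\in\bbn$ for which the set $U_m$ of distributionally $m$-unbounded vectors is residual in $X$.

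The key ingredient, playing the role of $P(s_k)$ in Theorem~\ref{thm:dense-irregular-manifold}, is the following. For any $A\subset\bbn$ with $\udens(A)=1$, let
\[
P(A):=\Bigl\{x\in X\colon \forall\eps>0,\ \udens\bigl(A\cap\{n\in\bbn\colon d(T^nx,\vecz)<\eps\}\bigr)=1\Bigr\}.
\]
A direct verification (as in Lemma~\ref{lem:dyn-prox-delta}) shows $P(A)$ is a $G_\delta$. Moreover, $P(A)\supset\dasym(T,\vecz)$: if $x\in\dasym(T,\vecz)$ then there exists $A_x\subset\bbn$ with $\dens(A_x)=1$ and $\lim_{A_x\ni n\to\infty}T^nx=\vecz$, and since $\ldens(A_x)=1$ one has $\udens(A\cap A_x)=1$, whence $\udens(A\cap\{n\colon d(T^nx,\vecz)<\eps\})=1$ for every $\eps>0$. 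As $\dasym(T,\vecz)$ is dense by hypothesis, $P(A)$ is a dense $G_\delta$. Finally, applying Lemma~\ref{lem:density-limit}(1) to the sequence $a_n:=d(T^nx,\vecz)$ if $n\in A$ and $a_n:=2$ otherwise, every $x\in P(A)$ admits some $A'\subset A$ with $\udens(A')=1$ and $\lim_{A'\ni n\to\infty}T^nx=\vecz$.

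Fix a dense sequence $(y_i)_i$ in $X$. I will inductively construct vectors $x_i\in X$ with $d(x_i,y_i)<\tfrac{1}{i}$ together with subsets $A^{(i,j)}$ of $\bbn$ ($1\leq j\leq i$) and $B^{(i)}$ of $\bbn$, all of upper density $1$, satisfying: $A^{(i,j)}\subset A^{(i-1,j)}$ for $j<i$ and $A^{(i,i)}\subset B^{(i-1)}$; $\lim_{A^{(i,j)}\ni n\to\infty}T^nx_i=\vecz$ for every $j\leq i$; and $\lim_{B^{(i)}\ni n\to\infty}p_m(T^nx_i)=\infty$. At step $n+1$, set
\[
X_{n+1}:=\bigcap_{j=1}^{n}P(A^{(n,j)})\cap P(B^{(n)})\cap U_m,
\]
which is a dense $G_\delta$ by the claim and Baire category. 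Pick $x_{n+1}\in X_{n+1}$ within distance $\tfrac{1}{n+1}$ of $y_{n+1}$; extract the $A^{(n+1,j)}$ via the claim, and $B^{(n+1)}$ from membership in $U_m$.

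For any nonzero $\sum_{\ell=i}^{N}\alpha_\ell x_\ell$ (with $\alpha_i\neq 0$), distributional proximality to $\vecz$ follows from the nesting: $A^{(N,1)}\subset A^{(\ell,1)}$ for $\ell=1,\dotsc,N$, so $T^nx_\ell\to\vecz$ along $A^{(N,1)}$ for every such $\ell$, and hence $T^n(\sum\alpha_\ell x_\ell)\to\vecz$ along a set of upper density $1$. For the $m$-unbounded part, if $i<N$ use $A^{(N,i+1)}\subset B^{(i)}$ and $A^{(N,i+1)}\subset A^{(\ell,i+1)}$ for $\ell=i+1,\dotsc,N$, so that along $A^{(N,i+1)}$ one has $p_m(T^nx_i)\to\infty$ while $p_m(T^nx_\ell)\to 0$ for $\ell>i$ (using $p_m(x)\leq 2^md(x,\vecz)$ for $d(x,\vecz)<2^{-m}$); the triangle inequality then yields $p_m(T^n(\sum\alpha_\ell x_\ell))\to\infty$ along a set of upper density $1$. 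If $i=N$, use $B^{(N)}$. Thus $\sspan\{x_i\colon i\in\bbn\}$ is a dense distributionally $m$-irregular manifold of type $1$. The main technical obstacle is the fact that upper density $1$ is not preserved under intersection in general; this is overcome precisely by the nested construction of the $A^{(i,j)}$, together with the observation that whenever $\udens(E)=1$ and $\dens(F)=1$ one has $\udens(E\cap F)=1$, which is what drives the extraction step inside each $P(A^{(n,j)})$.
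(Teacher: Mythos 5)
Your proof is correct and follows exactly the route the paper intends: the paper only states that the result is ``similar to the proof of Theorem~\ref{thm:dense-irregular-manifold}'', and your argument is precisely that adaptation, with the sets $P(A)$ (dense $G_\delta$ because they contain $\dasym(T,\vecz)$, via the fact that $\udens(E)=1$ and $\dens(F)=1$ imply $\udens(E\cap F)=1$) correctly replacing the sets $P(s_k)$, and Proposition~\ref{prop:f-distr-sen} supplying the residual set of distributionally $m$-unbounded vectors. The nested extraction of upper-density-one subsets handles the only genuine obstacle, so no gaps remain.
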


\subsection{Distributional chaos for continuous linear operators on Banach spaces}
\label{subsec:DC-Banach}
This section aims to slightly generalize the characterizations of distributional chaos for linear operators on Banach spaces given in \cite{BBMP2013}
and \cite{BBPW2018} via our new rather general topological dynamics approach.

Combining \cite[Proposition~8]{BBMP2013} and Propositions~\ref{prop:g-sen-in-mean-eq-cond} and~\ref{prop:f-distr-sen}, we have the following important result.
\begin{thm}\label{thm:b-mean-l-stable}
    Let $(X,\lVert\cdot\rVert)$ be a Banach space and $T\in L(X)$.
    Then $T$ is mean-L-unstable if and only if it is distributionally sensitive.
\end{thm}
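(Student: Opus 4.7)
The implication from distributional sensitivity to mean-L-unstability is essentially tautological. If $\delta_0 > 0$ is a constant witnessing distributional sensitivity of $T$, then $\delta := \min(\delta_0, 1) > 0$ witnesses mean-L-unstability: for any $x \in X$ and $\eps > 0$, the vector $y$ supplied by distributional sensitivity satisfies $\Vert x-y\Vert < \eps$ and
\[
\udens\{n\in\bbn\colon \Vert T^nx - T^ny\Vert > \delta\} \geq \udens\{n\in\bbn\colon \Vert T^nx - T^ny\Vert > \delta_0\} = 1 \geq \delta.
\]

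For the converse, suppose $T$ is mean-L-unstable. Viewing $(X,+)$ as an abelian completely metrizable group with the translation-invariant metric $d_B$, Proposition~\ref{prop:g-sen-in-mean-eq-cond}(3) yields $\delta > 0$, vectors $y_k \to \vecz$ and positive integers $(N_k)_k$ with
\[
\sup_{k\in\bbn}\frac{1}{N_k}\card\{1\leq i\leq N_k\colon \Vert T^iy_k\Vert > \delta\} \geq \delta.
\]
Passing to a subsequence, we may assume that for every $k$ the left-hand density is at least $\delta/2$.

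The heart of the argument is then an application of \cite{BBMP2013}*{Proposition~8}, which upgrades such ``positive upper density of escape'' data into ``upper density approaching $1$'' data: it produces $\delta' > 0$, vectors $z_k \to \vecz$ in $X$ and an increasing sequence $(M_k)_k$ in $\bbn$ such that for every $k$,
\[
\card\{1\leq i\leq M_k\colon \Vert T^iz_k\Vert > \delta'\} \geq M_k\bigl(1 - \tfrac{1}{k}\bigr).
\]
This is exactly condition (2) of Proposition~\ref{prop:f-distr-sen}, so $T$ is distributionally sensitive.

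The main obstacle is precisely this boosting step. It is where the Banach-space linear structure enters decisively: the new witnesses $z_k$ are built as carefully chosen rescalings of the $y_k$ (together, if necessary, with suitable finite superpositions of shifted copies) so that the escape sets densely fill the interval $\{1,\dotsc,M_k\}$. Such a construction has no counterpart in a general completely metrizable group, which is why the equivalence of mean-L-unstability and distributional sensitivity is stated only in the Banach setting and relies on the specific linear-operator result from \cite{BBMP2013}.
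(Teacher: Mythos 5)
Your proof is correct and follows essentially the same route as the paper, whose own argument consists precisely of combining \cite{BBMP2013}*{Proposition~8} with Propositions~\ref{prop:g-sen-in-mean-eq-cond} and~\ref{prop:f-distr-sen}; you have merely made that combination explicit, including the trivial direction. (One cosmetic remark: your ``passing to a subsequence'' step reads the supremum condition in Proposition~\ref{prop:g-sen-in-mean-eq-cond}(3) as being witnessed by infinitely many $k$ with $N_k\to\infty$, which is also how the paper itself uses that condition.)
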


Combining Theorems~\ref{thm:f-equi-of-dense-DC2}, \ref{thm:f-dense-dc1} and~\ref{thm:b-mean-l-stable}, we give a characterization of the existence of a dense
set of distributionally irregular vectors for continuous linear operators on Banach spaces.
\begin{thm}\label{thm:b-dense-dc1}
    Let $(X,\lVert\cdot\rVert)$ be a Banach space and $T\in L(X)$. Then the following assertions are equivalent:
    \begin{enumerate}
        \item $T$ admits a dense set of distributionally semi-irregular vectors of type $2$;
        \item $T$ admits a residual set of distributionally irregular vectors of type $1$;
        \item for every sequence $(O_j)_j$ of nonempty open subsets of $X$, there exists a sequence $(K_j)_j$ of Cantor sets with $K_j\subset O_j$ such that $\bigcup_{j=1}^\infty K_j$ is distributionally extremely scrambled of type $1$;
        \item the distributionally proximal relation of $T$ is dense in $ X\times X$ and $T$ is mean-L-unstable;
        \item the distributionally proximal relation of $T$ is dense in $ X\times X$ and there exists $x\in X$ and $\delta>0$ such that
              \[
                  \udens(\{n\in\bbn\colon \lVert T^n x\rVert >\delta\})>0.
              \]
    \end{enumerate}
\end{thm}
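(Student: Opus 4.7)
The plan is to assemble the five equivalences from three previously established results: Theorem~\ref{thm:f-equi-of-dense-DC2} (the Fr\'echet-space characterization of a dense set of distributionally semi-irregular vectors of type $2$), Theorem~\ref{thm:f-dense-dc1} (the Fr\'echet-space characterization of a dense set of distributionally semi-irregular vectors of type $1$), and Theorem~\ref{thm:b-mean-l-stable} (the Banach-space equivalence between mean-L-unstability and distributional sensitivity).

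First I would specialize to the Banach case and observe that since the defining sequence of seminorms on $X$ reduces to the single norm $\lVert\cdot\rVert$, the index $m\in\bbn$ appearing in the Fr\'echet-space statements is vacuous: ``distributionally $m$-irregular of type $i$'' coincides with ``distributionally irregular of type $i$'', and ``distributionally $m$-extremely scrambled of type $i$'' coincides with ``distributionally extremely scrambled of type $i$''. Moreover, condition (5) of Theorem~\ref{thm:f-equi-of-dense-DC2}, once the metric $d$ on $G$ is replaced by the norm-induced metric $d_B$, becomes exactly condition (5) of the present statement, because $d_B(T^nx,\vecz)=\lVert T^nx\rVert$.

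Then the equivalence of (1), (4) and (5) in the statement is a direct application of Theorem~\ref{thm:f-equi-of-dense-DC2} to $T\in L(X)$, and the equivalence of (2) and (3) together with the auxiliary statement \emph{``the distributionally proximal relation of $T$ is dense in $X\times X$ and $T$ is distributionally sensitive''} is a direct application of Theorem~\ref{thm:f-dense-dc1}. The remaining step is to glue these two blocks together: Theorem~\ref{thm:b-mean-l-stable} says that on a Banach space $T$ is mean-L-unstable if and only if $T$ is distributionally sensitive, so condition (4) of the present theorem coincides with the auxiliary statement appearing in the second block. Chaining the two resulting chains of equivalences yields that (1), (2), (3), (4), and (5) are mutually equivalent.

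The only conceptually nontrivial point---rather than a genuine obstacle---is that (1) speaks of type $2$ semi-irregular vectors while (2) already produces \emph{type $1$} irregular vectors, and the equivalence thus upgrades the weaker type-$2$ density hypothesis to the stronger type-$1$ conclusion. This upgrade is not proved here in isolation; it is exactly what Theorem~\ref{thm:b-mean-l-stable} achieves, by promoting the mean-L-unstability produced from density of type-$2$ semi-irregular vectors to the full distributional sensitivity needed to manufacture a residual set of type-$1$ irregular vectors via Theorem~\ref{thm:f-dense-dc1}.
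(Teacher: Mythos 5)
Your proposal is correct and follows essentially the same route as the paper: the paper states this theorem by ``combining Theorems~\ref{thm:f-equi-of-dense-DC2}, \ref{thm:f-dense-dc1} and~\ref{thm:b-mean-l-stable}'', which is exactly the decomposition you use, with Theorem~\ref{thm:b-mean-l-stable} supplying the bridge between mean-L-unstability and distributional sensitivity that upgrades the type-$2$ hypothesis to the type-$1$ conclusion. Your remarks on the collapse of the seminorm index $m$ and the identification of $d(T^nx,\vecz)$ with $\lVert T^nx\rVert$ in the Banach setting correctly fill in the routine details the paper leaves implicit.
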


By Theorem~\ref{thm:b-dense-dc1} (5), we have the following consequence.
\begin{coro}
    Let $(X,\lVert\cdot\rVert)$ be a Banach space and $T\in L(X)$.
    Assume that the distributionally proximal cell of $\vecz$ is dense. If $T$ has a non-trivial periodic point or an eigenvalue $\lambda$ with $|\lambda|\geq 1$, then it is distributionally extremely chaotic of type $1$.
\end{coro}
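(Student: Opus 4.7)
The plan is to deduce this corollary directly from the equivalence of conditions (3) and (5) in Theorem~\ref{thm:b-dense-dc1}. To invoke that equivalence I need two things: density of the distributionally proximal relation in $X\times X$, and the existence of some $x\in X$ and $\delta>0$ with $\udens(\{n\in\bbn\colon\lVert T^nx\rVert>\delta\})>0$.

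For the first ingredient, I will use the group-theoretic observation from Section 5.2 (the analog of Lemma~\ref{lem:g-asym-prox-left-inv}~(4) for the distributionally proximal relation), applied to $X$ viewed as an abelian completely metrizable group with identity $\vecz$. That lemma says the distributionally proximal cell of $\vecz$ is dense in $X$ if and only if $\dprox(T)$ is dense in $X\times X$, so the hypothesis of the corollary immediately gives density of $\dprox(T)$.

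For the second ingredient, I will split into the two cases of the hypothesis. In the periodic-point case, there is a non-trivial $y\in X$ and $p\in\bbn$ with $T^py=y$, so $\lVert T^{kp}y\rVert=\lVert y\rVert$ for every $k\in\bbn$; setting $\delta=\tfrac{1}{2}\lVert y\rVert$, the set $\{n\colon \lVert T^ny\rVert>\delta\}$ contains $p\bbn$, which has density $1/p>0$. In the eigenvalue case, there is a non-zero eigenvector $y$ with $Ty=\lambda y$ and $|\lambda|\ge 1$, so $\lVert T^ny\rVert=|\lambda|^n\lVert y\rVert\ge \lVert y\rVert$ for all $n$; again with $\delta=\tfrac{1}{2}\lVert y\rVert$ the set in question equals $\bbn$, which has upper density $1$. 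In either case the quantitative clause of Theorem~\ref{thm:b-dense-dc1}~(5) is satisfied.

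Having verified (5), I will invoke the equivalence with condition (3) to obtain, for every sequence of nonempty open subsets of $X$, Cantor subsets whose union is distributionally extremely scrambled of type $1$. In particular, $X$ contains an uncountable distributionally extremely scrambled set of type $1$, which is by definition distributional extreme chaos of type $1$. There is no real obstacle here; the only point that needs a tiny bit of care is remembering that the corollary's hypothesis is phrased in terms of the proximal \emph{cell} of $\vecz$ rather than the relation, which is why the left-invariance lemma for groups must be cited at the outset.
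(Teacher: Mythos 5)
Your proposal is correct and follows exactly the route the paper intends: the corollary is stated as a direct consequence of condition (5) of Theorem~\ref{thm:b-dense-dc1}, and you verify that condition by passing from the dense proximal cell of $\vecz$ to the dense proximal relation via the left-invariance lemma and by exhibiting the required $x$ and $\delta$ in each of the two cases. The only difference is that you spell out details the paper leaves implicit.
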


By applying Theorem~\ref{thm:b-dense-dc1}, we also can characterize distributional chaos for continuous linear operators on Banach spaces, which slightly strengths \cite[Theorem~12]{BBMP2013} and \cite[Theorem~2]{BBPW2018}.

\begin{thm}\label{thm:b-dc1}
    Let $(X,\lVert\cdot\rVert)$ be a Banach space and $T\in L(X)$. Then the following assertions are equivalent:
    \begin{enumerate}
        \item $T$ admits a distributionally semi-irregular vector of type $2$;
        \item $T$ admits a distributionally chaotic pair of type $2$;
        \item $T$ admits distributionally irregular vector of type $1$;
        \item $T$ is distributionally extremely chaotic of type $1$;
        \item the restriction of $T$ to some closed $T$-invariant subspace $\widetilde{X}$ has a dense set of distributionally irregular vectors of type $1$.
    \end{enumerate}
\end{thm}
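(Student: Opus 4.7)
The plan is to establish the cycle $(1) \Rightarrow (5) \Rightarrow (4) \Rightarrow (3) \Rightarrow (2) \Rightarrow (1)$. The three short arrows on the right are essentially bookkeeping via the translation invariance already proved, while $(5) \Rightarrow (4)$ reduces to Theorem~\ref{thm:g-dense-dc1}, and the real work is $(1) \Rightarrow (5)$, where the Banach-space ingredient Theorem~\ref{thm:b-mean-l-stable} is exactly what bootstraps the type-$2$ hypothesis into the type-$1$ conclusion.

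For the easy arrows: $(5) \Rightarrow (4)$ is immediate from Theorem~\ref{thm:g-dense-dc1} applied to $T|_{\widetilde{X}}$, since a dense set of distributionally irregular points of type~$1$ certainly contains a Li-Yorke distributionally extremely scrambled set of type~$1$ (and passing from $\widetilde{X}$ to $X$ preserves this). For $(4) \Rightarrow (3)$, any two distinct points $x,y$ in a distributionally extremely scrambled set of type~$1$ form such a pair, and by Lemma~\ref{lem:g-dis-ext-scrambled-irr}(1) the vector $y-x$ is distributionally irregular of type~$1$. For $(3) \Rightarrow (2)$, a distributionally irregular vector $x$ of type~$1$ makes $(x,\vecz)$ distributionally extremely scrambled of type~$1$; since $\udens(\{n\in\bbn\colon \lVert T^nx\rVert>1\})=1\geq 1$, this pair is in particular distributionally scrambled of type~$2$. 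Finally, $(2) \Rightarrow (1)$ is the left-invariance statement of (the DC-analogue of) Lemma~\ref{lem:g-dis-ext-scrambled-irr}, already recorded in the lemma preceding Theorem~\ref{thm:equi-of-dense-DC2}.

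The crucial step is $(1) \Rightarrow (5)$. Let $x$ be a distributionally semi-irregular vector of type~$2$, and fix $A\subset \bbn$ with $\udens(A)=1$ such that $\lim_{A\ni n\to\infty}T^nx=\vecz$. Enumerating $A$ as an increasing sequence $(n_k)_k$, Lemma~\ref{lem:B-seq-mean-asym-subspace} ensures that
\[
    X_0 := \Bigl\{y\in X\colon \lim_{k\to\infty}T^{n_k}y=\vecz\Bigr\}
\]
is a $T$-invariant subspace of $X$ containing $x$. Let $\widetilde{X}$ be the closure of $X_0$; then $\widetilde{X}$ is a closed $T$-invariant subspace. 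Clearly $X_0\subset \dprox(T|_{\widetilde{X}},\vecz)$, so the distributionally proximal cell of $\vecz$ (with respect to $T|_{\widetilde{X}}$) is dense in $\widetilde{X}$, and hence by the left-invariance of $\dprox$, $\dprox(T|_{\widetilde{X}})$ is dense in $\widetilde{X}\times\widetilde{X}$. Moreover $(x,\vecz)$ is a distributionally scrambled pair of type~$2$ for $T|_{\widetilde{X}}$, so Corollary~\ref{coro:dist-scambled-sen} yields that $T|_{\widetilde{X}}$ is mean-L-unstable.

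With both conditions in hand, apply Theorem~\ref{thm:b-dense-dc1} to $T|_{\widetilde{X}}$: assertion~(4) there is satisfied, so assertion~(2) holds as well, i.e.\ $T|_{\widetilde{X}}$ admits a residual (in particular dense) set of distributionally irregular vectors of type~$1$, which is exactly~(5). The main obstacle is really only the conceptual one: once one recognizes that $\widetilde{X}$ is a \emph{Banach} subspace on which $\vecz$ has a dense distributionally proximal cell and on which the map is mean-L-unstable, the Banach-specific equivalence of mean-L-unstability with distributional sensitivity (Theorem~\ref{thm:b-mean-l-stable}), packaged inside Theorem~\ref{thm:b-dense-dc1}, automatically upgrades DC2 information to a residual set of DC1 irregular vectors. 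In a general Fréchet space this upgrading need not happen, which is precisely why the Banach hypothesis is essential here.
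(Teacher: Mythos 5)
Your proof is correct and follows essentially the route the paper intends: the paper gives no explicit proof of this theorem, simply deriving it from Theorem~\ref{thm:b-dense-dc1} by the same template used for Theorem~\ref{thm:f-space-LY-chaos}, and your step $(1)\Rightarrow(5)$ (restrict to the closure of $X_0$, observe density of the distributionally proximal cell, invoke Corollary~\ref{coro:dist-scambled-sen} and then Theorem~\ref{thm:b-dense-dc1}) is exactly that unwinding. One small citation slip: the subspace $X_0=\{y\colon \lim_k T^{n_k}y=\vecz\}$ is handled by Lemma~\ref{lem:subseq-asym-subspace} (together with the remark preceding Theorem~\ref{thm:dich-mean-L-stable-sens-mean}), not by Lemma~\ref{lem:B-seq-mean-asym-subspace}, which concerns the Ces\`aro-averaged set.
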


By the proof of Theorem~\ref{thm:b-dc1}, we also can obtain the following:

\begin{prop}
    Let $(X,\lVert\cdot\rVert)$ be a Banach space and $T\in L(X)$.  Then the set of distributionally irregular vectors of type $1$ is dense in the set of distributionally semi-irregular vectors of type $2$.
\end{prop}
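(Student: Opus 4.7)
The plan is to mimic the argument that was used for the corresponding statement in the mean Li-Yorke setting (the corollary following Theorem~\ref{thm:B-mean-LY-chaos}) and the analogous density statements for $m$-irregular vectors: given a distributionally semi-irregular vector $y$ of type $2$, I will pass to a suitable $T$-invariant closed subspace $\widetilde X$ containing $y$ on which the restriction of $T$ satisfies condition~(5) of Theorem~\ref{thm:b-dense-dc1}, thereby producing a residual (in particular dense) set of distributionally irregular vectors of type $1$ in $\widetilde X$ that accumulates on $y$.

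First I would unpack the definition. Since $y$ is distributionally semi-irregular of type $2$, there exists a set $A\subset\bbn$ with $\udens(A)=1$ along which $T^n y\to\vecz$. Enumerate $A$ as an increasing sequence $(n_k)_k$ and, invoking Lemma~\ref{lem:subseq-asym-subspace}, set
\[
    X_0:=\Bigl\{x\in X\colon \lim_{k\to\infty}T^{n_k}x=\vecz\Bigr\},
\]
which is a $T$-invariant subspace of $X$ containing $y$. Let $\widetilde X$ be its closure; then $\widetilde X$ is a $T$-invariant closed subspace. Note that every $x\in X_0$ is distributionally proximal to $\vecz$ under $T|_{\widetilde X}$ (since $\udens(A)=1$ forces $\udens(\{n\colon\|T^n x\|<\eps\})=1$ for every $\eps>0$), so $X_0\subset \dprox(T|_{\widetilde X},\vecz)$ and consequently the distributionally proximal cell of $\vecz$ is dense in $\widetilde X$. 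By the Banach-space analogue of Lemma~\ref{lem:g-asym-prox-left-inv}(4), $\dprox(T|_{\widetilde X})$ is then dense in $\widetilde X\times\widetilde X$.

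Next I would exploit the second half of the semi-irregularity of $y$: there exists $\delta>0$ with $\udens(\{n\in\bbn\colon\|T^n y\|>\delta\})>0$. Because $y\in \widetilde X$, this provides a witness for condition~(5) of Theorem~\ref{thm:b-dense-dc1} applied to $T|_{\widetilde X}$. Therefore $T|_{\widetilde X}$ admits a residual set $R$ of distributionally irregular vectors of type $1$. Every element of $R$ is obviously still distributionally irregular of type $1$ for $T$ on $X$.

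Finally, since $R$ is residual in the completely metrizable space $\widetilde X$, it is dense in $\widetilde X$, and in particular $y\in\widetilde X$ lies in the closure of $R$. As $y$ was an arbitrary distributionally semi-irregular vector of type $2$, this shows that the set of distributionally irregular vectors of type $1$ is dense in the set of distributionally semi-irregular vectors of type $2$. The only mild obstacle is the bookkeeping at the passage to $\widetilde X$ — specifically verifying that density of $X_0$ in $\widetilde X$ transfers to density of the proximal relation in the product — but this is handled exactly as in the analogous steps of the proofs of Theorem~\ref{thm:f-space-LY-chaos} and the mean-version corollary, so no new ideas are required.
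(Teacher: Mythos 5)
Your proposal is correct and follows essentially the route the paper intends: the paper states this proposition as a consequence of Theorem~\ref{thm:b-dc1}, whose implication (1)$\Rightarrow$(5) is proved by exactly your construction (pass to the closure of $X_0=\{x\colon \lim_k T^{n_k}x=\vecz\}$ for an enumeration $(n_k)$ of a density-one set along which $T^n y\to\vecz$, verify density of the distributionally proximal relation there, and feed the second half of the semi-irregularity of $y$ into condition (5) of Theorem~\ref{thm:b-dense-dc1} to get a residual set of type-$1$ distributionally irregular vectors in $\widetilde X\ni y$). This mirrors the paper's explicit proof of the analogous statement for mean $m$-irregular vectors, so no further comment is needed.
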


Combining Theorem~\ref{thm:b-mean-l-stable} and Propositions~\ref{prop:f-dense-dis-irr-manifold-2} and~\ref{prop:f-dense-dis-irr-manifold-1}, we have the following result, which generalizes \cite[Theorem 33]{BBPW2018}.
\begin{prop}\label{prop:B-dense-dis-irr-manifold}
    Let $(X,\lVert\cdot\rVert)$ be a separable Banach space and $T\in L(X)$.
    If the distributionally asymptotic cell of $\vecz$ is dense in $X$, then either the distributionally asymptotic cell of $\vecz$ is $X$ or $T$ has a dense distributionally irregular manifold of type $1$.
\end{prop}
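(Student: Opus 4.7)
The plan is to reduce to the Fr\'echet-space result Proposition~\ref{prop:f-dense-dis-irr-manifold-1}, using the Banach-space equivalence between mean-L-unstability and distributional sensitivity (Theorem~\ref{thm:b-mean-l-stable}) to supply the missing hypothesis. Recall that for a Banach space the sequence of seminorms degenerates to the single norm $\lVert\cdot\rVert$, so ``distributionally $m$-irregular'' is synonymous with ``distributionally irregular'' and ``distributionally $m$-irregular manifold'' means what the statement claims.

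Assume that $\dasym(T,\vecz)$ is dense in $X$ but that $\dasym(T,\vecz)\neq X$. First I would show that $T$ is mean-L-unstable. Pick $x_0\in X\setminus \dasym(T,\vecz)$. Since $(x_0,\vecz)$ is not distributionally asymptotic, there exists $\eps>0$ such that $\dens(\{n\in\bbn\colon \lVert T^nx_0\rVert<\eps\})$ is not equal to $1$; as an upper density can never exceed $1$, this forces
\[
\ldens(\{n\in\bbn\colon \lVert T^nx_0\rVert<\eps\})<1,\quad\text{equivalently}\quad \udens(\{n\in\bbn\colon \lVert T^nx_0\rVert\geq\eps\})>0.
\]
On the other hand $\dasym(T,\vecz)\subset \dprox(T,\vecz)$, so the distributionally proximal cell of $\vecz$ is already dense in $X$. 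Thus condition~(5) of Theorem~\ref{thm:f-equi-of-dense-DC2} is satisfied, which yields condition~(4): $T$ is mean-L-unstable.

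Now I would invoke Theorem~\ref{thm:b-mean-l-stable}, which states that for continuous linear operators on a Banach space mean-L-unstability is equivalent to distributional sensitivity, to conclude that $T$ is distributionally sensitive. With this in hand, Proposition~\ref{prop:f-dense-dis-irr-manifold-1} applies to $(X,T)$: since $X$ is a separable Fr\'echet space, the distributionally asymptotic cell of $\vecz$ is dense, and $T$ is distributionally sensitive, $T$ admits a dense distributionally $m$-irregular manifold of type~$1$ for some $m\in\bbn$. In the Banach setting $m=1$ is the only index and the seminorm $p_m$ is the norm itself, so this is precisely a dense distributionally irregular manifold of type~$1$, completing the proof.

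I do not expect any genuine obstacle: the two nontrivial ingredients have already been done, namely the Fr\'echet-space construction of the dense irregular manifold (Proposition~\ref{prop:f-dense-dis-irr-manifold-1}) and the Banach-space equivalence of mean-L-unstability with distributional sensitivity (Theorem~\ref{thm:b-mean-l-stable}). The only substantive step left is the short density argument above, turning the failure of the distributionally asymptotic cell to exhaust $X$ into the positive-upper-density separation needed to apply Theorem~\ref{thm:f-equi-of-dense-DC2}.
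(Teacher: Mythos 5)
Your proposal is correct and follows essentially the same route as the paper, which simply combines Theorem~\ref{thm:b-mean-l-stable} with Propositions~\ref{prop:f-dense-dis-irr-manifold-2} and~\ref{prop:f-dense-dis-irr-manifold-1}: the core of both arguments is to obtain distributional sensitivity via the Banach-space equivalence with mean-L-unstability and then invoke the Fr\'echet-space manifold result. The only (harmless) deviation is that you derive mean-L-unstability directly from Theorem~\ref{thm:f-equi-of-dense-DC2}~(5)$\Rightarrow$(4) rather than by first extracting a type-$2$ irregular manifold from Proposition~\ref{prop:f-dense-dis-irr-manifold-2}, and your density computation for that step is sound.
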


\medskip 
\noindent
\textbf{Acknowledgments:}
\thanks{J. Li (corresponding author) was supported in part by NFS of China (Grant Nos.~12222110 and 12171298).
The authors thank the anonymous referee whose careful comments have substantially improved this paper.
}


\begin{thebibliography}{99}
\bibitem{AK2003} E. Akin and S. Kolyada, \emph{Li-Yorke sensitivity}, Nonlinearity \textbf{16} (2003), no.~4, 1421--1433.

\bibitem{AY1980}
J. Auslander and J. A. Yorke, \emph{Interval maps, factors of maps, and chaos}, Tohoku Math. J. (2) \textbf{32} (1980), no.~2, 177--188.

\bibitem{BSS05}
F. Balibrea, J. Sm\'{\i}tal, and M. \v{S}tef\'{a}nkov\'{a}, \emph{The three versions of distributional chaos}, Chaos Solitons Fractals \textbf{23} (2005), no.~5, 1581--1583.

\bibitem{BM2009}
F. Bayart and \'{E}. Matheron, \emph{Dynamics of linear operators}, Cambridge Tracts in Mathematics, vol. 179, Cambridge University Press, Cambridge, 2009.

\bibitem{B1988}
B. Beauzamy, \emph{Introduction to operator theory and invariant
subspaces}, North-Holland Mathematical Library, vol.~42, North-Holland Publishing Co., Amsterdam, 1988.

\bibitem{B1974}
S. K. Berberian, \emph{Lectures in functional analysis and operator theory}, Graduate Texts in Mathematics, vol. No. 15, Springer-Verlag, New York-Heidelberg, 1974.

\bibitem{BBMP2011}
T. Berm\'{u}dez, A. Bonilla, F. Mart\'{\i }nez-Gim\'{e}nez and A. Peris, \emph{Li-Yorke and distributionally chaotic operators}, J. Math. Anal. Appl. \textbf{373} (2011), no.~1, 83--93.

\bibitem{BBMP2013}
N. C. Bernardes Jr., A. Bonilla, V. M\"{u}ller and A. Peris,
\emph{Distributional chaos for linear operators}, J. Funct. Anal.
\textbf{265} (2013), no.~9, 2143--2163.

\bibitem{BBMP2015}
N. C. Bernardes Jr., A. Bonilla, V. M\"{u}ller and A. Peris, \emph{Li-Yorke chaos in linear dynamics}, Ergodic Theory Dynam. Systems \textbf{35} (2015), no.~6, 1723--1745.

\bibitem{BBP2020}
N. C. Bernardes Jr., A. Bonilla and A. Peris, 
\emph{Mean Li-Yorke chaos in
  Banach spaces}, J. Funct. Anal. \textbf{278} (2020), no.~3, 108343, 31 pp.

\bibitem{BBPW2018}
N. C. Bernardes Jr., A. Bonilla, A. Peris and X. Wu, \emph{Distributional
  chaos for operators on Banach spaces}, J. Math. Anal. Appl. \textbf{459}
  (2018), no.~2, 797--821.

\bibitem{BP2022}
J. Burke and L. Pinheiro, \emph{Topologically mixing extensions of
  endomorphisms on Polish groups}, Appl. Gen. Topol. \textbf{23} (2022), no.~1,
  179--187.

\bibitem{C2001}
K. C. Chan, \emph{Universal meromorphic functions}, Complex Variables Theory
  Appl. \textbf{46} (2001), no.~4, 307--314.

\bibitem{CW2019}
K. C. Chan and D. Walmsley, \emph{A constructive approach to the
  universality criterion for semi-groups}, Math. Proc. R. Ir. Acad.
  \textbf{119A} (2019), no.~1, 27--55.

\bibitem{D2014}
T. Downarowicz, \emph{Positive topological entropy implies chaos DC2}, Proc. Amer. Math. Soc. \textbf{142} (2014), no.~1, 137--149.

\bibitem{F1951}
S. Fomin, \emph{On dynamical systems with a purely point spectrum} (Russian), Doklady
  Akad. Nauk SSSR (N.S.) \textbf{77} (1951), 29--32.

\bibitem{GP2011}
K.-G. Grosse-Erdmann and A. Peris Manguillot, \emph{Linear chaos},
Universitext, Springer, London, 2011.

\bibitem{HLY2014}
W. Huang, J. Li and X. Ye, \emph{Stable sets and mean Li-Yorke chaos
  in positive entropy systems}, J. Funct. Anal. \textbf{266} (2014) no.~6, 3377--3394.

\bibitem{HY2002}
W. Huang and X. Ye, \emph{Devaney's chaos or $2$-scattering implies
  Li-Yorke's chaos}, Topology Appl. \textbf{117} (2002), no.~3, 259--272.

\bibitem{LTY2015}
J. Li, S. Tu and X. Ye, \emph{Mean equicontinuity and mean
  sensitivity}, Ergodic Theory Dynam. Systems \textbf{35} (2015), no.~8,
  2587--2612.

\bibitem{LY2016}
J. Li and X. Ye, \emph{Recent development of chaos theory in
  topological dynamics}, Acta Math. Sin. (Engl. Ser.) \textbf{32} (2016),
  no.~1, 83--114.

\bibitem{LY1975}
T.~Y. Li and J.~A. Yorke, \emph{Period three implies chaos}, Amer. Math. Monthly
  \textbf{82} (1975), no.~10, 985--992.

\bibitem{LH2015}
L. Luo and B. Hou, \emph{Some remarks on distributional chaos for
  bounded linear operators}, Turkish J. Math. \textbf{39} (2015), no.~2,
  251--258.

\bibitem{M2011}
M. Malicki, \emph{On polish groups admitting a compatible complete
  left-invariant metric}, J. Symbolic Logic \textbf{76} (2011), no.~2,
  437--447.

\bibitem{M1964}
J. Mycielski, \emph{Independent sets in topological algebras}, Fund. Math.
  \textbf{55} (1964), 139--147.

\bibitem{SS1994}
B.~Schweizer and J. Sm\'{\i }tal, \emph{Measures of chaos and a spectral
  decomposition of dynamical systems on the interval}, Trans. Amer. Math. Soc.
  \textbf{344} (1994), no.~2, 737--754.

\bibitem{SS04}
J. Sm\'{\i }tal and M. \v{S}tef\'{a}nkov\'{a}, \emph{Distributional
  chaos for triangular maps}, Chaos Solitons Fractals \textbf{21} (2004),
  no.~5, 1125--1128.

\bibitem{M2009}
T. K.  Subrahmonian~Moothathu, \emph{Weak mixing and mixing of a single
  transformation of a topological (semi)group}, Aequationes Math. \textbf{78}
  (2009), no.~1-2, 147--155.

\bibitem{YLW21}
Z. Yin, L. Li and Y. Wei, \emph{Investigating distributional
  chaos for operators on Fr\'{e}chet spaces}, Internat. J. Bifur. Chaos Appl. Sci. Engrg. \textbf{31} (2021), no.~15, Paper No. 2150222, 15 pp.

\end{thebibliography}
\end{document}